\newif\ifspringer
  \newtheorem{theorem}{Theorem}[section]
  \newtheorem{lemma}[theorem]{Lemma}
  \newtheorem{corollary}[theorem]{Corollary}
  \theoremstyle{definition}
  \newtheorem{definition}[theorem]{Definition}
  \newtheorem{example}[theorem]{Example}
  \newtheorem*{remark}{Remark}
\DeclareMathOperator\Spec{Spec}
\DeclareMathOperator\Ext{Ext}
\DeclareMathOperator\Mod{Mod}
\DeclareMathOperator\Ind{Ind}
\DeclareMathOperator\tr{tr}
\newcommand\Z{{\mathbb{Z}}}
\newcommand\F{{\mathbb{F}}}
\renewcommand\P{{\mathbb{P}}}
\newcommand\RP{{\mathbb{RP}}}
\renewcommand\d{\mathrm{d}}
\newcommand\E{{\mathcal{E}}}
\renewcommand\O{{\mathcal{O}}}
\newcommand\M{{\mathcal{M}}}
\newcommand\Ell{{\mathcal{E}ll}}
\newcommand\tmf{{\mathrm{tmf}}}
\newcommand\TMF{{\mathrm{TMF}}}
\newcommand\KO{{\mathrm{KO}}}
\newcommand\ko{{\mathrm{ko}}}
\newcommand\kappabar{{\bar{\kappa}}}
\newcommand\Sp{{\mathrm{Sp}}}
\DeclareMathOperator\QCoh{QCoh}
\DeclareMathOperator\RO{RO}
\DeclareMathOperator\Sq{Sq}
\DeclareMathOperator*\colim{colim}
\newcommand\A{{\mathcal{A}}}
\newcommand\aesq[1]{{\A^{C_2} \square_{\A^{C_2}(#1)}}}
\newcommand\HF{{H\underline{\F}}}
\newcommand\Mt{{\mathbb{M}_2}}
\newcommand\bxi{{\bar{\xi}}}
\newcommand\Y{Y}
\DeclareSseqGroup\hzerotower {} {
  \class(0, 0)
  \savestack
  \DoUntilOutOfBounds {
    \class(\lastx, \lasty + 1)
    \structline[hzero]
  }
  \restorestack
}
\DeclareSseqGroup\hzerotowereta {} {
  \hzerotower
  \class(1, 1) \structline[hone]
  \class(2, 2) \structline[hone]
}
\tikzset{circ/.style = {fill, circle, inner sep = 0, minimum size = 3}}
\newcounter{eta}
\newcounter{nu}
\newenvironment{celldiagram}{
  \setcounter{eta}{0}
  \setcounter{nu}{0}
  \def\n##1{\node [circ] at (0, ##1) {};}
  \def\invert##1{\setcounter{##1}{1 - \the\value{##1}}}
  \def\getside##1{\ifnum\the\value{##1}=0 right\else left\fi}
  \def\two##1{\draw [blue, thick] (0, ##1) to (0, ##1 + 1);}
  \def\eta##1{\draw [red, thick] (0, ##1) to [bend \getside{eta}=50] (0, ##1 + 2);\invert{eta}}
  \def\nu##1{\draw [green!50!black, thick] (0, ##1) to [bend \getside{nu}=50] (0, ##1 + 4);\invert{nu}}
}{}
\NewSseqGroup\hook {} {
  \class(1, 1)
  \class(2, 2) \structline[hone]
  \class(4, 2) \structline[htwo](4, 2)(1, 1)
  \class(7, 3) \structline[htwo]
}
\NewSseqGroup\htwoedge {} {
  \class(4, 2)
  \class(7, 3) \structline[htwo]
}
\title{\texorpdfstring{$C_2$}{C2}-equivariant topological modular forms}
\author{Dexter Chua}
  \institute{Dexter Chua\at Department of Mathematics, Harvard University, Cambridge, MA 02138, \\\email{dexter@math.harvard.edu}}
\def\labelscalefactor{0.9}
\begin{document}
\begin{abstract}
  We compute the homotopy groups of the $C_2$ fixed points of equivariant topological modular forms at the prime $2$ using the descent spectral sequence. We then show that as a $\TMF$-module, it is isomorphic to the tensor product of $\TMF$ with an explicit finite cell complex.
  \ifspringer
    \subclass{55N34 \and 55P91 \and 55T99}
  \fi
\end{abstract}
\maketitle
\tableofcontents
\section{Introduction}
Topological $K$-theory is one of the first examples of generalized cohomology theories. It admits a natural equivariant analogue --- for a compact Hausdorff $G$-space $X$, the group $\KO^0_G(X)$ is the Grothendieck group of $G$-equivariant vector bundles over $X$. In particular, $\KO^0_G(*) = \operatorname{Rep}(G)$ is the representation ring of $G$.

As in the case of non-equivariant $K$-theory, this extends to a $G$-equivariant cohomology theory $\KO_G$, and is represented by a genuine $G$-spectrum. We shall call this $G$-spectrum $\KO$, omitting the subscript, as we prefer to think of this as a global equivariant spectrum --- one defined for all compact Lie groups. The $G$-fixed points of this, written $\KO^{\mathcal{B}G}$, are a spectrum analogue of the representation ring, with $\pi_0 \KO^{\mathcal{B}G} = \KO^0_G(*) = \operatorname{Rep}(G)$ (more generally, $\pi_n \KO^{\mathcal{B}G} = \KO^{-n}_G(*)$).

These fixed point spectra are readily computable as $\KO$-modules. For example,
\[
  \KO^{\mathcal{B}C_2} = \KO \vee \KO,\quad \KO^{\mathcal{B} C_3} = \KO \vee \mathrm{KU}.
\]
This corresponds to the fact that $C_2$ has two real characters, while $C_3$ has a real character plus a complex conjugate pair. If one insists, one can write $\mathrm{KU} = \KO \otimes C\eta$, providing an arguably more explicit description of $\KO^{\mathcal{B} C_3}$ as a $\KO$-module. In general, $\KO^{\mathcal{B}G}$ decomposes as a direct sum of copies of $\KO$, $\mathrm{KU}$ and $\mathrm{KSp}$, with the factors determined by the representation theory of $G$ \cite[p.133--134]{segal-equivariant-k-theory}.

From the chromatic point of view, the natural object to study after $K$-theory is elliptic cohomology, or its universal version, topological modular forms. Equivariant elliptic cohomology, in various incarnations, has been of interest to many people, include geometric representation theorist and quantum field theorists. Most recently, in \cite{equivariant-tmf}, Gepner and Meier constructed integral equivariant elliptic cohomology and topological modular forms for compact abelian Lie groups, following the outline in \cite{elliptic-survey} and the groundwork in \cite{elliptic-i,elliptic-ii,elliptic-iii}. The introduction in \cite{equivariant-tmf} provides a nice overview of the relevant history, whose efforts we shall not attempt to reproduce.

The spectra $\TMF^{\mathcal{B}C_n}$ can be constructed as follows: in \cite{elliptic-ii}, Lurie constructed the universal oriented\footnote{``oriented'' refers to complex orientation of the associated cohomology theory} (spectral) elliptic curve, which we shall denote $p\colon \E \to \M$. Equivariant $\TMF$ is then constructed with the property that
\[
  \TMF^{\mathcal{B}C_n} = \Gamma(\E[n]; \O_{\E[n]}),\quad \TMF^{\mathcal{B}S^1} = \Gamma(\E; \O_\E),
\]
where $\E[n]$ are the $n$-torsion points of the elliptic curve. This is to be compared to the homotopy fixed points (with trivial group action), where $\E$ is replaced by the formal group $\hat{\E}$.

We are interested in explicit descriptions of these spectra as $\TMF$-modules. Much work was done by Gepner--Meier themselves: in \cite[Theorem 1.1]{equivariant-tmf}, they computed 
\[
  \TMF^{\mathcal{B}S^1} = \TMF \oplus \Sigma \TMF.
\]
This corresponds to the fact that the coherent cohomology of a (classical) elliptic curve is concentrated in degrees $0$ and $1$ by Serre duality.

As for finite groups, \cite[Example 9.4]{equivariant-tmf} argues that if $\ell \nmid |G|$ or $\ell > 3$, then $\TMF^{\mathcal{B}G}_{\ell}$ splits as sums of shifts of $\TMF_1(3)$, $\TMF_1(2)$ and $\TMF$. Further, $\TMF_1(3)$ and $\TMF_1(2)$ can themselves be described as the smash product of $\TMF$ with an 8- and 3-cell complex respectively (see \cite[Section 4]{homology-tmf} for details). Thus, we have an explicit description of $\TMF^{\mathcal{B}G}_\ell$ as a $\TMF_\ell$-module.

This leaves us with the case where $\ell = 2, 3$ and $\ell \mid |G|$. In this paper, we compute $\TMF^{\mathcal{B}C_2}$ at the prime $2$.

\begin{theorem}\label{thm:main}
  There is a (non-canonical) isomorphism of $2$-completed $\TMF$-modules
  \[
    \TMF^{\mathcal{B}C_2} \cong \TMF \oplus \TMF \oplus \TMF \otimes DL,
  \]
  where $DL$ is the spectrum $S^{-8} \cup_{\nu} S^{-4} \cup_{\eta} S^{-2} \cup_2 S^{-1}$, as depicted in \Cref{fig:cell-dl}.
\end{theorem}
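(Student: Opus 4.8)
The plan is to compute $\pi_*\TMF^{\mathcal{B}C_2}$ via the descent spectral sequence and then realize the answer by an explicit map of $\TMF$-modules. Since $\E[2]\to\M$ is finite flat of degree $4$, pushforward along it is exact, $\TMF^{\mathcal{B}C_2}=\Gamma(\E[2];\O_{\E[2]})=\Gamma(\M;p_*\O_{\E[2]})$, and there is a descent spectral sequence $E_2^{s,*}=H^s(\M;p_*\O_{\E[2]})\Rightarrow\pi_*\TMF^{\mathcal{B}C_2}$ which is a module over the descent spectral sequence for $\TMF$. The unit section $\M\to\E[2]$ splits off a copy of $\O_\M$, giving $p_*\O_{\E[2]}\cong\O_\M\oplus\mathcal{I}$ with $\mathcal{I}$ the (rank-$3$) augmentation ideal; the first summand already accounts for one $\TMF$ --- concretely, the image of the restriction $\TMF^{\mathcal{B}C_2}\to\TMF^{\mathcal{B}e}=\TMF$ --- and the real work is to understand $\Gamma(\M;\mathcal{I})$.

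First I would identify $p_*\O_{\E[2]}$ as a graded algebra over modular forms: working $2$-adically from a Weierstrass presentation, the nonidentity $2$-torsion is cut out by the $2$-division polynomial, which pins down $\mathcal{I}$ and reduces the $E_2$-page to the classical cohomology of $\M_{(2)}$ with coefficients in powers of $\omega$. The crucial point is that at the prime $2$ the map $\E[2]\to\M$ is far from \'etale (just as $[2]\colon\E\to\E$ is inseparable in characteristic $2$), so $\mathcal{I}$ is a genuinely nontrivial sheaf carrying $2$-, $\eta$-, and $\nu$-torsion beyond what one sees away from $2$ --- where $\E[2]\to\M$ is \'etale, $p_*\O_{\E[2]}$ is the regular-representation sheaf for $\mathrm{GL}_2(\F_2)=S_3$, splitting as $\text{trivial}\oplus\text{trivial}\oplus\text{standard}$ and recovering the prime-to-$2$ splitting of Gepner--Meier; rationally, where $\nu,\eta$ and the $2$-attachment all become trivial and $DL\otimes\Q\simeq S^{-8}\vee S^{-4}$, this gives $\TMF_\Q\oplus\TMF_\Q\oplus\Sigma^{-4}\TMF_\Q\oplus\Sigma^{-8}\TMF_\Q$, a useful rank check. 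Running the spectral sequence then requires resolving its differentials; these are linear over the $\TMF$ descent spectral sequence (whose $d_5,d_7,d_9,\dots$, driven by $\alpha,\beta,\kappa,\kappabar$, are known), and I would fix the remaining indeterminacy using the module structure together with the transfer map $\TMF^{\mathcal{B}C_2}\to\TMF$ and comparison with the known homotopy of $\TMF_0(2)$ and $\TMF_1(2)$.

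With $\pi_*\TMF^{\mathcal{B}C_2}$ in hand as a module over $\pi_*\TMF$, I would prove \Cref{thm:main} by constructing a map $\TMF\oplus\TMF\oplus\TMF\otimes DL\to\TMF^{\mathcal{B}C_2}$ of $\TMF$-modules and checking it is an isomorphism on homotopy. The two free summands come from the identity section and the transfer (equivalently, the two trivial isotypic pieces of $p_*\O_{\E[2]}$). For the last summand I would build the map cell by cell: locate classes in $\pi_{-8}$, $\pi_{-4}$, $\pi_{-2}$, $\pi_{-1}$ of $\TMF^{\mathcal{B}C_2}$ on which multiplication by $\nu$, $\eta$, $2$ respectively realizes the attaching maps of $DL=S^{-8}\cup_\nu S^{-4}\cup_\eta S^{-2}\cup_2 S^{-1}$, and extend over the cells after checking the relevant obstruction groups vanish, producing $\TMF\otimes DL\to\TMF^{\mathcal{B}C_2}$; the homotopy computation then forces the assembled map to be a $\pi_*$-isomorphism, hence an equivalence of $\TMF$-modules (non-canonically, since the cell classes involve choices). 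The main obstacle I anticipate is exactly the bookkeeping in the spectral sequence: resolving the higher differentials and, above all, the hidden $2$-, $\eta$-, and $\nu$-extensions, since it is precisely these extensions that certify the third summand is $\TMF\otimes DL$ with the stated attaching maps rather than some other $\TMF$-module with the same associated graded.
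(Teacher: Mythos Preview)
Your outline matches the paper's architecture: split off two copies of $\TMF$ via the unit and the transfer, compute the descent spectral sequence for the remainder, then realize the answer by an obstruction-theoretic map from $\TMF\otimes DL$. The gap is in the middle step, where you underestimate what it takes to determine the differentials.

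The issue is translational invariance. After quotienting by the two $\TMF$ summands, the $E_2$ page of the descent spectral sequence for $\overline{\TMF^{\mathcal{B}C_2}}$ is $\Delta$-periodic (period $24$), but the $E_\infty$ page is only $\Delta^8$-periodic. Linearity over the $\TMF$ descent spectral sequence cannot break this symmetry: if $x$ is a class and you know only the $\TMF$-module structure, there is nothing to distinguish $x$ from $\Delta x$. In particular, there is a class $t$ in bidegree $(1,1)$ that must be permanent, while $\sqrt{\Delta}\,t$ supports a $d_3$ and $\Delta^4 t$ supports a $d_{17}$; no amount of Leibniz-rule bookkeeping from $d_5(\Delta)=h_2 g$ etc.\ will tell you which is which. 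Your proposed inputs --- the transfer $\TMF^{\mathcal{B}C_2}\to\TMF$ (which in any case is the wrong direction; the paper uses $\tr\colon\TMF\to\TMF^{\mathcal{B}C_2}$ coming from the norm) and comparison with $\TMF_0(2)$, $\TMF_1(2)$ --- do not supply this information; the level-structure spectra see the \'etale locus and carry no data about which $\Delta$-translate of a torsion class is permanent.

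The paper's solution is to exploit the \emph{genuine} equivariance. The norm map $\TMF_{hC_2}=\TMF\otimes\RP^\infty_+\to\TMF^{\mathcal{B}C_2}$ produces, from the $1$-cell of $\RP^\infty$, a concrete homotopy class detected by $t$, certifying it permanent. Showing $\sqrt{\Delta}\,t$ is \emph{not} permanent requires a delicate argument in $BP$-synthetic spectra together with a Toda-bracket identity $\eta\kappabar=\nu^2\tilde\kappa+\kappa\tilde{\nu^2}$ in $\pi_*\TMF/2$. Showing $\Delta^2 t$ \emph{is} permanent requires extending $t$ to a map out of $\TMF\otimes L$ and running a separate (synthetic) spectral sequence for $\tmf\otimes L$. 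None of this is visible from the module structure alone, and it is where essentially all of the work lies; your proposal should name the norm map as the mechanism that breaks $\Delta$-periodicity and be prepared for substantial auxiliary computation beyond ``fixing indeterminacy''.
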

This space $DL$ is so named because its dual $L$ is a split summand of the spectrum $L_0$ defined in \cite[Definition 2.3]{tmf-tate}; in fact, $L_0 = L \oplus S^0$.
\begin{figure}[ht]
  \centering
  \begin{tikzpicture}[scale=0.5]
    \begin{celldiagram}
      \two{-2}
      \eta{-4}
      \nu{-8}
      \n{-8} \n{-4} \n{-2} \n{-1}

      \foreach \y in {1,2,4,8} {
        \node [left] at (-0.8, -\y) {$-\y$};
      }
    \end{celldiagram}
    \node [right] at (1, -6) {$\nu$};
    \node [right] at (0.5, -3) {$\eta$};
    \node [right] at (0, -1.5) {$2$};
  \end{tikzpicture}
  \caption{Cell diagram of $DL$}\label{fig:cell-dl}
\end{figure}
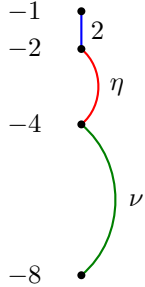

\begin{remark}
  Despite being a $4$-cell complex, the $\TMF$-module $\TMF \otimes DL$ is really a rank-$2$ $\TMF$-module. We claim that after base change to the flat cover $\TMF_1(3) = v_2^{-1}\mathrm{BP}\langle 2 \rangle$, the module $\TMF_1(3) \otimes DL$ is free of rank $2$.

  We first observe what happens after base change to $\tmf_1(3) = \mathrm{BP}\langle 2 \rangle$. In $\pi_* \tmf_1(3)$, the elements $\eta$ and $\nu$ are killed. We claim that the cell diagram of $\tmf_1(3) \otimes DL$ as a $\tmf_1(3)$-module is what is given in \Cref{fig:cell-dl-tmf13}.

  \begin{figure}[H]
    \centering
    \begin{tikzpicture}[scale=0.5]
      \begin{celldiagram}
        \two{-2}
        \draw (0, -1) edge [bend left=50, red, thick] (0, -4);
        \draw (0, -1) edge [bend left=50, green!50!black, thick] (0, -8);
        \n{-8} \n{-4} \n{-2} \n{-1}

        \foreach \y in {1,2,4,8} {
          \node [left] at (-0.8, -\y) {$-\y$};
        }
      \end{celldiagram}
      \node [left] at (1.5, -4.5) {$v_2$};
      \node [left] at (0.7, -2.5) {$v_1$};
      \node [left] at (0, -1.5) {$2$};
    \end{tikzpicture}
    \caption{Cell diagram of $DL$ over $\tmf_1(3)$}\label{fig:cell-dl-tmf13}
  \end{figure}
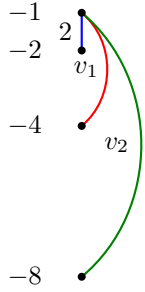

  To see this, recall that $v_n$ is a filtration one element detected by $Q_n$. We can expand
  \[
    Q_1 = \Sq^2 \Sq^1 + \Sq^1 \Sq^2,\quad Q_2 = \Sq^1 \Sq^2 \Sq^4 + \Sq^5 \Sq^2 + \Sq^6 \Sq^1 + \Sq^4 \Sq^2 \Sq^1.
  \]
  So $Q_1$ and $Q_2$ send the $(-4)$- and $(-8)$-cell to the top cell respectively. Since there are no other possible attaching maps for degree reasons ($v_n$ is only well-defined up to higher filtration), the cell diagram must be as described.

  In $\TMF_1(3)$, we further invert $v_2$, and the top and bottom cell cancel out. So we are left with two free cells in degree $-2$ and $-4$.
\end{remark}

\begin{remark}
  While the theorem is stated for $\TMF$, the same result holds for any elliptic cohomology theories in the sense of \Cref{section:equivariant}. Indeed, by \cite[Theorem 7.2]{affineness-chromatic}, taking global sections gives an equivalence of $\infty$-categories
  \[
    \Gamma\colon \QCoh(\M) \overset\sim\to \Mod_\TMF.
  \]
  Thus, as quasi-coherent sheaves, we have
  \[
    p_* \mathcal{O}_{\E[2]} \cong \mathcal{O}_\M \oplus \mathcal{O}_\M \oplus \mathcal{O}_\M \otimes DL.
  \]
  By the universality of $\M$, the same must hold for all other elliptic cohomology theories.
\end{remark}

\subsection{Outline of proof}
To prove the theorem, we begin by computing the homotopy groups of $\TMF^{\mathcal{B}C_2}$. As in the case of $\TMF$, there is a descent spectral sequence computing $\pi_* \TMF^{\mathcal{B} C_2}$ whose $E_2$ page is the coherent cohomology of the $2$-torsion points of the (classical) universal elliptic curve.

Upon computing the $E_2$ page for $\TMF^{\mathcal{B}C_2}$, one immediately observes that there are two copies of $\TMF$'s $E_2$ page as direct summands (as one would expect from the answer). We can identify these copies as follows:
\begin{enumerate}
  \item Applying $\Gamma$ to the map $p\colon \E[2] \to \M$ induces $1 \colon \Gamma(\M; \O_\M) \to \Gamma(\E[2]; \O_{\E[2]})$. This is split by the identity section.
  \item Since $\TMF$ is a genuine $C_2$-equivariant cohomology theory, we get a norm map $\TMF_{h C_2} = \TMF \otimes \RP^\infty_+ \to \TMF^{\mathcal{B}C_2}$. Restricting to the bottom cell of $\RP^\infty_+$ gives us a transfer map $\tr\colon \TMF \to \TMF^{\mathcal{B}C_2}$.
\end{enumerate}
We will explore these further in \Cref{subsection:unit-transfer}.

To simplify the calculation, we can quotient out these factors, and rephrase our original theorem as
\begin{theorem}
  There is an isomorphism
  \[
    \overline{\TMF^{\mathcal{B}C_2}} \equiv \TMF^{\mathcal{B}C_2} / (1, \tr) \simeq \TMF \otimes DL.
  \]
\end{theorem}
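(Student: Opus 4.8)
The plan is to prove the reformulated theorem by a two-stage computation: first pin down $\overline{\TMF^{\mathcal{B}C_2}}$ as a graded module over $\pi_*\TMF$ via the descent spectral sequence, and then promote this to an equivalence of $\TMF$-modules by exhibiting an explicit map from $\TMF \otimes DL$ and checking it induces the computed isomorphism on homotopy. For the first stage, I would compute the $E_2$-page of the descent spectral sequence for $\TMF^{\mathcal{B}C_2}$, namely the coherent cohomology $H^*(\E[2]; \O_{\E[2]})$ over $\M$, presumably using the standard presentation of the universal elliptic curve (Weierstrass form) with its $2$-torsion subscheme, and reducing mod $2$ so that $\E[2]$ becomes a (non-reduced) degree-$4$ finite flat cover of $\M$. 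After splitting off the two copies coming from the unit $1$ and the transfer $\tr$ (identified in \Cref{subsection:unit-transfer}), what remains is a rank-$2$ summand; I would then run the differentials, using sparseness and multiplicativity over the $\TMF$ differential pattern (the classic $d_5, d_7, d_9, \ldots, d_{23}$ pattern driven by $\Delta$-periodicity), together with comparison to the known $v_1$- and $v_2$-periodic behavior, to identify the associated graded of $\pi_*\overline{\TMF^{\mathcal{B}C_2}}$.

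The second stage is to build the map. The cell structure of $DL = S^{-8}\cup_\nu S^{-4}\cup_\eta S^{-2}\cup_2 S^{-1}$ gives, after smashing with $\TMF$, a filtration of $\TMF\otimes DL$ by cofiber sequences, and I want to produce a $\TMF$-module map $\TMF \otimes DL \to \overline{\TMF^{\mathcal{B}C_2}}$ cell by cell: first a class in $\pi_{-1}$ (the bottom cell $S^{-1}$), then extend over the $\cdot 2$-attaching map to the $S^{-2}$-cell provided the chosen class is $2$-torsion, then over $\eta$, then over $\nu$, each extension step being possible exactly when the relevant obstruction (the image of the previous class under multiplication by $2$, $\eta$, $\nu$ respectively) vanishes in $\pi_*\overline{\TMF^{\mathcal{B}C_2}}$. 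These vanishing statements should fall out of the homotopy computation in stage one — indeed the whole point is that $\overline{\TMF^{\mathcal{B}C_2}}$ "looks like" $\TMF\otimes DL$, so the attaching-map relations hold by construction. Once the map exists, I check it is an equivalence by showing it is an isomorphism on homotopy groups; since both sides are $\TMF$-modules and $\pi_*\TMF$ is reasonably well understood, comparing the two via the (already matched) associated gradeds, and ruling out a possible failure of the map to be "split" on associated graded pieces, should suffice. Alternatively — and perhaps more cleanly — I would base change to the faithfully flat cover $\TMF_1(3) = v_2^{-1}\mathrm{BP}\langle 2\rangle$, where the remark shows $\TMF_1(3)\otimes DL$ is free of rank $2$ on classes in degrees $-2$ and $-4$; if the computation shows $\TMF_1(3)\otimes\overline{\TMF^{\mathcal{B}C_2}}$ is also free of rank $2$ on such classes, then the map is an equivalence after a faithfully flat base change and hence an equivalence.

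I expect the main obstacle to be the descent spectral sequence differentials and the hidden extension problems in stage one: the $E_2$-page will have the familiar $\Z[\eta,\nu]$-torsion pattern together with polynomial generators, and determining exactly which differentials fire (and which $2$-, $\eta$-, $\nu$-multiplications are hidden in the associated graded) is the delicate part. Here I would lean heavily on multiplicativity — $\overline{\TMF^{\mathcal{B}C_2}}$ is a module over the descent spectral sequence for $\TMF$ — and on naturality with respect to the covers $\M \to \M$ coming from level structure, as well as the $\TMF_1(3)$-base-change sanity check, to force the pattern. A secondary subtlety is ensuring the four attaching maps in $DL$ are the \emph{correct} ones (e.g. that it really is $\nu$ and not $0$ gluing the bottom two cells), which again is read off from the computed $\eta$- and $\nu$-action on $\pi_*\overline{\TMF^{\mathcal{B}C_2}}$; getting the attaching maps wrong would give a module with the same associated graded but a different $\TMF$-module structure, so this must be checked carefully rather than assumed.
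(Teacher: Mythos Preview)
Your high-level two-stage plan (compute the descent spectral sequence for $\overline{\TMF^{\mathcal{B}C_2}}$, then build a map $\TMF\otimes DL\to\overline{\TMF^{\mathcal{B}C_2}}$ by obstruction theory and check it after base change to $\TMF_1(3)$) is exactly the paper's strategy. Two points need correction.

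First, a small but consequential slip: the bottom cell of $DL$ is $S^{-8}$, not $S^{-1}$. To map $\TMF\otimes DL$ out, you begin by choosing a class in $\pi_{-8}\overline{\TMF^{\mathcal{B}C_2}}$ and then successively kill the obstructions $\nu z_{-8}$, $\langle\eta,\nu,z_{-8}\rangle$, $\langle 2,\eta,\nu,z_{-8}\rangle$, which lie in $\pi_{-5},\pi_{-3},\pi_{-2}$; these groups vanish, so the map extends for free. The subtlety is not in existence but in choosing $z_{-8}$ (and the lift $z_{-4}$) so that the resulting map is an isomorphism after base change to $\TMF_1(3)$; the paper pins down an explicit $z_{-8}$ by a cocycle computation.

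Second, and more seriously, the tools you propose for the differentials---multiplicativity over the $\TMF$ spectral sequence, naturality under level covers, and $v_1$/$v_2$-periodicity---are not enough. The $E_2$ page is $\Delta$-periodic while the $E_\infty$ page is only $\Delta^8$-periodic, so the $\TMF$-module structure alone cannot distinguish which of the $\Delta$-translates of a given class survives. The paper breaks this symmetry with genuinely external input: the norm map $\TMF_{hC_2}\to\TMF^{\mathcal{B}C_2}$ produces a specific permanent class $t$ in bidegree $(1,1)$ (via stunted projective spaces), a synthetic-spectra argument forces $\sqrt{\Delta}\,t$ to support a $d_3$, and a separate analysis of the Adams--Novikov spectral sequence of $\tmf\otimes L$ is needed to show that $\Delta^2 t$ is permanent. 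Without something playing the role of the norm map you have no anchor for the differential pattern, and the computation cannot get off the ground.
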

This is proven by computing the homotopy groups of $\overline{\TMF^{\mathcal{B} C_2}}$ via its descent spectral sequence, which is now reasonably sparse, followed by an obstruction theory argument. This implies the original theorem via the observation
\begin{lemma}
  Any cofiber sequence of $\TMF$-modules
  \[
    \TMF \oplus \TMF \to {?} \to \TMF \otimes DL
  \]
  splits.
\end{lemma}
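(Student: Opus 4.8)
The claim is that any cofiber sequence of $\TMF$-modules $\TMF \oplus \TMF \to {?} \to \TMF \otimes DL$ splits. A cofiber sequence of this shape is classified by an element of $\pi_{-1}\Hom_{\TMF}(\TMF \otimes DL, \TMF \oplus \TMF)$, and the plan is to show this group vanishes, or at least that the relevant attaching class is null. First I would rewrite $\Hom_{\TMF}(\TMF \otimes DL, \TMF \oplus \TMF) \simeq (\TMF \otimes DDL)^{\oplus 2}$, where $DDL \simeq L$ is the Spanier--Whitehead dual of $DL$; concretely $L = S^1 \cup_2 S^2 \cup_\eta S^4 \cup_\nu S^8$ (the dual cell structure, with cells in degrees $1,2,4,8$). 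So the obstruction lives in $\bigl(\pi_{-1}(\TMF \otimes L)\bigr)^{\oplus 2} = \bigl(\TMF_{-1}(L)\bigr)^{\oplus 2}$, and it suffices to prove $\pi_{-1}(\TMF \otimes L) = 0$.

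To compute $\pi_{-1}(\TMF \otimes L)$ I would use the cell filtration of $L$: there is a finite filtration with associated graded $\Sigma^1 \TMF$, $\Sigma^2 \TMF$, $\Sigma^4 \TMF$, $\Sigma^8 \TMF$, giving an Atiyah--Hirzebruch-type spectral sequence converging to $\TMF_*(L)$ whose input is $\pi_{*}\TMF$ placed in stems $1,2,4,8$. In the $(-1)$-stem the potential contributions come from $\pi_{-2}\TMF$, $\pi_{-3}\TMF$, $\pi_{-5}\TMF$, and $\pi_{-9}\TMF$ (the classes on the cells in degrees $1,2,4,8$ contributing to total degree $-1$). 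Reading off the $2$-complete homotopy of $\TMF$ in these negative stems — where the only classes near the bottom are the ones dual to $\eta$, $\nu$, $\epsilon$, $\kappabar$ and their multiples — one finds $\pi_{-2}\TMF = 0$, $\pi_{-3}\TMF = 0$ (recall $\pi_{-3}\TMF$ would have to come from the $\nu$-related or $\eta^2$-related classes, which sit in different degrees), $\pi_{-5}\TMF = 0$, and $\pi_{-9}\TMF$, while potentially nonzero, maps out via the $\nu$ attaching map into the group carrying the $d$-differential; I would check that the relevant differentials and the module structure kill whatever survives. In fact the cleaner route is to observe that $L = S^0 \otimes L$ and that $L \oplus S^0 = L_0$ from \cite{tmf-tate}, so $\TMF \otimes L$ is a summand of $\TMF \otimes L_0 = \TMF^{tC_2}[\text{shift}]$ (or the relevant Tate-type construction there), whose homotopy is already recorded; then $\pi_{-1}$ is read off directly.

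The main obstacle is pinning down $\pi_{-1}(\TMF\otimes L)$ unambiguously: the Atiyah--Hirzebruch spectral sequence has several a priori contributing cells, and one must verify that the differentials coming from the attaching maps $2$, $\eta$, $\nu$ of $L$ (these are precisely the $d_1$-type differentials detected by multiplication by $2$, $\eta$, $\nu$ in $\pi_*\TMF$) eliminate all classes in total degree $-1$. Concretely, the class on the $8$-cell in $\pi_{-9}\TMF$-degree must be shown to either not survive to $E_\infty$ or to be hit; since $\nu$ acts injectively on the relevant portion of $\pi_{-9}\TMF \to \pi_{-6}\TMF$ (or the relevant source is zero outright), this works out. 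Once $\pi_{-1}(\TMF \otimes L) = 0$ is established, the extension class of the cofiber sequence vanishes, the sequence splits, and we are done. I would also remark that this is consistent with \Cref{thm:main}: the splitting is exactly what lets us pass from the reduced statement $\overline{\TMF^{\mathcal{B}C_2}} \simeq \TMF \otimes DL$ back to the full decomposition.
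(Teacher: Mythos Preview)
Your approach is essentially the paper's: reduce to showing $\pi_{-1}(\TMF \otimes L) = 0$ via the cell filtration of $L$, so that the only possible contributions come from $\pi_{-2}\TMF$, $\pi_{-3}\TMF$, $\pi_{-5}\TMF$, and $\pi_{-9}\TMF$. The one place you overcomplicate things is your treatment of $\pi_{-9}\TMF$: you call it ``potentially nonzero'' and start discussing differentials and $\nu$-actions, but in fact $\pi_{-9}\TMF = 0$ at the prime $2$ (just as the other three groups are), so the argument finishes immediately with no spectral sequence analysis needed. The detour through $L_0$ and the Tate construction is unnecessary for the same reason.
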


\begin{proof}
  We have to show that
  \[
    [\TMF \otimes DL, \Sigma \TMF \oplus \Sigma \TMF]_\TMF = 0.
  \]
  This is equivalent to showing that $\pi_{-1} \TMF \otimes L = 0$. This follows immediately by running the long exact sequences building $\TMF \otimes L$ from its cells, since $\pi_{-2} \TMF = \pi_{-3} \TMF = \pi_{-5} \TMF = \pi_{-9} \TMF = 0$.
\end{proof}

\begin{remark}
  At first I only computed the homotopy groups of $\TMF^{\mathcal{B}C_2}$. The above identification was discovered when I, for somewhat independent reasons, looked into the homotopy groups of $\TMF \otimes L$, and observed that they looked almost the same as that of $\TMF^{\mathcal{B}C_2}$. It is, however, $\TMF \otimes DL$ that shows up above; there is a cofiber sequence
  \[
    \TMF \otimes L \to \TMF \otimes DL \to \KO,
  \]
  which induces a \emph{short} exact sequence in homotopy groups. Thus, the homotopy groups of $\TMF \otimes DL$ and $\TMF \otimes L$ differ by a single copy of $\pi_* \KO$, which is hard to notice after inverting $\Delta$. On the other hand, the corresponding classes have different Adams--Novikov filtrations, which makes them easy to distinguish in practice.
\end{remark}

\begin{remark}
  As part of the proof, we compute the homotopy groups $\pi_* \TMF^{\mathcal{B}C_2}$. To describe the group explicitly, under the decomposition, it remains to specify $\pi_* \TMF \otimes DL = \pi_* \overline{\TMF^{\mathcal{B} C_2}}$. This group is given by the direct sum of the $\ko$-like parts, namely
  \[
    \bigoplus_{k \in \Z} \pi_*\Sigma^{8 + 24k} \ko \oplus \pi_*\Sigma^{16 + 24k} \ko,
  \]
  and what is depicted in \Cref{fig:anss-e9-page,fig:anss-e-infty-page}. In these figures, each dot is a copy of $\Z/2$, and the greyed out classes are ones that do not survive the spectral sequence (that is, the homotopy groups are given by the black dots). This part is $192$-periodic via $\Delta^8$-multiplication.
\end{remark}

\subsection{Overview}
In \Cref{section:equivariant} we provide relevant background on equivariant elliptic cohomology. Building upon the results in \cite{equivariant-tmf}, we construct $C_n$-equivariant elliptic cohomology as a functor $\Sp_{C_n}^{\mathrm{op}} \to \QCoh(\E[n])$, which gives us the transfer map $\tr$. We then provide an explicit description of the descent spectral sequence for quasi-coherent sheaves over $\M$.

In \Cref{section:e2}, we compute the Hopf algebroid presenting $\E[2]$ and subsequently the $E_2$ page of the descent spectral sequence for $\overline{\TMF^{\mathcal{B}C_2}}$ using the $2$-Bockstein spectral sequence. Unfortunately, the coaction involves division in a fairly complex ring, and cocycle manipulations throughout the paper are performed with the aid of \texttt{sage}.

In \Cref{section:differentials}, we compute the differentials in the descent spectral sequence. The key input here is the fact that there is a norm map $\TMF_{h C_2} \to \TMF^{\mathcal{B}C_2}$ whose composite all the way down to $\TMF^{h C_2}$ is well-understood in terms of stunted projective spaces. This provides us with a few permanent classes, which combined with the $\TMF$-module structure lets us compute all the differentials. Our calculations will make heavy use of synthetic spectra \cite{synthetic}, whose relation to the Adams spectral sequence is laid out in \cite[Section 9]{manifold-synthetic}.

In \Cref{section:ident}, we conclude the story by constructing a map $\TMF \otimes DL \to \overline{\TMF^{\mathcal{B}C_2}}$ via obstruction theory and showing that it is an isomorphism.

In \Cref{section:connective}, we use entirely different methods to study properties of a hypothetical connective version of $C_2$-equivariant $\TMF$, which we call $\tmf_{C_2}$ (we put the $C_2$ subscript since we do not purport to describe a global equivariant $\tmf$). We shall show that under reasonable assumptions, $\Delta^{-1} (\tmf_{C_2})^{\mathcal{B}C_2}$ is dual to $\TMF^{\mathcal{B} C_2}$ in the category of $\TMF$-modules.

In \Cref{section:stunted}, we prove some basic properties of the norm map that we use in \Cref{section:differentials}.

Finally, in \Cref{section:sage}, we include the \texttt{sage} code we used for our cocycle manipulations.

\subsection{Conventions}
\begin{itemize}
  \item All categories are $\infty$-categories.
  \item We use $\otimes$ to denote the smash product of spectra.
  \item Unless otherwise specified, we work in the category of $\TMF$-modules, and all maps are $\TMF$-module maps. Further, we implicitly complete at the prime $2$.
  \item Our charts follow the same conventions as, say, \cite{tilman-tmf}. In each bidegree, a solid round dot denotes a copy of $\Z/2$. More generally, $n$ concentric circles denotes a copy of $\Z/2^n$. A white square denotes $\Z$. A line of slope $1$ denotes $h_1$ multiplication and a line of slope $\frac{1}{3}$ denotes $h_2$ multiplication. An arrow with a negative slope denotes a differential. Dashed lines denote hidden extensions. In particular, a dashed vertical line is a hidden $2$-extension. We use Adams grading, so that the horizontal axis is $t - s$ and vertical axis is $s$.
  \item All synthetic spectra will be based on $BP$. We choose our grading conventions so that $\pi_{t - s, s}(\nu X/\tau) = \Ext_{E_* E}^{s, t}(E_*, E_* X)$, i.e.\ $\pi_{x, y}$ shows up at coordinates $(t - s, s) = (x, y)$ in an Adams chart. Under these grading conventions, $\tau$ has bidegree $(0, -1)$. This is not the grading convention used by \cite{synthetic} and \cite{manifold-synthetic}; $\mathbb{S}^{a, b}$ in their grading is $S^{a, b - a}$ in ours.
  \item To avoid confusing the synthetic analogue functor $\nu$ with the element $\nu$ in the homotopy groups of spheres, we always write the former as $\nu(X)$ with the brackets.
  \item If $R$ is a (discrete) ring and $\alpha \in R$, we write $O(\alpha)$ for an unspecified element that is $\alpha$-divisible. For example, if $f = g + O(2)$, this means $f$ and $g$ agree mod $2$.
\end{itemize}

\ifspringer
  \begin{acknowledgements}
\else
  \subsection{Acknowledgements}
\fi
I would like to thank Robert Burklund for helpful discussions on various homotopy-theoretic calculations, especially regarding the application of synthetic spectra in \Cref{section:differentials}. Further, I benefited from many helpful discussions with Sanath Devalapurkar, Jeremy Hahn, and Lennart Meier regarding equivariant $\TMF$ and equivariant homotopy theory in general. Robert, Lennart and an anonymous referee also provided many helpful comments on an earlier draft. Finally, the paper would not have been possible without the support of my advisor, Michael Hopkins, who suggested the problem and provided useful guidance and suggestions throughout.

The author was partially supported by NSF grants DMS-1803766 and DMS-1810917 through his advisor.

\ifspringer
  \end{acknowledgements}
\fi

\section{Equivariant elliptic cohomology}\label{section:equivariant}
\subsection{Elliptic cohomology}
The starting point of equivariant elliptic cohomology is the notion of an oriented (spectral) elliptic curve, which was introduced by Lurie in \cite[Section 2]{elliptic-i} and \cite[Section 7.2]{elliptic-ii}. We should think of this as a spectral version of an elliptic curve, accompanied with a complex orientation of the associated cohomology theory.

Let $X$ be a non-connective spectral Deligne--Mumford stack, and $p\colon E \to X$ an oriented elliptic curve. In \cite[Construction 5.4, Proposition 8.2]{equivariant-tmf}, Gepner--Meier constructs an $S^1$-equivariant elliptic cohomology functor
\[
  \Ell_{S^1}\colon \Sp_{S^1}^{\mathrm{op}} \to \QCoh(E),
\]
which is a limit-preserving symmetric monoidal functor satisfying
\[
  \Ell_{S^1} ((S^1 / C_m)_+) = \mathcal{O}_{E[m]}.
\]

We begin by extending this to a functor on $\Sp_{C_n}$.

\begin{lemma}
  There is an elliptic cohomology functor
  \[
    \Ell_{C_n}^E\colon \Sp_{C_n}^{\mathrm{op}} \to \QCoh(E[n])
  \]
  such that for any $m \mid n$, we have a natural identification
  \[
    \Ell_{C_n}^E(({C_n} / C_m)_+) = \O_{E[m]},
  \]
  where we identify $\O_{E[m]}$ with its direct image in the $\QCoh(E[n])$ under the inclusion.

  Moreover, if $f: X' \to X$ is a morphism almost of finite presentation, then
  \[
    f^* \Ell_{C_n}^E(X) = \Ell_{C_n}^{f^* E}(X) \in \QCoh(f^* E[n]).
  \]
\end{lemma}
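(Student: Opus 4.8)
The plan is to build $\Ell^E_{C_n}$ by transporting the Gepner--Meier functor $\Ell_{S^1} := \Ell_{S^1}^E$ along the induction functor $\Ind = \Ind_{C_n}^{S^1}\colon \Sp_{C_n} \to \Sp_{S^1}$, the left adjoint of the strong symmetric monoidal restriction $\operatorname{Res}^{S^1}_{C_n}$. Being a left adjoint of a strong symmetric monoidal functor, $\Ind$ is oplax symmetric monoidal; hence $\Ind(\mathbb{S}_{C_n}) = \Ind((C_n/C_n)_+) = (S^1/C_n)_+$ carries a canonical coalgebra structure (comultiplication dual to the diagonal of $S^1/C_n$), and $\Ind(Y)$ is naturally a comodule over it for every $Y \in \Sp_{C_n}$. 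Applying the contravariant strong symmetric monoidal $\Ell_{S^1}$ then makes $\mathcal{A}_n := \Ell_{S^1}((S^1/C_n)_+)$ a commutative algebra in $\QCoh(E)$ and $\Ell_{S^1}(\Ind(Y))$ a natural $\mathcal{A}_n$-module. Since $E[n] = \ker([n]\colon E\to E)$ is the pullback of the identity section along $[n]$, the inclusion $i_n\colon E[n]\hookrightarrow E$ is a closed immersion, in particular affine, so $i_{n*}$ is fully faithful with essential image $\Mod_{i_{n*}\O_{E[n]}}(\QCoh(E))$; that is, $\QCoh(E[n]) \simeq \Mod_{i_{n*}\O_{E[n]}}(\QCoh(E))$. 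Promoting the Gepner--Meier identity $\Ell_{S^1}((S^1/C_n)_+) = \O_{E[n]}$ to one of commutative algebras --- the algebra structure on the left being dual to the diagonal, which matches the ring-of-functions structure on the right --- gives $\mathcal{A}_n \simeq i_{n*}\O_{E[n]}$. We may therefore define
\[
  \Ell^E_{C_n} \colon \Sp_{C_n}^{\mathrm{op}} \xrightarrow{\ \Ind^{\mathrm{op}}\ } \Sp_{S^1}^{\mathrm{op}} \xrightarrow{\ \Ell_{S^1}\ } \QCoh(E) \simeq \QCoh(E[n]),
\]
the last step being the corestriction along the equivalence above, which is legitimate precisely because of the $\mathcal{A}_n$-module structure just produced.

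The asserted properties are then formal. For $m \mid n$ one has $\Ind((C_n/C_m)_+) = (S^1 \times_{C_n} C_n/C_m)_+ = (S^1/C_m)_+$, so $\Ell^E_{C_n}((C_n/C_m)_+) = \O_{E[m]}$; and unwinding the oplax comodule structure, its $\mathcal{A}_n$-module structure is induced by $(\pi, \mathrm{id})\colon S^1/C_m \to (S^1/C_n)\times(S^1/C_m)$ with $\pi$ the canonical quotient, which $\Ell_{S^1}$ sends to restriction of functions along $E[m]\hookrightarrow E[n]$ --- so $\O_{E[m]}$ appears as the structure sheaf of the closed subscheme $E[m]\subseteq E[n]$, as required. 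The functor $\Ell^E_{C_n}$ is limit-preserving (hence exact), because $\Ind$ preserves colimits, $\Ell_{S^1}$ sends colimits to limits, and $\QCoh(E[n])\hookrightarrow\QCoh(E)$ is conservative and preserves limits. Finally, the oplax structure on $\Ind$ makes $\Ell^E_{C_n}$ lax symmetric monoidal with $\Ell^E_{C_n}(\mathbb{S}_{C_n}) = \O_{E[n]}$ the unit, which supplies all the multiplicative structure we need; it is \emph{not} strong monoidal (already $(C_4/C_2)_+^{\otimes 2} \simeq (C_4/C_2)_+^{\oplus 2}$, whereas $\O_{E[2]}\otimes_{\O_{E[4]}}\O_{E[2]}$ is not a sum of two copies of $\O_{E[2]}$), so only the lax structure is claimed.

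For the base change clause, write $g\colon f^*E = E\times_X X'\to E$ for the projection induced by $f$. Then $f^*E[n] = (f^*E)[n]$, and because $\O_{E[n]}$ is flat --- indeed finite flat --- over $\O_X$, we have $g^*\O_{E[n]} \simeq \O_{E[n]}\otimes_{\O_X}\O_{X'} \simeq \O_{f^*E[n]}$; thus $g^*$ identifies $\mathcal{A}_n$ with its analogue for $f^*E$ and carries $\QCoh(E[n])$ into $\QCoh(f^*E[n])$ compatibly with the module structures. Combining this with the naturality of $\Ell_{S^1}$ in the oriented elliptic curve (part of the Gepner--Meier construction, applicable since $f$ is almost of finite presentation) and the fact that $\Ind$ is independent of $E$, we obtain $f^*\Ell^E_{C_n}(Y) = f^*\Ell^E_{S^1}(\Ind Y) = \Ell^{f^*E}_{S^1}(\Ind Y) = \Ell^{f^*E}_{C_n}(Y)$.

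The one step requiring real care is promoting $\Ell_{S^1}((S^1/C_n)_+) = \O_{E[n]}$ to an identification of commutative algebras, and at the same time identifying the $\mathcal{A}_n$-module $\Ell_{S^1}((S^1/C_m)_+)$ with the structure sheaf of $E[m]\subseteq E[n]$. I expect both to fall directly out of (the proof of) the Gepner--Meier construction, whose orbit-level computations are naturally performed at this refined level; the remaining ingredients --- the bookkeeping with the oplax structure of $\Ind$ and the standard facts about closed immersions --- are routine.
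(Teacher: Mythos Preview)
Your proposal is correct and follows essentially the same route as the paper: compose $\Ell_{S^1}$ with $\Ind_{C_n}^{S^1}$, use that $\Ind$ is oplax monoidal (as left adjoint to a symmetric monoidal functor) to obtain a lax monoidal composite, and then factor through $\Mod_{\O_{E[n]}}(\QCoh(E)) \simeq \QCoh(E[n])$ via the coalgebra/comodule structure on the unit. The paper's proof is terser---it just says ``$S^0$ is a coalgebra and every object is an $S^0$-comodule''---and relegates the algebra-structure compatibility you flag at the end to a footnote, but the content is the same; your extra remarks on limit-preservation and the failure of strong monoidality appear in the paper as a separate Corollary and Remark rather than inside the proof.
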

If there is no risk of confusion, we omit the superscript ${}^E$.

\begin{proof}
  Let $\Ind_{C_n}^{S^1}\colon \Sp_{C_n} \to \Sp_{S^1}$ be the induction map, left adjoint to the restriction map. Then $\Ind_{C_n}^{S^1}((C_n / C_m)_+) = (S^1 / C_m)_+$.

  Since the restriction map is symmetric monoidal under the smash product, $\Ind_{C_n}^{S^1}$ is oplax monoidal. Thus, the composite
  \[
    \begin{tikzcd}
      \Ell_{C_n}^*\colon \Sp_{C_n}^{\mathrm{op}} \ar[r, "\Ind_{C_n}^{S^1}"] & \Sp_{S^1}^{\mathrm{op}} \ar[r, "\Ell_{S^1}"] & \QCoh(E)
    \end{tikzcd},
  \]
  is lax monoidal. Since $S^0$ is a coalgebra in $\Sp_{C_n}$ and every object in $\Sp_{C_n}$ is naturally an $S^0$-comodule, it follows that this functor canonically factors through the category of $\Ell_{C_n}^*(S^0) = \O_{E[n]}$-modules in $\QCoh(E)$, which is equivalent to $\QCoh(E[n])$.\footnote{One has to check that the ring structure on $\O_{E[n]} = \Ell_{C_n}^*(S^0)$ that arises this way is the standard ring structure, which follows from the construction of $\Ell_{S^1}$.}

  Functoriality in $X$ follows from functoriality in the $S^1$ case as in \cite[Proposition 5.6]{equivariant-tmf}.
\end{proof}

\begin{remark}
  Unlike the case of $S^1$, the map $\Ell_{C_n}^E \colon \Sp_{C_n}^{\mathrm{op}} \to \QCoh(E[n])$ is in general not symmetric monoidal.
\end{remark}

\begin{corollary}
  There is a $C_n$-spectrum $R$ such that for any $C_n$-spectrum $Z$, we have
  \[
    (R^Z)^{C_n} = \Gamma(E[n], \Ell_{C_n}(Z)).
  \]
\end{corollary}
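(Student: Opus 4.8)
The plan is to obtain $R$ from a representability theorem applied to the functor
$F \colonequals \Gamma(E[n], \Ell_{C_n}(-)) \colon \Sp_{C_n}^{\mathrm{op}} \to \Sp$. The first step is to check that $F$ is limit-preserving. From the construction in the previous lemma, $\Ell_{C_n}$ is the composite of $\Ind_{C_n}^{S^1}\colon \Sp_{C_n} \to \Sp_{S^1}$ (a left adjoint, hence colimit-preserving, hence limit-preserving as a functor of opposite categories), the limit-preserving functor $\Ell_{S^1}$, and the canonical factorization through $\O_{E[n]}$-modules; since the equivalence $\QCoh(E[n]) \simeq \Mod_{\O_{E[n]}}(\QCoh(E))$ used there computes limits on underlying quasi-coherent sheaves, the factored functor $\Ell_{C_n}$ remains limit-preserving. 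Composing with $\Gamma(E[n], -) = \operatorname{map}_{\QCoh(E[n])}(\O_{E[n]}, -)$, which preserves limits being a mapping spectrum, shows that $F$ sends colimits in $\Sp_{C_n}$ to limits in $\Sp$.

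The second step is to invoke representability. Since $\Sp_{C_n}$ is presentable and stable, any limit-preserving functor $\Sp_{C_n}^{\mathrm{op}} \to \Sp$ is of the form $\operatorname{map}_{\Sp_{C_n}}(-, R)$ for a $C_n$-spectrum $R$, determined up to canonical equivalence: one applies the adjoint functor theorem for presentable $\infty$-categories to $\Omega^\infty F \colon \Sp_{C_n}^{\mathrm{op}} \to \mathcal{S}$ to represent the underlying space-valued functor, and then observes that $F$ deloops $\Omega^\infty F$ because $F$ carries the pushout square exhibiting $\Sigma Z$ to a pullback square, so that $F(\Sigma Z) \simeq \Omega F(Z)$. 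This yields a natural equivalence $\operatorname{map}_{\Sp_{C_n}}(Z, R) \simeq \Gamma(E[n], \Ell_{C_n}(Z))$.

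The last step is to rewrite the left-hand side. Taking $R^Z$ to be the internal function $C_n$-spectrum and using that the genuine fixed points functor is $(-)^{C_n} \simeq \operatorname{map}_{\Sp_{C_n}}(\mathbb{S}_{C_n}, -)$, the adjunction between $\otimes$ and the internal function spectrum gives $(R^Z)^{C_n} \simeq \operatorname{map}_{\Sp_{C_n}}(\mathbb{S}_{C_n}, R^Z) \simeq \operatorname{map}_{\Sp_{C_n}}(\mathbb{S}_{C_n} \otimes Z, R) \simeq \operatorname{map}_{\Sp_{C_n}}(Z, R)$, so $(R^Z)^{C_n} \simeq \Gamma(E[n], \Ell_{C_n}(Z))$, naturally in $Z$. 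No step here is genuinely difficult; the only thing demanding care is the variance bookkeeping --- keeping ``limit-preserving'' consistent across the several contravariant functors involved --- together with ensuring the representability theorem is applied in its spectrum-valued rather than its space-valued form. It is worth noting that specializing $Z = (C_n/C_m)_+$ then recovers $(R^Z)^{C_n} = \Gamma(E[m], \O_{E[m]})$ via the lemma's identification of $\Ell_{C_n}((C_n/C_m)_+)$, so $R$ is indeed a model for equivariant elliptic cohomology as a genuine $C_n$-spectrum.
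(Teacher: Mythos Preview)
Your proof is correct and follows essentially the same approach as the paper: verify that $Z \mapsto \Gamma(E[n], \Ell_{C_n}(Z))$ is limit-preserving by decomposing it into the same pieces ($\Ind_{C_n}^{S^1}$, $\Ell_{S^1}$, the factorization through $\O_{E[n]}$-modules, and $\Gamma$), then invoke representability. The only cosmetic difference is that the paper cites the spectral Yoneda lemma \cite[Proposition 4.8.2.18]{ha} directly to identify $\Sp_{C_n}$ with limit-preserving functors $\Sp_{C_n}^{\mathrm{op}} \to \Sp$, whereas you argue more explicitly via the space-valued adjoint functor theorem plus delooping; this is a minor packaging choice and your final rewriting of $\operatorname{map}_{\Sp_{C_n}}(Z,R)$ as $(R^Z)^{C_n}$ makes explicit a step the paper leaves implicit.
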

We call this $R$ the $C_n$-spectrum associated to the elliptic curve $E \to X$. For example, when $E \to X$ is the universal elliptic curve, then $R = \TMF$.

This follows the argument of \cite[Construction 8.3]{equivariant-tmf}.
\begin{proof}
  By the spectral Yoneda's lemma \cite[Proposition 4.8.2.18]{ha}, the Yoneda embedding $\Sp_{C_n} \to \operatorname{Fun}^R(\Sp_{C_n}^{\mathrm{op}}, \Sp)$ is an equivalence. Since $\Sp_{C_n}$ is presentable, by \cite[Corollary 5.5.2.9(1)]{htt}, a functor $\Sp_{C_n}^{\mathrm{op}} \to \Sp$ is a right adjoint iff its opposite preserves colimits, i.e.\ it preserves limits. Thus, we have to show that the functor $Z \mapsto \Gamma(E[n], \Ell_{C_n}(Z))$ preserves limits as a functor $\Sp_{C_n}^{\mathrm{op}} \to \Sp$.

  \begin{itemize}
    \item By construction $\Ell_{S^1}: \Sp_{S^1}^{\mathrm{op}} \to \QCoh(E)$ preserves limits.
    \item Since $\Ind_{C_n}^{S^1} \colon \Sp_{C_n} \to \Sp_{S^1}$ is a left adjoint, it preserves colimits, hence its opposite preserves limits. So $\Ell_{C_n}^*\colon \Sp_{C_n}^{\mathrm{op}} \to \QCoh(E)$ preserves limits.
    \item Since $\QCoh(E[n])$ is the category of $\O_{E[n]}$-modules in $\QCoh(E)$, the forgetful functor $\QCoh(E[n]) \to \QCoh(E)$ creates limits. So $\Ell_{C_n}\colon \Sp_{C_n}^{\mathrm{op}} \to \QCoh(E[n])$ preserves limits.
    \item Finally, $\Gamma \colon \QCoh(E[n]) \to \Sp$ is a right adjoint and preserves limits.
  \end{itemize}
\end{proof}

We are interested in these global sections, which we can write as
\[
  \Gamma(E[n]; \Ell_{C_n}(Z)) = \Gamma(X; p_* \Ell_{C_n}(Z)).
\]
By computing $p_* \Ell_{C_n}(S^0)$, this lets us understand the global sections in terms of quasi-coherent sheaves on $X$ itself. This pushforward is fairly nice by virtue of
\begin{lemma}
  The map $[n]\colon E \to E$ is flat, hence so is $p\colon E[n] \to X$.
\end{lemma}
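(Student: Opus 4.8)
The plan is to reduce the flatness of $p\colon E[n] \to X$ to the flatness of the multiplication-by-$n$ isogeny $[n]\colon E \to E$, and then to prove the latter by a standard fibral-flatness argument. First I would observe that $E[n]$ fits into the pullback square
\[
  \begin{tikzcd}
    E[n] \ar[r] \ar[d] & E \ar[d, "{[n]}"] \\
    X \ar[r, "e"] & E
  \end{tikzcd}
\]
where $e\colon X \to E$ is the identity section. Since flatness is stable under base change, it suffices to show $[n]\colon E \to E$ is flat. (One should be slightly careful that we are working with spectral Deligne--Mumford stacks, so ``flat'' means flat in the sense of \cite{sag}; but the pullback-stability and fibral criteria we need all have spectral analogues, and $X$, $E$ are locally Noetherian here so there are no finiteness subtleties.)

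Next I would reduce to the classical situation. The map $[n]$ on the spectral elliptic curve $E \to X$ is, on underlying classical stacks, the usual multiplication-by-$n$ map on the underlying classical elliptic curve; and a map of spectral Deligne--Mumford stacks that is locally almost of finite presentation is flat iff the underlying classical map is flat and the higher homotopy sheaves are appropriately flat — but for an oriented elliptic curve $E$ is flat (even smooth) over $X$, so this bookkeeping reduces the claim to: multiplication by $n$ on a classical elliptic curve $\mathcal{E} \to S$ is flat. This is a well-known fact, but I would prove it to keep the paper self-contained. The cleanest route is the fibral flatness criterion (\cite[Tag 039A]{stacks-project}, or EGA IV 11.3.10): since $[n]$ is a finite-type morphism between Noetherian schemes (or algebraic stacks, working smooth-locally) with $\mathcal{E}$ flat over the base $S$, it is enough to check that for every geometric point $\Spec k \to S$, the induced map $[n]_k\colon \mathcal{E}_k \to \mathcal{E}_k$ on the fiber is flat. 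The fiber $\mathcal{E}_k$ is an elliptic curve over a field, hence a regular one-dimensional scheme, and $[n]_k$ is a finite surjective morphism (it is an isogeny of degree $n^2$, or — in characteristic dividing $n$ — still finite and surjective, just not étale). A finite surjective morphism from a Cohen--Macaulay scheme to a regular scheme of the same dimension is automatically flat (``miracle flatness'', \cite[Tag 00R4]{stacks-project}), so $[n]_k$ is flat, and we conclude.

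Finally, assembling: flatness of $[n]\colon E \to E$ over $X$ gives, by base change along the identity section, flatness of $p\colon E[n] \to X$, which is the assertion. The main obstacle, such as it is, is purely expository: making sure the passage between the spectral and classical worlds is clean, i.e.\ that ``flat'' for the spectral map $[n]$ can be checked on underlying classical stacks given that $E$ is flat over $X$ with the relevant finiteness hypotheses. Once that is granted, the classical statement is the miracle-flatness argument above, which is entirely routine. I expect to state the spectral reduction in a sentence citing \cite{sag} and devote the bulk of the (short) proof to the fibral criterion plus miracle flatness.
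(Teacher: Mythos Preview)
Your approach is essentially the same as the paper's: reduce $p\colon E[n]\to X$ to $[n]\colon E\to E$ via the pullback square along the identity section, then split the spectral flatness of $[n]$ into the classical statement plus a condition on higher homotopy sheaves. The two proofs differ only in how each piece is handled.

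For the classical statement, the paper simply cites \cite[Theorem 2.3.1]{katz-mazur}, whereas you give the miracle-flatness argument directly. Your route is more self-contained and perfectly correct; the paper's is shorter. Either is fine.

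For the higher-homotopy condition you are vaguer than you should be. You say ``$E$ is flat (even smooth) over $X$, so this bookkeeping reduces\ldots'', but that is not quite the point. The precise condition to check is that $[n]^* \pi_t \O_E \cong \pi_t \O_E$ as sheaves on the underlying classical stack, and the paper dispatches this in one line: since $\pi_t \O_E = p^* \pi_t \O_X$ (this is what orientedness buys you) and $p \circ [n] = p$, the pullback identity is automatic. I would recommend you replace the hand-wave with exactly this observation; it is no longer than what you wrote and actually closes the argument.
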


\begin{proof}
  To check that $[n]\colon E \to E$ is flat, observe that by \cite[Theorem 2.3.1]{katz-mazur}, the map on underlying (classical) stacks is flat. The condition that $[n]^* \pi_t \O_E = \pi_t \O_E$ as sheaves on the underlying stack is automatic, since $\pi_t \O_E = p^* \pi_t \O_X$ and $p[n] = p$.

  For the second part, we have a pullback square
  \[
    \begin{tikzcd}
      E[n] \ar[d, "p"] \ar[r] & E \ar[d, "{[n]}"] \\
      X \ar[r] & E
    \end{tikzcd}
  \]
  where the bottom map is the identity section, and flat morphisms are closed under pullbacks.
\end{proof}

\begin{corollary}[{\cite[Lemma 8.1]{equivariant-tmf}}]\label[corollary]{cor:underlying-torsion}
  The underlying stack of $E[n]$ are the $n$-torsion points of the underlying stack of $E$.
\end{corollary}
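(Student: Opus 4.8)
The plan is to read the statement off the defining pullback square for $E[n]$, the only substantive input being the flatness established in the previous lemma. Recall that $E[n]$ sits in the Cartesian square
\[
  \begin{tikzcd}
    E[n] \ar[r] \ar[d, "p"] & E \ar[d, "{[n]}"] \\
    X \ar[r, "e"] & E
  \end{tikzcd}
\]
with $e$ the identity section. Applying the underlying-classical-stack functor $(-)^{\mathrm{cl}}$ produces a square of classical stacks equipped with a canonical comparison map $(E[n])^{\mathrm{cl}} \to E^{\mathrm{cl}} \times_{[n]^{\mathrm{cl}},\, E^{\mathrm{cl}},\, e^{\mathrm{cl}}} X^{\mathrm{cl}}$. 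By Lurie's setup in \cite{elliptic-ii}, the underlying classical stack of the oriented elliptic curve $p\colon E \to X$ is a classical elliptic curve over $X^{\mathrm{cl}}$, with group law, identity section, and multiplication-by-$n$ map all obtained by truncation; hence the target of the comparison map is, by definition, the $n$-torsion points of $E^{\mathrm{cl}}$. So it suffices to show the comparison map is an equivalence.

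This is where flatness enters. Working locally, write $X = \Spec A$ and cover $E$ by affines $\Spec B$ so that $[n]$ is modeled by flat maps $B' \to B$ of (even-periodic) $E_\infty$-rings and $e$ by an augmentation $B' \to A$; then $E[n]$ is locally $\Spec(B \otimes_{B'} A)$. Since $B$ is flat over $B'$, the Tor spectral sequence $\mathrm{Tor}^{\pi_* B'}_{*,*}(\pi_* B, \pi_* A) \Rightarrow \pi_*(B \otimes_{B'} A)$ collapses onto its zero line, giving $\pi_*(B \otimes_{B'} A) \cong \pi_* B \otimes_{\pi_* B'} \pi_* A$; in degree $0$, the periodicity generators of $B$, $B'$, $A$ being compatible, this is $\pi_0 B \otimes_{\pi_0 B'} \pi_0 A$. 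Therefore $(E[n])^{\mathrm{cl}}$ is, locally and hence globally, the classical fiber product, i.e.\ the $n$-torsion of $E^{\mathrm{cl}}$.

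The only point requiring care — the ``hard part'' of an otherwise formal argument — is that in the non-connective, $2$-periodic setting $\pi_0$ is not simply a left adjoint, so one genuinely needs the flatness of $[n]$ from the preceding lemma to prevent higher Tor terms from contributing to $\pi_0$ of the relative tensor product; with flatness in hand the computation is routine. Equivalently, one may argue invariantly: flat base change commutes with truncation, so $(E[n])^{\mathrm{cl}} \simeq (E \times_{[n],\, E,\, e} X)^{\mathrm{cl}} \simeq E^{\mathrm{cl}} \times_{E^{\mathrm{cl}}} X^{\mathrm{cl}}$ because $[n]$ is flat, and the right-hand side is classical $n$-torsion. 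In any case this is \cite[Lemma 8.1]{equivariant-tmf}.
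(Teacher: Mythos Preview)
Your proposal is correct and follows essentially the same approach as the paper: reduce to the general fact that pullbacks along flat morphisms of non-connective spectral Deligne--Mumford stacks remain pullbacks on underlying classical stacks, localize to the affine case, and finish there. The paper dispatches the affine case with ``the result is clear''; you unpack this via the Tor spectral sequence collapsing by flatness, which is exactly the intended content.
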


\begin{proof}
  More generally, given a pullback of a flat morphism between non-connective spectral Deligne--Mumford stacks, it is also a pullback on the underlying classical stacks. To see this, since being flat and a pullback is local, we may assume that the stacks are in fact affine, in which case the result is clear.
\end{proof}

\subsection{The unit and transfer maps}\label{subsection:unit-transfer}
There are two natural maps
\[
  1, \tr\colon \O_X \to p_* \O_{E[n]}.
\]

The map $1$ is adjoint to the identity map $p^* \mathcal{O}_X = \mathcal{O}_{E[n]} \to \mathcal{O}_{E[n]}$, and is a map of $\O_X$-algebras. In particular, it is an $\O_X$-module homomorphism that sends $1$ to $1$. If $X$ were affine, then this comes from taking the global sections of $p\colon E[n] \to X$. This map is split by the identity section $X \to E[n]$.

The trace map $\tr$ comes from stable equivariant homotopy theory itself. To avoid double subscripts, set $G = C_n$. In the category of $G$-spectra, there are maps
\[
  G_+ \to S^0 \to G_+
\]
whose composition is $\sum_{g \in G} g$. The first map comes from applying $\Sigma^\infty_+$ to the map of unbased $G$-spaces $G \to *$, whereas the second map is the Spanier--Whitehead dual of the first map, using the self-duality of $G_+$. Informally, it sends $1 \mapsto \sum_{g \in G} G$.

Now $\Ell_G(G_+) = \Ell_{\{e\}}(S^0) = \O_X$, so we get maps of $\O_{E[n]}$-modules
\[
  \O_X \overset\tr\to \O_{E[n]} \to \O_X
\]
whose composite is $n$ (since $G$ acts trivially on $\O_X$). Applying $p_*$, we get a map $\tr\colon \O_X \to p_* \O_{E[n]}$.

It will be useful to relate this to the norm map of $C_n$-spectra. Let $R$ be the $C_n$-spectrum associated to $E$. Then unwrapping the definitions, we see that $\tr$ is the $C_n$-fixed points of the map
\[
  R \otimes G_+ \longrightarrow R \otimes S^0,
\]
obtained by tensoring up the unique map $G \to *$. Similarly, the norm map is induced by
\[
  R \otimes EG_+ \longrightarrow R \otimes S^0,
\]
using the Adams isomorphism $(R \otimes EG_+)^{\mathcal{B}G} = R_{hG}$. Since $G$ includes into $EG$, the trace map factors as
\[
  \tr\colon R \longrightarrow R_{h G} \overset{\mathrm{Nm}}\longrightarrow R^{\mathcal{B}G},
\]
where the left-hand map is the usual inclusion. Since $G$ acts trivially on the underlying spectrum $R$, we have $R_{h G} = R \otimes BG_+$, and the left-hand map is the inclusion of the bottom cell of $BG_+$.

We now define $\overline{p_* \O_{E[n]}}$ by the following cofiber sequence in $\QCoh(X)$:
\[
  \O_X \oplus \O_X \overset{1 \oplus \tr}\longrightarrow p_*\O_{E[n]} \longrightarrow \overline{p_* \O_{E[n]}}.
\]
We then write
\[
  \begin{aligned}
    R^{\mathcal{B}C_n} &= \Gamma(X; p_* \O_{\E[n]}),\\
    \overline{R^{\mathcal{B}C_n}} &= \Gamma(X; \overline{p_* \O_{\E[n]}}).
  \end{aligned}
\]
In particular, when $X = \M$ and $R = \TMF$, we have a cofiber sequence
\[
  \TMF \oplus \TMF \overset{1 \oplus \tr}\longrightarrow \TMF^{\mathcal{B}C_n} \longrightarrow \overline{\TMF^{\mathcal{B}C_n}}.
\]
In this paper, we are only interested in the case $n = 2$.

\subsection{The descent spectral sequence}\label{subsection:dss}
Our main computational tool is the descent spectral sequence, which we recall in this section.

Let $X$ be any non-connective spectral Deligne--Mumford stack and $\mathcal{F}$ a quasi-coherent sheaf on $X$. Let $U \to X$ be an \'etale cover of $X$. Then the sheaf condition tells us
\[
  \Gamma(X; \mathcal{F}) = \operatorname{Tot}(\Gamma(U \times_X \cdots \times_X U; \pi^* \mathcal{F})),
\]
where $U \times_X \cdots \times_X U$ is the \v{C}ech nerve of the cover, and $\pi\colon U \times_X \cdots \times_X U \to X$ is the projection map. The descent spectral sequence is the Bousfield--Kan spectral sequence for the totalization, and the $E_2$ page is given by the \v{C}ech cohomology
\[
  E_2^{s, t} = \check{H}^s(X_{\mathrm{cl}}; \pi_t \mathcal{F})
\]
of the underlying classical stack $X_{\mathrm{cl}}$ with respect to the cover $U$.

For us, we have $X = \M$, and $U = \M_1(3)$, the spectral enhancement of moduli stack of elliptic curves with a $\Gamma_1(3)$-structure (i.e.\ a choice of $3$-torsion point). By \cite[Theorem 7.2]{affineness-chromatic}, the map
\[
  \Gamma\colon \QCoh(\M) \to \Mod_\TMF
\]
is an equivalence of symmetric monoidal categories. Let $i\colon \M_1(3) \to \M$ be the covering map. Then we have a sequence of equivalences
\[
  \QCoh(\M_1(3)) \!\simeq\! \Mod_{i_* \O_{\M_1(3)}}(\QCoh(\M)) \!\simeq\! \Mod_{\TMF_1(3)}(\Mod_\TMF) \!\simeq\! \Mod_{\TMF_1(3)},
\]
where the first equivalence follows from $i$ being affine and \cite[Proposition 2.5.6.1]{sag}\footnote{The statement of \cite[Proposition 2.5.6.1]{sag} refers to spectral Deligne--Mumford stacks, but the proof applies to non-connective ones as well.}. Under this equivalence, the pullback functor $\Mod_{\TMF} \to \Mod_{\TMF_1(3)}$ is given by $\TMF_1(3) \otimes_{\TMF}(-)$.

More generally, we find that
\[
  \QCoh(\M_1(3) \times_\M \cdots \times_\M \M_1(3)) \simeq \Mod_{\TMF_1(3) \otimes_{\TMF} \cdots \otimes_{\TMF} \TMF_1(3)}.
\]
Thus, we have
\[
  \Gamma(\M_1(3) \times_\M \cdots \times_\M \M_1(3), \pi^* \mathcal{F}) \simeq \TMF_1(3) \otimes_{\TMF} \cdots \otimes_{\TMF} \Gamma(\M; \mathcal{F}).
\]
So the descent spectral sequence is also the $\TMF_1(3)$-based Adams spectral sequence in $\Mod_\TMF$.

There is a well-known identification
\begin{lemma}
  The $\TMF_1(3)$-based Adams spectral sequence in $\Mod_\TMF$ is the same as the $BP$-based Adams--Novikov spectral sequence in spectra.
\end{lemma}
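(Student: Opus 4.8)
The plan is to establish an equivalence of spectral sequences by exhibiting both as the Adams spectral sequence associated to a common comonad, or more concretely, by identifying the cosimplicial resolutions that give rise to them. The key observation is that the descent spectral sequence for $\M$ with respect to the cover $\M_1(3) \to \M$ is, as explained in the preceding paragraphs, the $\TMF_1(3)$-based Adams spectral sequence in $\Mod_\TMF$; so it suffices to match this with the $BP$-based Adams--Novikov spectral sequence in spectra.

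First I would recall why $\TMF_1(3)$ is a suitable "Adams ring" in $\Mod_\TMF$: since $\TMF \to \TMF_1(3)$ is a faithfully flat cover (étale descent from $\M_1(3) \to \M$), the $\TMF_1(3)$-based Adams spectral sequence converges, and its $E_2$-page is computed by the Hopf algebroid $(\pi_*\TMF_1(3), \pi_*(\TMF_1(3) \otimes_\TMF \TMF_1(3)))$, equivalently the coherent cohomology of $\E[2]$ presented via this cover. The reason one expects this to coincide with the Adams--Novikov spectral sequence is that $\TMF_1(3) \simeq v_2^{-1}\mathrm{BP}\langle 2\rangle$ is a form of a Landweber-exact theory: its coefficient ring is Landweber exact over $BP_*$, so $BP_* \TMF_1(3)$ and the relevant tensor powers are flat $BP_*$-modules, and $BP_* \otimes_{\mathbb{S}} (-)$ applied to the cobar complex for $\TMF_1(3)$ recovers the cobar complex for $BP$.

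The main step, then, is to compare cosimplicial objects. One approach: the $\TMF_1(3)$-Adams tower in $\Mod_\TMF$ arises from the cosimplicial $\TMF$-algebra $\TMF_1(3)^{\otimes_\TMF \bullet + 1}$, while the $BP$-Adams--Novikov tower arises from $BP^{\otimes \bullet + 1}$ in spectra. Applying $\pi_*$ to the first gives a cobar complex built on $\TMF_1(3)_* = (v_2^{-1}\mathrm{BP}\langle 2\rangle)_*$; using that $\TMF_1(3)$ is even periodic and Landweber exact, one identifies $\pi_*(\TMF_1(3)^{\otimes_\TMF n})$ with $BP_*^{\otimes n} \otimes_{BP_*}\TMF_1(3)_*$-type expressions, and the flatness ensures no higher homotopy/Tor terms intervene. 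The cleanest way to phrase this is via descent: $\M_1(3)$ is an affine scheme mapping to $\M$, and the moduli stack $\M$ is equivalent (after 2-completion, or rationally plus at each prime) to the stack presented by the Adams--Novikov Hopf algebroid $(BP_*, BP_*BP)$ restricted to its height $\le 2$ locus — but since $\TMF$ sees only heights $\le 2$ this is all of the relevant part. So the two Hopf algebroids present the same stack, hence give the same sheaf cohomology and the same spectral sequence. I would cite the standard comparison (e.g. as in work of Bauer, or Hopkins--Miller, or \cite{tilman-tmf}) that the descent spectral sequence for $\tmf$/$\TMF$ agrees with its Adams--Novikov spectral sequence, and note that the argument is formal once flatness is in hand.

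The hard part — really the only subtlety — is checking the flatness/Landweber-exactness carefully enough that the identification of $E_2$-pages extends to an identification of the whole spectral sequences (all pages, all differentials), not just an abstract isomorphism at $E_2$. This comes down to the statement that the natural map from the $BP$-Adams resolution to the $\TMF_1(3)$-Adams resolution (or a common refinement) is an equivalence on associated graded, which follows from $BP_*\TMF_1(3)$ being flat over $BP_*$ and $\TMF_1(3)$ being a "nice" $BP$-algebra in the sense that it is built as a localization of a Thom spectrum / $\mathrm{BP}\langle 2\rangle$. Since this is a well-documented fact, I would keep the proof short: state that faithful flatness of $\TMF \to \TMF_1(3)$ together with Landweber exactness of $\TMF_1(3)_*$ over $BP_*$ identifies the two cosimplicial cobar complexes degreewise and compatibly with the cosimplicial structure, hence identifies the spectral sequences, and refer to the literature for the detailed verification.
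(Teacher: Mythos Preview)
Your approach differs substantially from the paper's, and the gap you yourself flag as ``the hard part'' is not actually closed by your sketch. You argue via Landweber exactness and stack presentations that the two Hopf algebroids give isomorphic $E_2$-pages, then gesture at a ``common refinement'' of resolutions to upgrade this to an identification of spectral sequences. But you never construct such a refinement or a map of cosimplicial objects, and an abstract $E_2$-isomorphism does not propagate to higher pages without one. Saying the map of resolutions ``is an equivalence on associated graded'' presupposes the map exists; Landweber exactness of $\TMF_1(3)_*$ over $BP_*$ by itself does not produce it.

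The paper sidesteps this entirely by invoking Miller's comparison framework \cite{relations-ass}: rather than comparing two different resolutions, one shows that a single $\TMF_1(3)$-Adams resolution in $\Mod_\TMF$ is \emph{already} a $BP$-Adams resolution in $\Sp$. This reduces to two concrete checks: (i) every $\TMF_1(3)$-injective $\TMF$-module is $BP$-injective in spectra, which follows because $\TMF_1(3)$ is complex orientable, hence any $\TMF_1(3) \otimes_\TMF X$ is a homotopy $MU$-module; and (ii) every $\TMF_1(3)$-exact sequence is $BP$-exact, which uses the splitting $\TMF_1(3) \simeq \TMF \otimes Z$ for an even finite spectrum $Z$ (so $BP$ is a retract of $BP \otimes DZ$, and one manipulates $F(\TMF, BP \otimes Y)$ into a $\TMF_1(3)$-module). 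This yields literal equality of towers, hence of spectral sequences from $E_1$ onward, with no need to match differentials by hand or appeal to flatness of cobar complexes.
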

We only use this result to apply the machinery of synthetic spectra to the descent spectral sequence; the morally correct approach would be to reproduce the theory of synthetic spectra inside $\Mod_\TMF$, but we'd rather not take that up.

\begin{proof}
  Following \cite[Section 1]{relations-ass}, it suffices to show that any $\TMF_1(3)$-resolution of a $\TMF$-module in $\Mod_\TMF$ is also an $BP$-resolution in $\Sp$. To do so, we have to show that every $\TMF_1(3)$-injective module in $\Mod_{\TMF}$ is $BP$-injective in $\Sp$, and every $\TMF_1(3)$-exact sequence in $\Mod_\TMF$ is $BP$-exact in $\Sp$.

  \begin{enumerate}
    \item We have to show that $\TMF_1(3) \otimes_\TMF X$ is $BP$-injective in $\Sp$ for any $X \in \Mod_\TMF$. Since $\TMF_1(3)$ is complex orientable, there is a homotopy ring map $MU \to \TMF_1(3)$. Thus, $\TMF_1(3) \otimes_\TMF X$ is a homotopy $MU$-module, hence $MU$-injective, hence $BP$-injective.
    \item Since $F(\TMF, -)$ is right-adjoint to the forgetful functor $\Mod_\TMF \to \Sp$, by definition of exactness, it suffices to show that if $X$ is $BP$-injective, then $F(\TMF, X)$ is $\TMF_1(3)$-injective. Again we may assume $X = BP \otimes Y$.

      By \cite[Theorem 1.2]{homology-tmf}, there exists an even spectrum $Z$ such that $\TMF_1(3) = \TMF \otimes Z$. By evenness, $BP$ is a retract of $BP \otimes DZ$. Thus, $F(\TMF, BP \otimes Y)$ is a retract of $F(\TMF, BP \otimes DZ \otimes Y) = F(\TMF \otimes Z, BP \otimes Y) = F(\TMF_1(3), BP \otimes Y)$, which is a $\TMF_1(3)$-module, hence $\TMF_1(3)$-injective.
  \end{enumerate}
\end{proof}

For convenience, set
\[
  A = \pi_* \TMF_1(3) ,\quad \Gamma = \pi_* \TMF_1(3) \otimes_\TMF \TMF_1(3).
\]
Then $(\Gamma, A)$ is a Hopf algebroid, and for any $\TMF$-module $N = \Gamma(\M; \mathcal{F})$, we have
\[
  \Ext^s_\Gamma(A, \pi_t (\TMF_1(3) \otimes_\TMF N)) = \Ext^s_\Gamma(A, \pi_t (i^* \mathcal{F})) \Rightarrow \pi_{t - s} N.
\]

To perform calculations, it is of course necessary to identify $(\Gamma, A)$ explicitly. From \cite{tmf-level-3}, we have
\[
  A \equiv \pi_* \TMF_1(3) = \Z_2[a_1, a_3, \Delta^{-1}],\quad \Delta = a_3^3 (a_1^3 - 27 a_3),\quad |a_i| = 2i,
\]
with associated elliptic curve
\[
  \E': y^2 z + a_1 xyz + a_3 yz^2 = x^3.
\]

$\Spec \Gamma$ is the classifying scheme of two curves of the form $\E'$ that are abstractly isomorphic, i.e.\ related by a coordinate transform. Consider the change of coordinates
\[
  \begin{aligned}
    x &\mapsto x + rz\\
    y &\mapsto y + sx + tz
  \end{aligned}
\]
In order to preserve the form of the equation, we need
\[
  \begin{aligned}
    0 &= 3r - s^2 - a_1 s\\
    0 &= s^4 - 6 st + a_1 s^3 - 3 a_1 t - 3 a_3 s\\
    0 &= s^6 - 27 t^2 + 3 a_1 s^5 - 9 a_1 s^2 t + 3 a_1^2 s^4 - 9 a_1^2 st + a_1^3 s^3 - 27 a_3 t
  \end{aligned}
\]
So we have $\Gamma = A[s, t]/I$, where $I$ is the ideal generated by the relations above (we have eliminated $r$ entirely). One checks that $\Gamma$ is the free $A$-module on $\{1, s, s^2, s^3, t, st, s^2 t, s^3 t\}$, and these generators exhibits $\TMF_1(3) \otimes_\TMF \TMF_1(3)$ as the sum of $8$ suspended copies of $\TMF_1(3)$.

We can read off the structure maps of the Hopf algebroid to be
\[
  \begin{aligned}
    \eta_R(a_1) &= a_1 + 2s\\
    \eta_R(a_3) &= a_3 + a_1 r + 2t \\
    \Delta(s) &= s \otimes 1 + 1 \otimes s\\
    \Delta(r) &= r \otimes 1 + 1 \otimes r\\
    \Delta(t) &= t \otimes 1 + 1 \otimes t + s \otimes r.
  \end{aligned}
\]

This Hopf algebroid (or rather, the connective version without inverting $\Delta$) was studied in detail in \cite{tilman-tmf}, whose computations and names we will use significantly.

\section{The \texorpdfstring{$E_2$}{E2} page of the DSS}\label{section:e2}
\subsection{Computing the comodule}
Let $q\colon \E' \to \M_1(3)$ be the canonical elliptic curve over $\M_1(3)$, so that we have a pullback diagram
\[
  \begin{tikzcd}
    \E' \ar[d, "q"] \ar[r, "j"] & \E \ar[d, "p"]\\
    \M_1(3) \ar[r, "i"] & \M.
  \end{tikzcd}
\]
Then we have
\[
  \TMF_1(3) \otimes_\TMF \TMF^{\mathcal{B}C_2} = \Gamma(i^* p_* \mathcal{O}_{\E[2]}) = \Gamma(q_* \mathcal{O}_{\E'[2]}) = \TMF_1(3)^{\mathcal{B}C_2},
\]
and similarly with the bar version.

In this section, we compute $\pi_* \TMF_1(3)^{\mathcal{B}C_2}$ as a $\Gamma$-comodule, and then quotient out the image of $1$ and $\tr$. By \Cref{cor:underlying-torsion}, $\pi_* \TMF_1(3)^{\mathcal{B}C_2}$ is given by (the global sections of) the classical scheme of $2$-torsion points of $\E'$.

The na\"ive way to compute $\E'[2]$ is to write down the duplication formula for $\E'$ and compute its kernel. However, the duplication formula is unwieldy. Instead, we write down the inversion map $i\colon \E' \to \E'$ and compute the equalizer with the identity map. The inversion map is induced by the map of projective spaces
\[
  \begin{aligned}
    \P^2 &\to \P^2\\
    [x:y:z] &\mapsto [x:-y - a_1 x - a_3 z: z]
  \end{aligned}
\]
(we use $z$ instead of $z$ since we shall soon use $z$ to mean $-\frac{x}{y}$). The equalizer of $i$ with the identity is then cut out by the equations
\[
  \begin{aligned}
    x(2y + a_1 x + a_3 z) &= 0\\
    z(2y + a_1 x + a_3 z) &= 0
  \end{aligned}
\]

Now observe that the $2$-torsion points are contained in the affine chart $y = 1$. Indeed, if $y = 0$, then the equation defining $E$ tells us $x = 0$. So the unique point on the curve when $y = 0$ is $[0:0:1]$. But this doesn't satisfy the last equation above since $a_3$ is invertible.

Therefore, we work in the $y = 1$ chart. Following standard conventions, we redefine
\[
  z = -\frac{x}{y},\quad w = -\frac{z'}{y},
\]
where $z'$ is the old $z$.

In the new coordinate system, the $2$-torsion points are cut out by the equations
\[
  \begin{aligned}
    z^3 - w + a_1 zw + a_3 w^2 &= 0\\
    2z - a_1 z^2 - a_3 zw &= 0\\
    2w - a_1 zw - a_3 w^2 &= 0.
  \end{aligned}
\]
Adding the first and last equation gives
\[
  z^3 + w = 0.
\]
Eliminating $w$, we find that
\[
  \E'[2] = \Spec A[z]/(2z - a_1 z^2 + a_3 z^4).
\]
In other words,
\[
  \pi_* \TMF_1(3)^{\mathcal{B}C_2} = A[z]/(2z - a_1 z^2 + a_3 z^4),\quad |z| = -2.
\]
Since $a_3$ is invertible, this is a free $A$-module of rank $4$.

The $\Gamma$-coaction on $\pi_* \TMF_1(3)^{\mathcal{B}C_2}$ comes directly from the construction of $\Gamma$ itself; it is given by
\[
  z = -\frac{x}{y} \mapsto -\frac{x + rz'}{y + sx + tz'} = \frac{-\frac{x}{y} - r \frac{z'}{y}}{1 + s \cdot \frac{x}{y} + t \cdot \frac{\tilde{z}}{y}} = \frac{z + rw}{1 - sz - tw} =\frac{z - r z^3}{1 - sz + tz^3}.
\]

\begin{theorem}\label{thm:tr-value}
  The map $\tr\colon \TMF_1(3) \to \TMF_1(3)^{\mathcal{B}C_2}$ sends $1$ to $2 - a_1 z + a_3 z^3$. In particular, by naturality, $2 - a_1 z + a_3 z^3$ is a permanent cocycle.
\end{theorem}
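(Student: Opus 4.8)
The statement asserts that the transfer map $\tr\colon \TMF_1(3) \to \TMF_1(3)^{\mathcal{B}C_2}$ sends $1$ to $2 - a_1 z + a_3 z^3$. Since we have identified $\pi_* \TMF_1(3)^{\mathcal{B}C_2} = A[z]/(2z - a_1 z^2 + a_3 z^4)$ as the global sections of the $2$-torsion scheme $\E'[2]$, and since this identification is via the affine chart $y = 1$ with $z = -x/y$, it suffices to compute the transfer map at the level of quasi-coherent sheaves and read off the resulting element of $A[z]/(2z - a_1 z^2 + a_3 z^4)$.

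**Step 1: unwind the definition of $\tr$.** Recall from \Cref{subsection:unit-transfer} that $\tr\colon \O_X \to p_* \O_{E[2]}$ is obtained by applying $\Ell_{C_2}$ and $p_*$ to the Spanier--Whitehead dual of the collapse map $(C_2)_+ \to S^0$. Dually, one first writes down the map $\O_{E[2]} \to \O_X$ coming from $S^0 \to (C_2)_+$ (the "sum over the group" map composed appropriately): this is the trace of the $C_2$-action, i.e.\ it sends a function $f$ on $E[2]$ to $f + \sigma^* f$, where $\sigma$ is the inversion involution on $E'$. But what we actually want is the other adjoint — the map $\O_X \to p_* \O_{E[2]}$ — which, after unwinding the self-duality of $(C_2)_+$ and the fact that $\Ell_{C_2}((C_2)_+) = \O_X$, is \emph{multiplication by the "fundamental class" of the finite flat cover $p$}. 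Concretely, since $p\colon \E'[2] \to \M_1(3)$ is finite flat of degree $4$ and $\O_{\E'[2]}$ is a free $A$-module, the composite $\O_X \xrightarrow{\tr} p_*\O_{E[2]} \to \O_X$ must be multiplication by $2 = |C_2|$, and the transfer element $\tr(1)$ is characterized by this together with its being in the image of the norm/transfer from the trivial group.

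**Step 2: pin down the element via the quotient structure.** The cleanest way to finish: the transfer factors through $\O_{E[2]}$ as the composite of (a) the unit $\O_X \to \O_{E[2]}$ for the \emph{trivial} $C_2$-action on the point, i.e.\ through $\Ell_{C_2}((C_2)_+) = \Ell_{e}(S^0) = \O_X$, which lands in $\O_{E[2]}$ as the structure sheaf of the \emph{full} preimage, and (b) the pushforward along $E[2]/C_2 \to X$ is an iso of the relevant summand. Working in coordinates: the inversion $\sigma$ on $\E'$ in the $y=1$ chart sends $z = -x/y \mapsto -x/(-y - a_1 x - a_3 z')$, which on $\E'[2]$ (where by definition $\sigma$ fixes every point) acts trivially, but the point-counting ideal $(2z - a_1 z^2 + a_3 z^4) = z(2 - a_1 z + a_3 z^3)$ exhibits the $2$-torsion subscheme as the union of the identity section $z = 0$ and the "nontrivial" $2$-torsion $\{2 - a_1 z + a_3 z^3 = 0\}$. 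The identity section $\O_X$ is split off by the map $1$; the transfer $\tr(1)$ is the idempotent-like element cutting out the \emph{complement}, namely the function that vanishes on the identity section and agrees with the degree count on the rest — and $2 - a_1 z + a_3 z^3$ is exactly $(2z - a_1 z^2 + a_3 z^4)/z$, the cofactor. One verifies directly in $A[z]/(2z - a_1 z^2 + a_3 z^4)$ that $z \cdot (2 - a_1 z + a_3 z^3) = 0$ and that $1 \cdot (\text{the element }1) + \tr(1)$ accounts for the full degree-$4$ cover, which is the compatibility forced by the cofiber sequence $\O_X \oplus \O_X \xrightarrow{1 \oplus \tr} p_*\O_{E[2]} \to \overline{p_*\O_{E[2]}}$.

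**Step 3: the permanent cocycle claim.** This is immediate by naturality once the value of $\tr$ is known: $\tr$ is a map of $\TMF$-modules out of $\TMF_1(3) = \Gamma(\M_1(3); \O)$, hence is a map of descent spectral sequences, and $1 \in \pi_0 \TMF_1(3)$ is a permanent cycle (it is the unit), so its image $2 - a_1 z + a_3 z^3 \in E_2^{0, 0}$ is a permanent cocycle.

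**Main obstacle.** The genuine difficulty is Step 1 — correctly identifying, through the chain of adjunctions ($\Ind_{C_2}^{S^1}$ oplax monoidal, then $\Ell_{S^1}$, then the comodule factorization, then $p_*$), \emph{which} section-level map the transfer corresponds to, and in particular getting the normalization right so that the composite down to $\O_X$ is multiplication by $2$ rather than by $4$ or by a unit. I expect the argument will lean on the explicit description in \cite[Construction 8.3]{equivariant-tmf} of $\Ell_{S^1}((S^1/C_2)_+) = \O_{E[2]}$ together with the self-duality of $(C_2)_+$, and the key sanity check throughout is the relation $z \cdot (2 - a_1 z + a_3 z^3) = 2z - a_1 z^2 + a_3 z^4 \equiv 0$, which is what makes the splitting $\pi_*\TMF_1(3)^{\mathcal{B}C_2} \cong A \cdot 1 \oplus A \cdot \tr(1) / (\ldots)$ internally consistent.
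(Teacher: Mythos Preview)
Your proposal does not actually complete a proof: Steps~1 and~2 offer consistency checks and plausibility arguments (the factorization of the defining polynomial, the composite being multiplication by~$2$), but you never \emph{derive} the value of $\tr(1)$, and you yourself flag the normalization through the chain of adjunctions as the ``main obstacle.'' The reasoning in Step~2 about an ``idempotent-like element cutting out the complement'' is suggestive but not an argument. The paper bypasses all of this with one structural observation you overlooked.

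The key point is that $\tr\colon \O_X \to \O_{E[2]}$ is already a map of $\O_{E[2]}$-modules, as recorded in the construction of the transfer in \Cref{subsection:unit-transfer}. Hence $\tr\colon \pi_*\TMF_1(3) \to \pi_*\TMF_1(3)^{\mathcal{B}C_2}$ is $\pi_*\TMF_1(3)^{\mathcal{B}C_2}$-linear. Since $z$ acts trivially on the source, we get $z \cdot \tr(1) = 0$ in $A[z]/(2z - a_1 z^2 + a_3 z^4)$. Now argue in the UFD $A[z]$: any lift of $\tr(1)$ has its $z$-multiple in the ideal $\bigl(z(2 - a_1 z + a_3 z^3)\bigr)$, hence is itself a multiple of $2 - a_1 z + a_3 z^3$. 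The composite with the identity section (set $z = 0$) is multiplication by~$2$, and $2 - a_1 z + a_3 z^3$ restricts to~$2$, so the scalar is~$1$.

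You were close: the factorization $2z - a_1 z^2 + a_3 z^4 = z(2 - a_1 z + a_3 z^3)$ you flagged as a ``sanity check'' is in fact the entire argument, once one feeds in $z \cdot \tr(1) = 0$ from the module structure. Trying to compute the transfer by unwinding $\Ind_{C_2}^{S^1}$, $\Ell_{S^1}$, and the comodule factorization is much harder and unnecessary here.
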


The argument is similar to \cite[Satz 4]{dieck72} (see also \cite[Remark 6.15]{hkr}).
\begin{proof}
  This map is a map of $\TMF_1(3)^{\mathcal{B}C_2}$-modules. Since $z$ acts trivially on $\pi_* \TMF_1(3)$, this means $z \tr 1 = 0$. Since $A[z]$ is a UFD, we know that $\tr 1$ must be a multiple of $2 - a_1 z + a_3 z^3$. Moreover, since it is equal to $2$ after modding out by $z$, the multiple must be $1$.
\end{proof}

So after taking the cofiber by $1$ and $\tr$, we get $\pi_* \overline{\TMF_1(3)^{\mathcal{B}C_2}} = A\{z, z^2\}$, and the $E_2$ page of the descent spectral sequence for $\pi_* \overline{\TMF^{\mathcal{B}C_2}}$ is given by $\Ext_\Gamma(A, A\{z, z^2\})$.

\subsection{Computing the cohomology mod \texorpdfstring{$2$}{2}}
While the coaction itself is fairly complicated, there is a major simplification after we reduce mod $2$.

By computer calculation (\Cref{section:sage}), we find that:
\begin{lemma}\label[lemma]{lemma:computer-action}
  Let
  \[
    b_1 = a_3 z^2,\quad b_5 = a_3^2 z; \quad |b_i| = 2i.
  \]
  Then $A\{z, z^2\} = A\{b_1, b_5\}$, and there is a short exact sequence of comodules
  \[
    0 \to A\{b_1\}/2 \to A \{b_1, b_5\}/2 \to A\{b_5\}/2 \to 0,
  \]
  where both ends are cofree on the indicated generator, inducing a long exact sequence in $\Ext$.

  More precisely, the class $b_1 \in A\{z, z^2\} / 2$ is invariant, while
  \[
    \psi(b_5) - [1] b_5 = [r^2] b_1.
  \]
  Thus, the connecting map of the long exact sequence in $\Ext$ is given by $b_5 \mapsto [r^2] b_1$.
\end{lemma}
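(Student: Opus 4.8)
The plan is to isolate the only substantive part, the coaction modulo $2$, and read off everything else formally. The identification $A\{z,z^2\} = A\{b_1,b_5\}$ needs no work: $a_3$ is a unit in $A$, so $b_1 = a_3 z^2$ and $b_5 = a_3^2 z$ are unit multiples of $z^2$ and $z$. The point of this change of basis is that it diagonalizes the mod-$2$ coaction, which I would compute as follows. Start from $\psi(z) = (z - rz^3)/(1 - sz + tz^3)$ together with $\eta_R(a_1) = a_1 + 2s$, $\eta_R(a_3) = a_3 + a_1 r + 2t$ and $3r = s^2 + a_1 s$; modulo $2$ these become $\eta_R(a_1) \equiv a_1$, $\eta_R(a_3) \equiv a_3 + a_1 r$ and $r \equiv s^2 + a_1 s$. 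Work in $\pi_* \overline{\TMF_1(3)^{\mathcal{B}C_2}}/2$, which---having quotiented by $1$ and by $\tr 1 \equiv a_1 z + a_3 z^3$---is $A\{z,z^2\}/2$ with $z^3 = a_1 a_3^{-1} z$. The geometric series for $(1 - sz + tz^3)^{-1}$ truncates, because the defining relations of $\Gamma$ present it as the free $A$-module on $\{1, s, s^2, s^3, t, st, s^2 t, s^3 t\}$, so only finitely many monomials in $s, t$ survive and the $z$-powers collapse into $z$ and $z^2$; multiplying the result by $z - rz^3$ and by the appropriate power of $\eta_R(a_3) \equiv a_3 + a_1 r$ then yields closed formulas for $\psi(b_1)$ and $\psi(b_5)$.

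Carrying out this division in $\Gamma/2$ is the step I expect to be the real obstacle: it is conceptually routine but lengthy and error-prone, which is exactly why I would perform it in \texttt{sage} (\Cref{section:sage}). The expected output is $\psi(b_1) = [1]b_1$ and $\psi(b_5) = [1]b_5 + [r^2]b_1$, which is the heart of the lemma.

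Everything else is formal. Since $b_1$ is coaction-invariant and spans a free rank-$1$ $A$-summand, $A\{b_1\}/2$ is a subcomodule, cofree on $b_1$ in the sense of the lemma --- concretely, a suspension of $A/2 = \pi_*(\TMF_1(3)/2)$, whose $\Ext$ over $\Gamma$ is the known descent $E_2$-page of $\TMF/2$. In the quotient of $A\{b_1,b_5\}/2$ by $A\{b_1\}/2$, namely $A\{b_5\}/2$, the term $[r^2]b_1$ vanishes, so $b_5$ becomes invariant there and the quotient is again cofree; this gives the asserted short exact sequence. As $\Gamma$ is $A$-free, $\Gamma$-comodules form an abelian category with enough injectives, so this short exact sequence induces a long exact sequence in $\Ext_\Gamma(A,-)$. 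Finally, $r$ is primitive --- indeed $\Delta(r) = r\otimes 1 + 1\otimes r$ --- so $r^2$ is a mod-$2$ cocycle representing a class $[r^2] \in \Ext^1_\Gamma(A,A/2)$, and by the standard description of the boundary map of a short exact sequence of comodules as Yoneda product against its extension class, the connecting homomorphism sends $b_5 \mapsto [r^2]b_1$.
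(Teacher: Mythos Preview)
Your proposal is correct and takes essentially the same approach as the paper: the paper's entire proof is the line ``By computer calculation (\Cref{section:sage}), we find that:'', deferring the coaction formula to the \texttt{sage} script, and you likewise reduce everything to that computation. Your additional explanation of how the short exact sequence, the cofreeness of the ends, and the identification of the connecting map follow formally from $\psi(b_1)=[1]b_1$ and $\psi(b_5)=[1]b_5+[r^2]b_1$ is correct and simply makes explicit what the paper leaves to the reader.
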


For reference, we display the cohomology of $A / 2$ in \Cref{fig:cohomology-a-mod-2}, as computed by \cite{tilman-tmf}. This chart is read as follows:
\begin{itemize}
  \item Each dot represents a copy of $\F_2$; $h_1$-multiplication and $h_3$-multiplication are denoted by lines of slope $1$ and $1/3$ respectively.
  \item $[r^2]$ represents the class $x$ in bidegree $(7, 1)$. This class is uniquely characterized by the fact that $a_1^2$ kills this class, coming from the cobar differential
\[
  \d(a_3^2) = [a_1^2 r^2].
\]
  \item The long dotted lines denote the extension $h_1^4 = a_1^4 \Delta^{-1} g$.
  \item The classes fading out continue $a_1$-periodically, and each ``period'' consists of an infinite $h_1$ tower.
\end{itemize}

In \Cref{fig:ext-connecting}, we put two copies of this next to each other and draw the connecting differential. The resulting cohomology is in \Cref{fig:ext-mod-2}. The hidden extensions follow from a ``multiplication by $\sqrt{\Delta}$'' operator, which we shall next explain.

\DeclareSseqCommand\htwo {} {
  \class (\lastx + 3, \lasty + 1) \structline[htwo]
}
\DeclareSseqCommand\hone {} {
  \class (\lastx + 1, \lasty + 1) \structline[hone]
}
\DeclareSseqCommand\hzero {} {
  \class (\lastx, \lasty + 1) \structline[hzero]
}
\DeclareSseqCommand\honedot {m} {
  \savestack
  \foreach \n in {1,...,#1} {
    \hone
  }
  \draw [->](\lastclass0) -- ++(0.5, 0.5);
  \restorestack
}
\DeclareSseqCommand\honei {} {
  \class (\lastx - 1, \lasty - 1) \structline[hone]
}

\DeclareSseqGroup\tmfAnssModTwoUnit {m} {
  \class (0, 0);
  \draw [->] (0, 0) -- (0.8, 0.8);
  \htwo \htwo \htwo
  \honei \honei
  \htwo
  \honei

  \ifnum#1>0
    \foreach \n in {2, 4, 6, 8} {
      \SseqParseInt\ten{\n * 10}
      \class [white!\ten!black](\n, 0);
      \draw [->, white!\ten!black] (\n, 0) -- (\n + 0.8, 0.8);
    }
  \fi

  \class (15, 3)
  \honei
  \htwo
  \honei \honei
  \htwo \htwo
}

\DeclareSseqGroup\tmfAnssModTwoLayer{m} {
  \tmfAnssModTwoUnit(-24, 0){#1}
  \tmfAnssModTwoUnit(0, 0){#1}
  \tmfAnssModTwoUnit(24, 0){#1}
}

\DeclareSseqGroup\tmfAnssModTwo {m} {
  \tmfAnssModTwoLayer{#1}
  \tmfAnssModTwoLayer(-4, 4){#1}

  \ifnum#1>0
    \draw [dashed] (24, 0) -- (28, 4);
    \draw [dashed] (-24, 0) -- (-20, 4);
    \draw [dashed] (0, 0) -- (4, 4);
  \fi
}
\begin{figure}[ht]
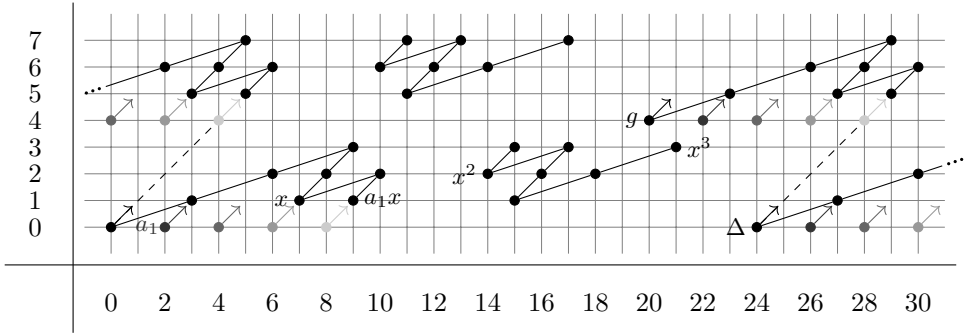

  \centering
  \begin{sseqpage}[small, x range={0}{30}]
    \tmfAnssModTwo{1}

    \classoptions["x"](7, 1);
    \classoptions["a_1 x" {right = 0cm}](9, 1);
    \classoptions["x^2"](14, 2);
    \classoptions["x^3" {right = 0cm}](21, 3);

    \classoptions["a_1" {left=-0.07cm}](2, 0)
    \classoptions["g"](20, 4);
    \classoptions["\Delta"](24, 0);
  \end{sseqpage}
  \caption{Cohomology of $A/2$}\label{fig:cohomology-a-mod-2}
\end{figure}

\begin{figure}[ht]
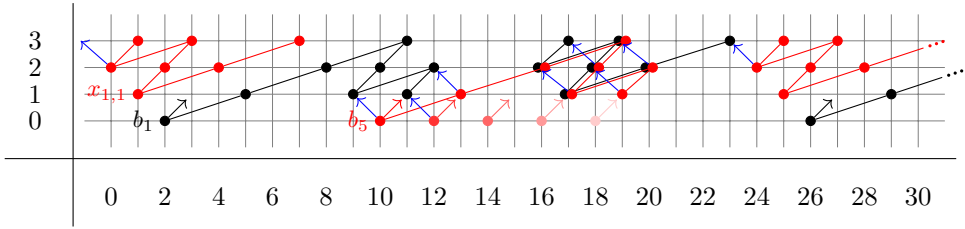

  \centering
  \begin{sseqpage}[small, x range={0}{30}]
    \tmfAnssModTwoLayer(2, 0){0}
    \tmfAnssModTwoLayer[red](10 - 24, 0){0}

    \foreach \n in {2, 4, 6, 8} {
      \SseqParseInt\ten{\n * 10}
      \class [white!\ten!red](\n + 10, 0);
      \draw [->, white!\ten!red] (\n + 10, 0) -- (\n + 10.8, 0.8);
    }

    \class(-1, 3) 
    \d1(0, 2)
    \d1(10, 0)
    \d1(12, 0)
    \d1(13, 1)

    \d1(17, 1, 2, 1)
    \d1(18, 2, 2)
    \d1(19, 1, 1, 1)
    \d1(20, 2, 2, 1)
    \d1(24, 2)

    \classoptions["b_1"](2, 0)
    \classoptions["b_5"](10, 0)

    \classoptions["x_{1, 1}"](1, 1)
  \end{sseqpage}
  \caption{Connecting maps for $\Ext_\Gamma(A, A\{b_1, b_5\} / 2)$}\label{fig:ext-connecting}
\end{figure}

\DeclareSseqGroup\TmfCtwoAnssModTwo {} {
  \foreach \m in {0, 24} {
    \class (2 + \m, 0)
    \draw [->] (2 + \m, 0) -- (2.8 + \m, 0.8);
    \htwo \htwo

    \foreach \n in {2, 4, 6, 8} {
      \SseqParseInt\ten{\n * 10}
      \class [white!\ten!black](\n + \m + 2, 0);
      \draw [->, white!\ten!black] (\n + \m + 2, 0) -- (\n + \m + 2.8, 0.8);
    }
  }

  \class(17, 1) \htwo

  \begin{scope}[red]
    \foreach \m in {0, 24} {
      \class (1 + \m, 3)
      \class (3 + \m, 3)
      \honei \honei
      \htwo \htwo
    }

    \foreach \n in {0, 2, 4, 6, 8} {
      \SseqParseInt\ten{\n * 10}
      \class [white!\ten!red](\n + 14, 0);
      \draw [->, white!\ten!red] (\n + 14, 0) -- (\n + 14.8, 0.8);
    }

    \class (13, 3)
    \class (13, 1)
    \draw [->] (13, 1) -- (13.8, 1.8);
    \htwo \htwo
  \end{scope}
}

\begin{figure}[ht]
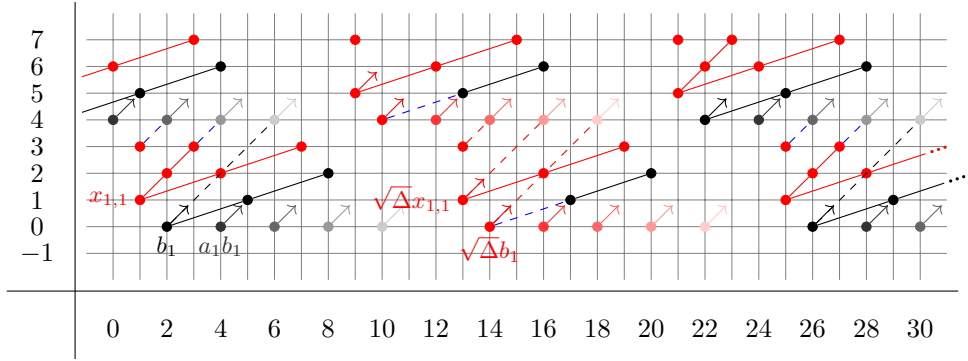

  \centering
  \begin{sseqpage}[small, x range={0}{30}, yrange={-1}{7}]
    \TmfCtwoAnssModTwo
    \TmfCtwoAnssModTwo(-4, 4)

    \draw[dashed, red] (13, 3) -- (14, 4);
    \draw[dashed, red] (13, 1) -- (16, 4);
    \draw[dashed, red] (14, 0) -- (18, 4);

    \draw[dashed](2, 0) -- (6, 4);
    \draw[dashed](26, 0) -- (30, 4);

    \draw[dashed, blue] (3, 3) -- (4, 4);
    \draw[dashed, blue] (1, 3) -- (2, 4);
    \draw[dashed, blue] (24 + 3, 3) -- (24 + 4, 4);
    \draw[dashed, blue] (24 + 1, 3) -- (24 + 2, 4);

    \draw[dashed, blue] (14, 0) -- (17, 1);
    \draw[dashed, blue] (10, 4) -- (13, 5);

    \classoptions["x_{1, 1}"](1, 1)
    \classoptions["b_1" {below = -0.5pt}](2, 0)
    \classoptions["a_1 b_1" {below = -0.5pt}](4, 0)
    \classoptions["\sqrt{\Delta} x_{1, 1}"](13, 1)
    \classoptions["\sqrt{\Delta} b_1" {below = -0.5pt}](14, 0)
  \end{sseqpage}
  \caption{$\Ext_\Gamma(A, A\{b_1, b_5\}/2)$}\label{fig:ext-mod-2}
\end{figure}

Originally, we have an action of $A[z]/(2z - a_1 z^2 + a_3 z^4)$ on $A\{z, z^2\} / 2$. Since we have quotiented out by $2$ and $2 - a_1 z + a_3 z^3$, this reduces to an action by $A [z]/(2, a_1 z + a_3 z^3)$. Further, since we are acting on $z$-multiples only, this reduces to an action by $A[z]/(2, a_1 + a_3 z^2)$. In this ring, we have
\[
  \Delta = a_3^4 + a_3^3 a_1^3 = a_3^4 + a_3^4 a_1^2 z^2 = (a_3^2 (1 + a_1 z))^2.
\]

One can check via \texttt{sage} that $\sqrt{\Delta} = a_3^2 (1 + a_1 z)$ is invariant in $A[z]/(2, z + a_3 z^2)$, so acts on $\Ext_\Gamma(A, A\{z, z^2\}/ 2)$ (see again \Cref{section:sage}). For example, the surviving class in bidegree $(14, 0)$ is $\sqrt{\Delta} b_1 = a_3^3z^2 - a_3^2 a_1^2 z$.

\subsection{\texorpdfstring{$2$}{2}-Bockstein spectral sequence}
We now run the $2$-Bockstein spectral sequence, which we will find to degenerate on the $E_2$ page. These Bockstein $d_1$'s resemble the $d_3$'s in the descent spectral sequence quite a bit. Thus, despite the fact that a lot of the differentials can be computed by writing down explicit cocycles, we try our best to argue them formally so that the same argument can be applied to the $d_3$'s.

Looking at the chart in \Cref{fig:ext-mod-2}, it is not hard to see what to expect. All differentials have bidegree $(-1, 1)$, and we know that nothing above the zero line survives, since $(\Gamma, A)$ has no rational cohomology. Thus, for example, up to $O(a_1)$, the class in bidegree $(1, 1)$ must be hit by a differential from $b_1$. The main work to do is to make sure nothing exotic happens with the highly $a_1$-divisible classes coming from $\Delta$ division.

To begin, recall that in the $2$-Bockstein for $\Ext_\Gamma(A, A)$, we have
\[
  d_1(a_1) = h_1,
\]
since $h_1 = [s]$ and $\eta_R(a_1) = a_1 + 2s$.

\begin{lemma}\label[lemma]{lemma:no-h1}
  There are no non-zero classes of the form $h_1^2 a$ on the $E_2$ page. Further, any permanent class of this form equals $d_1(h_1 a_1 a)$.
\end{lemma}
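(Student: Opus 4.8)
The plan is to deduce both assertions from the derivation property of the Bockstein $d_1$, compatibly with the Bockstein on $\Ext_\Gamma(A, A/2)$ acting on $\Ext_\Gamma(A, A\{b_1, b_5\}/2)$. Besides $d_1(a_1) = h_1$, the only input needed is $d_1(h_1) = 0$: since $\Delta(s) = s \otimes 1 + 1 \otimes s$ we have $\mathrm{d}(s) = 0$ in the cobar complex, so $h_1 = [s]$ lifts to an integral cocycle and hence has trivial Bockstein. Combining these, $d_1(h_1 a_1) = d_1(h_1) a_1 + h_1\, d_1(a_1) = h_1^2$, so for every class $a$,
\[
  d_1(h_1 a_1 a) = h_1^2 a + h_1 a_1\, d_1(a).
\]
Thus the second assertion is equivalent to the vanishing $h_1 a_1\, d_1(a) = 0$ whenever $h_1^2 a$ is a $d_1$-cycle, and the first assertion follows formally: a nonzero $h_1^2$-multiple on the $E_2$-page is represented by a $d_1$-cycle of the form $h_1^2 a$ on $E_1$, which the displayed identity exhibits as the boundary $d_1(h_1 a_1 a)$ — a contradiction.

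To prove the vanishing, set $b = d_1(a)$. The hypothesis that $h_1^2 a$ is a $d_1$-cycle gives $h_1^2 b = d_1(h_1^2 a) = 0$, and multiplying by $a_1$ shows that $h_1 a_1 b$ is killed by $h_1$. I would then read off from \Cref{fig:ext-mod-2} that multiplication by $h_1$ is injective in every bidegree where $h_1 a_1 b$ can lie, which forces $h_1 a_1 b = 0$. The structural fact behind this is that all the $h_1$-towers in $\Ext_\Gamma(A, A\{b_1, b_5\}/2)$ are infinite — each period of a tower maps into the next via $h_1^4 = a_1^4 \Delta^{-1} g$ — so there is no nonzero $h_1$-torsion in the relevant range. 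To make the case analysis finite, I would use the $h_2$- and $\sqrt\Delta$-periodicities of the chart: the classes $b_1, b_5, x_{1,1}$ and their $h_2$-multiples generate one $\sqrt\Delta$-period, $\sqrt\Delta = a_3^2(1 + a_1 z)$ is invariant and so acts on $\Ext$, and $d_1$ commutes with this action up to higher $a_1$-divisibility, so it suffices to treat that period.

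I expect the genuine obstacle to be exactly what the surrounding text flags as ``the main work'': the highly $a_1$-divisible families produced by $\Delta$-division, where the bidegree of $h_1 a_1 b$ also supports classes from neighbouring $\Delta$-periods and the injectivity is not visible from the chart alone. For these I would write the class as a $\sqrt\Delta^{-k}$-multiple of a generator in a bounded range and propagate the identity using the approximate $\sqrt\Delta$-linearity of $d_1$; where an ambiguity remains, I would fall back on the explicit cobar representatives and verify the identity with the \texttt{sage} computation of the coaction (\Cref{section:sage}), just as for the computational differentials elsewhere in this section. The formal manipulation of the first paragraph is routine — it is this $\Delta$-divided bookkeeping that carries the content.
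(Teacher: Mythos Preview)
Your Leibniz identity $d_1(h_1 a_1 a) = h_1^2 a + h_1 a_1\, d_1(a)$ is exactly the paper's starting point, and reducing both assertions to showing $h_1\, d_1(a) = 0$ (or your slightly weaker $h_1 a_1\, d_1(a) = 0$) when $h_1^2 a$ is a cycle is correct.

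Where you diverge is in how you kill this term. You argue via injectivity of $h_1$-multiplication in the bidegree of $h_1 a_1\, d_1(a)$, backed by the claim that ``all the $h_1$-towers are infinite''. That claim is not true as stated: the $h_2$-multiples in \Cref{fig:ext-mod-2} are $h_1$-torsion, so $h_1$ is not globally injective. Your proposed rescue --- restricting to the relevant bidegrees, invoking $\sqrt\Delta$-periodicity, and ultimately falling back on \texttt{sage} for the $\Delta$-divided families --- turns a two-line lemma into a case analysis you yourself flag as the hard part.

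The paper's finishing move is much cleaner and avoids all of this. From $h_1^2\, d_1(a) = 0$ one reads off the chart that $d_1(a)$ is an $h_2$-multiple: in $\Ext_\Gamma(A, A\{b_1, b_5\}/2)$ the only $h_1^2$-torsion classes are the $h_2$-multiples (everything else sits on an infinite $h_1$-tower). Then $h_1\, d_1(a) = 0$ follows immediately from the relation $h_1 h_2 = 0$, with no bidegree bookkeeping and no $\Delta$-division issues. So the ``genuine obstacle'' you anticipate simply does not arise.
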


\begin{proof}
  If $d_1(h_1^2 a) \not= 0$, then it doesn't survive. Otherwise, consider $d_1(a)$. This must be $h_1^2$ torsion, so it is an $h_2$ multiple. Then $h_1 d_1(a) = 0$. So $d_1(h_1 a_1 a) = h_1^2 a$.
\end{proof}

In general, we let $x_{t - s, s}$ denote a class in the corresponding bidegree that generates the bidegree after modding out by $a_1$- and $h_1$-multiples, if this makes sense. This class is well-defined up to $a_1$- and $h_1$-multiples.

\begin{lemma}
  $d_1(b_1) = x_{1, 1}$ and $d_1(\sqrt{\Delta} b_1) = \sqrt{\Delta} x_{1, 1}$.
\end{lemma}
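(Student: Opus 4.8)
The plan is to deduce both differentials by exclusion, using that nothing above the zero line survives the $2$-Bockstein spectral sequence (as $(\Gamma, A)$ has no higher rational cohomology), and to fall back on an explicit mod-$4$ cocycle computation (in the style of \Cref{section:sage}) only if needed. For $d_1(b_1)$: the class $x_{1,1}$ in bidegree $(1,1)$ lies in positive filtration, hence cannot survive, so it must be the target of a Bockstein differential. Scanning bidegrees, the only filtration-$0$ class that can map into $(1,1)$ under a $(-1,1)$-differential is $b_1$, living at $(2,0)$: that bidegree is generated by $b_1$ modulo $a_1$-divisible classes, while $(1,1)$ is generated by $x_{1,1}$ modulo $a_1$- and $h_1$-divisible classes, of which there are none actually present. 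Moreover $b_1$ must support a $d_1$ rather than a higher $d_r$, since its only possible target already lives on the $E_1 = E_2$ page in that spot. This forces $d_1(b_1) = x_{1,1}$. One can confirm this by reducing the reduced coaction $\psi(a_3 z^2) - 1 \otimes a_3 z^2 = 2c$ modulo $4$ and checking that $\bar c$ represents $x_{1,1}$.

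For $d_1(\sqrt{\Delta}\, b_1)$ I would run the same argument one step further up the chart: multiplication by $\sqrt{\Delta} = a_3^2(1 + a_1 z)$ acts on $\Ext_\Gamma(A, A\{z, z^2\}/2)$, so $\sqrt{\Delta}\, x_{1,1}$ is a genuine non-permanent class in bidegree $(13,1)$ and must be hit. In the stem $t - s = 14$ at filtration $0$ one also finds $a_1^6 b_1$ (and other $a_1$-divisible detritus produced by inverting $\Delta$), but $d_1(a_1^6 b_1) = a_1^6 x_{1,1}$ is itself $a_1$-divisible and therefore lands away from $\sqrt{\Delta}\, x_{1,1}$; hence $\sqrt{\Delta}\, b_1$, the unique non-$a_1$-divisible class at $(14,0)$, must be the source, giving $d_1(\sqrt{\Delta}\, b_1) = \sqrt{\Delta}\, x_{1,1}$ up to $O(a_1)$. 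Since $\sqrt{\Delta}\, b_1 = a_3^3 z^2 - a_3^2 a_1^2 z$ is an explicit element, one may alternatively just rerun the mod-$4$ computation, which sidesteps the fact that $\sqrt{\Delta}$ need not commute with $d_1$ (it does not lift to the integral Bockstein complex).

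The main obstacle, exactly as flagged in the discussion preceding the lemma, is controlling the highly $a_1$-divisible families coming from $\Delta$-division: one must be certain that no exotic $a_1$-divisible Bockstein differential is lurking there that would undermine the "unique source" claims, and that the bidegrees $(2,0)$, $(14,0)$, $(1,1)$, $(13,1)$ really contain only what the chart in \Cref{fig:ext-mod-2} records. I expect to handle this using the $a_1$-torsion relations already isolated (e.g. $a_1^2 x = 0$ from $\d(a_3^2) = [a_1^2 r^2]$) together with $a_1$-periodicity of the surviving cohomology, in the same spirit as \Cref{lemma:no-h1}; if this becomes unwieldy, the explicit cocycle route settles both statements unconditionally.
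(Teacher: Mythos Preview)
Your overall strategy is the same as the paper's, but the formal argument has two gaps that the paper's proof is specifically designed to close.

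First, you write that $x_{1,1}$ ``cannot survive, so it must be the target of a Bockstein differential.'' This presumes that $x_{1,1}$ is a permanent \emph{cycle}; otherwise it could die as the \emph{source} of a differential into bidegree $(0,2)$, and your ``only possible source is $b_1$'' reasoning never gets started. The paper handles this explicitly: for any choice of $x_{1,1}$, the target $d_1(x_{1,1})$ lands in a bidegree where everything is $h_1^2$-divisible, so by \Cref{lemma:no-h1} one can adjust $x_{1,1}$ by an $a_1$-multiple to make it a $d_1$-cycle, and then all higher $d_r$ have trivial target for the same reason.

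Second, your claim that in bidegree $(1,1)$ there are no $a_1$- or $h_1$-divisible classes ``actually present'' is false once $\Delta$ is inverted: e.g.\ $h_1 a_1^{11}\Delta^{-1} b_1$ and $h_1 a_1^{5}\Delta^{-1}\sqrt{\Delta}\,b_1$ both live there, which is exactly why the paper remarks that $x_{1,1}$ is only well-defined up to $a_1$-multiples. The paper sidesteps this by passing to $h_1^2 x_{1,1}$ in bidegree $(3,3)$ and analyzing the source bidegree $(4,2)$, where the only non-$a_1$-divisible classes are $h_1^2 b_1$ and $h_2 x_{1,1}$; the latter is permanent since $x_{1,1}$ is, forcing $d_1(h_1^2 b_1 + O(a_1)) = h_1^2 x_{1,1}$ and hence $d_1(b_1) = x_{1,1}$ modulo $a_1$. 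The $\sqrt{\Delta}\,b_1$ case is then literally the same argument.

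Your fallback of an explicit mod-$4$ cocycle check would indeed settle the lemma unconditionally (the paper even notes the explicit lifts in the remark after \Cref{lemma:juggle}). But the paper deliberately avoids relying on it here, because the same formal argument is recycled verbatim for the $d_3$'s in the descent spectral sequence, where no explicit cocycles are available.
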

Note that since $x_{1, 1}$ is only well-defined up to $a_1$ multiples, this is equivalent to saying that $d_1(b_1)$ and $d_1(\sqrt{\Delta}) b_1$ are not $a_1$ divisible. Since $x_{1, 1}$ is not well-defined, neither is $\sqrt{\Delta} x_{1, 1}$, and we are not claiming that there is a single choice of $x_{1, 1}$ for which both equations hold.

\begin{proof}
  First observe that there is a choice of $x_{1, 1}$ that is permanent. Indeed, for any choice of $x_{1, 1}$, the class $d_1(x_{1, 1})$ must be $h_1^2$-divisible, so it must be hit by a $d_1$ from an $a_1$-multiple by \Cref{lemma:no-h1}, which we can add to $x_{1, 1}$, so that it survives the $d_1$. From the $E_2$ page onwards, the target bidegree of the differential is $0$ by \Cref{lemma:no-h1} again.

  Now $h_1^2 x_{1, 1}$ must be hit by a $d_1$, and the source can only be $h_1^2 b_1 + O(a_1)$, since $h_2 x_{1, 1}$ is permanent and other classes are highly $a_1$-divisible. So $d_1(b_1)$ must hit a version of $x_{1, 1}$.

  The case of $\sqrt{\Delta} b_1$ is analogous.
\end{proof}

\begin{corollary}
  There is a hidden extension $a_1 x_{1, 1} = h_1 b_1 + O(a_1^2)$.
\end{corollary}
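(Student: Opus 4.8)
The plan is to deduce this from the Leibniz rule for the $2$-Bockstein differential, fed by the two computed differentials $d_1(a_1)=h_1$ and $d_1(b_1)=x_{1,1}$.

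First I would record that $d_1$ is a derivation over the $2$-Bockstein spectral sequence of $(\Gamma,A)$: it is the connecting map of $0\to A\{z,z^2\}\xrightarrow{2}A\{z,z^2\}\to A\{z,z^2\}/2\to 0$, which is compatible with the analogous sequence for $A$; and $d_1(h_1)=0$, since $s\in\Gamma$ is primitive so that $h_1=[s]$ already lifts to $\Ext^1_\Gamma(A,A)$. Applying Leibniz to $a_1b_1$ and then to $h_1a_1b_1$ gives, modulo $2$,
\[
  d_1(a_1b_1)=h_1b_1+a_1x_{1,1},\qquad d_1(h_1a_1b_1)=h_1\,d_1(a_1b_1)=h_1^2b_1+a_1h_1x_{1,1}.
\]

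Second I would show $a_1x_{1,1}\neq 0$. If it were zero, the second identity would read $d_1(h_1a_1b_1)=h_1^2b_1$, exhibiting $h_1^2b_1$ as a $d_1$-boundary and hence forcing $d_1(h_1^2b_1)=0$; but Leibniz gives $d_1(h_1^2b_1)=h_1^2\,d_1(b_1)=h_1^2x_{1,1}$, which is the nonzero class of bidegree $(3,3)$ on the $E_1$-page (the very class whose being hit forces $d_1(b_1)=x_{1,1}$ in the preceding lemma). This contradiction settles the claim.

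Finally, since $x_{1,1}$ was chosen so as not to be $a_1$-divisible, and the long exact sequence of \Cref{lemma:computer-action} shows that bidegree $(3,1)$ of $\Ext_\Gamma(A,A\{b_1,b_5\}/2)$ is spanned by $h_1b_1$ modulo $a_1^2$-multiples — the contributions of $A\{b_5\}/2$ there and in the adjacent bidegree $(4,0)$ being shifts of $\Ext_\Gamma(A,A/2)$ into internal degrees where it vanishes (a congruence mod $4$, together with inspection of \Cref{fig:cohomology-a-mod-2}) — the nonzero element $a_1x_{1,1}$ must equal $h_1b_1$ up to an $a_1^2$-multiple, which is the assertion. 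The delicate point is this last step: identifying bidegree $(3,1)$ precisely is exactly the place where the highly $a_1$-divisible classes coming from $\Delta$-division threaten to interfere, so they have to be tracked with care (or, failing that, one falls back on a direct \texttt{sage}-assisted cocycle computation, as elsewhere in the paper); the first two steps, by contrast, pin down $d_1(a_1b_1)$ and the nonvanishing of $a_1x_{1,1}$ by purely formal means, which is what makes the argument portable to the $d_3$'s later on.
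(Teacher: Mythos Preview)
Your argument has a gap in the final step. You correctly derive $d_1(a_1 b_1) = h_1 b_1 + a_1 x_{1,1}$ via Leibniz, and your contradiction argument does show $a_1 x_{1,1} \neq 0$. But ``nonzero'' is not what is needed: the bidegree $(3,1)$ contains both $h_1 b_1$ and genuine $a_1^2$-multiples (coming from $\Delta^{-1}$ times high $a_1$-powers), so a nonzero element could perfectly well lie entirely in $O(a_1^2)$. The fact that $x_{1,1}$ is not $a_1$-divisible does not, in a module that is not $a_1$-torsion-free, prevent $a_1 x_{1,1}$ from being $a_1^2$-divisible. Your own closing remarks acknowledge this as ``the delicate point'' but do not actually resolve it.

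The paper's argument targets exactly this: it computes $d_1(a_1 x_{1,1}) = h_1 x_{1,1}$ (using that $x_{1,1}$ was already chosen to be a $d_1$-cycle in the preceding lemma), observes that $h_1 x_{1,1}$ is not $a_1^2$-divisible, and concludes that $a_1 x_{1,1}$ cannot be $O(a_1^2)$ either, since $d_1$ of an $a_1^2$-multiple is again an $a_1^2$-multiple (as $d_1(a_1^2) = 0$). Your framework can be patched the same way: apply $d_1$ to your identity $d_1(a_1 b_1) = h_1 b_1 + a_1 x_{1,1}$; using $d_1^2 = 0$ one gets $h_1 x_{1,1} + d_1(a_1 x_{1,1}) = 0$, hence if $a_1 x_{1,1}$ were $O(a_1^2)$ then $h_1 x_{1,1}$ would be too. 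This is the same contradiction as the paper's, just approached from the $b_1$ side rather than the $x_{1,1}$ side.
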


\begin{proof}
  The bidegree $(3, 1)$ is generated by $h_1 b_1$ and $a_1^2$ multiples. So $a_1 x_{1, 1}$ is either $h_1 b_1 + O(a_1^2)$ or $O(a_1^2)$. But $d_1(a_1 x_{1, 1}) = h_1 x_{1, 1}$ is not an $a_1^2$-multiple, so $a_1 x_{1, 1}$ cannot be $O(a_1^2)$. So it must be $h_1 b_1 + O(a_1^2)$.
\end{proof}

\begin{lemma}\label[lemma]{lemma:juggle}
  $d_1(a_1 b_1) = d_1(a_1 \sqrt{\Delta} b_1) = 0$.
\end{lemma}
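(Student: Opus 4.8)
The plan is to differentiate the product $a_1 b_1$ by the Leibniz rule for the Bockstein $d_1$, using the values on the two factors that we have already pinned down. First I would record $d_1(a_1) = h_1$ and $d_1(b_1) = x_{1,1}$ (the latter is the preceding lemma; we may as well take $x_{1,1} := d_1(b_1)$ as the representative of that bidegree modulo $a_1$- and $h_1$-multiples). Since $b_1$ lives in a module over the subring generated by $a_1$, the Bockstein is a derivation for this action, so
\[
  d_1(a_1 b_1) = d_1(a_1)\, b_1 + a_1\, d_1(b_1) = h_1 b_1 + a_1 x_{1,1}.
\]
Then I would feed in the preceding corollary, the hidden extension $a_1 x_{1,1} = h_1 b_1 + O(a_1^2)$: working mod $2$ the two copies of $h_1 b_1$ cancel, leaving $d_1(a_1 b_1) \in O(a_1^2)$.

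To close the gap I would argue that bidegree $(3,1)$ of $\Ext_\Gamma(A, A\{b_1, b_5\}/2)$ contains no nonzero $a_1^2$-divisible element. The point is that any such element would be the image of something in bidegree $(-1,1)$, and the only classes near there that are highly $a_1$-divisible come from $\Delta$-division and are multiples of $x = [r^2]$; but $x$ is annihilated by $a_1^2$ (this is the cobar relation $\d(a_3^2) = [a_1^2 r^2]$), so they are killed by a further $a_1^2$. One can also simply read this off \Cref{fig:ext-mod-2}. Hence $d_1(a_1 b_1) = 0$.

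For $a_1\sqrt{\Delta}\,b_1$ I would run the same argument one $\sqrt{\Delta}$-translate over. Multiplication by $\sqrt{\Delta} = a_3^2(1 + a_1 z)$ is a comodule endomorphism of $A\{b_1,b_5\}/2$, hence commutes with the $A$- and $\Ext$-module structures, so applying it to the corollary gives $a_1\sqrt{\Delta}\,x_{1,1} = h_1\sqrt{\Delta}\,b_1 + O(a_1^2)$; with $d_1(\sqrt{\Delta}\,b_1) = \sqrt{\Delta}\,x_{1,1}$ and the Leibniz rule this yields $d_1(a_1\sqrt{\Delta}\,b_1) \in O(a_1^2)$ in bidegree $(15,1) = (3,1) + (12,0)$ (as $|\sqrt{\Delta}| = 12$), and the same sparseness check — the offending classes there are again $x$-multiples killed by $a_1^2$ — gives $0$.

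I expect the genuine obstacle to be exactly this last sparseness bookkeeping: verifying that in bidegrees $(3,1)$ and $(15,1)$ none of the highly $a_1$-divisible ``$\Delta$-division'' classes survives multiplication by $a_1^2$. This is the recurring danger flagged in the discussion preceding the Bockstein computation, and it is what upgrades the otherwise two-line derivation (Leibniz plus the hidden extension) into an actual proof; everything else is formal.
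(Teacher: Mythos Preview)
Your Leibniz-rule reduction to $d_1(a_1 b_1) \in O(a_1^2)$ is fine and is essentially equivalent to the paper's first step (the paper argues by contradiction via $d_1^2 = 0$, but this is the same information repackaged). The gap is in the second step: the claim that bidegree $(3,1)$ contains no nonzero $a_1^2$-divisible element is \emph{false}.

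Recall that $b_1$ and $\sqrt{\Delta}\,b_1$ generate the $0$-line under $a_1$ and $\Delta^{\pm 1}$, and each carries an infinite $h_1$-tower. In particular
\[
  h_1 \cdot a_1^{6}\, \Delta^{-1}\sqrt{\Delta}\,b_1 \in \Ext^{1}\quad\text{lives in bidegree }(3,1)
\]
and is visibly $a_1^6$-divisible and nonzero. (Similarly $h_1 a_1^{12} \Delta^{-1} b_1$, and so on.) Your justification --- that the highly $a_1$-divisible classes on the $1$-line are $x$-multiples, hence killed by $a_1^2$ --- only accounts for the part of the $1$-line \emph{not} coming from $h_1$ times the $0$-line. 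The $h_1$-towers on the $a_1$-periodic $0$-line classes are exactly the ``highly $a_1$-divisible classes coming from $\Delta$ division'' that the paragraph before the Bockstein computation warns about, and they are not $a_1^2$-torsion. You also cannot read their absence off the chart: the chart is drawn for $0 \leq t-s \leq 30$ and the fading $h_1$-towers continue $a_1$-periodically.

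This is precisely why the paper does not stop at $O(a_1^2)$. After setting $\Delta = 1$, it writes $d_1$ on the pair $(a_1 b_1,\, a_1\sqrt{\Delta}\,b_1)$ as $h_1 M$ for a matrix $M$ of odd polynomials in $a_1$, and uses $d_1^2 = 0$ to obtain $M = a_1 M^2$; iterating forces $M$ to be infinitely $a_1$-divisible, hence zero. That bootstrapping is the missing ingredient in your argument --- a single degree of $a_1$-divisibility does not suffice, and the two generators have to be handled simultaneously because the $O(a_1^2)$ error terms can (a priori) involve each other.
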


\begin{proof}
  Note that $b_1$ and $\sqrt{\Delta} b_1$ generate the $0$-line under $a_1$ and $\Delta^{\pm}$, and $d_1(a_1 b_1)$ and $d_1(a_1 \sqrt{\Delta} b_1)$ must be in the submodule generated by these and $h_1$. We first show that the values of the differentials must be $a_1^2$-divisible. Indeed, we cannot have $d_1(a_1 b_1) = h_1 b_1 + O(a_1^2)$, because applying $d_1$ again would imply that
  \[
    0 = h_1 x_{1, 1} + O(a_1^2),
  \]
  a contradiction. The argument for $d_1(a_1 \sqrt{\Delta} b_1)$ is similar.

  Set $\Delta = 1$, and let $x = a_1 b_1$, $y = a_1 \sqrt{\Delta} b_1$. Then we can write
  \[
    d_1 \begin{pmatrix}x\\y \end{pmatrix} = h_1 M \begin{pmatrix}x\\y \end{pmatrix}
  \]
  for some matrix $M$ of odd polynomials in $a_1$. Applying $d_1$ again gives us the equation
  \[
    M^2 = \frac{1}{a_1} M.
  \]
  In other words, we have $M = a_1 M^2$. Iterating this equation shows that $M$ is infinitely $a_1$-divisible, so it must be trivial.
\end{proof}

\begin{corollary}\label[corollary]{cor:cross}
  $a_1 d_1(b_1) = h_1 b_1$, or equivalently, $a_1 x_{1, 1} = h_1 b_1 + O(a_1^2)$.
\end{corollary}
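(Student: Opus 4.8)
The plan is to read the relation straight off \Cref{lemma:juggle} by invoking the Leibniz rule for the $2$-Bockstein differential. The Bockstein spectral sequence for the comodule $A\{b_1, b_5\}/2$ is a module over the Bockstein spectral sequence for $A/2$, so $d_1$ is a derivation in the appropriate sense: for a class $a$ in the cohomology of $A/2$ and a class $m$ in the cohomology of $A\{b_1, b_5\}/2$ we have $d_1(am) = d_1(a)\,m + a\,d_1(m)$, the sign being irrelevant mod $2$.

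First I would specialize this to $a = a_1$ and $m = b_1$. Using $d_1(a_1) = h_1$, recorded at the start of the subsection, the Leibniz rule gives
\[
  d_1(a_1 b_1) = h_1 b_1 + a_1\, d_1(b_1).
\]
By \Cref{lemma:juggle} the left-hand side vanishes, so $a_1\, d_1(b_1) = h_1 b_1$, which is exactly the claimed relation. For the parenthetical reformulation I would note that, by the lemma identifying $d_1(b_1)$, we may take the ambiguous class $x_{1,1}$ to be $d_1(b_1)$ itself; for that choice the identity above literally reads $a_1 x_{1,1} = h_1 b_1$, and since any admissible representative of $x_{1,1}$ differs from this one by an $a_1$-multiple, this upgrades to $a_1 x_{1,1} = h_1 b_1 + O(a_1^2)$ for an arbitrary choice, thereby recovering the hidden extension of the preceding corollary.

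I do not expect a genuine obstacle here. The only points needing care are that $d_1$ on the module $A\{b_1, b_5\}/2$ really is a derivation over $d_1$ on $A/2$ (immediate from naturality of the Bockstein together with the comodule-algebra / comodule-module structures), and that $b_1$ is handled as the module factor rather than as a ring element when Leibniz is applied. All the substance lives in \Cref{lemma:juggle}, whose ``infinitely $a_1$-divisible, hence zero'' juggling argument is where the real work was done; \Cref{cor:cross} is just its Leibniz-rule shadow.
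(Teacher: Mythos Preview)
Your proof is correct and is exactly the argument the paper intends: the corollary is stated without proof because it follows immediately from \Cref{lemma:juggle} via the Leibniz rule $d_1(a_1 b_1) = h_1 b_1 + a_1\,d_1(b_1)$, just as you write. Your remark that this refines the earlier hidden-extension corollary is also apt; the paper in fact states the $O(a_1^2)$ version first (via a separate argument) and then sharpens it here once \Cref{lemma:juggle} is available.
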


\begin{remark}
  We can in fact write down explicit lifts of $a_1 b_1$ and $a_1 \sqrt{\Delta} b_1$, namely
  \[
    a_1 b_1 + 2 a_3 z,\quad a_1 \sqrt{\Delta} b_1 + 2a_3^3 z,
  \]
  whose coboundary vanishes mod $2^2$. However, the proofs above will be used for $d_3$'s in the descent spectral sequence too, and we cannot write down explicit cocycles for that.
\end{remark}

With $\Delta^{\pm}$ and $g$ periodicity, this gives all $d_1$'s. No classes are left in positive $s$ so we are done. The resulting $E_2$ page of the descent spectral sequence of $\overline{\TMF^{\mathcal{B}C_2}}$ has a fairly regular pattern, which we exhibit in \Cref{fig:anss-e2-page}. The names are intentionally left off; they can be found in \Cref{fig:anss-e3-page}.

\begin{sseqdata}[name = anss e3, small, x range={0}{30}, y range={0}{7}]
  \foreach \x in {0, 12, 24} {
    \class(\x + 1, 1) \honedot{5};
    \htwo \htwo
    \class(\x - 3, 5) \honedot{2};
    \htwo \htwo

    \class[rectangle, fill=white] (\x + 4, 0);
    \honedot{3}
    \begin{scope}[white!40!black]
      \class[rectangle, fill=white] (\x + 8, 0);
      \honedot{3}
    \end{scope}
    \begin{scope}[white!80!black]
      \class[rectangle, fill=white] (\x + 12, 0);
      \honedot{3}
    \end{scope}

    \class (\x, 4) \honedot{2}
  }
  \d3(4, 0)
  \d3(5, 1)
  \d3(6, 2)
  \d3(7, 3, 2)

  \d[white!80!blue]3(12, 0)

  \d3(13, 1, 2)
  \d3(14, 2, 2)
  \d3(15, 3, 2)

  \d3(4 + 24, 0)(27, 3, 2)
  \d3(5 + 24, 1)
  \d3(6 + 24, 2)

  \d3(24, 4)
  \d3(20, 0)(19, 3, 2)

  \structline[dashed, bend left=30, page=3, source anchor=70, target anchor=200] (19, 3, 1)(22, 6)
\end{sseqdata}

\begin{figure}[H]
  \centering
  \printpage[name = anss e3, page = 2]
  \caption{$E_2$ page of ANSS}\label{fig:anss-e2-page}
\end{figure}

\section{Differentials in the DSS}\label{section:differentials}
We have now computed the $E_2$ page of the descent spectral sequence of $\overline{\TMF^{\mathcal{B}C_2}}$. The goal of this section is to compute the differentials.

The main difficulty in computing the descent spectral sequence differentials is translational invariance --- the $E_2$ page is $\Delta$-invariant, but the $E_\infty$ page will only be $\Delta^8$-invariant. If we had a connective version, then the leftmost class must be permanent since there is nothing to hit. Since we do not, we need external means of determining that certain classes are permanent. Once we do so, we can use standard techniques in homotopy theory to compute the remaining differentials.

We begin by computing the $d_3$'s, where most of the hard work lies in. We depict the end result in \Cref{fig:anss-e3-page} for reference.

\begin{figure}[ht]
  \centering
  \begin{sseqpage}[name = anss e3, page = 3]
    \classoptions["h_1^2 t"](3, 3)
    \classoptions["\sqrt{\Delta} t"](13, 1, 2)
    \classoptions["\Delta t"](25, 1, 2)

    \classoptions["x_{12, 4}"](12, 4)

    \classoptions["x_{4, 0}"](4, 0)
    \classoptions["x_{8, 0}"](8, 0)
    \classoptions["x_{16, 0}"](16, 0)

    \classoptions["gt" {below = -0.5pt}](21, 5)
    \classoptions["gx_{4, 0}" {right = 0pt}](24, 4)
    \classoptions["gx_{8, 0}" {right = 0pt}](28, 4)

    \sseqset{class label handler = { \def\result{\scalebox{0.5}{$#1$}} }}
    \classoptions["h_2 \sqrt{\Delta} t" {below = -0.5pt}](16, 2)
    \sseqset{class label handler = { \def\result{\scalebox{0.6}{$#1$}} }}
    \classoptions["{x_{1, 1}\!=\!t\!}"](1, 1)
  \end{sseqpage}
  \caption{$E_3$ page of the descent spectral sequence}\label{fig:anss-e3-page}
\end{figure}

To compute the $d_3$'s, we have to show that $x_{1, 1}$ is permanent by explicitly constructing a homotopy class $t$, while $\sqrt{\Delta} x_{1, 1}$ supports a $d_3$. The rest then follows formally using $\eta^4 = 0$. Along the way, we will find a hidden $\nu$-extension from $h_2^2 \sqrt{\Delta} t$ to $h_1 gt$, which will be useful later on.

For the purposes of computing $d_3$, it is convenient to have a ``multiplication by $v_1^2$'' operation, obtained by lifting $a_1^2$-multiplication on the mod $2$ cohomology. This is well-defined up to $O(2)$, which is fine because the targets of all differentials are $2$-torsion. Then $c_4 = v_1^4$ and $c_6 = v_1^6$, which lets us deduce that $d_3(v_1^2) = h_1^3$.
\begin{theorem}\label{thm:x11-permanent}
  There is a choice of $x_{1, 1}$ that survives and has order $2$. We call this class $t$.
\end{theorem}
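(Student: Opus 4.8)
The plan is to produce $t$ by hand, as the image of a canonical order-two class under the norm map, and to deduce its permanence and order from the known behaviour of the norm on stunted projective spectra. Recall from \Cref{subsection:unit-transfer} that the transfer factors as $\tr\colon \TMF \to \TMF_{hC_2} \xrightarrow{\mathrm{Nm}} \TMF^{\mathcal{B}C_2}$, with $\TMF_{hC_2} = \TMF \otimes \RP^\infty_+$ and the first map the inclusion of the bottom ($S^0$) cell. Using the stable splitting $\RP^\infty_+ \simeq S^0 \vee \RP^\infty$, the transfer is the restriction of $\mathrm{Nm}$ to the $\TMF$-summand of $\TMF_{hC_2} \simeq \TMF \oplus (\TMF \otimes \RP^\infty)$. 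One checks that $\pi_1(\TMF \otimes \RP^\infty) = \Z/2$, generated by the class $e$ coming from the bottom cell $\RP^1 = S^1 \hookrightarrow \RP^\infty$ (equivalently, the unit of $\TMF$ times the generator of $\pi_1^s(\RP^\infty) = \Z/2$). I would set $t := \mathrm{Nm}(e) \in \pi_1 \TMF^{\mathcal{B}C_2}$ and write $t$ also for its image in $\pi_1 \overline{\TMF^{\mathcal{B}C_2}}$. Since $2e = 0$, automatically $2t = 0$, so the order of $t$ divides two; everything then reduces to showing that the image of $t$ in $\pi_1 \overline{\TMF^{\mathcal{B}C_2}}$ is nonzero and detected in descent-spectral-sequence filtration exactly one by a unit multiple of $x_{1,1}$.

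For the detection step I would compose with the comparison $\TMF^{\mathcal{B}C_2} \to \TMF^{hC_2}$, under which the descent spectral sequence of $\overline{\TMF^{\mathcal{B}C_2}}$ maps compatibly to that of the cofiber $\TMF^{hC_2}/(1, \tr)$; here the images of $1$ and $\tr$ in $\TMF^{hC_2} = F(\RP^\infty_+, \TMF)$ are, respectively, the canonical ``constants'' map $\TMF \to \TMF^{hC_2}$ and the composite of the bottom-cell inclusion with the ordinary norm $\TMF_{hC_2} \to \TMF^{hC_2}$. By Spanier--Whitehead duality $F(\RP^\infty_+, \TMF)$ is an inverse limit of $\TMF$ smashed with the stunted projective spectra $\RP^\infty_{-k}$, and the composite $\TMF \otimes \RP^\infty_+ \xrightarrow{\mathrm{Nm}} \TMF^{\mathcal{B}C_2} \to \TMF^{hC_2}$ is exactly that norm. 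Feeding in the cell-level description of the norm assembled in \Cref{section:stunted} shows that the image of $e$ is a nonzero class of Adams--Novikov filtration one, not accounted for by the images of $1$ and $\tr$, which matches $x_{1,1}$ under the comparison. Consequently $t$ is nonzero in $\pi_1 \overline{\TMF^{\mathcal{B}C_2}}$, has descent filtration at most one, and — since $x_{1,1}$ is the unique class of filtration one at $t - s = 1$ on the $E_2$-page (\Cref{fig:anss-e2-page}), every other contribution there lying in strictly higher filtration, and since \Cref{lemma:no-h1} rules out spurious $h_1^2$-classes — its leading term is $x_{1,1}$ up to a unit. I would run this whole comparison in $BP$-based synthetic spectra, lifting $e$ to a $\tau$-nonzero synthetic class and transporting the stunted-projective-space identifications synthetically, so that the filtration statements become literal rather than valid only modulo higher filtration.

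The main obstacle is precisely this filtration bookkeeping: a priori the norm could drop filtration, so $\mathrm{Nm}(e)$ might land deeper than filtration one, or even inside the image of $1$ and $\tr$ and hence vanish in $\overline{\TMF^{\mathcal{B}C_2}}$; ruling this out is exactly what requires the sharp, cell-by-cell description of the norm in terms of stunted projective spectra from \Cref{section:stunted}, with the synthetic refinement upgrading ``detected by something $\equiv x_{1,1}$ modulo higher filtration'' to an honest detection statement. By contrast, once such a $t$ is produced there is nothing left to check: $x_{1,1}$ cannot be the target of a differential (the would-be source sits in negative filtration), so the constructed class witnesses the permanence of $x_{1,1}$, and $2t = 0$ gives the order claim.
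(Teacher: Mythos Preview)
Your strategy is the paper's: define $t$ as the image under the norm of the $1$-cell of $\RP^\infty_+$, then check detection by pushing forward to homotopy fixed points and invoking the stunted-projective description from \Cref{section:stunted}. The difference is in execution. The paper truncates the target all the way down to $\TMF^{\RP^2_+}$, reads off from the cell diagram of $\Sigma P_{-3}^1$ that $\TMF^{\RP^2_+}/(1,\tr)$ is the question-mark complex built from the $(-2)$-, $(-1)$- and $1$-cells, and identifies the image $t'$ with the attaching map of the $2$-cell. Since $\pi_2(\TMF/\eta) = 0$, this attaching map factors through the bottom cell as $\nu$. That single observation gives filtration exactly one and non-$v_1^4$-divisibility simultaneously, forcing detection by a choice of $x_{1,1}$. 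No synthetic machinery is needed; the finite truncation already makes all filtration statements literal.

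Your sketch has a genuine gap at the detection step. The assertion that $x_{1,1}$ is the unique filtration-one class in stem $1$ is false: the $\ko$-like zero line carries a class at $(0,0)$ (for instance $c_4\, x_{16,0}\,\Delta^{-1}$; compare \Cref{fig:homotopy-neg}), and its $h_1$-multiple sits in bidegree $(1,1)$ alongside $x_{1,1}$. \Cref{fig:anss-e2-page} simply does not draw the period containing it. So knowing only that $t$ is nonzero of filtration one does not yet force detection by a choice of $x_{1,1}$; you must also rule out detection by a pure $h_1$-multiple of a $\ko$-like class. That is precisely what the paper's $\nu$-on-the-bottom-cell computation provides: $t'$ is detected by $h_2$ on a $\TMF$-cell, which is not $v_1^4$-divisible, and this pulls back to $t$. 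Your proposed synthetic upgrade does not address this point and is unnecessary once the truncation to $\RP^2$ is in place.
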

In fact, all choices survive and have order $2$, but we will only get to see this after computing the spectral sequence fully.

\begin{proof}
  We define $t$ to be the composition
  \[
    t\colon \Sigma \TMF \hookrightarrow \TMF \otimes \RP^\infty_+ = \TMF_{h C_2} \overset{\mathrm{Nm}}\longrightarrow \TMF^{\mathcal{B}C_2} \to \TMF^{\mathcal{B}C_2} / (1, \tr),
  \]
  where the first map is the inclusion of the $1$-cell. We claim this this has ANSS filtration $1$ and is non-$v_1^4$-divisible on the $E_2$ page (note that there are no Adams filtration 0 elements in odd degrees). Then it must be detected by a choice of $x_{1, 1}$.

  To do so, consider the composite
  \begin{multline*}
    t'\colon \Sigma \TMF \overset{t}\to \TMF^{\mathcal{B}C_2}/(1, \tr) \\
    \to \TMF^{h C_2}/(1, \tr) = \TMF^{\RP^\infty_+}/(1, \tr) \to \TMF^{\RP^2_+}/(1, \tr),
  \end{multline*}
  where we use $\tr$ to refer to the composite $\Sigma \TMF \to \TMF^{\RP^\infty_+} \to \TMF^{\RP^2_+}$ as well. It suffices to prove the same properties for $t'$.

  The key fact from equivariant homotopy theory we use is the following: let $X$ be a genuine $C_2$-spectrum whose $C_2$ action on the underlying spectrum $\iota X$ is trivial. Then the cofiber of the composition
  \[
    \iota X \otimes \RP^n_+ \to \iota X \otimes \RP^\infty_+ = X_{h C_2} \overset{\mathrm{Nm}}\to X^{h C_2} = (\iota X)^{\RP^\infty_+} \to (\iota X)^{\RP^m_+}
  \]
  is $\Sigma \iota X \otimes P_{-m - 1}^n$, where $P_{-m}^n$ is the stunted projective space. This is well-known, but we are unable to find a reference, so we prove this in \Cref{section:stunted}.

  Take $n = 1$, so that $\RP^n_+ = S^0 \vee S^1$. The cell diagram of  $\Sigma P_{-3}^1$ is given by
  \begin{figure}[H]
    \centering
    \begin{tikzpicture}[scale=0.5]
      \begin{scope}[xshift=0cm]
        \begin{celldiagram}
          \foreach \k in {-2,...,2} {
            \n\k
            \node at (3, \k) {$\k$};
          }

          \two{0} \two{-2} \eta{0} \eta{-1} \nu{-2}
        \end{celldiagram}%
      \end{scope}%
    \end{tikzpicture}%
  \end{figure}%
  \noindent where as usual the attaching maps of degree $1, 2, 4$ are $2, \eta, \nu$ respectively. In this diagram, we think of each cell as a $\TMF$-module cell, i.e.\ a copy of $\TMF$.

  We can read off all the information we need from this diagram. We start with $\TMF^{\RP^2_+}$, which is the bottom three cells in the diagram. We first understand what happens when we mod out $1$ and $\tr$.

  Recall that $1$ is the global sections of the projection map $\E[2] \to \M$, which is split by the identity section $\M \to \E[2]$. The global sections of the identity section is the inclusion of the fixed points $\TMF^{\mathcal{B}C_2} \to \TMF^{h C_2} \to \TMF$ by construction of equivariant TMF. That is, $1$ is a section of the projection $\TMF^{\RP^2_+} \to \TMF$ onto the $0$-cell. Thus, quotienting out by $1$ kills off the $0$-cell, and we are left with the bottom two cells.

  By construction, $\tr$ is the attaching map of the $1$-cell. Thus, further quotienting by $\tr$ adds the $1$-cell, and $\TMF^{\RP^2_+}/(1, \tr)$ is the question mark complex, i.e.\ the subcomplex consisting of the $(-2)$-, $(-1)$- and $1$-cell.

  Finally, $t'$ is the attaching map of the $2$-cell. It must factor through the bottom cell since $\pi_2 \TMF / \eta = 0$, and the diagram tells us this map is $\nu$ on the bottom cell, as desired.
\end{proof}

\begin{corollary}\label[corollary]{cor:d3x40}
  There is a choice of $x_{4, 0}$ with $d_3(x_{4, 0}) = h_1^2 t$.
\end{corollary}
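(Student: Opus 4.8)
The plan is to leverage the two facts established immediately above: that $d_3(v_1^2)=h_1^3$ (from $c_4=v_1^4$, $c_6=v_1^6$), and that $t=x_{1,1}$ is a permanent cycle (\Cref{thm:x11-permanent}). First I would combine these via the Leibniz rule for $d_3$ over the ANSS for $\TMF$: since $d_3(t)=0$, we get $d_3(v_1^2 t)=h_1^3 t$. The class $v_1^2 t$ lives in bidegree $(5,1)$, which is exactly the bidegree of $h_1 x_{4,0}$, so the next step is to identify these two classes on the $E_3$-page.

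For that identification I would argue as follows. By \Cref{cor:cross} we have $a_1 x_{1,1}=h_1 b_1 + O(a_1^2)$; multiplying by $a_1$ gives $a_1^2 x_{1,1}=h_1(a_1 b_1)+O(a_1^3)$. Under the chosen lifts, $x_{4,0}$ is the integral class reducing to $a_1 b_1$ mod $2$, and the ``multiplication by $v_1^2$'' operation reduces to multiplication by $a_1^2$ mod $2$; since the relevant bidegree $(5,1)$ carries only $2$-torsion, this lifts to an equality $v_1^2 t = h_1 x_{4,0}$ on $E_3$, provided the $O(a_1^3)$ correction term vanishes for sparseness reasons (there is no class in bidegree $(-1,1)$, so no $a_1^3$-multiple can contribute). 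Granting this, $h_1\,d_3(x_{4,0}) = d_3(h_1 x_{4,0}) = d_3(v_1^2 t) = h_1^3 t$.

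Finally, I would observe that bidegree $(3,3)$ of $E_3$ is generated by the single class $h_1^2 t$ (it is the unique survivor there after the $d_1$-analysis, cf.\ \Cref{lemma:no-h1}) and that $h_1\cdot(h_1^2 t)=h_1^3 t$ is nonzero on $E_3$; hence $h_1\colon E_3^{3,3}\to E_3^{4,4}$ is injective, forcing $d_3(x_{4,0})=h_1^2 t$. The remaining ambiguity in the choice of $x_{1,1}$ (only well-defined up to $a_1$- and $h_1$-multiples) is absorbed into the corresponding ambiguity of $x_{4,0}$. I expect the main obstacle to be the middle step --- pinning down the equality $v_1^2 t = h_1 x_{4,0}$ on the $E_3$-page and verifying that no exotic $a_1$-divisible correction terms intervene; this is the kind of statement that, elsewhere in the paper, is checked by an explicit cocycle computation, but here one wants instead a formal argument (using only the sparseness of the chart and \Cref{cor:cross}) so that it can be read off purely from the $E_3$-page. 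An alternative route, which I would keep in reserve, is to argue directly that $h_1^2 t$ cannot be a permanent cycle --- tracing through the cell diagram of the question-mark complex used in \Cref{thm:x11-permanent} to see that $\eta^2$ times the homotopy class $t$ vanishes --- and then note that the only class which can hit $h_1^2 t$ is $x_{4,0}$ via $d_3$.
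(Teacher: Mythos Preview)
Your main approach works, but the paper's proof is a one-liner that bypasses the identification step entirely. Since $\eta^4 = 0$ in $\pi_*\TMF$ and $t$ is permanent, the class $h_1^4 t$ in bidegree $(5,5)$ must be a boundary; inspecting the chart, the only possible source is $h_1^2 x_{4,0}$ via a $d_3$. As bidegree $(3,3)$ contains only $h_1^2 t$, this forces $d_3(x_{4,0}) = h_1^2 t$.

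This is essentially your ``alternative route'', except that rather than trying to show $\eta^2 t = 0$ (which would require extra geometric input about the question-mark complex), the paper climbs two more rungs up the $h_1$-tower and uses the universally available relation $\eta^4 = 0$. Your primary argument via the Leibniz rule $d_3(v_1^2 t) = h_1^3 t$ and the identification $v_1^2 t = h_1 x_{4,0}$ is correct --- bidegree $(5,1)$ is one-dimensional, and \Cref{cor:cross} shows $v_1^2 t$ is nonzero there --- but the paper defers this kind of reasoning to the later corollary about $d_3$ on classes in bidegree $(8k,0)$, where the Leibniz rule with $d_3(v_1^2)=h_1^3$ is genuinely needed. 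For the present corollary the $\eta^4$ argument is both shorter and avoids any delicate bookkeeping with the $v_1^2$ operation.
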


\begin{proof}
  Since $\eta^4 = 0$, we know that $h_1^4 t$ must be hit by a differential. The only possible source is $h_1^2 x_{4, 0}$. So we are done. 
\end{proof}

Our next goal is to show that $\sqrt{\Delta} t$ is \emph{not} permanent, and instead supports a $d_3$. We can think of this as a $d_3$ on the hypothetical $\sqrt{\Delta}$ (which, if existed, must support a $d_3$ since $\sqrt{\Delta}^2$ supports a non-$2$-divisible $d_5$). The proof is somewhat roundabout.

Since $t$ is $2$-torsion, we get a map $\Sigma \TMF/2 \to \overline{\TMF^{\mathcal{B}C_2}}$ picking out $t$. The homotopy groups of $\tmf / 2$ up to the $20$\textsuperscript{th} stem are depicted in \Cref{fig:anss-tmf-mod-2}. We name these classes as follows --- if $y \in \pi_* \TMF$ is $2$-torsion, we let $\tilde{y} \in \pi_* \TMF/2$ be the class that is $y$ on the top cell. This is well-defined up to an element in the image of $\pi_* \TMF$. In particular, we are interested in the following classes:
\begin{enumerate}
  \item $\kappa \in \pi_{14} \TMF$ is well-defined, while $\kappabar \in \pi_{20} \TMF$ is well-defined mod $2$.
  \item $\tilde{\nu^2} \in \pi_7 \TMF /2 \cong \Z/2$ is the unique non-zero element in this degree.
  \item $\tilde{\kappa} \in \pi_{15} \TMF/2$ is well-defined up to $\eta \kappa$. Thus, $\nu \tilde{\kappa}$ is well-defined.
\end{enumerate}

\DeclareSseqGroup\zigzag {} {
  \class(0, 0)
  \savestack
  \hone \hone
  \class(2, 0) \structline[dashed] \lastclass1 \hone \hone
  \restorestack
}
\begin{figure}[ht]
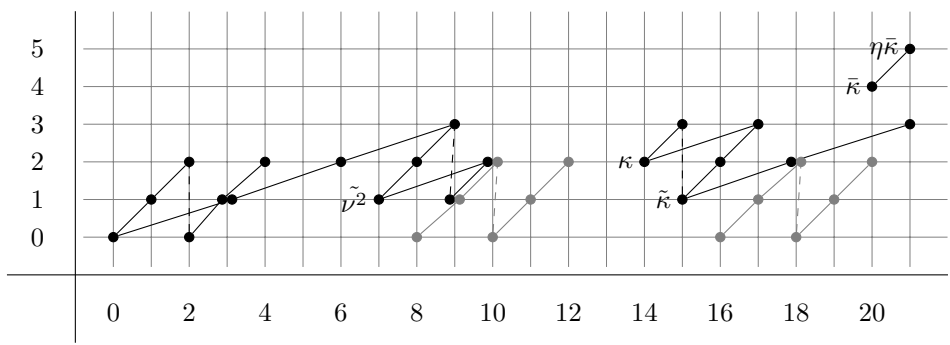

  \centering
  \def\labelscalefactor{1}
  \begin{sseqpage}[small, scale = 1.41, x range={0}{21}]
    \zigzag(0, 0)
    \htwo \htwo \htwo
    \honei \honei
    \htwo \honei \structline[dashed] (9, 3)

    \zigzag[gray](8, 0)

    \class(15, 3) \honei \htwo
    \honei \honei \structline[dashed] (15, 3) \htwo \htwo
    \class(21, 5) \honei

    \zigzag[gray](16, 0)

    \classoptions["\tilde{\nu^2}"](7, 1)
    \classoptions["\kappa"](14, 2)
    \classoptions["\tilde\kappa"](15, 1)
    \classoptions["\kappabar"](20, 4)
    \classoptions["\eta\kappabar"](21, 5)
  \end{sseqpage}
  \caption{$E_\infty$ page of the ANSS of $\TMF/2$}\label{fig:anss-tmf-mod-2}
\end{figure}

\begin{lemma}
  In $\pi_* \TMF/2$, we have
  \[
    \eta \kappabar = \nu^2 \tilde{\kappa} + \kappa \tilde{\nu^2}.
  \]
\end{lemma}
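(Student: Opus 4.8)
The plan is to reduce the identity to the evaluation of a single Toda bracket in $\pi_*\TMF$ with the map $2$ in its middle slot, exploiting the standard ``juggling'' description of such brackets in terms of the mod $2$ Moore spectrum, and then to evaluate that bracket in the descent spectral sequence.

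First, both sides lie in $\pi_{21}\TMF/2$, and I would collect the elementary facts in $\pi_*\TMF$ that are needed: $2\kappa = 0$ and $2\nu^2 = 0$ (since $\pi_{14}\TMF$ and $\pi_6\TMF$ are $2$-torsion), $\eta\nu = 0$ and hence $\eta\nu^2 = 0$, $\pi_7\TMF = 0$, $\pi_{15}\TMF = \Z/2\{\eta\kappa\}$, and $\eta\kappabar$ is well-defined in $\pi_{21}\TMF$ (as $\kappabar$ is well-defined mod $2$ and $2\eta = 0$). Consequently the bracket $\langle\nu^2, 2, \kappa\rangle\subseteq\pi_{21}\TMF$ is defined, and its indeterminacy $\nu^2\cdot\pi_{15}\TMF + \pi_7\TMF\cdot\kappa = \{\eta\kappa\nu^2\} = \{0\}$ is trivial, so it is a single well-defined element. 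Writing $q\colon\TMF\to\TMF/2$ for the quotient and $\delta\colon\TMF/2\to\Sigma\TMF$ for the boundary, so that $\delta\tilde\kappa = \kappa$ and $\delta\tilde{\nu^2} = \nu^2$, the class $\nu^2\tilde\kappa + \kappa\tilde{\nu^2}$ is killed by $\delta$ (its image is $2\nu^2\kappa = 0$), hence lifts along $q_*$; the two summands record the two ways of distributing the central $2$ of $\langle\nu^2,2,\kappa\rangle$ across the lifts $\tilde\kappa$ and $\tilde{\nu^2}$, and unwinding the bracket through this cofiber sequence identifies, modulo signs which are irrelevant mod $2$,
\[
  q_*\langle\nu^2, 2, \kappa\rangle = \nu^2\tilde\kappa + \kappa\tilde{\nu^2}.
\]
Since $\pi_{21}\TMF$ is $2$-torsion, the lemma will follow once we show $\langle\nu^2, 2, \kappa\rangle = \eta\kappabar$ in $\pi_{21}\TMF$.

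This last step is the main obstacle. I would evaluate the bracket inside the descent (Adams--Novikov) spectral sequence via Moss's convergence theorem: $\langle\nu^2, 2, \kappa\rangle$ is detected by the corresponding Massey product of the $E_2$-classes for $\nu^2$, $2$ and $\kappa$, which sits at $(21,3)$. I expect this Massey product to vanish there, so that (there being nothing at $(21,4)$) the bracket has Adams--Novikov filtration exactly $5$ and is therefore detected by $h_1 g$, the class detecting $\eta\kappabar$. The delicate points are: (i) the $\Ext$-level Massey product computation, a finite cocycle manipulation of the same kind already carried out in \Cref{section:sage}; (ii) verifying that no differential crosses the relevant range; and (iii) ruling out that the bracket vanishes, i.e.\ confirming that the two filtration-$3$ leading terms of $\nu^2\tilde\kappa$ and $\kappa\tilde{\nu^2}$ genuinely agree so that their sum lands in filtration $5$ and there represents $\eta\kappabar$ rather than $0$. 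A cleaner alternative, in the spirit of the rest of the paper, is to deduce the analogous identity in $\pi_*\tmf/2$ from the published $\tmf$ computations and then invert $\Delta$, which is harmless since $\kappa$, $\nu^2$, $\eta$ and $\kappabar$ all have $\Delta$-torsion-free images.
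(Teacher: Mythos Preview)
Your approach matches the paper's: both reduce to the Toda bracket identity $\langle\kappa,2,\nu^2\rangle=\eta\kappabar$ in $\pi_*\TMF$ via Moss' convergence theorem, and then translate to $\TMF/2$. Your juggling translation is exactly what the paper carries out, only made explicit: the paper introduces the self-map $\overline{\nu^2}\colon\Sigma^6\TMF/2\to\TMF/2$ given by projecting to the top cell and then applying $\tilde{\nu^2}$, observes that $\nu^2-\overline{\nu^2}$ kills the top cell and hence factors through the bottom cell as a valid ``$\nu^2$ on the bottom cell'' in the definition of the bracket, and reads off $q_*(\eta\kappabar)=(\nu^2-\overline{\nu^2})\tilde\kappa=\nu^2\tilde\kappa+\kappa\tilde{\nu^2}$.

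The one substantive difference is which spectral sequence feeds into Moss. You propose the descent/Adams--Novikov spectral sequence, where the $E_2$-Massey product lands in bidegree $(21,3)=0$; as you yourself flag in (iii), this only forces filtration $\geq 5$ and gives no mechanism to rule out that the bracket is zero. The paper instead runs Moss in the \emph{classical Adams} spectral sequence for $\tmf$: there $h_0 h_2^2=0$ already on $E_2$ while $h_0 d_0\neq 0$, and a $d_2$ from a class $\beta$ at $(15,3)$ kills $h_0 d_0$. The resulting $E_3$-Massey product $\langle d_0,h_0,h_2^2\rangle$ is then represented by $h_2^2\beta$, which one reads directly off the chart as the nonzero class at $(21,5)$ detecting $\eta\kappabar$. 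This computes the Massey product as a specific nonzero element and closes exactly the gap you identified, without appealing to external $\tmf/2$ tables.
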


\begin{proof}
  We start with the Adams spectral sequence for $\pi_* \tmf$, which is depicted in \Cref{fig:tmf-ass-e2}. This may be computed by the May spectral sequence or a computer. The only possible $d_2$'s in this range are the ones we have drawn, and any of the differentials implies all others by the Leibniz rule. We can get these via the fact that $v_1^4\nu = 0$, for example.

  From this, Moss' convergence theorem \cite{moss} tells us
  \[
    \eta\kappabar = \langle \kappa, 2, \nu^2\rangle \in \pi_* \TMF
  \]
  with no indeterminacy. By definition, the right-hand side is given by the composite\footnote{In this diagram, the spectrum in the middle is (a shift of) $\TMF/2$ and the ones at the end are $\TMF$. The map on the left is any map such that if you project onto the top cell of $\TMF / 2$, then the map is $\kappa$ (``$\kappa$ on the top cell''); the map on the right is any map such that the restriction to the bottom cell of $\TMF / 2$ is $\nu^2$ (``$\nu^2$ on the bottom cell''). The composite of any two such choices given an element in the Toda bracket, and vice versa.}

  \begin{figure}[H]
    \centering
    \begin{tikzpicture}[xscale = 3, yscale = 0.5]
      \node [circ, outer sep=3] (a) at (0, 5) {};
      \node [circ, outer sep=3] (b) at (1, 3) {};
      \node [circ, outer sep=3] (c) at (1, 1.75) {};
      \node [circ, outer sep=3] (d) at (2, 0) {};

      \draw (c) -- (b) node [pos=0.5, right] {$2$};

      \draw [->] (a) to [out=0, in=110, looseness=0.5] (b);
      \draw [->] (c) to [out=0, in=110, looseness=0.5] (d);

      \node at (0.5, 5) {$\kappa$};
      \node at (1.5, 1.9) {$\nu^2$};
    \end{tikzpicture}
  \end{figure}

  Let $\overline{\nu^2}\colon \Sigma^6 \TMF / 2 \to \TMF / 2$ be the map that first projects to the top $\TMF$-cell, and then maps via $\tilde{\nu^2}$. Then $\nu^2 - \overline{\nu^2}$ maps trivially to the top cell, so factors through the bottom cell. This factorization is a valid choice of ``$\nu^2$ on the bottom cell''. So
  \[
    \eta \kappabar = (\nu^2 - \overline{\nu^2}) \tilde{\kappa} = \nu^2 \tilde{\kappa} - \overline{\nu^2} \tilde{\kappa}.
  \]
  Finally, note that $\overline{\nu^2} \tilde{\kappa} = \tilde{\nu^2} \kappa$, and that everything is $2$-torsion, so we can drop the signs.
\end{proof}

\begin{figure}[h]
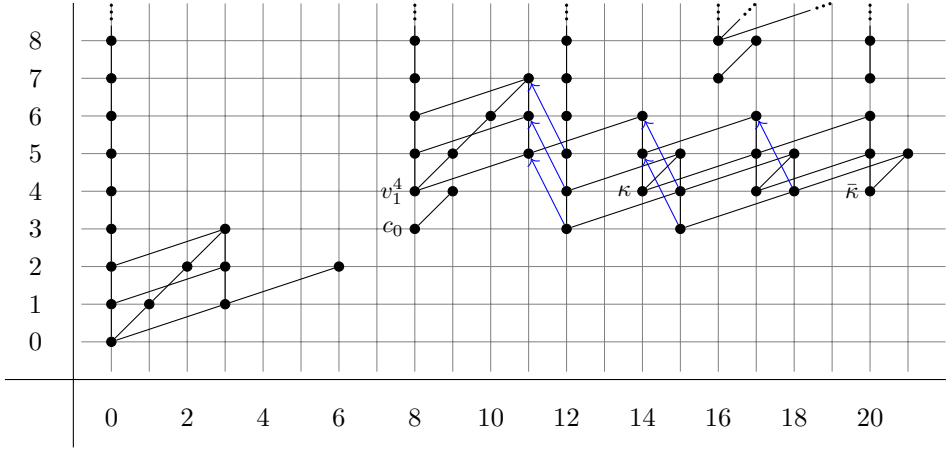

  \begin{sseqpage}[small, x range={0}{21}, y range={0}{8}, scale=1.41]
    \foreach \n in {0,1} {
      \hzerotower(\n * 8, \n * 4)
      \savestack
      \hone \hone \hone 
      \restorestack
      \htwo \htwo 
      \class(3 + \n * 8, 2 + \n * 4)
      \structline(3 + \n * 8, 1 + \n * 4) \structline (3 + \n * 8, 3 + \n * 4) \structline(0 + \n * 8, 1 + \n * 4)
      \structline(3 + \n * 8, 3 + \n * 4)(0 + \n * 8, 2 + \n * 4)

      \class(8 + \n * 8, 3 + \n * 4) \hone

      \hzerotower(12 + \n * 8, 3 + \n * 4)
    }
    \class(16, 8) \hzero 
    \class(17, 9) \structline(16, 8)
    \class(19, 9) \structline(16, 8)

    \classoptions["c_0"](8, 3)
    \classoptions["v_1^4"](8, 4)

    \class(15, 3) \hzero \hzero \honei \hzero

    \class(18, 4) \hzero \honei \hzero \hzero

    \class(21, 5) \honei \hzero \hzero

    \structline(12, 4)(15, 5)
    \structline(12, 3)(15, 4)

    \structline(14, 5)(14, 6)
    \structline(14, 4)(17, 5)
    \structline(14, 5)(17, 6)
    \structline(17, 4)(20, 5)
    \structline(17, 5)(20, 6)

    \structline(15, 3)(18, 4)
    \structline(15, 4)(18, 5)
    \structline(18, 4)(21, 5)

    \classoptions["\kappabar"](20, 4)
    \classoptions["\kappa"](14, 4)

    \d2(12, 3)
    \d2(12, 4)
    \d2(12, 5)

    \d2(15, 3)
    \d2(15, 4)
    \d2(18, 4)
  \end{sseqpage}
  \caption{$E_2$ page of Adams spectral sequence for $\tmf$}\label{fig:tmf-ass-e2}
\end{figure}

\begin{corollary}\label[corollary]{cor:hidden-ext}
  $h_2 \sqrt{\Delta} t$ represents $\tilde{\kappa} t$, and in particular is permanent. Further, there is a hidden $\nu$ extension from $h_2^2 \sqrt{\Delta} t$ to $h_1 g t$.
\end{corollary}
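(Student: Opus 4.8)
\emph{Proof plan.} The strategy is to identify $h_2\sqrt\Delta t$ with an honest homotopy class --- which makes permanence automatic --- and then to read off the hidden extension by pushing the previous lemma forward along $t$. Since $t$ is $2$-torsion by \Cref{thm:x11-permanent}, it factors as $\Sigma\TMF \to \Sigma\TMF/2 \xrightarrow{\bar t} \overline{\TMF^{\mathcal{B}C_2}}$ for some $\TMF$-module map $\bar t$, and I define $\tilde\kappa t \in \pi_{16}\overline{\TMF^{\mathcal{B}C_2}}$ to be the composite of $\bar t$ with a map $\Sigma^{16}\TMF \to \Sigma\TMF/2$ representing $\tilde\kappa$. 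Since $t$ is detected by $x_{1,1}$ and $\tilde\kappa$ has ANSS filtration $1$ in $\pi_*\TMF/2$ by \Cref{fig:anss-tmf-mod-2}, the class $\tilde\kappa t$ has filtration at least $2$; and inspecting \Cref{fig:anss-e3-page}, the only class that can detect a filtration-$2$ element in stem $16$ is $h_2\sqrt\Delta t$. So it remains to show $\tilde\kappa t$ is nonzero of filtration exactly $2$. I would do this on the $E_2$ page, where $\bar t$ induces multiplication by $x_{1,1}$: a cobar computation in $\Ext_\Gamma$ along the lines of \Cref{section:e2} (with \texttt{sage} if necessary) shows that $x_{1,1}$ times the detecting class of $\tilde\kappa$ is precisely $h_2\sqrt\Delta t$, rather than an $a_1$- or $h_1$-divisible class of higher filtration; one can cross-check nonvanishing using $\nu\tilde\kappa \neq 0$ in $\pi_*\TMF/2$ and the known structure in stem $19$. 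I expect this transport-of-filtration step to be the main obstacle, since the $\Delta$-periodicity of the $E_2$ page offers no a priori bound on the filtration of $\tilde\kappa t$. Granting it, $h_2\sqrt\Delta t$ detects a genuine homotopy class, hence is a permanent cycle and not a boundary; this is the ``in particular'' clause.

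For the hidden extension, I would expand $\nu^2\cdot(\tilde\kappa t) = \bar t_*(\nu^2\tilde\kappa)$ using the previous lemma, which rewrites $\nu^2\tilde\kappa = \eta\kappabar + \kappa\tilde{\nu^2}$ in $\pi_*\TMF/2$. The map $\bar t_*$ is $\TMF$-linear, carries the reduction of $\eta\kappabar \in \pi_*\TMF$ to $\eta\kappabar\cdot t$, and carries $\tilde{\nu^2}$ to a class I denote $\tilde{\nu^2}t \in \pi_8\overline{\TMF^{\mathcal{B}C_2}}$, so that $\nu^2\cdot(\tilde\kappa t) = \eta\kappabar\cdot t + \kappa\cdot(\tilde{\nu^2}t)$ in $\pi_{22}\overline{\TMF^{\mathcal{B}C_2}}$. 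Here $\kappabar$ acts by multiplication by $g$, so $\eta\kappabar\cdot t$ is detected by $h_1 g t$ in bidegree $(22,6)$, provided $g t$ survives and is nonzero --- which I read off from \Cref{fig:anss-e3-page} --- and I would show $\kappa\cdot(\tilde{\nu^2}t) = 0$ by checking that $\pi_8\overline{\TMF^{\mathcal{B}C_2}}$ carries no $2$-torsion admitting a nonzero $\kappa$-multiple into stem $22$.

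It then remains to compare filtrations. On the one hand $h_2$-multiplication leaves no room for a filtration jump in stem $19$, so $h_2^2\sqrt\Delta t$ detects $\nu\cdot(\tilde\kappa t)$, while $h_2 \cdot h_2^2\sqrt\Delta t = h_2^3\sqrt\Delta t$ vanishes on $E_\infty$ (bidegree $(22,4)$ contains no surviving class). On the other hand the computation above shows $\nu\cdot(\nu\tilde\kappa t) = \nu^2\cdot(\tilde\kappa t) = \eta\kappabar\cdot t$ is nonzero and detected by $h_1 g t$ in filtration $6$. Therefore $\nu$-multiplication carries the element detected by $h_2^2\sqrt\Delta t$ to one detected in filtration $6$ rather than the expected filtration $4$ --- a hidden $\nu$-extension from $h_2^2\sqrt\Delta t$ to $h_1 g t$. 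The only outstanding work is the two vanishing checks, $\kappa\cdot(\tilde{\nu^2}t) = 0$ and $h_2^3\sqrt\Delta t = 0$ on $E_\infty$, both of which fall out of the chart data assembled so far.
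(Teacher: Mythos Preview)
Your overall strategy --- push the relation $\eta\kappabar = \nu^2\tilde\kappa + \kappa\tilde{\nu^2}$ forward along $\bar t\colon \Sigma\TMF/2 \to \overline{\TMF^{\mathcal{B}C_2}}$ --- is exactly what the paper does. The difference lies in how you establish that $\tilde\kappa t$ is detected by $h_2\sqrt\Delta t$. You propose a direct cobar computation, and you correctly flag this as the main obstacle; the paper sidesteps it entirely. Once you have $\nu^2\tilde\kappa t = \eta\kappabar t + \kappa\,\tilde{\nu^2}t$ in $\pi_{22}\overline{\TMF^{\mathcal{B}C_2}}$, observe that $\tilde{\nu^2}t \in \pi_8\overline{\TMF^{\mathcal{B}C_2}}$ must have ANSS filtration at least $7$ (there is nothing of lower filtration in stem $8$ apart from the torsion-free $\ko$-like class), so $\kappa\,\tilde{\nu^2}t$ has filtration at least $9$. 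Since $\eta\kappabar t$ is detected by $h_1 g t$ in filtration $6$, so is $\nu^2\tilde\kappa t$; hence $\tilde\kappa t$ must be detected in filtration at most $6 - 2 = 4$. In stem $16$ the only candidate is $h_2\sqrt\Delta t$. This yields both the detection and the hidden extension in one stroke, with no $\Ext$ computation at all.

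A caution about your cobar plan: the map $\bar t$ has ANSS filtration at least $1$, so it induces the \emph{zero} map on the level of comodules; the phrase ``$\bar t$ induces multiplication by $x_{1,1}$'' is not literally correct, and extracting its leading term requires a geometric-boundary or synthetic argument rather than a straightforward product in the cobar complex. The paper remarks parenthetically that one could compute this product in $\Ext$ directly but deliberately avoids doing so. Your filtration-comparison argument for the hidden extension itself is fine and matches the paper, and your handling of the error term via $\pi_8$ is equivalent to the paper's filtration bound --- the paper does not need $\kappa\,\tilde{\nu^2}t$ to vanish, only to lie above filtration $6$.
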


\begin{proof}
  The class $t$ gives a map $\Sigma \TMF / 2 \to \overline{\TMF^{\mathcal{B}C_2}}$. Thus, the previous lemma gives
  \[
    \nu^2 \tilde{\kappa} t = \eta \kappabar t + \tilde{\nu^2} \kappa t \in \pi_* \overline{\TMF^{\mathcal{B}C_2}}
  \]
  We know that $\tilde{\nu^2} t \in \pi_8 \overline{\TMF^{\mathcal{B}C_2}}$ has very high ANSS filtration (at least 7) because there is nothing in lower degrees. So
  \[
    \nu^2 \tilde{\kappa} t = \eta \kappabar t + \text{higher filtration}.
  \]
  $\eta \kappabar t$ is represented by $h_1 g t$, so $\tilde{\kappa} t$ must be detected by a permanent class with filtration at most $4$, which must be $h_2 \sqrt{\Delta} t$ (this can alternatively follow from calculating the products in $\Ext$, but we spare ourselves the trouble).
\end{proof}

Our next result uses the machinery of synthetic spectra, which ``categorifies'' the Adams spectral sequence. They were originally introduced in \cite{synthetic}, but \cite[Section 9]{manifold-synthetic} gives a more computationally-oriented introduction to the subject. Specifically, the precise relationship between synthetic spectra and the Adams spectral sequence is laid out in \cite[Theorem 9.19]{manifold-synthetic}. Further, \cite[Appendix A.2]{manifold-synthetic} gives some helpful example calculations. Note however that we use a different grading convention. Our grading corresponds directly to the $x$ and $y$ coordinates in the Adams chart. The exact conversion is that $\mathbb{S}^{a, b}$ in their grading is $S^{a, b - a}$ in ours.

If $z$ is a cocycle in the $E_2$ page, we let $[z]$ denote any element of $\pi_*$ that is represented by $z$.
\begin{corollary}\label[corollary]{cor:hidden-nu}
  There is a hidden $\nu$ extension from $\Delta^k h_2^2 \sqrt{\Delta} t$ to $\Delta^k h_1 g t$ for every $k$. That is, if $\Delta^k h_2^2 \sqrt{\Delta} t$ is permanent, then $\nu [\Delta^k h_2^2 \sqrt{\Delta} t]$ is detected by $\Delta^k h_1 g t$.
\end{corollary}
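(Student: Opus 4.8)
The plan is to deduce the statement for general $k$ from the $k=0$ case, \Cref{cor:hidden-ext}, by multiplying through by $\Delta^k$. The obstruction to doing this naively is exactly the translational-invariance failure flagged at the start of \Cref{section:differentials}: $\Delta$ is not a permanent cycle, so multiplication by $\Delta^k$ is not an operation on $\pi_*\overline{\TMF^{\mathcal{B}C_2}}$. Synthetic spectra are what repairs this, since $\Delta$ does survive at the synthetic level.

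I would work in $\pi_{*,*}\nu(\overline{\TMF^{\mathcal{B}C_2}})$, which is a module over $\pi_{*,*}\nu(\TMF)$ because $\nu(-)$ is lax symmetric monoidal and $\overline{\TMF^{\mathcal{B}C_2}}$ is a $\TMF$-module. Since $\Delta$ is a class on the $E_2$-page of $\TMF$ itself --- it lies in $\Ext_\Gamma^{0,24}(A,A)$ --- it lifts to a synthetic class $\widetilde\Delta\in\pi_{24,0}\nu(\TMF)$, so $\widetilde\Delta^k$-multiplication is a bona fide operation on $\pi_{*,*}\nu(\overline{\TMF^{\mathcal{B}C_2}})$ for every $k\in\Z$. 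On the other hand, the honest homotopy classes $\nu\tilde\kappa t$ and $\eta\kappabar t$ --- detected by $h_2^2\sqrt\Delta t$ and $h_1 g t$ by \Cref{cor:hidden-ext} and its proof --- furnish non-$\tau$-torsion lifts $\widetilde x_0\in\pi_{19,3}$ and $\widetilde y_0\in\pi_{22,6}$ of $\nu(\overline{\TMF^{\mathcal{B}C_2}})$. Spelling out what a hidden $\nu$-extension of filtration jump $2$ means synthetically, \Cref{cor:hidden-ext} is precisely the equation
\[
  \widetilde\nu\cdot\widetilde x_0 \;=\; \tau^2\,\widetilde y_0 \;+\; \varepsilon
\]
in $\pi_{22,4}\nu(\overline{\TMF^{\mathcal{B}C_2}})$, where $\widetilde\nu\in\pi_{3,1}$ lifts $\nu$ and $\varepsilon$ has strictly higher $\tau$-filtration. (If one prefers, this equation can instead be re-derived internally to synthetic spectra by rerunning the Toda-bracket argument of the preceding lemma, with Moss' convergence theorem replaced by its synthetic incarnation.)

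Multiplying through by $\widetilde\Delta^k$ gives $\widetilde\nu\cdot(\widetilde\Delta^k\widetilde x_0)=\tau^2(\widetilde\Delta^k\widetilde y_0)+\widetilde\Delta^k\varepsilon$ in $\pi_{22+24k,4}$. Suppose now $\Delta^k h_2^2\sqrt\Delta t$ is a permanent cycle. Then $\widetilde\Delta^k\widetilde x_0$ reduces mod $\tau$ to this class, which is nonzero on $E_\infty$; after modifying $\widetilde\Delta^k\widetilde x_0$ by a $\tau$-power-torsion element if necessary --- which changes $\widetilde\nu\cdot(\widetilde\Delta^k\widetilde x_0)$ only by a $\tau$-torsion class, hence nothing after $\tau$-inversion --- we may take $\widetilde\Delta^k\widetilde x_0$ to be an honest detector of $[\Delta^k h_2^2\sqrt\Delta t]$. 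The equation then shows $\nu[\Delta^k h_2^2\sqrt\Delta t]$ is $\tau^2$-divisible and is detected by $\widetilde\Delta^k\widetilde y_0$; since the mod-$\tau$ reduction of $\widetilde\Delta^k\widetilde y_0$ is $\Delta^k h_1 g t$, this class is forced to be a permanent cycle detecting $\nu[\Delta^k h_2^2\sqrt\Delta t]$, which is what we want.

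The main obstacle is the verification hidden in the last paragraph: one must check that neither $\widetilde\Delta^k\varepsilon$ nor the $\tau$-torsion correction can produce a class of filtration $\le 6$ in stem $22+24k$ that cancels or competes with $\widetilde\Delta^k\widetilde y_0$, and that $\widetilde\Delta^k\widetilde y_0$ is itself not $\tau$-divisible, so that the detection genuinely sits in filtration $6$. Both points reduce to knowing $\pi_{*,*}\nu(\overline{\TMF^{\mathcal{B}C_2}})$ --- equivalently the $E_2$-page together with the lowest differentials --- in a narrow band of tridegrees around $(22+24k,6)$, and there the computations of \Cref{section:e2} (see \Cref{fig:ext-mod-2}) are sparse enough to settle by inspection. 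It is worth emphasizing that $\Delta^k h_1 g t$ is not assumed to be a permanent cycle beforehand; its permanence drops out of the synthetic equation, conditionally on that of $\Delta^k h_2^2\sqrt\Delta t$.
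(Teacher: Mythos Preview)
Your overall strategy is exactly the paper's: translate the $k=0$ case of \Cref{cor:hidden-ext} into a synthetic equation and then multiply through by $\Delta^k$. However, there is a genuine error in the key step. You assert that because $\Delta$ lies on the $E_2$-page in filtration $0$, it lifts to a class $\widetilde\Delta\in\pi_{24,0}\nu(\TMF)$. This is false: a class on $E_2 = \pi_{*,*}\nu(X)/\tau$ lifts to $\pi_{*,*}\nu(X)$ only if it is a permanent cycle, and $\Delta$ supports a nontrivial $d_5$ in the ANSS for $\TMF$. Concretely, $\pi_{24,0}\nu(\TMF)\cong\pi_{24}\TMF$ (there is no $\tau$-torsion in bidegree $(*,0)$), and there is no element there detected by $\Delta$ itself. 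So your $\widetilde\Delta$ does not exist, and the subsequent manipulations with $\widetilde\Delta^k\widetilde x_0$, $\tau$-torsion corrections, and $\tau$-inversion are not well-posed.

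The fix is what the paper does: work modulo $\tau^3$ rather than in $\nu(\overline{\TMF^{\mathcal{B}C_2}})$ itself. Since $\Delta$ survives to the $E_5$-page, it \emph{does} lift (uniquely, being on the $0$-line) to $\pi_{24,0}(\nu(\TMF)/\tau^3)$, and $C\tau^3$ is a ring, so $\Delta^k$-multiplication makes sense there. One then lifts any permanent representative $\alpha$ of $\Delta^k h_2^2\sqrt\Delta t$ to $\pi_{19+24k,3}$, reduces mod $\tau^3$, writes it as $\Delta^k\nu\tilde\kappa t$ there, and applies the synthetic relation $\nu^2\tilde\kappa t=\tau^2\eta\kappabar t$ to get $\nu\alpha=\tau^2\Delta^k\eta\kappabar t$ mod $\tau^3$. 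That is already enough to conclude $\nu\alpha=\tau^2[\Delta^k h_1 gt]+O(\tau^3)$ in $\pi_{*,*}\nu(\overline{\TMF^{\mathcal{B}C_2}})$, which is the desired detection statement. Working mod $\tau^3$ also obviates your error-term bookkeeping with $\varepsilon$.
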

We use $h_2$ to denote multiplication on the $E_2$ page, and $\nu$ to denote multiplication on homotopy groups. We insist on distinguishing these since $[h_2^2 \sqrt{\Delta} t]$ is not $\nu^2$-divisible in the homotopy groups.

\begin{proof}
  We work in $BP$-synthetic spectra, and identify $\overline{\TMF^{\mathcal{B}C_2}}$ with its synthetic analogue $\nu (\overline{\TMF^{\mathcal{B}C_2}})$. In synthetic homotopy groups, the previous result can be written as $\nu^2 \tilde{\kappa} t = \tau^2 \eta \kappabar t$.\footnote{To see this, we know that $\nu^2 \tilde{\kappa} t$ is equal to $\eta \kappabar t$ after inverting $\tau$, and $\tau^2$ is the right number of copies of $\tau$ to put the right-hand side in the right bidegree, since the product jumps by two filtrations. We also have to check that there are no $\tau$-torsion classes. $\tau$-torsion classes are generated by classes that are hit by differentials, and a class hit by a $d_k$ is killed by $\tau^{k - 1}$. Thus, the $\tau$-torsion terms in bidegree $(22, 4)$ are classes hit by differentials from $(23, 2)$ or below, of which there are none.}

  Suppose $\Delta^k h_2^2 \sqrt{\Delta} t$ is permanent. Let $\alpha \in \pi_{19 + 24k, 3} \overline{\TMF^{\mathcal{B}C_2}}$ be a class whose image in $\overline{\TMF^{\mathcal{B}C_2}}/\tau$ is $\Delta^k h_2^2 \sqrt{\Delta} t$. Consider its image in $\overline{\TMF^{\mathcal{B}C_2}}/\tau^3$. Since $\Delta$ survives to the $E_5$-page, we know that $\Delta$ lifts to $\pi_{24, 0} \TMF/\tau^3$ (uniquely, since it is on the $0$-line). Since $\tilde{\kappa} t$ represents $h_2 \sqrt{\Delta} t$, we can write\footnote{To multiply, we need to know that $C\tau^n$ is a ring. To see this, note that the natural $t$-structure of \cite[Proposition 2.16]{synthetic} is compatible with the symmetric monoidal structure by \cite[Proposition 2.29]{synthetic}. Further, the proof of \cite[Proposition 4.29]{synthetic} shows that $C\tau^n$ is the $(n - 1)$-truncation of the unit, so it has a natural $\mathbb{E}_\infty$-ring structure.}
  \[
    \alpha = \Delta^k \nu \tilde{\kappa} t \in \pi_{19 + 24k, 3} \overline{\TMF^{\mathcal{B}C_2}}/\tau^3.
  \]
  So we know that
  \[
    \nu \alpha = \Delta^k \nu^2 \tilde{\kappa} t = \tau^2 \Delta^k \eta \kappabar t \in \pi_{22 + 24k, 4} \overline{\TMF^{\mathcal{B}C_2}}/\tau^3.
  \]
  So in $\pi_{*, *} \overline{\TMF^{\mathcal{B}C_2}}$, we know that $\nu \alpha = \tau^2 [\Delta^k h_1 gt] + O(\tau^3)$.
\end{proof}

\begin{lemma}\label[lemma]{lemma:sqrt-delta-t}
  $\sqrt{\Delta} t$ does not survive to the $E_4$ page.
\end{lemma}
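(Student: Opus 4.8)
The plan is to run the obvious "boundedness" argument: if $\sqrt{\Delta} t$ survived, then the hidden $\nu$-extension of \Cref{cor:hidden-nu} would produce a nonzero class $\nu[\sqrt{\Delta}t]$ detected by $h_1 g t$ in the $E_\infty$ page, and then I want to derive a contradiction from the fact that $\eta^4 = 0$ (equivalently $h_1^4$ must die), combined with what the $E_2$/$E_3$ chart (\Cref{fig:anss-e3-page}) makes available. Concretely, first I would suppose for contradiction that $\sqrt{\Delta}t$ is a permanent cycle. Then by \Cref{cor:hidden-nu} (with $k = 0$), $h_2^2 \sqrt{\Delta}t$ being permanent forces $\nu[\,h_2^2\sqrt{\Delta}t\,]$ to be detected by $h_1 g t$; in particular $h_1 g t$ is a permanent cycle. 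Since $g t$ is an $h_1$-tower class, this says the whole $h_1$-tower on $gt$ survives, and in particular $h_1^3 g t = h_1^4 \cdot (\text{something detecting }t)$-type classes persist.

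Next I would chase $\eta^4 = 0$. We know $t = [x_{1,1}]$ is $2$-torsion of filtration $1$ (\Cref{thm:x11-permanent}), and by \Cref{cor:d3x40} the class $h_1^2 t$ is killed by $d_3(x_{4,0})$, so $h_1^2 t = 0$ already on $E_4$. By $\Delta$-linearity the same $d_3$ pattern (via $x_{4,0}$, $x_{8,0}$, $x_{16,0}$ and their $\Delta$-translates, together with $g t$) is what is forced to kill the entire $h_1$-tower region above the $0$-line — this is exactly the bookkeeping already displayed in \Cref{fig:anss-e2-page,fig:anss-e3-page}. The point is that $h_1 g t$ lies in bidegree $(21,5)$ (or its $\Delta$-translate), and for $\eta\cdot\bar\kappa\cdot t$-type products to be consistent with $\eta^4 = 0$ in $\pi_*\overline{\TMF^{\mathcal{B}C_2}}$, the class $h_1 g t$ cannot be permanent unless something below it also survives to cancel the resulting $h_1^4$; but \Cref{lemma:no-h1} (no $h_1^2 a$ classes survive) together with the chart shows there is nothing available. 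So $h_1 g t$ must support or receive a differential; a degree count shows the only possibility in the relevant tridegree is that it is hit, and the unique available source is (a $\Delta$-translate of) $\sqrt{\Delta}t$ itself via a $d_3$, or more precisely $\sqrt{\Delta}t$ must die before contributing — contradicting that it survived.

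The cleanest way to package this, and the route I would actually take, is: assume $\sqrt{\Delta}t$ survives to $E_4$; since $\sqrt{\Delta}t$ is $2$-torsion on the $E_2$ page and $t$ has order $2$, the Leibniz rule plus \Cref{cor:d3x40} (applied to $\sqrt{\Delta}x_{4,0}$, i.e. $x_{4,0}$ multiplied into the $\sqrt{\Delta}$-translate) would force $h_1^2\sqrt{\Delta}t = 0$ on $E_4$; but then $h_1^4 \sqrt{\Delta}t = 0$ trivially and the only way $\eta^4\bar\kappa = 0$-type relations can hold while $h_1 g t$ is simultaneously permanent (forced by \Cref{cor:hidden-nu}) is if $h_1 g t$ is itself hit, and the sole candidate source is $\sqrt{\Delta}t$ via $d_3$ — contradiction with survival to $E_4$. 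The main obstacle is the last step: pinning down that $h_1 g t$'s only possible differential source is $\sqrt{\Delta}t$ requires reading off the $E_3$-page tridegrees carefully (using $192$-periodicity and the explicit generators $t, \sqrt{\Delta}t, g t, x_{4,0}$, etc. labelled in \Cref{fig:anss-e3-page}) to rule out any competing source or target, and checking that the $d_3$ in question is nonzero rather than merely possible. I expect this bookkeeping, rather than any conceptual difficulty, to be where the real work lies, and it is exactly the kind of chart-chase the paper sets up the $\sqrt{\Delta}$-operator and the $\eta^4 = 0$ formalism to handle.
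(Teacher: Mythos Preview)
Your proposal has a genuine gap: the contradiction you are aiming for is not actually a contradiction. The class $h_1 g t$ \emph{is} a permanent cycle---it detects $\eta\kappabar t$, which is a nonzero element of $\pi_{22}\overline{\TMF^{\mathcal{B}C_2}}$ (this is exactly what \Cref{cor:hidden-ext} establishes, independently of whether $\sqrt{\Delta}t$ survives). So deducing ``$h_1 g t$ is permanent'' from the survival of $\sqrt{\Delta}t$ gives no leverage. Moreover, your $\eta^4=0$ argument does not bite: by \Cref{cor:d3x40} and $g$-linearity we already have $d_3(g x_{4,0}) = h_1^2 g t$, so $h_1^k g t$ for $k\geq 2$ is zero on $E_4$ regardless; there is nothing left for $\eta^4=0$ to force. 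Finally, your degree bookkeeping is off: $\sqrt{\Delta}t$ sits in bidegree $(13,1)$, so a $d_3$ on it lands in $(12,4)$, nowhere near $h_1 g t$ at $(22,6)$.

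There is also a logical slip: the lemma asserts that $\sqrt{\Delta}t$ does not survive to $E_4$, which is strictly stronger than ``$\sqrt{\Delta}t$ is not permanent.'' An argument that only rules out permanence does not suffice. The paper's proof addresses both issues at once. The key relation is not $\eta^4=0$ but rather $\nu^3 = \eta^2\tilde{\nu^2}$ in $\pi_{*}\TMF/2$. Informally: if $\sqrt{\Delta}t$ were permanent and $2$-torsion, then $\nu^3[\sqrt{\Delta}t]$ would be detected by $h_1 g t$ (via the hidden extension), but $\nu^3 = \eta^2\tilde{\nu^2}$ forces this product to factor through $\tilde{\nu^2}[\sqrt{\Delta}t]\in\pi_{20}$, and there is no suitable class there. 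To upgrade this from ``not permanent'' to ``dies by $E_4$,'' the paper works in $BP$-synthetic spectra modulo $\tau^3$: survival to $E_4$ gives a lift of $\sqrt{\Delta}t$ to $\pi_{13,1}(\overline{\TMF^{\mathcal{B}C_2}}/\tau^3)$, and the same product computation then yields $\nu^3\sqrt{\Delta}t = \tau^2\eta\kappabar t \neq 0$ while simultaneously $\nu^3\sqrt{\Delta}t = \eta^2(\tilde{\nu^2}\sqrt{\Delta}t)$ with $\tilde{\nu^2}\sqrt{\Delta}t\in\pi_{20,2}(\overline{\TMF^{\mathcal{B}C_2}}/\tau^3)=0$. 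That is the missing idea.
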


Note that if $\sqrt{\Delta} t$ survived to the $E_\infty$ page and is $2$-torsion, then \Cref{cor:hidden-nu} implies $\eta \kappabar t$ is $\nu^3$-divisible. However, $\nu^3$ is $\eta^2$-divisible in $\pi_* \TMF / 2$, and there is no candidate for the $\eta^2$ division of $\eta \kappabar t$ --- the classes on the $0$-line are $\kappabar$-torsion but $\eta \kappabar t$ is not. The proof runs this argument in synthetic spectra to get the stronger claim that it doesn't survive to $E_4$.
\begin{proof}
  We again work in synthetic spectra.

  If $\sqrt{\Delta} t$ survived to the $E_4$ page, then it lifts to a class in $\pi_{13, 1}(\overline{\TMF^{\mathcal{B}C_2}} / \tau^3)$, which we shall call $\sqrt{\Delta} t$ again. Then
  \[
    \nu^3 \sqrt{\Delta} t = \tau^2 \eta \kappabar t \not= 0 \in \pi_{22, 4} \overline{\TMF^{\mathcal{B} C_2}} / \tau^3.
  \]

  Now note that $2 \sqrt{\Delta} t = 0 \in \pi_{13, 1} \overline{\TMF^{\mathcal{B}C_2}} / \tau^3$, since it is true mod $\tau$ (i.e.\ on the $E_2$ page), and there are no $\tau$ multiples in the bidegree. So we get a map of synthetic spectra $\Sigma^{13, 1} \TMF / 2 \to \overline{\TMF^{\mathcal{B}C_2}} / \tau^3$ picking out $\sqrt{\Delta} t$.

  From \Cref{fig:anss-tmf-mod-2}, we can read that $\nu^3 = \eta^2 \tilde{\nu^2} \in \pi_{9, 3}\TMF / 2$, noting that there are no $\tau$-divisible classes in that bidegree. Thus, $\nu^3 \sqrt{\Delta} t = \eta^2 \tilde{\nu^2}\sqrt{\Delta} t$. However, $\tilde{\nu^2} \sqrt{\Delta} t \in \pi_{20, 2}\overline{\TMF^{\mathcal{B}C_2}} / \tau^3 = 0$, which is a contradiction ($\tau^2 h_1^4 = 0$, so the $h_1$ towers cannot contribute).
\end{proof}

\begin{corollary}
  $d_3(\sqrt{\Delta} t) = x_{12, 4}$.
\end{corollary}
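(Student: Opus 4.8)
The plan is to read this off from \Cref{lemma:sqrt-delta-t} by a bidegree count, with essentially no further homotopy-theoretic input. First I would pin down the location of $\sqrt{\Delta} t$: since $x_{1,1}$ sits in bidegree $(1,1)$ and $\sqrt{\Delta}$ has bidegree $(12,0)$, the class $\sqrt{\Delta} t$ lives in filtration $1$, in bidegree $(13,1)$. Any differential whose target lies in filtration $1$ must originate in filtration $\le -1$, so $\sqrt{\Delta} t$ can never be a boundary; in particular no $d_2$ hits it, and it survives (nonzero) to the $E_3$-page.

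Next I would rule out a $d_2$ \emph{out} of $\sqrt{\Delta} t$. Such a differential would land in bidegree $(12,3)$, and inspection of the $E_2$-page (\Cref{fig:anss-e2-page}) shows this bidegree is zero --- in the column $t - s = 12$ the only nonzero groups sit in filtrations $0$, $4$ and $6$. (Alternatively one may invoke the standard vanishing of the Adams--Novikov $d_2$.) Since \Cref{lemma:sqrt-delta-t} says $\sqrt{\Delta} t$ does not survive to $E_4$, and since it is neither a boundary nor the source of a $d_2$, it must support a nonzero $d_3$.

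Finally, a $d_3$ out of bidegree $(13,1)$ lands in bidegree $(12,4)$. On the $E_2$-page this group is a single copy of $\F_2$, generated by $x_{12,4}$, and it receives no $d_2$ (the only conceivable source, bidegree $(13,2)$, is empty), so it is still a single $\F_2$ on the $E_3$-page. Hence the nonzero value of $d_3(\sqrt{\Delta} t)$ can only be $x_{12,4}$, which is the claim.

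I do not anticipate a genuine obstacle in this final step: all of the substance has been front-loaded into \Cref{lemma:sqrt-delta-t} --- where synthetic spectra and the relation $\nu^3 = \eta^2 \tilde{\nu^2}$ in $\pi_* \TMF/2$ do the heavy lifting --- and into the computation of the $E_2$-page in \Cref{section:e2}; what remains here is purely bookkeeping of bidegrees on the chart, the only mild point being to confirm that the relevant neighbouring bidegrees $(12,3)$ and $(13,2)$ are empty so that the $d_3$ target is an unambiguous $\F_2$.
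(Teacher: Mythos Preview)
Your bidegree count goes wrong at the claim that $(12,4)$ is a single copy of $\F_2$. The $E_2$-page carries infinite $h_1$-towers rising from every $\Z$ on the $0$-line (these are the ``$\ko$-like'' towers indicated by the little arrows in \Cref{fig:anss-e2-page}); in particular $h_1^4 x_{8,0}$ lands in $(12,4)$ and is nonzero there. More generally, every $h_1$-divisible class in $(12,4)$ is in fact $h_1^4$-divisible, since the only nonzero class in $(11,3)$ is $h_1^3 x_{8,0}$, but this is still a nontrivial subgroup. Recall also that $x_{12,4}$ is by convention only well-defined modulo $a_1$- and $h_1$-multiples, so the assertion $d_3(\sqrt{\Delta}t)=x_{12,4}$ really means ``$d_3(\sqrt{\Delta}t)$ is a valid choice of $x_{12,4}$'', i.e.\ it is nonzero modulo the $h_1$-divisible classes. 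Knowing only that $d_3(\sqrt{\Delta}t)\neq 0$ from \Cref{lemma:sqrt-delta-t} does not rule out $d_3(\sqrt{\Delta}t)=h_1^4 y$ for some $y$ on the $0$-line.

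The paper closes this gap with a short extra argument: if $d_3(\sqrt{\Delta}t)=h_1^4 y$, then $h_1^4 d_3(y)=0$, and since $h_1^4$ is injective on the target bidegree one gets $d_3(y)=0$. Using $d_3(v_1^2)=h_1^3$ one then has $d_3(h_1 v_1^2 y)=h_1^4 y$, so $\sqrt{\Delta}t + h_1 v_1^2 y$ is a $d_3$-cycle. But this modified class is just another representative of $\sqrt{\Delta}x_{1,1}$, and the synthetic argument of \Cref{lemma:sqrt-delta-t} applies verbatim to it, giving a contradiction. Your reduction to \Cref{lemma:sqrt-delta-t} is therefore the right idea, but you need this additional step to handle the $h_1$-divisible ambiguity; the chart alone does not make the target a singleton.
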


\begin{proof}
  This is equivalent to saying that $d_3(\sqrt{\Delta} t)$ is not $h_1$-divisible. If it were, then we can write
  \[
    d_3(\sqrt{\Delta} t) = h_1^4 y
  \]
  for some $y$. Then $h_1^4 d_3(y) = d_3(h_1^4 y) = 0$, so $d_3(y) = 0$ since $h_1^4$ is injective on the target bidegree. Then $d_3(h_1 v_1^2 y) = h_1^4 y$, and so $d_3(\sqrt{\Delta}t + h_1 v_1^2 y) = 0$.

  But the argument of \Cref{lemma:sqrt-delta-t} applies equally well to $\sqrt{\Delta} t + h_1 v_1^2 y$. So we get a contradiction.
\end{proof}

\begin{corollary}
  $d_3$ vanishes on any class in bidegree $(8k, 0)$.
\end{corollary}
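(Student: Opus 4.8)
The plan is to show that the target bidegree $(8k-1,3)$ of such a $d_3$ contains no class that could possibly be a $d_3$-boundary: every class there either supports a nonzero $d_3$ itself (so, by $d_3^2=0$, cannot be in the image of $d_3$), or is a permanent cycle detecting a nonzero homotopy class (so again is not a boundary). First note that a class $x$ in bidegree $(8k,0)$ can only support differentials, never receive them, since the source of any $d_r$ into filtration $0$ would lie in filtration $-r<0$; hence it suffices to rule out $d_3(x)\ne 0$, and $d_3(x)$ lands in $(8k-1,3)$.

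Next I would read off $E_3^{8k-1,3}$ from the $E_3$-page computed so far, using the $\sqrt\Delta$-periodicity of $E_2$ to reduce to the residue of $8k-1$ modulo $12$, i.e.\ to $k$ modulo $3$. In each case $(8k-1,3)$ is spanned by at most two classes: always the class $h_1^3 x_{8k-4,0}$, where $x_{8k-4,0}$ is the $0$-line generator four stems to the left, and in two of the three residues additionally a class of the form $h_1^2 t^{(m)}$ or $h_2^2 t^{(m)}$, with $t^{(m)}=\sqrt\Delta^{\,m}t$. Now I eliminate these one at a time. Since $8k-4\equiv 4\pmod 8$, the class $x_{8k-4,0}$ supports a nonzero $d_3$ by \Cref{cor:d3x40} and its $\Delta$-translates, so by the Leibniz rule $h_1^3 x_{8k-4,0}$ supports a nonzero $d_3$ as well (the value being a nonzero $h_1^5$-multiple of a $t$-translate), and is therefore not a $d_3$-cycle. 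Likewise, when $m$ is odd, $t^{(m)}$ is a $\Delta$-multiple of $\sqrt\Delta t$, which supports the $d_3$ of the preceding corollary, so $h_1^2 t^{(m)}$ (resp.\ $h_2^2 t^{(m)}$) again supports a nonzero $d_3$.

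The only surviving candidate is $h_2^2 t^{(m)}$ with $m=2\ell$ even, i.e.\ $h_2^2\Delta^\ell t$, and here the point is that this is a genuine, nonzero permanent cycle. Since $t$ is permanent (\Cref{thm:x11-permanent}) and $\Delta$ is a $d_3$-cycle, $d_3(h_2^2\Delta^\ell t)=0$; and I claim it detects $\Delta^\ell\nu^2[t]\ne 0$. For this I would push $t$ forward along $\overline{\TMF^{\mathcal{B}C_2}}\to \TMF^{\RP^2_+}/(1,\tr)=Q$, where $Q$ is the $\TMF$-module complex $S^{-2}\cup_2 S^{-1}\cup_\eta S^1$ appearing in the proof of \Cref{thm:x11-permanent} and $t$ becomes ``$\nu$ on the bottom cell''. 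A short diagram chase with the cell filtration of $Q$, using $\pi_7\TMF=0$ and the fact that $\pi_9\TMF$ is $2$-torsion, shows that the map $\pi_9\TMF\to\pi_7 Q$ from the bottom cell is injective, so $\nu^2$ of the bottom-cell class is nonzero and of filtration $3$; hence $\nu^2[t]\ne 0$, of filtration $3$, in $\pi_7\overline{\TMF^{\mathcal{B}C_2}}$. As stem $7$ of $E_\infty$ is concentrated in filtration $3$, the class $h_2^2 t$ must survive, and applying $\Delta^\ell$ gives the claim for all $\ell$. Assembling the cases, no nonzero value is available for $d_3(x)$, so it vanishes.

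I expect the main obstacle to be the bookkeeping in the middle step: carefully enumerating $E_3^{8k-1,3}$ in all three residue classes and uniformly across $\Delta$-periods, and in particular isolating the $m$-even subcase of the $h_2^2 t^{(m)}$ family, which is the one place a genuinely homotopy-theoretic input — the nonvanishing of $\nu^2[t]$ — is needed rather than a formal $d_3^2=0$ argument.
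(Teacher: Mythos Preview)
Your approach is genuinely different from the paper's, and it has two real gaps. The first is the claim that every $x_{8k-4,0}$ supports a nonzero $d_3$ ``by \Cref{cor:d3x40} and its $\Delta$-translates.'' That corollary only gives $d_3(x_{4,0}) = h_1^2 t$; since $|\Delta| = 24$, its $\Delta$-translates cover only $x_{24j+4,0}$, i.e.\ the case $k \equiv 1 \pmod 3$. When $k \equiv 0$ or $2 \pmod 3$ the relevant class is $x_{24j+20,0}$ or $x_{24j+12,0}$, and nothing established so far computes $d_3$ on these. (They do turn out to support nonzero $d_3$'s, but proving it requires the $v_1^2$-module relation $v_1^2 t = h_1 x_{4,0}$ coming from \Cref{cor:cross} together with Leibniz for $c_4 = v_1^4$ --- none of which you invoke.) In the case $k\equiv 0$, where $h_1^3 x_{8k-4,0}$ is the \emph{only} class in the target, your argument simply has no leverage without this. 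The second gap is the step ``applying $\Delta^\ell$ gives the claim for all $\ell$'': you cannot propagate the $E_\infty$-survival of $h_2^2 t$ to $h_2^2\Delta^\ell t$ by a homotopy-level argument, because $\Delta$ is not a permanent cycle (indeed $h_2^2\Delta^4 t$ does not survive). The correct reduction is on the $E_3$ page: since $d_3(\Delta)=0$, $d_3$ is $\Delta$-linear, so $d_3(x_{24\ell+8,0}) = h_2^2\Delta^\ell t$ iff $d_3(x_{8,0}) = h_2^2 t$, and then your nice argument that $\nu^2 t \ne 0$ in $\pi_7$ rules out the single case $\ell = 0$. But that is not what you wrote.

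For comparison, the paper's proof avoids all casework. It observes that $x_{8,0}$ and $x_{16,0}$ generate the $(8k,0)$-classes under $v_1^4$ and $\Delta^{\pm}$, so it suffices to show $d_3$ vanishes on these two. It then checks that both $d_3$-values are $O(v_1^4)$ --- for $x_{8,0}=v_1^2 x_{4,0}$ via Leibniz with $d_3(v_1^2)=h_1^3$ together with \Cref{cor:cross}, and for $x_{16,0}$ by applying $d_3$ twice --- and finishes by rerunning the matrix argument of \Cref{lemma:juggle} with $(d_1,h_1)$ replaced by $(d_3,h_1^3)$. No $k\pmod 3$ split, and no homotopy input like $\nu^2 t\ne 0$, is needed.
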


\begin{proof}
  By the computation of the $E_2$ page, we can pick $x_{4, 0}$ and $x_{16, 0}$ to be such that they generate the $0$-line under $v_1^2$ and $\Delta^{\pm}$. Let $x_{8, 0} = v_1^2 x_{4, 0}$. Then $x_{8, 0}$ and $x_{16, 0}$ generate the classes in bidegree $(8k, 0)$ under $v_1^4$ and $\Delta^{\pm}$, and it suffices to show that $d_3(x_{8, 0}) = d_3(x_{16, 0}) = 0$.

  The strategy is to repeat the argument of \Cref{lemma:juggle} with $(d_1, h_1)$ replaced by $(d_3, h_1^3)$. To do so, we need to check that $d_3(x_{8, 0})$ and $d_3(x_{16, 0})$ are $O(v_1^4)$.

  We can compute that
  \[
    d_3(x_{8, 0}) = d_3(v_1^2 x_{4, 0}) = h_1^3 x_{4, 0} + v_1^2 h_1^2 t + O(v_1^4).
  \]
  But $h_1^3 x_{4, 0} = v_1^2 h_1^2 t + O(v_1^4)$ by \Cref{cor:cross} (the corollary shows that the remainder term is $O(a_1^3)$, but anything on the $E_2$ page that is $a_1^3$-divisible is also $a_1^4$-divisible). So this is $O(v_1^4)$.

  To see that $d_3(x_{16, 0}) = O(v_1^4)$, suppose instead that $d_3(x_{16, 0}) = h_1^2 \sqrt{\Delta} t + O(v_1^4)$. Then applying $d_3$ again gives $0 = h_1^2 x_{12, 4} + O(v_1^4)$, which is a contradiction since $h_1^2 x_{12, 4}$ is not $v_1^4$-divisible.
\end{proof}
This concludes the calculation of the $d_3$ differentials.

The $E_5$ page (with the $\ko$-like patterns omitted\footnote{It is easy to check that there cannot be differentials from the $\ko$-like classes since the possible targets are non-$g$-torsion}) is shown in \Cref{fig:anss-e5-page}. The differentials come from applying the Leibniz rule with $d_5(\Delta) = h_2 g$.

We then have $d_7$s in \Cref{fig:anss-e7-page} that are forced by the hidden $\nu$ extensions. Indeed, we have shown that the targets of the depicted differentials are $\nu$ times classes that are zero, hence must be hit by a differential, and the only possible sources are the ones we have drawn. A more careful argument using synthetic spectra can directly prove that these specific $d_7$'s must occur.

The $E_9$ page is then depicted in \Cref{fig:anss-e5-page}, which is still $\Delta^2$-invariant. We will show that the greyed out classes do not survive, while the black ones do. 

Afterwards, all the remaining differentials are long differentials that kill off high $\kappabar$ powers. These are shown in \Cref{fig:anss-long-d}, and the $E_\infty$ page is shown in \Cref{fig:anss-e-infty-page}. In the rest of the section, we shall show that the long differentials that occur are indeed what we indicated. We then conclude the calculation using that $\Delta^{\pm 8}$ is permanent.

\begin{sseqdata}[large, name = tmf anss, x range={0}{100}, y range={0}{20}]
  \foreach \delta in {0,...,5} {
    \foreach \g in {0,...,6} {
      \hook(\delta * 24 - 4 * \g, 4 * \g);
      \htwoedge(\delta * 24 + 12 - 4 * \g, 4 * \g);
    }
    \ifnum\delta=0
      \classoptions["t"](\delta * 24 + 1, 1)
    \else\ifnum\delta=1
      \classoptions["\Delta t"](\delta * 24 + 1, 1)
    \else
      \classoptions["\Delta^{\delta} t"](\delta * 24 + 1, 1)
    \fi
    \fi
  }
  \foreach \delta in {0,...,5} {
    \foreach \g in {0,...,6} {
      \IfExistsT(\delta * 24 + 22 - 4 * \g, 6 + 4 * \g) {
        \structline [dashed, bend left=30] (\delta * 24 + 19 - 4 * \g, 3 + 4 * \g)(\delta * 24 + 22 - 4 * \g, 6 + 4 * \g)
      }
    }
  }

  \foreach \delta in {0,...,3} {
    \foreach \g in {0,...,6} {
      \DrawIfValidDifferential5(\delta * 48 + 25 - 28 * \g, 1 + 4 * \g);
      \DrawIfValidDifferential5(\delta * 48 + 28 - 28 * \g, 2 + 4 * \g);
      \DrawIfValidDifferential5(\delta * 48 + 40 - 28 * \g, 2 + 4 * \g);
      \DrawIfValidDifferential7(\delta * 48 + 43 - 28 * \g, 3 + 4 * \g);
    }
  }
\end{sseqdata}

\begin{figure}
  \centering
  \begin{sideways}
    \printpage[name = tmf anss, page = 5]
  \end{sideways}
  \caption{$E_5$ page of the descent spectral sequence}\label{fig:anss-e5-page}
\end{figure}

\begin{figure}
  \centering
  \begin{sideways}
    \printpage[name = tmf anss, page = 7]
  \end{sideways}
  \caption{$E_7$ page of the descent spectral sequence}\label{fig:anss-e7-page}
\end{figure}

\begin{figure}
  \centering
  \begin{sideways}
    \begin{sseqpage}[name = tmf anss, page = 9]
      \def\labelscalefactor{0.65}
      \foreach \x in {0, 1} {
        \SseqParseInt\dp{2 * \x}
        \ifnum\x=0
          \def\deltafactor{}
        \else
          \def\deltafactor{\Delta^{\dp}}
        \fi

        \foreach \n in {0, ..., 2} {
          \SseqParseInt\wpow{3 + 4 * \n}
          \classoptions["w^{\wpow} \deltafactor t"](\x * 48 + 16 + \n * 20, 2 + \n * 4)
          \SseqParseInt\wpow{4 + 4 * \n}
          \classoptions["w^{\wpow} \deltafactor t"](\x * 48 + 21 + \n * 20, 5 + \n * 4)
          \SseqParseInt\wpow{5 + 4 * \n}
          \classoptions["w^{\wpow} \deltafactor t"](\x * 48 + 26 + \n * 20, 2 + \n * 4)
          \SseqParseInt\wpow{6 + 4 * \n}
          \classoptions["w^{\wpow} \deltafactor t"](\x * 48 + 31 + \n * 20, 3 + \n * 4)
        }
      }

      \foreach \x/\y in {0/18, 10/18, 15/19, 3/7, 8/10, 13/13, 18/10, 23/11, 28/14, 33/17, 38/14, 43/15, 48/18, 58/18, 63/19, 96/18, 97/1} {
        \classoptions[gray](\x, \y)
      }
      \classoptions["w^{15} t"]( 16 + 60, 2 + 12)
      \classoptions["w^{16} t"]( 21 + 60, 5 + 12)
      \classoptions["w^{17} t"]( 26 + 60, 2 + 12)
      \classoptions["w^{18} t"]( 31 + 60, 3 + 12)
      \classoptions["w^{19} t"]( 16 + 80, 2 + 16)
    \end{sseqpage}
  \end{sideways}
  \caption{$E_9$ page of the descent spectral sequence}\label{fig:anss-e9-page}
\end{figure}

\newcommand\LONGDMAXN{9}
\NewSseqGroup \longd {} {
  \hook
  \htwoedge(12, 0)
  \class(21, 5)
  \class(22, 6) \structline[hone]
  \structline [dashed, bend left=30] (19, 3)(22, 6)
  \foreach \n in {0, ..., \LONGDMAXN} {
    \class(26 + \n * 20, 2 + \n * 4)
    \class(31 + \n * 20, 3 + \n * 4)
    \class(36 + \n * 20, 6 + \n * 4)
    \class(41 + \n * 20, 9 + \n * 4)
  }
}
\begin{sseqdata}[large, name = tmf anss long, x range={88}{188}, y range={0}{40}, class label handler = { \def\result{\scalebox{0.65}{$#1$}} }]
  \foreach \x in {0, ..., 4} {
    \longd(\x * 48, 0)
    \SseqParseInt\dp{2 * \x}
    \ifnum\x=0
      \def\deltafactor{}
    \else
      \def\deltafactor{\Delta^{\dp}}
    \fi
    \classoptions["\deltafactor t"](\x * 48 + 1, 1)
    \foreach \n in {0, ..., \LONGDMAXN} {
      \SseqParseInt\wpow{3 + 4 * \n}
      \classoptions["w^{\wpow} \deltafactor t"](\x * 48 + 16 + \n * 20, 2 + \n * 4)
      \SseqParseInt\wpow{4 + 4 * \n}
      \classoptions["w^{\wpow} \deltafactor t"](\x * 48 + 21 + \n * 20, 5 + \n * 4)
      \SseqParseInt\wpow{5 + 4 * \n}
      \classoptions["w^{\wpow} \deltafactor t"](\x * 48 + 26 + \n * 20, 2 + \n * 4)
      \SseqParseInt\wpow{6 + 4 * \n}
      \classoptions["w^{\wpow} \deltafactor t"](\x * 48 + 31 + \n * 20, 3 + \n * 4)
    }
  }
  \foreach \m in {0, 1} {
    \d17(97 + \m * 48, 1)
    \d17(112 + \m * 48, 2)
    \d17(117 + \m * 48, 5)
    \foreach \n in {0, ..., \LONGDMAXN} {
      \DrawIfValidDifferential23(122 + \n * 20 + \m * 48, 2 + \n * 4)
      \DrawIfValidDifferential19(127 + \n * 20 + \m * 48, 3 + \n * 4)
      \DrawIfValidDifferential17(132 + \n * 20 + \m * 48, 6 + \n * 4)
      \DrawIfValidDifferential17(137 + \n * 20 + \m * 48, 9 + \n * 4)
    }
  }
\end{sseqdata}

\begin{figure}
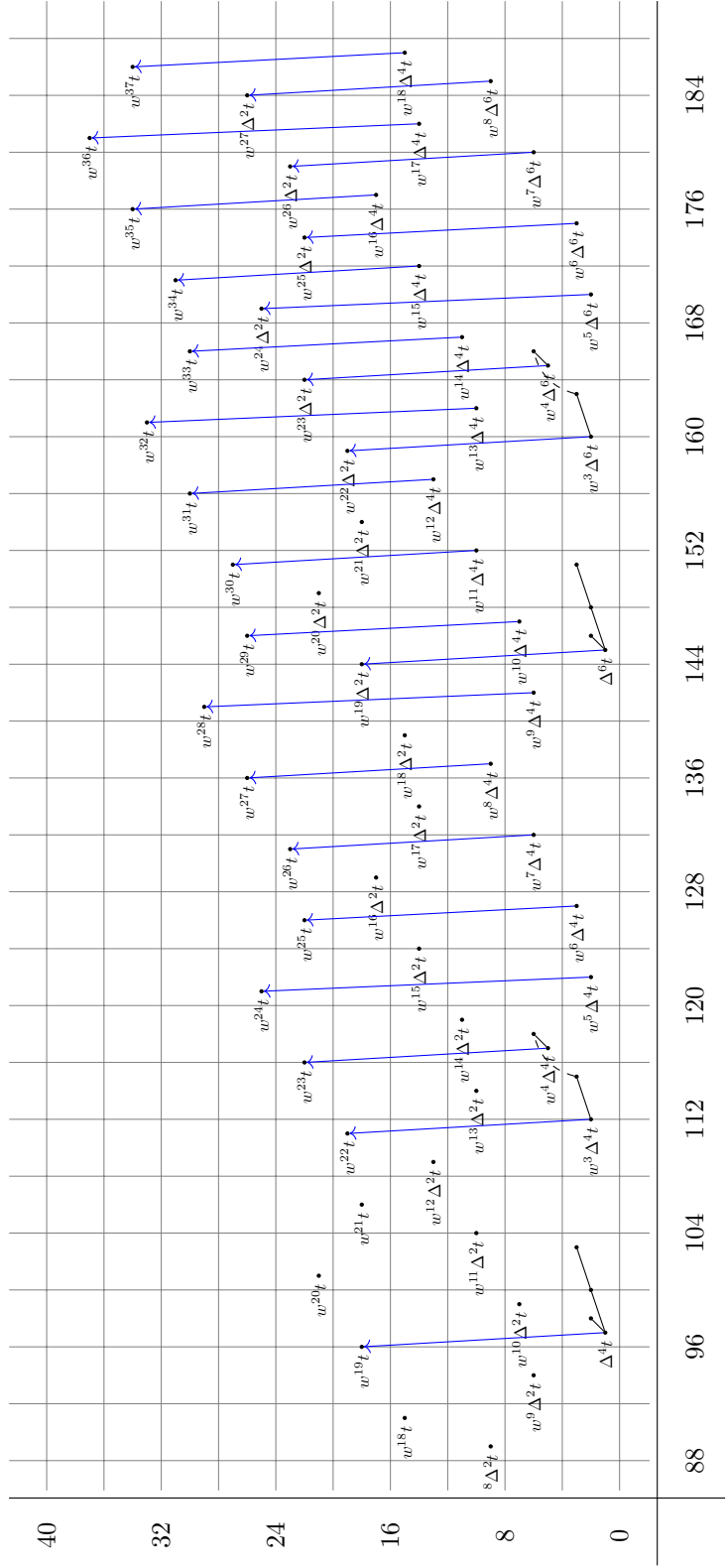

  \centering
  \begin{sideways}
    \printpage[name = tmf anss long]
  \end{sideways}
  \caption{Remaining long differentials}\label{fig:anss-long-d}
\end{figure}

\begin{figure}
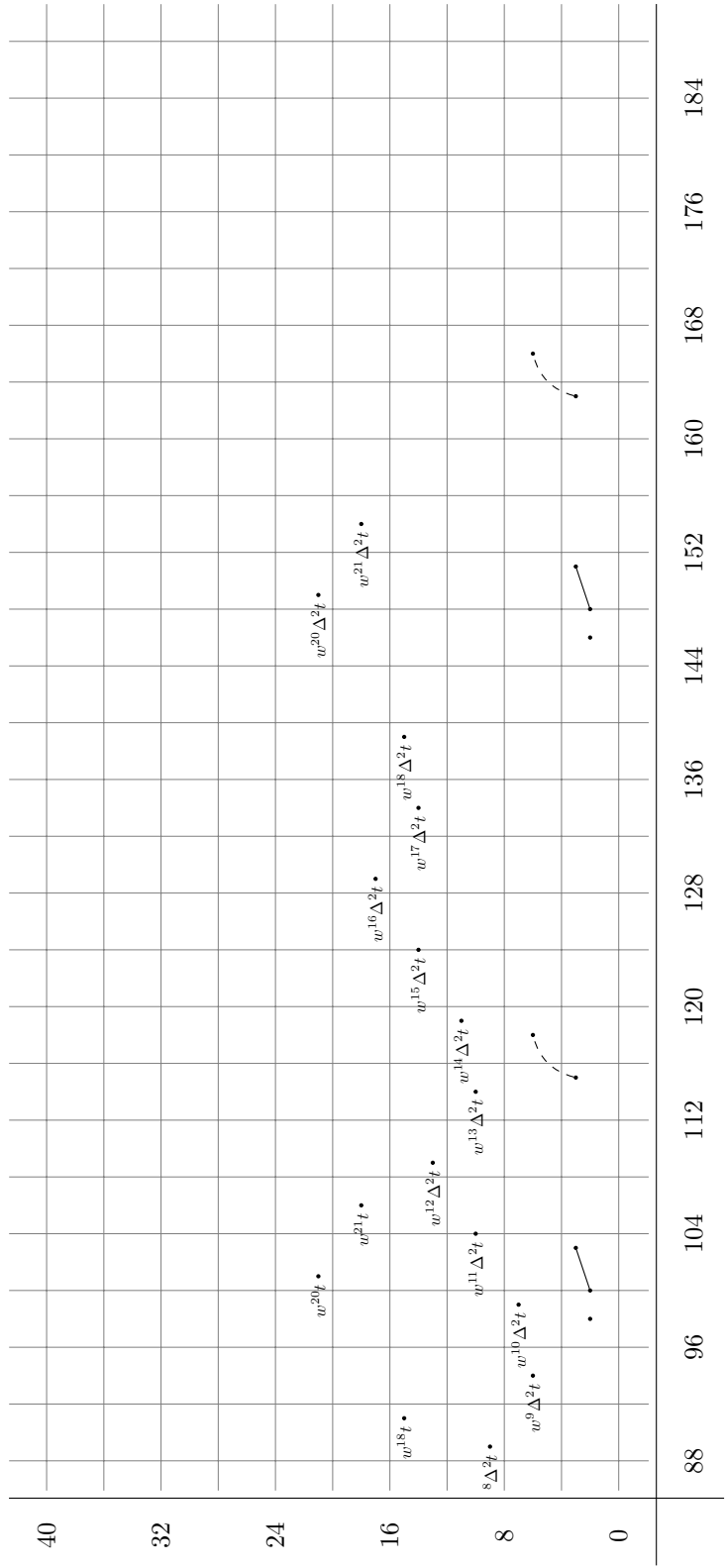

  \centering
  \begin{sideways}
    \printpage[name = tmf anss long, page = \infty]
  \end{sideways}
  \caption{The $E_\infty$ page of the descent spectral sequence}\label{fig:anss-e-infty-page}
\end{figure}

We start with the observation that
\begin{lemma}
  There are no classes above the $s = 24$-line that survive.
\end{lemma}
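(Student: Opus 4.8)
The plan is to promote the statement to a \emph{horizontal vanishing line}: I would show that the descent spectral sequence for $\overline{\TMF^{\mathcal{B}C_2}}$ has $E_\infty^{s,*} = 0$ for all $s > 24$, which is visibly stronger than the claim about surviving classes. The idea is that $\overline{\TMF^{\mathcal{B}C_2}}$ is a \emph{perfect} $\TMF$-module, and perfect $\TMF$-modules inherit a vanishing line from $\TMF$ itself, with the height controlled by the spread of the cells.

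Concretely, I would argue as follows. Since the underlying stack of $\E[2]$ is finite flat of degree $4$ over $\M$, the pushforward $p_*\O_{\E[2]}$ is a perfect complex, hence so is its cofiber $\overline{p_*\O_{\E[2]}}$; passing through the equivalence $\Gamma\colon\QCoh(\M)\overset\sim\to\Mod_\TMF$, the module $\overline{\TMF^{\mathcal{B}C_2}}$ is a perfect $\TMF$-module. It therefore admits a finite filtration by $\TMF$-modules with successive quotients shifts $\Sigma^{d_i}\TMF$, and the computation of \Cref{section:e2} bounds the range in which the $d_i$ lie — after base change to the flat cover $\TMF_1(3)$ the module becomes free of rank $2$ on generators in degrees $-2$ and $-4$, which constrains the cell structure over $\TMF$. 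I would then invoke the classical horizontal vanishing line for the descent (equivalently $BP$-Adams--Novikov) spectral sequence of $\TMF$ at the prime $2$, and propagate it along the filtration: for each cofiber sequence $M'\to M\to\Sigma^{d_i}\TMF$, the long exact sequence of $E_\infty$-pages shows that a vanishing line for $M'$ and for $\Sigma^{d_i}\TMF$ forces one for $M$, so after finitely many steps one obtains a vanishing line for $\overline{\TMF^{\mathcal{B}C_2}}$ whose height is computed from the vanishing line of $\TMF$ together with the spread of the $d_i$, and which a direct check identifies with $s = 24$.

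The main obstacle is making the propagation rigorous in the Adams--Novikov setting: the long exact sequence of $E_\infty$-pages is immediate only when the relevant sequences of $BP_*BP$-comodules are short exact (i.e.\ the connecting maps vanish on $BP$-homology), so one must either choose the filtration so that each $\Sigma^{d_i}\TMF$ splits off on $BP_*$, or run the argument at the level of the $\TMF_1(3)$-resolution, where all modules in sight are even. A secondary nuisance is pinning down the exact constant $24$ from the bound on the cells. If one prefers to sidestep all of this, there is a purely internal fallback on the $E_9$-page: by $\Delta^2$-periodicity and the module structure over the descent spectral sequence of $\TMF$, it suffices to exclude survivors in finitely many $w$-power families $w^k t$, and each such family is truncated by the same $\eta^4 = 0$ and synthetic $\tau$-torsion arguments used above to handle $\sqrt{\Delta}t$.
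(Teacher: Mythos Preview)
Your approach is genuinely different from the paper's, and considerably more involved than necessary. The paper's argument is a three-line trick using the $\TMF$-module structure directly on the spectral sequence: on the $E_4$ page multiplication by $g$ (the class detecting $\kappabar$, of bidegree $(20,4)$) is already bijective above the $3$-line, so any class $x$ with $s>24$ can be written $x=g^6y$; if $x$ supports no differential then by $g$-injectivity neither does $y$, so $y$ is permanent and the homotopy class detected by $x$ is $\kappabar^6$-divisible; but $\kappabar^6=0$ in $\pi_*\TMF$. That's it.

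Your main route---propagating a vanishing line through a finite $\TMF$-cell filtration---is not wrong in spirit, but the obstacles you flag are real and you have not addressed them. Cofiber sequences do not induce long exact sequences of $E_\infty$-pages; the correct statement involves filtration shifts, and tracking those shifts precisely enough to land on the constant $24$ essentially requires knowing the $\TMF$-cell structure of $\overline{\TMF^{\mathcal{B}C_2}}$, which is the main theorem of the paper and is only established \emph{after} this lemma. Knowing the module is free of rank $2$ over $\TMF_1(3)$ constrains things but does not by itself pin down the $\TMF$-cells. Your fallback on the $E_9$-page is also more work than needed and aims at the wrong relation: the paper does not use $\eta^4=0$ here at all; the entire content is $g$-periodicity plus $\kappabar^6=0$.
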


\begin{proof}
  On the $E_4$ page, multiplication by $g$ is injective and surjective above the $3$-line. If $x$ is an element above the $24$-line, then we can write $x = g^6 y$ for some $y$. If $x$ does not support any differential, then neither does $y$, by injectivity of $g$-multiplication. So $y$ is permanent, and the class representing $x$ is $\kappabar^6$-divisible. But $\kappabar^6 = 0$ in $\tmf$.
\end{proof}

The hardest part is to show that $\Delta^2 t$ is permanent. The difficulty here is again translational invariance. Our starting piece of knowledge is that $t$ is permanent, and we want to somehow deduce that $\Delta^2 t$ is permanent too. However, we must not allow ourselves to repeat the argument, using that $\Delta^2 t$ is permanent to deduce that $\Delta^4 t$ is, because it is not.

The key property we can make use of is the fact that the class $t$ extends to a map from $\TMF \otimes \RP^\infty$. Our job would be easy if $\Delta^2$ of the bottom cell is permanent in $\TMF \otimes \RP^\infty$, but that's not true. However, we can get by with the following version:

\begin{lemma}
  The class $t\colon \Sigma \TMF \to \overline{\TMF^{\mathcal{B}C_2}}$ extends to a map from $\TMF \otimes L$.
\end{lemma}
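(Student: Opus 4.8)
The plan is to extend $t$ along the bottom-cell inclusion $\Sigma\TMF = \TMF \otimes S^1 \hookrightarrow \TMF \otimes L$, where $L = D(DL) = S^1 \cup_2 e^2 \cup_\eta e^4 \cup_\nu e^8$ is the Spanier--Whitehead dual of the complex in \Cref{fig:cell-dl}. I would do this cell by cell, so there are three obstructions to kill, one for each of $e^2$, $e^4$, $e^8$, living respectively in $\pi_1$, $\pi_3$, $\pi_7$ of $\overline{\TMF^{\mathcal{B}C_2}}$ (the last two well-defined only modulo earlier choices). The structural input that makes this work is that $t$ is not an arbitrary homotopy class: it extends to a map $\bar{n}\colon \TMF \otimes \RP^\infty \to \overline{\TMF^{\mathcal{B}C_2}}$. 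Indeed $t$ is the restriction of $\TMF \otimes \RP^\infty_+ = \TMF_{hC_2} \xrightarrow{\mathrm{Nm}} \TMF^{\mathcal{B}C_2} \to \overline{\TMF^{\mathcal{B}C_2}}$ to the bottom cell of the $\RP^\infty$-summand; this composite kills $\TMF \otimes S^0$ (that summand maps in via $\tr$, which dies in the quotient), hence factors through $\TMF \otimes \RP^\infty$, and restricting the resulting $\bar{n}$ to $S^1$ recovers $t$.

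The obstruction for $e^2$ is $2t \in \pi_1 \overline{\TMF^{\mathcal{B}C_2}}$, which vanishes because $t$ has order $2$ by \Cref{thm:x11-permanent}; in fact $L^{(2)} = S^1 \cup_2 e^2$ is just $\RP^2$, and we may take the extension over $\TMF \otimes L^{(2)}$ to be $\bar{n}|_{\TMF \otimes \RP^2}$. The obstruction for $e^4$ lies in $\pi_3 \overline{\TMF^{\mathcal{B}C_2}}$, which is zero: stem $3$ is empty on $E_\infty$, its only class on $E_2$ being $h_1^2 t$, killed by $d_3(x_{4,0})$. So $t$ extends over $\TMF \otimes L^{(4)} = \TMF \otimes (\RP^2 \cup_\eta e^4)$; moreover one can keep the extension $t_4$ compatible with $\bar{n}$ by observing that $L^{(4)}$ is the cofiber of the collapse map $S^3 \to \RP^4$ of the split $3$-cell of $\RP^4$ (as $\RP^3 = \RP^2 \vee S^3$), so that $\bar{n}|_{\TMF \otimes \RP^4}$ descends to $\TMF \otimes L^{(4)}$, again using $\pi_3 \overline{\TMF^{\mathcal{B}C_2}} = 0$.

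The remaining obstruction, for $e^8$, is the real difficulty, because it lies in $\pi_7 \overline{\TMF^{\mathcal{B}C_2}}$, which is \emph{not} zero (it contains $[h_2 x_{4,0}]$). Since $e^8$ is attached to $e^4$ by $\nu$, varying $t_4$ on the $4$-cell by $v \in \pi_4 \overline{\TMF^{\mathcal{B}C_2}}$ changes this obstruction by $\nu v$, so it suffices to show the obstruction is $\nu$-divisible, and here the norm-map origin of $t$ is essential. Because $t_4$ was built from $\bar{n}$, its value on the $4$-cell is the value of $\bar{n}$ on the $4$-cell of $\RP^\infty$; the latter cell is attached by $\eta$, and one checks this value is detected by $h_2 t$ rather than by $x_{4,0}$ (the class attached to the transfer, which supports a $d_3$), so the obstruction equals $\nu \cdot [\nu t]$, visibly $\nu$-divisible. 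I expect pinning down this last obstruction precisely enough to see its $\nu$-divisibility to be the technical heart of the proof; everything else is a formal comparison of the cell structures of $L$ and $\RP^\infty$ with the low-degree homotopy of $\overline{\TMF^{\mathcal{B}C_2}}$ computed earlier.
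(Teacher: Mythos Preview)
Your approach is essentially the paper's: extend $t$ cell by cell along $L$, using the factorization through $\TMF \otimes \RP^\infty$ for leverage. For the $4$-cell, your observation that $\pi_3 \overline{\TMF^{\mathcal{B}C_2}} = 0$ (the only $E_2$ class $h_1^2 t$ being killed by $d_3(x_{4,0})$) is correct and arguably cleaner than the paper's argument, which instead identifies the obstruction as the Toda bracket $\langle \eta, 2, t\rangle$, observes via the cell structure of $\RP^4$ that it equals $2y$ with $y$ the image of the $3$-cell, and notes that everything in $\pi_3$ is $2$-torsion.

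For the $8$-cell you have overcomplicated matters, and your argument as written is incomplete. You correctly reduce to showing the obstruction is $\nu$-divisible, but then attempt to compute it explicitly by tracing the $4$-cell of $\RP^\infty$ through the norm map. This introduces imprecisions (the $4$-cell of $\RP^4$ is attached by both $2$ and $\eta$, not just $\eta$; and the claim that its image ``is detected by $h_2 t$ rather than $x_{4,0}$'' is left as something ``one checks'') and you yourself flag this as the unresolved ``technical heart''. None of this is necessary. The paper simply observes that $\pi_7 \overline{\TMF^{\mathcal{B}C_2}}$ is generated by $[h_2^2 t] = \nu [h_2 t]$, so \emph{every} element of $\pi_7$ is a $\nu$-multiple and hence lies in the indeterminacy $\nu \cdot \pi_4$. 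In Toda-bracket language: the obstruction is $\langle \nu, \eta, 2, t\rangle$, and the only possible nonzero value already lies in the indeterminacy, so $0 \in \langle \nu, \eta, 2, t\rangle$. No tracking through the norm map is required at this last stage.
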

Recall that $L$ is the dual of $DL$, as in the statement of the main theorem (\Cref{thm:main}). Its cell diagram is depicted in \Cref{fig:cell-l}.

\begin{proof}
  First of all, it extends to $\Sigma \TMF / 2$ since $2 t = 0$. The obstruction to extending to the $4$-cell is $\langle \eta, 2, t\rangle$. Since $t$ comes from restricting the norm map $\TMF_{h C_2} \to \TMF^{\mathcal{B}C_2}$, we know it extends to a map $\RP^4 \to \overline{\TMF^{\mathcal{B}C_2}}$. Let $y \in \pi_* \overline{\TMF^{\mathcal{B}C_2}}$ be the image of the $3$-cell. Then the cell structure of $\RP^4$ (\Cref{fig:cell-l}) tells us
  \[
    \langle \eta, 2, t \rangle = 2y.
  \]
  But all possible images of $y$ are $2$-torsion. So $\langle \eta, 2, t \rangle = 0$. Finally, the obstruction to extending to all of $L$ is $\langle \nu, \eta, 2, t \rangle$, which is defined since $\langle \nu, \eta, 2\rangle = 0$ with no indeterminacy. However, the only possible class is a $\nu$-multiple, hence is in the indeterminacy. So $0 \in \langle \nu, \eta, 2, t \rangle$, and we can extend to $L$.
\end{proof}

\begin{figure}[h]
  \centering
  \begin{tikzpicture}[scale=0.5]
    \begin{celldiagram}
      \two{1}
      \eta{2}
      \nu{4}
      \n{8} \n{4} \n{2} \n{1}

      \foreach \y in {1,2,4,8} {
        \node [left] at (-0.8, \y) {$w_\y$};
      }
    \end{celldiagram}
    \node [right] at (1, 6) {$\nu$};
    \node [right] at (0.5, 3) {$\eta$};
    \node [right] at (0, 1.5) {$2$};

    \node at (0, 0) {$L$};

    \begin{scope}[shift={(5, 0)}]
      \begin{celldiagram}
        \two{1}
        \two{3}
        \eta{2}
        \n{1} \n{2} \n{3} \n{4}
      \end{celldiagram}
      \node at (0, 0) {$\RP^4$};
    \end{scope}
  \end{tikzpicture}
  \caption{Cell diagrams of $L$ and $\RP^4$}\label{fig:cell-l}
\end{figure}

\begin{remark}
  \emph{A posteriori}, we expect such a map to exist. We know that $\overline{\TMF^{\mathcal{B}C_2}} = \TMF \otimes DL$, and this is the map $\TMF \otimes L \to \TMF \otimes DL$ whose cofiber is $\KO$.
\end{remark}

Let $w_k$ be the $k$-cell of $L$.
\begin{theorem}
  In the Adams--Novikov spectral sequence of $\TMF \otimes L$, the class $\Delta^2 w_1$ survives and has order $2$.
\end{theorem}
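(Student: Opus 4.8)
The plan is to compute the relevant portion of the Adams--Novikov (equivalently descent) spectral sequence of $\TMF \otimes L$ directly, exploiting that $L$ is a finite four-cell complex with cells in dimensions $1,2,4,8$ and attaching maps $2,\eta,\nu$ as in \Cref{fig:cell-l}, so that everything reduces to the known spectral sequence of $\TMF$. Concretely, $\TMF_\ast L$ is free of rank four on $w_1,w_2,w_4,w_8$, so the $E_2$-page is a direct sum of four shifts of the $E_2$-page of the descent spectral sequence of $\TMF$, and the differentials are the internal $\TMF$-differentials on each summand together with the ``crossing'' differentials recording the attaching maps. Two structural facts set the stage: $w_1$ is a permanent cycle, since it is the inclusion of the bottom cell smashed with $\TMF$ and hence an honest homotopy class detected by the filtration-$0$ generator; and the bottom two cells span a copy of $\Sigma\,\TMF/2$, so that $2w_1 = 0$ in $\pi_1(\TMF\otimes L)$ (and likewise $\eta w_2 = 0$, $\nu w_4 = 0$).

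Granting survival, the order statement is formal: $2\cdot(\Delta^2 w_1) = \Delta^2(2w_1) = 0$, so $\Delta^2 w_1$ has order dividing two, and since it lies on the $0$-line it cannot be hit by a differential, so if it survives it is nonzero, hence of order exactly two. For survival I must rule out every differential out of the bidegree $(t-s,s)=(49,0)$. The first is $d_5$, and here $d_5(\Delta^2 w_1) = d_5(\Delta^2)\,w_1 = 2\Delta\,d_5(\Delta)\,w_1 = \Delta\,d_5(\Delta)\,(2w_1) = 0$ by the Leibniz rule and $2w_1 = 0$ (note $\Delta^2$ does survive to $E_5$, so Leibniz applies, even though $d_5(\Delta)\neq 0$ in $\TMF$). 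This is exactly the point where $L$ succeeds where $\TMF\otimes\RP^\infty$ fails: $\Delta^2$ itself supports a $d_5$ in $\TMF$, but that differential is annihilated once multiplied into the $2$-torsion class $w_1$, and — because $L$ has only four cells — there is not enough room in the spectral sequence for $\Delta^2 w_1$ to support a longer differential instead.

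Making ``not enough room'' precise is the main obstacle. For $r\ge 7$ the target bidegree $(48,r)$ splits as $E_r^{47,r}(\TMF)\oplus E_r^{46,r}(\TMF)\oplus E_r^{44,r}(\TMF)\oplus E_r^{40,r}(\TMF)$ (one summand per cell), and I would check that in each summand every candidate target has already been killed by an earlier differential or is obstructed by $\TMF$-module multiplicativity together with the relations $\eta w_2 = \nu w_4 = 0$; the permanence of $\Delta^8 w_1$, which is immediate since $\Delta^8$ and $w_1$ are both permanent, pins this down $192$-periodically. I expect to carry out this last step the way the rest of \Cref{section:differentials} handles long differentials rather than by brute-force bookkeeping: pass to $BP$-synthetic spectra, use $2w_1 = 0$ to obtain a map out of a suitable shift of $\TMF/2$ (or its truncation over $C\tau^n$) detecting $\Delta^2 w_1$, and play off $\nu$- and $\eta$-divisibility relations in $\pi_\ast\TMF/2$ against the absence of candidate classes in the target bidegrees. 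Equivalently one may simply compute $\pi_\ast(\TMF\otimes L)$ from the four cell-attachment long exact sequences and read off the $\Z/2$ in $\pi_{49}$ detected by $\Delta^2 w_1$; as a consistency check, $L$ is a split summand of the spectrum $L_0$ of \cite{tmf-tate}, so the outcome is compatible with the Tate computations there.
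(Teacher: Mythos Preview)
Your premise that the $E_2$ page of the ANSS for $\TMF\otimes L$ is ``a direct sum of four shifts of the $E_2$-page of $\TMF$'' with attaching maps appearing only as crossing differentials is false, and this is exactly the obstacle the paper has to work around. The $2$-attachment has filtration $0$ and is injective in $\TMF_1(3)$-homology, so already the $2$-skeleton contributes $\Ext_\Gamma(A,A/2)$ rather than two copies of $\Ext_\Gamma(A,A)$. Worse, the attaching map of the $4$-cell is a map $S^3\to\Sigma S/2$ that projects to $\eta$ on the top cell; in $\TMF_1(3)\otimes(-)$ this is detected by $a_1$ (equivalently $v_1$) in filtration $0$, but the induced map $A\to A/2$ is neither zero nor injective, so there is no long exact sequence in $\Ext$ and no direct way to compute the $E_2$ page. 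The paper flags this explicitly and instead constructs synthetic models $\widetilde{Q}$ and $\widetilde{L}$ by taking cofibers in $BP$-synthetic spectra with the attaching maps placed in bidegree $(*,0)$; this gives a modified spectral sequence whose $E_2$ page \emph{is} computable by long exact sequences and which still maps to $\nu(\tmf\otimes L)$.

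You also never locate the actual obstruction. In the ANSS for $\Sigma\tmf/2$ one has $d_9(\Delta^2 w_1)=g^2 x w_1$, so $\Delta^2 w_1$ does \emph{not} survive in the $2$-skeleton; the content of the theorem is that attaching the $4$- and $8$-cells kills this target. The paper's mechanism is a hidden $\eta$-extension $\eta\cdot[\nu w_4]=x w_1$ on the $E_2$ page of $\widetilde{Q}$, proved by the Toda manipulation $\eta\langle\nu,\eta,w_2\rangle=\langle\eta,\nu,\eta\rangle w_2=\nu^2 w_2$ after killing the bottom cell. Since $\widetilde{L}$ is the cofiber of $[\nu w_4]$, this forces $g x w_1=0$ on the $E_2$ page of $\widetilde{L}$, so $g^2 x w_1=0$ and there is nothing left for $\Delta^2 w_1$ to hit. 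Your sketch of ``playing off $\nu$- and $\eta$-divisibility relations'' does not isolate this extension, which is the whole argument; and your fallback of reading off $\pi_{49}$ from four long exact sequences runs into the same problem, since it requires the full ANSS of the intermediate skeleta.
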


\begin{proof}
  It suffices to prove this for $\tmf \otimes L$ instead.

  We do not know of a way to compute the $E_2$ page of the Adams--Novikov spectral sequence for $\tmf \otimes L$, as the attaching map of the $4$-cell is filtration $0$ but non-injective in homology, so there is no long exact sequence. To remedy this problem, we use a modified Adams spectral sequence via the technology of synthetic spectra.

  First observe that $2\colon \tmf \to \tmf$ is in fact injective in $BP$-homology, since $BP \otimes \tmf$ is non-torsion (see e.g.\ \cite[Corollary 5.2]{homology-tmf}). So $\nu (\tmf / 2) = \nu (\tmf) / 2$.

  We next construct synthetic $\nu(\tmf)$-modules $\widetilde{Q}, \widetilde{L}$ by the cofiber sequences
  \[
    \begin{tikzcd}[row sep=tiny]
      \Sigma^{3, 0}\tmf \ar[r, "{[\eta w_2]}"] & \nu(\Sigma \tmf / 2) \ar[r] & \widetilde{Q}\\
      \Sigma^{7, 0}\tmf \ar[r, "{[\nu w_4]}"] & \widetilde{Q} \ar[r] & \widetilde{L}
    \end{tikzcd}
  \]
  where $[\eta w_2]$ and $[\nu w_4]$ are the attaching maps of the $4$- and $8$-cell respectively. They are characterized by the fact that the projection onto the top cell of their targets are $\eta$ and $\nu$ respectively.

  The universal property of the cofiber gives us natural comparison maps $\widetilde{Q} \to \nu (\tmf \otimes Q)$ and $\widetilde{L} \to \nu (\tmf \otimes L)$. For example, the second map is obtained from the first via
  \[
    \begin{tikzcd}
      \Sigma^{7, 0} \tmf \ar[r, "{[\nu w_4]}"] \ar[d, equals] & \tilde{Q} \ar[r] \ar[d] & \widetilde{L} \ar[d, dashed]\\
      \Sigma^{7, 0} \tmf = \nu(\Sigma^7 \tmf) \ar[r, "{\nu([\nu w_4])}"] & \nu (\tmf \otimes Q) \ar[r] & \nu (\tmf \otimes L).
    \end{tikzcd}
  \]
  Here the top row is a cofiber sequence in the category of synthetic spectra, and the bottom row is $\nu$ applied to a cofiber sequence in the category of spectra. The only thing to check is that the left-hand square commutes, which is true since every map $S^{k, 0} \to \nu Z$ is uniquely of the form $\nu f$; there are no $\tau$-torsion classes in this bidegree since these would have to be hit by a differential from below the $0$-line.

  Given this, it suffices to show that $\Delta^2 w_1$ survives in $\widetilde{L}$. To understand $\widetilde{L}$, we start with $\Sigma \tmf / 2$, whose ANSS was computed by \cite{tmf-rp2} and is shown in \Cref{fig:anss-c2-full} (with $\ko$-like terms omitted as usual). The important feature is that the differential on $\Delta^2 w_1$ hits $g^2 x w_1$ --- we have to show that this class vanishes in $\widetilde{L}$.

  The synthetic cofiber sequence $\Sigma^{3, 0} \tmf \to \nu (\Sigma \tmf / 2) \to \widetilde{Q}$ tells us the $E_2$ page of the ANSS for $\tilde{Q}$ sits in a long exact sequence between that of $\Sigma^{3, 0} \tmf$ and $\nu (\Sigma \tmf / 2)$. This is displayed in \Cref{fig:anss-q-full}, where the depicted differentials are the connecting map. The crucial claim in this diagram is that there is a hidden extension $\eta [\nu w_4] = x w_1$ on the $E_2$ page. Then since $\widetilde{L}$ is obtained by killing $[\nu w_4]$, we know that $gxw_1 = 0$ in the $E_2$ page of $\tilde{L}$. Since there are no higher filtration terms, $\Delta^2 w_1$ must survive.

  To see this hidden extension, note that $x \in \pi_* \tmf / 2$ detects $\nu^2$ on the top cell. If we quotient out the bottom cell in $\tilde{Q}$, then we can write the class of interest as
  \[
    \eta [\nu w_4] = \eta \langle \nu, \eta, w_2 \rangle = \langle \eta, \nu, \eta\rangle w_2 = \nu^2 w_2,
  \]
  as desired. In this equation, $w_2$ is the homotopy class of the $2$-cell of $\tilde{Q}/w_1$, which is now an actual element since we killed off the bottom cell.

  Finally, it is straightforward to check that there are no classes above $\Delta^2 w_1$, so it must have order $2$.
\end{proof}

\DeclareSseqGroup\tmfAnssModTwoUnitPrime {} {
  \class (0, 0) \savestack \hone \hone
  \class (2, 0) \structline[dashed] \hone \hone
  \restorestack
  
  \htwo \htwo \htwo
  \honei \honei
  \htwo
  \honei \structline[dashed](9, 3)

  \class (15, 3)
  \honei
  \htwo
  \honei \honei
  \structline[dashed](15, 3)
  \htwo \htwo
}

\DeclareSseqGroup\tmfAnssZero {} {
  \class[rectangle, fill=white](0, 0) \hone \hone \hone
  \class[circlen=2](3, 1) \structline[dashed] \structline(0, 0) \htwo \htwo \honei
}

\DeclareSseqGroup\tmfAnssHigh {} {
  \class(-5, -1) \honei \htwo \class[circlen = 3](0, 0) \structline \hone \hone \hone
  \class[circlen=2](3, 1) \structline[dashed] \structline(0, 0) \htwo \htwo \honei
}

\begin{figure}
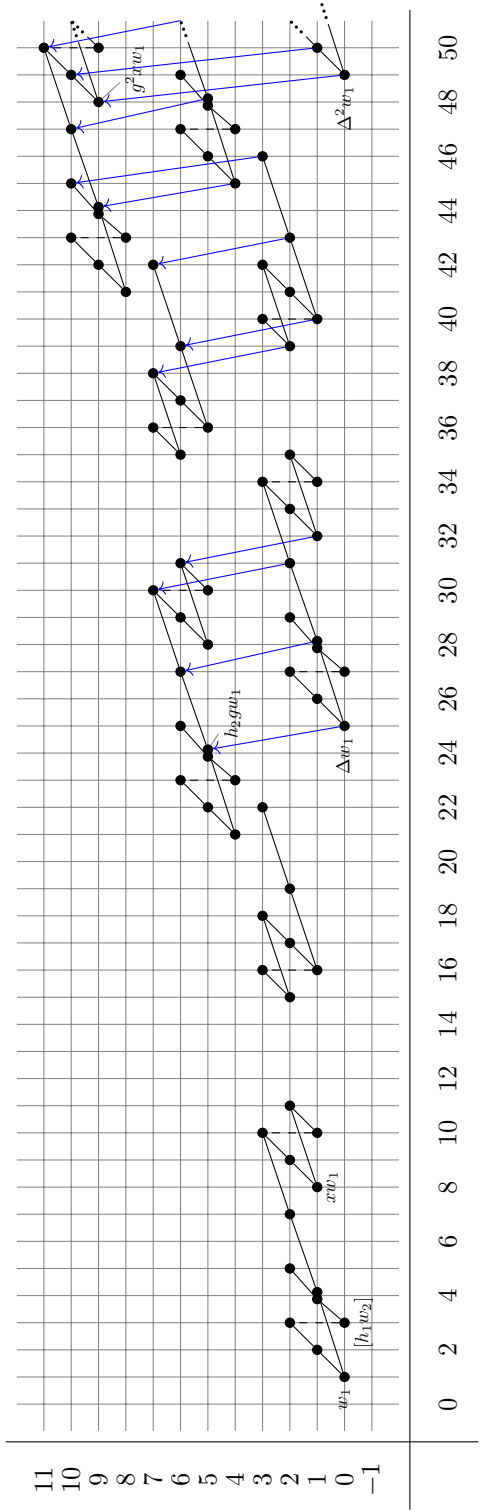

  \def\labelscalefactor{0.7}
  \centering
  \begin{sideways}
    \begin{sseqpage}[small, scale = 1.016, x range={0}{50}, y range={-1}{11}]
      \tmfAnssModTwoUnitPrime(1, 0)
      \tmfAnssModTwoUnitPrime(21, 4)
      \tmfAnssModTwoUnitPrime(41, 8)
      \tmfAnssModTwoUnitPrime(25, 0)
      \tmfAnssModTwoUnitPrime(45, 4)
      \tmfAnssModTwoUnitPrime(49, 0)

      \foreach \n in {0, 1} {
        \d5(25 + 20 * \n, 0 + 4 * \n)(24 + 20 * \n, 5 + 4 * \n, 2)
        \d5(28 + 20 * \n, 1 + 4 * \n, 2)
        \d5(31 + 20 * \n, 2 + 4 * \n)
        \d5(32 + 20 * \n, 1 + 4 * \n)

        \d5(39 + 20 * \n, 2 + 4 * \n)
        \d5(40 + 20 * \n, 1 + 4 * \n)
        \d5(43 + 20 * \n, 2 + 4 * \n)
      }

      \d7(46, 3)
      \d9(49, 0)
      \d9(50, 1)

      \classoptions["w_1"](1, 0)
      \classoptions["{[h_1 w_2]}" {below=-0.5pt}](3, 0)
      \classoptions["x w_1" {below=-0.5pt}](8, 1)
      \classoptions["\Delta w_1"](25, 0)
      \classoptions["\Delta^2 w_1"](49, 0)
      \classoptions["h_2 g w_1" { anchor = north west, pin=gray, yshift=-4, xshift=4 }](24, 5, 2)
      \classoptions["g^2 x w_1" { anchor = north west, pin=gray , yshift=-8, xshift=3 }](48, 9)
    \end{sseqpage}
  \end{sideways}
  \caption{Adams--Novikov spectral sequence for $\tmf / 2$}\label{fig:anss-c2-full}
\end{figure}

\begin{figure}
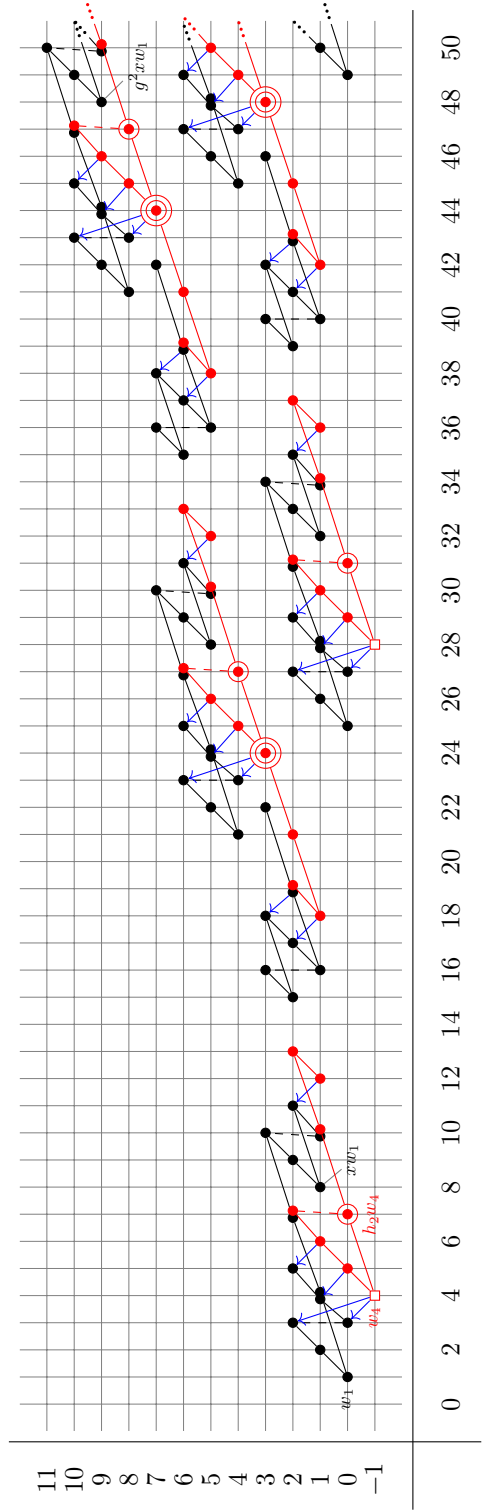

  \def\labelscalefactor{0.7}
  \centering
  \begin{sideways}
    \begin{sseqpage}[small, scale = 1.016, x range={0}{50}]
      \tmfAnssModTwoUnitPrime(1, 0)
      \tmfAnssModTwoUnitPrime(21, 4)
      \tmfAnssModTwoUnitPrime(41, 8)
      \tmfAnssModTwoUnitPrime(25, 0)
      \tmfAnssModTwoUnitPrime(45, 4)
      \tmfAnssModTwoUnitPrime(49, 0)

      \tmfAnssZero[red](4, -1)
      \tmfAnssHigh[red](24, 3)
      \tmfAnssHigh[red](44, 7)

      \tmfAnssZero[red](28, -1)
      \tmfAnssHigh[red](48, 3)

      \tmfAnssZero[red](52, -1)

      \foreach \n in {0, 1, 2} {
        \pgfmathsetmacro\endk{2 - \n}
        \foreach \k in {0, ..., \endk} {
          \d1(4 + 24 * \n + 20 * \k, -1 + 4 * \k)
          \replacesource["2"]
          \d3(4 + 24 * \n + 20 * \k, -1 + 4 * \k)
          \d1(5 + 24 * \n + 20 * \k, 0 + 4 * \k)(4 + 24 * \n + 20 * \k, 1 + 4 * \k, 1)
          \d1(6 + 24 * \n + 20 * \k, 1 + 4 * \k)
          \d1(12 + 24 * \n + 20 * \k, 1 + 4 * \k)
          \ifnum\k>0 
            \d1(-2 + 24 * \n + 20 * \k, -3 + 4 * \k)
            \d1(-1 + 24 * \n + 20 * \k, -2 + 4 * \k, 1)
          \fi
        }
      }
      \classoptions["w_1"](1, 0)
      \classoptions["w_4"](4, -1)
      \classoptions["h_2 w_4" {below=-0.5pt}](7, 0)
      \classoptions["x w_1" { anchor = north west, pin=gray , yshift=-8, xshift=4}](8, 1)
      \classoptions["g^2 x w_1" { anchor = north west, pin=gray , yshift=-8, xshift=4}](48, 9)
    \end{sseqpage}
  \end{sideways}
  \caption{Adams--Novikov spectral sequence for $\widetilde{Q}$}\label{fig:anss-q-full}
\end{figure}

\begin{corollary}
  The class $\Delta^2 t$ in the descent spectral sequence of $\overline{\TMF^{\mathcal{B}C_2}}$ is permanent and has order $2$.
\end{corollary}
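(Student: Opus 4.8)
The plan is to transport the preceding theorem along the extension $f\colon \TMF \otimes L \to \overline{\TMF^{\mathcal{B}C_2}}$ of $t$ that was just constructed. Recall that $f$ is a $\TMF$-module map whose restriction to the bottom cell $\Sigma\TMF \hookrightarrow \TMF \otimes L$ is $t$; in particular it induces a map of descent spectral sequences and carries the homotopy class $w_1$ to $t$, which is detected by $x_{1,1}$.

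First I would push the class forward. By the theorem, $\Delta^2 w_1$ is a permanent cycle of order $2$ in the descent spectral sequence of $\TMF \otimes L$, so it detects a class $[\Delta^2 w_1] \in \pi_{49}(\TMF \otimes L)$ with $2[\Delta^2 w_1] = 0$, and hence $f_*[\Delta^2 w_1] \in \pi_{49}\overline{\TMF^{\mathcal{B}C_2}}$ has order dividing $2$. The remaining point is to check that $f_*[\Delta^2 w_1]$ is detected by the class $\Delta^2 x_{1,1}$ in bidegree $(49,1)$: this simultaneously shows that $\Delta^2 x_{1,1}$ survives to $E_\infty$ and is not a boundary -- i.e.\ that $\Delta^2 t$ is permanent -- and, since the detected class is then nonzero, that it has order exactly $2$. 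One should also note that $2\Delta^2 t = 0$ is forced by this, so the order-$2$ statement is unambiguous once one fixes $\Delta^2 t$ to be this image.

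The main obstacle is exactly this last identification, because $f$ strictly raises Adams--Novikov filtration on the bottom cell (from filtration $0$ for $w_1$ to filtration $1$ for $t$), so the induced map of $E_2$-pages annihilates $w_1$ and hence annihilates $\Delta^2 w_1$; naive naturality of the spectral sequences therefore gives nothing, and the content must be extracted at the level of homotopy groups. I would argue as follows: $f_*[\Delta^2 w_1]$ has Adams--Novikov filtration at least $1$, since there are no filtration-$0$ classes in odd stems and so $E_\infty^{49,0}(\overline{\TMF^{\mathcal{B}C_2}}) = 0$; to see that the filtration is exactly $1$ with leading term $\Delta^2 x_{1,1}$, one uses that $t$, viewed as a $\TMF$-module map $\Sigma\TMF \to \overline{\TMF^{\mathcal{B}C_2}}$, has filtration exactly one and that $f$ is $\TMF$-linear, tracking the single filtration jump on the bottom cell against the module structure over the descent spectral sequence of $\TMF$. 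A more hands-on alternative, viable because by this stage all differentials in the relevant range have already been determined, is to argue by elimination: the only classes $\Delta^2 x_{1,1}$ could possibly hit are the $\kappabar$-power classes on the $E_9$-page (cf.\ \Cref{fig:anss-e9-page}), and any such differential -- or $\Delta^2 x_{1,1}$ being a boundary -- would push $f_*[\Delta^2 w_1]$ either into an empty bidegree or into one incompatible with its being the image of an order-$2$ permanent cycle, leaving $f_*[\Delta^2 w_1]$ detected by $\Delta^2 x_{1,1}$ as the only possibility.
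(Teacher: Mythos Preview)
Your overall strategy matches the paper's: transport the permanent class $\Delta^2 w_1$ along the map $f\colon \TMF\otimes L\to\overline{\TMF^{\mathcal{B}C_2}}$. You also correctly isolate the real difficulty, namely that $f$ raises Adams--Novikov filtration on the bottom cell, so naturality of $E_2$-pages gives nothing. However, neither of your two proposed resolutions closes this gap.

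Your option (b) assumes that ``by this stage all differentials in the relevant range have already been determined''. That is not the situation in the paper: at this point only $d_3,d_5,d_7$ have been computed, and the corollary is precisely the input needed to run the subsequent $w$-chain and long-differential arguments. On the $E_9$ page there \emph{is} a class in bidegree $(48,18)$ (it appears greyed in \Cref{fig:anss-e9-page}), so a $d_{17}$ on $\Delta^2 t$ is not excluded by sparsity; and you also do not know that $f_*[\Delta^2 w_1]\neq 0$, so there is nothing forcing it to be detected anywhere. Your option (a) is the right instinct but, as stated, is not an argument: the class $[\Delta^2 w_1]$ is not a $\pi_*\TMF$-multiple of $w_1$ because $\Delta^2$ is not a homotopy class, so ``$\TMF$-linearity'' alone does not let you write $f_*[\Delta^2 w_1]=\Delta^2\cdot t$ in any precise sense.

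The paper makes (a) precise using synthetic spectra, exactly as in the earlier proofs of \Cref{cor:hidden-nu} and \Cref{lemma:sqrt-delta-t}. Applying $\nu$ gives $\tilde t\colon\nu(\TMF\otimes L)\to\nu(\overline{\TMF^{\mathcal{B}C_2}})$, under which the bottom cell $w_1\in\pi_{1,0}$ maps to $\tau t$. Since $\Delta^2$ lies on the $0$-line it lifts (uniquely) to $\pi_{48,0}(\nu(\TMF)/\tau^2)$, so modulo $\tau^2$ one may literally multiply: $\tilde t(\Delta^2 w_1)=\Delta^2\cdot(\tau t)=\tau\,\Delta^2 t$. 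This exhibits $\tilde t(\Delta^2 w_1)\in\pi_{49,0}$ as $\tau$-divisible but not $\tau^2$-divisible, i.e.\ $f_*[\Delta^2 w_1]$ has Adams--Novikov filtration exactly $1$ with leading term $\Delta^2 t$. That is the missing step in your argument.
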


\begin{proof}
  We previously constructed a map $\TMF \otimes L \to \overline{\TMF^{\mathcal{B}C_2}}$ where the bottom cell hits $t$. Applying $\nu$ to this, we get a map $\tilde{t}\colon \nu(\TMF \otimes L) \to \nu (\overline{\TMF^{\mathcal{B}C_2}})$ where the bottom cell hits $\tau t$.\footnote{We use $t$ to denote the lift of the permanent class $x_{1, 1} \in \pi_{1, 1} \nu(\overline{\TMF^{\mathcal{B}C_2}}) / \tau$ to $\pi_{1, 1} \nu(\overline{\TMF^{\mathcal{B}C_2}})$. Since the bottom cell hits $\pi_{1, 0} \nu(\overline{\TMF^{\mathcal{B} C_2}})$ and is equal to $t$ after $\tau$-inversion, it hits $\tau t$.} Now consider $\tilde{t}(\Delta^2 w_1)$. This is a permanent class, and since $\Delta^2 \in \TMF / \tau^2$, after modding out by $\tau^2$, we know that it must hit $\tau \Delta^2 t$. So $\tilde{t}(\Delta^2 w_1)$ is detected by $\Delta^2 t$.
\end{proof}

For the rest of the section, let $z = t$ or $\Delta^2 t$. It remains to consider the ``$w$-chains'' starting from $z$. There is a partially defined multiplication-by-$w$ operation on the $E_9$ page, where $w$ increases stem by $5$. To formally define this, we set
\[
  w^3 z = h_2 \sqrt{\Delta} z, \quad w^5 z = \Delta h_1 z,\quad w^6 z = \Delta h_2^2 z,\quad w^{k + 4} z = g w^k z.
\]
\begin{corollary}
  The $w$ chain starting from $z$ is permanent.
\end{corollary}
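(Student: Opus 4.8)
The plan is to bootstrap from the classes whose permanence is already in hand, namely $z$ itself and $w^{3}z = h_{2}\sqrt{\Delta}\,z$. For $z = t$ the second of these is \Cref{cor:hidden-ext} (it represents $\tilde{\kappa}t$), and for $z = \Delta^{2}t$ it follows by running the same argument through the map $\tilde{t}\colon\nu(\TMF\otimes L)\to\nu(\overline{\TMF^{\mathcal{B}C_{2}}})$, whose image of $\Delta^{2}w_{1}$ is detected by $\Delta^{2}t$, together with the fact that $\tilde\kappa$ acts by a permanent cycle. The crucial structural point is that $g$ is a permanent cycle (it detects $\kappabar$), so multiplication by $g$ carries permanent cycles to permanent cycles; via the relation $w^{k+4}z = g\,w^{k}z$ this propagates permanence up the whole chain, and I am reduced to the four ``base'' classes $w^{3}z$, $w^{4}z$, $w^{5}z$ and $w^{6}z$. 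Of these, $w^{4}z = gz$ is a product of permanent cycles and $w^{3}z$ has just been dealt with, so the work concentrates on $w^{5}z = \Delta h_{1}z$ and $w^{6}z = \Delta h_{2}^{2}z$.

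For those two I would argue that no differential leaves them. The only non-permanent factor is $\Delta$, with $d_{5}(\Delta) = h_{2}g$, so the Leibniz rule gives $d_{5}(w^{5}z) = h_{1}h_{2}g\,z$ and $d_{5}(w^{6}z) = h_{2}^{3}g\,z$. The first vanishes because $h_{1}h_{2} = 0$ in $\Ext_{\Gamma}(A,A)$ --- this is $\eta\nu$, which is already absent on $E_{2}$ --- and the second vanishes because the target bidegree is empty on the $E_{5}$ page of $\overline{\TMF^{\mathcal{B}C_{2}}}$. Having survived to $E_{7}$, the only differentials $w^{5}z$ or $w^{6}z$ could support are the $d_{7}$'s, and these are forced (by \Cref{cor:hidden-nu} and the synthetic argument preceding it) to emanate from classes lying off the $w$-chains; alternatively one transports the claim along $\tilde{t}$ from the Adams--Novikov spectral sequence of $\TMF\otimes L$, where $w_{1}$ and $\Delta^{2}w_{1}$ survive and the relevant region is pinned down by the $\widetilde{Q},\widetilde{L}$ computation. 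Once $w^{3}z,\dots,w^{6}z$ are known to be permanent cycles, repeated application of $g$-multiplication gives $d_{r}(w^{k}z) = g^{\,j}\,d_{r}(w^{k-4j}z) = 0$ with $k-4j\in\{3,4,5,6\}$ (or $w^{0}z=z$), so every class in the chain is a permanent cycle.

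The hard part will be precisely the $w^{5}z,w^{6}z$ step: every other class in the chain is literally a product of permanent cycles, but these two are not, because the offending factor $\Delta$ supports a $d_{5}$. Thus one cannot avoid ruling out their differentials by hand --- using the vanishing relation $h_{1}h_{2}=0$ and the emptiness of the target bidegrees for the $d_{5}$, and then either the fine structure of the $E_{7}$ page or the comparison with $\TMF\otimes L$ for anything higher. After that, the rest of the argument is formal.
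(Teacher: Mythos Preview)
Your outline matches the paper's --- bootstrap from $z$ and $w^3 z$, then use $g$-periodicity so that only $w^4 z, w^5 z, w^6 z$ need separate treatment --- but your handling of $w^5 z$ and $w^6 z$ has a genuine gap, and the paper's route is both different and cleaner.

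For $w^5 z = h_1\Delta\,z$: you compute $d_5 = 0$ via $h_1 h_2 = 0$ and then gesture at ``the $d_7$'s emanate from classes off the $w$-chains'' or ``transport along $\tilde t$'' without carrying either out. Your invocation of \Cref{cor:hidden-nu} is backwards: that corollary gives hidden $\nu$-extensions, which the paper uses to force certain \emph{targets} to die (hence the $d_7$'s on \emph{other} sources); it says nothing about whether $w^5 z$ or $w^6 z$ themselves support differentials. The paper bypasses all of this by observing that $h_1\Delta$ is already a permanent cycle in the ANSS of $\TMF$ (it detects an element of $\pi_{25}\tmf$), so $w^5 z = [h_1\Delta]\cdot z$ is a product of permanent cycles, full stop.

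For $w^6 z = \Delta h_2^2\,z$: your argument again stops after $d_5$. The paper instead gives a positive argument: it first shows $\nu^3 z = 0$ in homotopy --- for $z = t$ via the relation $\nu^3 w_1 = \eta^3[\nu w_4]$ in $\widetilde Q$, which dies in $\widetilde L$; for $z = \Delta^2 t$ the vanishing $\nu^3\Delta^2 w_1 = 0$ holds already on the $E_2$ page of $\widetilde L$ with no higher-filtration classes in the bidegree --- and then Moss' convergence theorem shows $w^6 z$ detects the Toda bracket $\langle \kappabar, \nu^3, z\rangle$, hence is permanent. This produces an actual homotopy class detected by $w^6 z$, rather than attempting a page-by-page elimination of differentials that you never complete.
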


\begin{proof}
  The argument of \Cref{cor:hidden-ext} shows that $w^3 z$ is permanent and represents $\tilde{\kappa} z$ (for $\Delta^2 t$, run the same argument but start with the map $\Sigma^{49} \TMF / 2 \to \overline{\TMF^{\mathcal{B}C_2}}$ detecting $\Delta^2 t$, which we now know to be permanent and $2$-torsion). Since $w^5 = [h_1 \Delta]$ is permanent, we know that $w^5 z$ is also permanent.

  This leaves the $w^6$ terms, before we can conclude by $g = w^4$-periodicity. The crucial observation is that $\nu^3 z = 0$, and then Moss' convergence theorem \cite{moss} tells us $w^6 z$ detects $\langle \kappabar, \nu^3, z \rangle$ and is permanent.

  To see that $\nu^3 z = 0$, it suffices to show that $\nu^3 w_1 = \nu^3 \Delta^2 w_1 = 0$ in $\tilde{L}$. We have previously seen that in $\tilde{Q}$, we have $\nu^3 w_1 = \eta^2 x w_1 = \eta^3 [\nu w_4]$. Since $[\nu w_4]$ is killed in $\tilde{L}$, so is $\nu^3 w_1$. Since $\Delta^2$ exists on the $E_2$ page, we know that $\nu^3 \Delta^2 w_1 = 0$ on the Adams--Novikov $E_2$ page of $\tilde{L}$, and there are no higher filtration classes, so the product must be zero.
\end{proof}

\begin{corollary}
  There is a differential $d_?(\Delta^4 w^k z) = w^{k + 19} z$ for all $k \geq 3$ and $k = 0$.
\end{corollary}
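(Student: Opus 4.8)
The plan is to force each target class $w^{k+19}z$ to be the recipient of a differential, and then to observe that $\Delta^4 w^k z$ is the only class which can hit it. The first point is easy: $w^{k+19}z$ never supports a differential, since it lies on the $w$-chain starting from $z$, which is permanent by the previous corollary. So it suffices to show that $w^{k+19}z$ does not survive to $E_\infty$; it will then necessarily be a target.

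To see that $w^{k+19}z$ dies, write it as $g^m w^j z$ with $j \in \{3,4,5,6\}$, so that its filtration is $4m$ plus the bounded filtration of $w^j z$. For all but finitely many $k$ this exceeds $24$, so the class does not survive, by the lemma that nothing above the line $s = 24$ survives. The finitely many exceptions are precisely $k \in \{0,3,4,6,7\}$ --- the values $k = 1,2$ being excluded from the statement, as $w^{20}z = \kappabar^5 z$ and $w^{21}z$ do survive. These exceptional cases I would handle by comparison: applying the map of spectral sequences induced by $\TMF\otimes L \to \overline{\TMF^{\mathcal{B}C_2}}$ (which sends $\Delta^4 w^k w_1 \mapsto \Delta^4 w^k z$ and $w^{k+19}w_1 \mapsto w^{k+19}z$), it is enough to check that $w^{k+19}w_1$ is hit in the spectral sequence for $\TMF\otimes L$, and this follows from the computation of $\widetilde{L}$ used above to show that $\Delta^2 w_1$ survives, together with $g$-periodicity. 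Alternatively one can read off the non-survival of these classes from the value of $\pi_*\overline{\TMF^{\mathcal{B}C_2}}$ already computed.

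Once $w^{k+19}z$ is known to be a target, its source is pinned down by the sparsity of the $E_9$-page: apart from the $\ko$-like classes, which carry no differentials, the only classes one stem to the right lie on the two $w$-chains, and $\Delta^4 w^k z$ is the one in the correct stem (indeed $5(k+19)+1 = 5k+96$, so $\Delta^4 w^k z$ and $w^{k+19}z$ differ by one in stem). Hence $d_?(\Delta^4 w^k z) = w^{k+19}z$, the page being dictated by the filtration gap. One can also see the source directly: $\Delta^4 w^k z = \Delta^2\cdot(\Delta^2 w^k z)$ supports a differential because $\Delta^2 w^k z$ is permanent while $\Delta^2$ is not permanent in the descent spectral sequence of $\TMF$, and the Leibniz rule forces the target onto the $w$-chain.

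The main obstacle is the handful of low-filtration exceptional cases: the generic ``above $s=24$'' argument is immediate, but $w^{19}z$ through $w^{26}z$ sit below that line and require genuine input --- either the comparison with the $\widetilde{L}$-spectral sequence or an appeal to the previously established homotopy groups. A secondary subtlety is the sparsity bookkeeping needed to identify the source and to exclude competing differentials, which relies on having the complete $E_9$-page and its $\Delta^2$-periodicity in hand.
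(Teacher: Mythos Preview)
Your treatment of the generic case --- $w^{k+19}z$ is permanent, lies above the $s=24$ line for large $k$, hence must be hit, and sparsity of the $E_9$-page forces $\Delta^4 w^k z$ as the source --- is exactly the paper's argument. The paper takes the threshold to be $k \geq 8$; your sharper bookkeeping (noting that $k=5$ also falls in the generic range) is fine.

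The divergence is in the remaining low-filtration cases $k \in \{0,3,4,6,7\}$, and here your proposal has a genuine gap. Neither of your two suggested inputs is available at this point in the argument. The computation of $\widetilde{L}$ earlier established only that $\Delta^2 w_1$ survives and has order $2$; it did not determine any of the long differentials in the spectral sequence for $\TMF \otimes L$, so you cannot cite it to conclude that $w^{k+19}w_1$ is hit there. And appealing to ``the value of $\pi_*\overline{\TMF^{\mathcal{B}C_2}}$ already computed'' is circular: this corollary is part of that computation. Your closing Leibniz remark is also not sound as stated --- for $z = \Delta^2 t$ the factor $\Delta^2 w^k z$ equals $\Delta^4 w^k t$, which is exactly a class whose non-permanence you are trying to establish, and in any case $d_5(\Delta^2)=0$ in the DSS for $\TMF$, so there is no immediate Leibniz contribution.

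The paper's fix is much simpler than a comparison argument: \emph{$g$-division} from the already-established large-$k$ cases. Given $k < 8$, pick $n$ with $k' = k + 4n \geq 8$; then the known differential reads $d_r(g^n \Delta^4 w^k z) = g^n w^{k+19}z$. Since $g$ is permanent and $g$-multiplication is injective above the $3$-line on every page $E_r$ with $r \geq 4$, and since any potential target of a differential on $\Delta^4 w^k z$ lies above that line (the shortest possible differential here is a $d_9$), injectivity of $g^n$ forces $d_r(\Delta^4 w^k z) = w^{k+19}z$. This single observation disposes of all the exceptional cases at once, with no external input.
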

Here the length of the differential depends on the value of $k \pmod 4$, which can be read off the charts. The precise values are, however, unimportant.

\begin{proof}
  For $k \geq 8$, this follows from $\kappabar^6 = 0$, since these are the only classes that can hit them. For smaller $k$, this follows from $g$-division.
\end{proof}

\section{Identification of the last factor}\label{section:ident}
To identify $\overline{\TMF^{\mathcal{B}C_2}} \cong \TMF \otimes DL$, we map $DL$ in by obstruction theory, and show it is an isomorphism after base change to $\TMF_1(3)$. To do so, we need to understand the $\TMF_1(3)$-homology of $DL$.
\begin{lemma}
  We can choose classes $y_{-8}, y_{-4}, y_{-2} \in \pi_* \TMF_1(3) \otimes DL$ such that
  \begin{enumerate}
    \item $y_{-k} \in \pi_{-k} \TMF_1(3) \otimes DL$;
    \item $y_{-8}$ is the bottom cell of $DL$;
    \item $\{y_{-4}, y_{-2}\}$ generates $\pi_* \TMF_1(3) \otimes DL$ as a free $\pi_* \TMF_1(3)$-module; and
    \item 
      \[
        \begin{aligned}
          y_{-8} &= v_2^{-1} (a_1 y_{-4} + 2 y_{-2}) + O(2^2),\\
          d(y_{-4}) &\equiv \psi(y_{-4}) - [1] y_{-4} = [r] y_{-8} + O(2^2),
        \end{aligned}
      \]
      where $v_2 = a_1^3 - 27 a_3$.
  \end{enumerate}
\end{lemma}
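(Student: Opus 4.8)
The plan is to read off $\pi_*(\TMF_1(3)\otimes DL)$, together with its $\Gamma$-coaction, directly from the cell structure of $DL$, using that $\TMF_1(3)$ is complex orientable. Since $\eta$ and $\nu$ act as zero on $\pi_*\TMF_1(3)=A$ (nothing sits in odd degrees), smashing the subcomplex $S^{-8}\cup_\nu e^{-4}\cup_\eta e^{-2}$ of $DL$ with $\TMF_1(3)$ produces a wedge $\Sigma^{-8}\TMF_1(3)\vee\Sigma^{-4}\TMF_1(3)\vee\Sigma^{-2}\TMF_1(3)$; I would write $y_{-8},y_{-4},y_{-2}$ for the resulting generators and for their images in $\pi_*(\TMF_1(3)\otimes DL)$. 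Then $y_{-8}$ is the bottom cell, so (1) and (2) hold, and since the bottom-cell map $S^{-8}\to DL$ is a map of spectra, $y_{-8}$ is a comodule primitive. The top cell $e^{-1}$ is attached along a map $\phi\colon S^{-2}\to S^{-8}\cup_\nu e^{-4}\cup_\eta e^{-2}$ whose $\TMF_1(3)$-smash has three components $c_{-8}\in\pi_6\TMF_1(3)$, $c_{-4}\in\pi_2\TMF_1(3)=\Z_2\{a_1\}$, $c_{-2}\in\pi_0\TMF_1(3)=\Z_2$, and the cofiber relation is $c_{-8}y_{-8}+c_{-4}y_{-4}+c_{-2}y_{-2}=0$.

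To pin down the components, $c_{-2}=2$ by construction of $DL$. For the other two I would rerun the argument of the remark following \Cref{thm:main}: in $H^*(DL;\F_2)$ one has $Q_1x_{-4}=x_{-1}$ and $Q_2x_{-8}=x_{-1}$, and since $v_i$ is the Adams-filtration-one class detected by $Q_i$, with $v_1=a_1$ the Hasse invariant of $\E'$ and $v_2=a_1^3-27a_3$, there is no room --- for degree reasons, $v_i$ being defined only up to higher filtration --- for anything other than $c_{-4}=a_1+O(2^2)$ and $c_{-8}=v_2+O(2^2)$ after normalizing the generators. As $v_2=\Delta/a_3^3$ is a unit in $A$, the cofiber of $\phi$ cancels the $(-8)$- and $(-1)$-cells, so $\{y_{-4},y_{-2}\}$ is a free $A$-basis, proving (3), and the cofiber relation rearranges to $y_{-8}=v_2^{-1}(a_1y_{-4}+2y_{-2})+O(2^2)$ once a sign is absorbed into $y_{-4}$, proving the first half of (4).

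For the coaction, $y_{-8}$ and $y_{-4}$ already lie in the subcomodule $\TMF_1(3)_*(S^{-8}\cup_\nu e^{-4})=\TMF_1(3)_*(\Sigma^{-8}C\nu)$, whose coaction is the geometric one coming from the formal group of $\E'$ (note $C\nu\simeq\Sigma^{-4}\mathbb{HP}^2$); since $y_{-8}$ is the bottom cell it is primitive, and the only possible reduced coaction on the top cell is $d(y_{-4})=\gamma\otimes y_{-8}$ for some $\gamma\in\bar\Gamma$ of degree $4$, which coassociativity forces to be primitive. The degree-$4$ primitive to match is $[r]$ (recall $\Delta r=r\otimes1+1\otimes r$ and $|r|=4$), so it remains to see $\gamma$ is an odd multiple of $[r]$, equivalently that $y_{-4}$ is not a comodule primitive modulo $2$, i.e.\ that the $\TMF_1(3)$-theoretic attaching invariant of $C\nu$ is detected by $h_2$. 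I would extract this, and its value modulo $2^2$, from the coaction $z\mapsto (z-rz^3)/(1-sz+tz^3)$ on $\pi_*\TMF_1(3)^{\mathcal{B}C_2}=A[z]/(2z-a_1z^2+a_3z^4)$ found in \Cref{section:e2}: expand $\psi(z^2)$ to first order in $s,t,r$, pass to $\overline{\TMF_1(3)^{\mathcal{B}C_2}}=A\{\overline z,\overline{z^2}\}$ using $a_3\overline{z^3}=a_1\overline z$, and change basis from $\{\overline{z^2},\overline z\}$ to $\{y_{-4},y_{-2}\}$ --- the cocycle bookkeeping here being of the same division-heavy kind delegated to \texttt{sage} elsewhere (compare \Cref{lemma:computer-action}).

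\emph{The main obstacle is this last step.} The coaction on $\TMF_1(3)_*C\nu$ is invisible from the $\TMF_1(3)$-module splitting, which kills $\nu$, so one genuinely needs either the formal group law of the Weierstrass curve $\E'$ or explicit cobar cocycles in the complicated ring $\Gamma=A[s,t]/I$; and since $v_1$, $v_2$ and $y_{-4}$ are each only canonical up to $O(2^2)$ (and up to $a_1$-multiples), one must simultaneously check that the two normalizations appearing in (4) --- the coefficient $a_1$ of $y_{-4}$ inside $v_2 y_{-8}$, and the coefficient $[r]$ of $y_{-8}$ in $d(y_{-4})$ --- are realized by one consistent choice of $y_{-4}$.
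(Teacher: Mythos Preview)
Your treatment of the coaction on $y_{-4}$ is essentially the paper's: restrict to the subcomodule coming from $\Sigma^{-8}C\nu$, observe $y_{-8}$ is primitive, and use coassociativity to force the correction term to be a degree-$4$ primitive in $\bar\Gamma$, hence an integer multiple of $[r]$. You stop at ``odd multiple'', but the coefficient is well-defined mod $12$ (the indeterminacy in $y_{-4}$ is $\Z\cdot a_1^2 y_{-8}$, and $d(a_1^2 y_{-8}) = [12r]y_{-8}$), so ``odd'' is not enough; the input you are missing is the standard fact that $\nu$ is detected by the cobar cocycle $[r]$ with coefficient exactly $1$, which the paper invokes when it says ``attach $y_{-4}$ to kill $[r]y_{-8}$''.

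The real gap is the relation. Your $Q_1,Q_2$ argument only determines the components $c_{-4},c_{-8}$ of the top attaching map modulo $2$. You can normalise $y_{-4}$ by a unit to force $c_{-4}=a_1$ exactly, but $c_{-8}$ lives in the rank-$2$ lattice $\pi_6\TMF_1(3)=\Z_2\{a_1^3,a_3\}$, and the remaining freedom (unit-scaling $y_{-8}$, shifting $y_{-4}$ by $a_1^2 y_{-8}$, shifting $y_{-2}$) does not let you move an arbitrary odd-in-$a_3$ element to $v_2$ mod $4$. Your proposed fix --- read the answer off the coaction on $\overline{\TMF_1(3)^{\mathcal{B}C_2}}$ and change basis to $\{y_{-4},y_{-2}\}$ --- is circular: this lemma is precisely what is fed into \Cref{lemma:computer-massey} to prove the identification $\overline{\TMF^{\mathcal{B}C_2}}\simeq\TMF\otimes DL$, so you cannot presuppose it.

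The paper instead takes the ``explicit cobar cocycles in $\Gamma$'' option you mention but set aside. With $\psi(y_{-4})=y_{-4}+[kr]y_{-8}$, $k\equiv1\pmod 4$, one writes an explicit cocycle $\{h_1 y_{-4}\} = [s]y_{-4} - [k(a_1 r + t)]y_{-8} - ([s]a_1^2-[12t])y_{-8}$ representing $\eta y_{-4}$ (there is a binary ambiguity, absorbed into the choice of $y_{-4}$), attaches $y_{-2}$ so that $\psi(y_{-2})-y_{-2}=\{h_1 y_{-4}\}$, and then checks directly that $2y_{-2}+a_1 y_{-4}-k a_3 y_{-8}-(a_1^3-12a_3)y_{-8}$ is a $0$-cocycle. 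This is the class killed by the top cell, and since $12-k\equiv 27\pmod 4$, it gives $2y_{-2}+a_1 y_{-4}-v_2 y_{-8}+O(2^2)=0$. Thus the two halves of (4) come out consistently for the \emph{same} $y_{-4}$, because they are computed from the same cell-by-cell cobar construction rather than from two independent shortcuts.
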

The choice of $y_{-4}$ and $y_{-2}$ is pretty much arbitrary. Other choices will result in slightly different formulas. These are chosen to simplify the ensuing calculation.

\begin{remark}
  Note that $[r]$ is the class that represents $h_2$, not $s^2$; the latter is not a cocycle since $2 \not= 0$.
\end{remark}

\begin{proof}
  We carefully construct $\tmf \otimes DL$ in the category of $\tmf$-modules. We start with the bottom cell and attach $y_{-4}$ to kill $[r] y_{-8}$. The class $y_{-4}$ is only well-defined up to multiples of $y_{-8}$, which in this case is integral multiples of $a_1^2 y_{-8}$. The coboundary of $a_1^2 y_{-8}$ is $[12r] y_{-8}$, and so
  \[
    \psi(y_{-4}) = y_{-4} + [k r] y_{-8},
  \]
  where $k \equiv 1 \pmod 4$.

  The next cell will kill of the cocycle
  \[
    \{h_1 y_{-4}\} = [s] y_{-4} - [k(a_1 r + t)] y_{-8} - ? ([s]a_1^2 - [12t]) y_{-8}.
  \]
  Here $?$ is either $1$ or $0$, noting that twice the class is a coboundary. There is exactly one choice of $?$ for which this cocycle is permanent, since there is a $d_3([s] a_1^2 - [12t]) = h_1^4$. On the other hand, we know that $y_{-4}$ is not entirely well-defined, and we can absorb the term into $y_{-4}$ (and redefine $k$). We choose $y_{-4}$ so that $? = 1$.

  We now set
  \[
    \psi(y_{-2}) = y_{-2} + \{h_1 y_{-4}\}.
  \]
  and then the class
  \[
    2 y_{-2} + a_1 y_{-4} - k a_3 y_{-8} - (a_1^3 - 12 a_3) y_{-8}.
  \]
  is a cocycle, which the top cell kills off. So we get the relation
  \[
    2 y_{-2} + a_1 y_{-4} - (a_1^3 - 27 a_3) y_{-8} + O(2^2),
  \]
  as desired.
\end{proof}

We now construct a map $f\colon \TMF \otimes DL \to \overline{\TMF^{\mathcal{B}C_2}}$. This is constructed via obstruction theory. The relevant homotopy groups are in the range $[-8, 0]$, which are depicted in \Cref{fig:homotopy-neg}. In this range, all the homotopy groups come from the $\ko$-like patterns.
\begin{figure}[h]
  \centering
  \sseqnewclasspattern{horizontal}{
    (0,0);
    (0,0)(0,0);
    (0,0)(0,0)(0,0);
    (0.45,0)(0.15,0)(-0.15,0)(-0.45,0);
    (0.45,0)(0.15,0)(-0.15,0)(-0.45,0)(0.75,0);
  }
  \begin{sseqpage}[small, x range={-12}{7}, class pattern = horizontal]
    \foreach \n in {1, ..., 4} {
      \pgfmathsetmacro\opac{\n * 0.25}
      \begin{scope}[opacity = \opac]
        \class[rectangle, fill=white](-12, 0)
        \class[rectangle, fill=white](-8, 0)\hone \hone
        \class[rectangle, fill=white](-4, 0)
        \class[rectangle, fill=white](0, 0)\hone \hone
        \class[rectangle, fill=white](4, 0)
      \end{scope}
    }

    \class(2, 2) \honei \htwo \htwo
  \end{sseqpage}
  \caption{Homotopy groups of $\overline{\TMF^{\mathcal{B}C_2}}$ with $\ko$-like terms}\label{fig:homotopy-neg}
\end{figure}

In general, let $z_{-k}$ be the images of $y_{-k}$ under $f$ (after base change to $\TMF_1(3)$). In constructing $f$, the first step is to pick the image of the bottom cell, i.e.\ the value of $z_{-8}$. This lives in $\pi_{-8}(\overline{\TMF^{\mathcal{B}C_2}})$, which is the direct sum of infinitely many copies of $\Z$. Choosing $z_{-8}$ requires a bit of care, but once we have chosen it, we can always extend it to a full map $f$. Indeed, the obstructions are
\[
  \nu x_{-8, 0},\quad \langle \eta, \nu, x_{-8, 0}\rangle,\quad \langle 2, \eta, \nu, x_{-8, 0} \rangle,
\]
which all vanish because
\[
  \pi_{-5} \overline{\TMF^{\mathcal{B}C_2}} = \pi_{-3} \overline{\TMF^{\mathcal{B}C_2}} = \pi_{-2} \overline{\TMF^{\mathcal{B}C_2}} = 0.
\]

While the extension always exists, it is not unique. Specifically, the extension to the second cell is not unique. This results in $z_{-4}$ being well-defined up to a permanent class in $\pi_{-4} \overline{\TMF^{\mathcal{B}C_2}}$, which is again infinitely many copies of $\Z$. After extending to the second cell, there is a unique way to extend all the way to $DL$, because $\pi_{-2} \overline{\TMF^{\mathcal{B}C_2}} = \pi_{-1} \overline{\TMF^{\mathcal{B}C_2}} = 0$.

We are now ready to choose $z_{-8}$. We definitely don't want this to be $v_1^4$-divisible, but this only defines $z_{-8}$ up to $v_1^4$-multiples. After some experimentation, we settle on the following choice:
\begin{lemma}\label[lemma]{lemma:computer-z-8}
  In the $E_2$ page of the Adams--Novikov spectral sequence of $\overline{\TMF^{\mathcal{B}C_2}}$, there is a permanent cocycle of the form
  \[
    z_{-8} = v_2^{-1}[a_1(z^2 - a_3^{-1} a_1^2 z) + 2z] + O(2^2)
  \]
  This will be our choice of $z_{-8}$.
\end{lemma}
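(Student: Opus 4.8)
The shape of $z_{-8}$ is forced by the preceding lemma. We intend to build $f\colon \TMF\otimes DL\to \overline{\TMF^{\mathcal{B}C_2}}$ so that, after base change to $\TMF_1(3)$, it carries $y_{-4}$ and $y_{-2}$ to the elements $z^2 - a_3^{-1}a_1^2 z$ and $z$ of $A\{z,z^2\}$; these have the right degrees and generate the module freely. Given the relation $y_{-8} = v_2^{-1}(a_1 y_{-4} + 2 y_{-2}) + O(2^2)$, the image of the bottom cell must then be $v_2^{-1}[a_1(z^2 - a_3^{-1}a_1^2 z) + 2z] + O(2^2)$, so the content of the lemma is that this expression (a) is congruent mod $2^2$ to a genuine coaction primitive in $A\{z,z^2\}$, and (b) that such a primitive is a permanent cycle in the descent spectral sequence. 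Note that $v_2^{-1}$ and $a_3^{-1}$ are legitimate because $\Delta = a_3^3 v_2$ is inverted.

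For (a) I would compute the coboundary directly. Write $c_0$ for the displayed element. One forms $\psi(c_0) - [1]\,c_0$ in the cobar complex $\Gamma\otimes_A A\{z,z^2\}$ using $\psi(z) = (z - rz^3)/(1 - sz + tz^3)$, clears denominators, and reduces the resulting polynomial identity modulo $4$, modulo $2z - a_1 z^2 + a_3 z^4$, and modulo the ideal $I$ presenting $\Gamma$; then one checks that it vanishes. This is exactly the kind of division in an unwieldy ring that the paper hands to \texttt{sage}, so I would do the same, simplifying first with mod-$4$ identities such as $v_2 \equiv a_1^3 + a_3$. As a consistency check, modulo $2$ the element should --- under the identifications $b_1 = a_3 z^2$, $b_5 = a_3^2 z$ of \Cref{lemma:computer-action} --- reduce to a $\Z/2$-multiple of a $\Delta$-power translate of the invariant class $b_1$, hence be invariant mod $2$ and not $a_1^4$-divisible. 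Having checked that $c_0$ is a cocycle mod $4$, one lifts it to an honest primitive congruent to it mod $4$; the obstruction lives in $\Ext^1_\Gamma$ in internal degree $-8$, which is pinned down by the $2$-Bockstein computation of \Cref{section:e2} and presents no problem (equivalently, lift the mod-$2$ invariant through the Bockstein, which degenerates here).

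For (b), permanence is cheap. The class lies in filtration $0$ and stem $-8$, so any differential off it has target in stem $-9$; and on the $E_2$ page computed in \Cref{section:e2} the whole stem $-9$ is a single class in filtration $3$ (the $\ko$-like summands contribute nothing in a stem $\equiv 7 \pmod 8$). A $d_2$ or a $d_r$ with $r\ge 4$ off $z_{-8}$ would thus have target in a filtration where $E_2$ vanishes, while $d_3$ vanishes on every bidegree-$(8k,0)$ class by the corollary at the end of \Cref{section:differentials}. Hence $z_{-8}$ is permanent. Choosing $c_0$ non-$v_1^4$-divisible mod $2$ moreover ensures it represents a genuine ``$\ko$-bottom-cell'' class rather than a $v_1^4$-multiple of one, which is what we need when constructing $f$. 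The only genuinely laborious step is the coboundary verification in (a); everything else is bookkeeping against the spectral sequence already in hand.
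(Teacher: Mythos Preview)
Your structure matches the paper's: verify by machine that the displayed element is a cocycle mod~$2^2$, then argue that in bidegree $(-8,0)$ both the $2$-Bockstein and the descent spectral sequence degenerate, so a mod-$2^n$ cocycle lifts to a permanent one. Part~(a) is exactly what the paper does.

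The slip is in~(b). Your claim that on the $E_2$ page ``the whole stem $-9$ is a single class in filtration $3$'' is false. On the $E_2$ page the $\ko$-like generators at $(4k,0)$ carry \emph{infinite} $h_1$-towers (look at \Cref{fig:anss-e2-page}; they are only truncated to the $\ko$ shape by subsequent differentials), so they contribute in every stem above their start --- in particular in stem $-9$ in arbitrarily high filtration. Your parenthetical that ``the $\ko$-like summands contribute nothing in a stem $\equiv 7 \pmod 8$'' confuses the $E_2$ page with the $E_\infty$ page. Even in the non-$\ko$ part there is a class at $(-9,7)$, the $\Delta^{-1}$-translate of the $(15,7)$ class that is still present on the $E_5$ page in \Cref{fig:anss-e5-page}. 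So ``$d_r$ for $r\ge 4$ lands in zero'' does not follow from stem-counting.

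The paper's argument is correspondingly terser: it simply cites the completed DSS computation of \Cref{section:differentials}, which has already shown that $d_3$ vanishes on every $(8k,0)$ class (the corollary you invoke) and that there are no higher differentials out of the $\ko$-like classes (the footnote to the $E_5$ page: the only available targets are non-$g$-torsion). Replace your stem-counting with that appeal.
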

This is a lift of $\Delta^{-1} a_1 x_{14, 0}$.
\begin{proof}
  Computer calculation (see \Cref{section:sage}) verifies that this is a cocycle mod $2^2$. In the bidegree $(-8, 0)$, there are no $2$-Bockstein differentials and DSS differentials, so any cocycle mod $2^n$ would lift to a permanent cocycle.
\end{proof}

Given the relation between the $y_{-k}$, we know there must be some $g$ such that
\[
  \begin{aligned}
    z_{-4} &= z^2 - a_3^{-1} a_1^2 z + 2g + O(2^2)\\
    z_{-2} &= z + a_1 g + O(2).
  \end{aligned}
\]
The indeterminacy tells us $2g$ is well-defined up to a permanent class.

\begin{lemma}\label[lemma]{lemma:computer-massey}
  There is a choice of $f$ such that $g = 0 \bmod 2$.
\end{lemma}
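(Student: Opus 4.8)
The plan is to exploit the freedom in the obstruction-theoretic construction of $f$. Having fixed the value $z_{-8}$ of the bottom cell as in \Cref{lemma:computer-z-8}, the only remaining choice in building $f$ is the extension over the $(-4)$-cell of $DL$, and the set of such extensions is a torsor over $\pi_{-4}\overline{\TMF^{\mathcal{B}C_2}}$; once it is made, the extension over the remaining two cells is unique. So it suffices to control how $g \bmod 2$ depends on this one choice.

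First I would record that for any admissible $f$ the relations among the $y_{-k}$ together with \Cref{lemma:computer-z-8} force $z_{-4} \equiv z^2 - a_3^{-1} a_1^2 z \pmod 2$; consequently $g \bmod 2$ is a well-defined cochain of degree $-4$ which, once the cocycle condition below is checked, represents a class in bidegree $(-4,0)$ of the mod-$2$ descent $E_2$-page of $\overline{\TMF^{\mathcal{B}C_2}}$. Modifying the chosen extension by a class $2c$ with $c$ a permanent class of $\pi_{-4}\overline{\TMF^{\mathcal{B}C_2}}$ preserves the congruence for $z_{-4}$ and replaces $g$ by $g + c \bmod 2$; this is precisely the freedom we have to alter $g \bmod 2$.

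The key point is then that in bidegree $(-4,0)$ the mod-$2$ $E_2$-page consists only of $\ko$-like classes --- visible in the range $[-8,0]$ depicted in \Cref{fig:homotopy-neg} --- and the group $\pi_{-4}\overline{\TMF^{\mathcal{B}C_2}}$, a sum of copies of $\Z$, surjects onto it with every class lifting to a permanent integral class. Applying this to $g \bmod 2$ and modifying the extension by twice such a lift produces an $f$ with $g \equiv 0 \pmod 2$. To exhibit the choice explicitly rather than merely existentially, one then checks by \texttt{sage} (\Cref{section:sage}) that, starting from the cocycle $z_{-8}$ of \Cref{lemma:computer-z-8} and the leading term $z^2 - a_3^{-1} a_1^2 z$, one can add an explicit $a_1$- and $2$-divisible correction to obtain a cocycle mod $2^2$ of the form $z^2 - a_3^{-1} a_1^2 z + O(2^2)$, i.e.\ with $g \equiv 0$.

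The step I expect to be the main obstacle is the cochain-level bookkeeping: one must verify that $g \bmod 2$ is genuinely a cocycle --- which uses crucially that $z_{-8}$ is a cocycle mod $2^2$, not merely mod $2$, and the explicit formula of \Cref{lemma:computer-z-8} --- and that the corrective term subtracted really lies in the indeterminacy of $f$, i.e.\ comes from a permanent class of $\pi_{-4}\overline{\TMF^{\mathcal{B}C_2}}$ in the correct bidegree. Everything else is the division-heavy cocycle manipulation in the Hopf algebroid $(\Gamma, A)$ already automated in \Cref{section:sage}.
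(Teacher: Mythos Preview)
Your outline is correct and matches the paper's approach: show that $g \bmod 2$ is a cocycle in bidegree $(-4,0)$, lift it to an integral cocycle $\tilde{g}$ (there are no $2$-Bocksteins here, and these $\ko$-like classes are DSS-permanent), then modify the extension over the $(-4)$-cell by $-2\tilde{g}$.

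Where your proposal is imprecise is exactly at the step you flag as the main obstacle. Your hint that one ``uses crucially that $z_{-8}$ is a cocycle mod $2^2$'' is not the right ingredient: $z_{-8}$ is a permanent class, so it is a cocycle to all orders, and this alone says nothing about $d(g)$. What the paper actually uses is the coaction formula $d(y_{-4}) = [r]\,y_{-8} + O(2^2)$ from the earlier lemma on $\TMF_1(3) \otimes DL$; pushing this forward along $f$ gives $d(z_{-4}) = [r]\,z_{-8} + O(2^2)$. One then checks by \texttt{sage}, using the explicit formula of \Cref{lemma:computer-z-8}, that $d(z^2 - a_1^2 a_3^{-1} z) = [r]\,z_{-8} + O(2^2)$ as well. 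Subtracting the two identities yields $d(2g) = O(2^2)$, i.e.\ $g$ is a cocycle mod $2$. Your proposed \texttt{sage} check is also misdescribed: $z_{-4}$ is never a cocycle (its cobar differential is $[r]\,z_{-8}$), so you are not searching for ``a cocycle mod $2^2$ of the form $z^2 - a_3^{-1}a_1^2 z + O(2^2)$'' but rather verifying the coboundary identity above.
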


\begin{proof}
  In $\TMF_1(3) \otimes DL$, we have a cobar differential
  \[
    d(y_{-4}) = [r] y_{-8} + O(2^2).
  \]
  So we find that
  \[
    d(z_{-4}) = [r] z_{-8} + O(2^2).
  \]
  On the other hand, by computer calculation, we find that (see \Cref{section:sage})
  \[
    d(z^2 - a_1^2 a_3^{-1} z) = [r] z_{-8} + O(2^2).
  \]
  Comparing the two, we must have
  \[
    d(2g) = O(2^2).
  \]
  So $g$ is a cocycle mod $2$. Since there are no $2$-Bocksteins, $g$ lifts to an actual cocycle $\tilde{g}$, and $2\tilde{g}$ is permanent. So we can subtract $2\tilde{g}$ from $z_{-4}$ so that $g$ is now $0 \bmod 2$.
\end{proof}

\begin{corollary}
  This choice of $f$ is an equivalence, and hence $\overline{\TMF^{\mathcal{B}C_2}} \cong \TMF \otimes DL$.
\end{corollary}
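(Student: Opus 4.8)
The strategy is to verify that $f$ becomes an equivalence after base change along the cover $i\colon \M_1(3)\to\M$. This suffices: the cover is faithfully flat and, under the equivalence $\Gamma\colon\QCoh(\M)\simeq\Mod_\TMF$ recalled in \Cref{subsection:dss}, the pullback $i^*$ is identified with $\TMF_1(3)\otimes_\TMF(-)$, so a $\TMF$-module map whose cofiber dies after $\otimes_\TMF\TMF_1(3)$ is already an equivalence. After this base change the source becomes $\TMF_1(3)\otimes DL$ and the target becomes $\overline{\TMF_1(3)^{\mathcal{B}C_2}}$, and I will show the resulting map $f_{\TMF_1(3)}$ is an equivalence.

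Both of these $\TMF_1(3)$-modules have homotopy free of rank two over $A=\pi_*\TMF_1(3)$ and concentrated in even degrees: the target has $\pi_*=A\{z,z^2\}$ by the computation of \Cref{section:e2}, and the source is free on $y_{-4},y_{-2}$ by the first remark following \Cref{thm:main}. In particular both homotopy modules are $2$-torsion-free, so $\pi_*(M/2)=\pi_*(M)/2$ for $M$ either of them, and it is enough to show that the reduction $\bar f_*=f_*\otimes_A A/2$ is an isomorphism of $A/2$-modules.

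To compute $\bar f_*$, recall that by construction $f_*(y_{-k})=z_{-k}$, and \Cref{lemma:computer-z-8,lemma:computer-massey} give $z_{-2}=z+O(2)$ and $z_{-4}=z^2-a_3^{-1}a_1^2z+O(2)$. Hence, in the bases $(y_{-2},y_{-4})$ of the source and $(z,z^2)$ of the target, $\bar f_*$ has matrix
\[
  \begin{pmatrix} 1 & a_3^{-1}a_1^2 \\ 0 & 1 \end{pmatrix},
\]
which is unipotent, hence invertible over $A/2=\F_2[a_1,a_3,\Delta^{-1}]$ (here $a_3$ is a unit, since $\Delta=a_3^3(a_1^3-27a_3)$ is). Thus $\bar f_*$ is an isomorphism, so $f_{\TMF_1(3)}/2$ is an equivalence; its cofiber is the mod-$2$ reduction of the cofiber $C$ of $f_{\TMF_1(3)}$, so $C/2\simeq 0$. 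Since $C$ is a module over the $2$-complete ring spectrum $\TMF_1(3)$ it is $2$-complete, and a $2$-complete spectrum on which $2$ acts invertibly is contractible; hence $C\simeq 0$, $f_{\TMF_1(3)}$ is an equivalence, and by faithful flatness $f$ itself is an equivalence, giving $\overline{\TMF^{\mathcal{B}C_2}}\simeq\TMF\otimes DL$.

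The only genuinely delicate input is not in this final deduction, which is formal, but in the two preceding lemmas: choosing $z_{-8}$ to be a non-$v_1^4$-divisible lift of $\Delta^{-1}a_1x_{14,0}$ in \Cref{lemma:computer-z-8}, and adjusting $f$ so that $g\equiv 0\bmod 2$ in \Cref{lemma:computer-massey}, are exactly what pins $\bar f_*$ down to a unipotent matrix rather than an arbitrary invertible-looking one; those steps in turn rest on the \texttt{sage} cocycle computations of \Cref{section:sage}.
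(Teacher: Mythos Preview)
Your proof is correct and follows essentially the same approach as the paper: base change to $\TMF_1(3)$, reduce mod $2$, and verify the map on homotopy is an isomorphism by elementary linear algebra. The paper's own proof is a terse two-line version of exactly this; you have spelled out the linear algebra (the unipotent matrix) and the $2$-completion step that the paper leaves implicit.
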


\begin{proof}
  It suffices to show that $f$ is an equivalence after base change to $\TMF_1(3)$. Moreover, since we $2$-complete, we can further reduce mod $2$. Both surjectivity and injectivity in $\pi_*$ are easy linear algebra.
\end{proof}

\section{Further questions}\label{section:further-questions}
\begin{itemize}
  \item Is there a ``geometric'' description of the piece $\TMF \otimes DL$, similar to the representation-theoretic description of $\KO^{\mathcal{B}G}$? Can such a description streamline the calculations in the paper? (e.g.\ avoid the need of computers and ``explain'' the formula for $z_{-8}$)

    In particular, the equation $-v_3 y_{-8} + a_1 y_{-4} + 2 y_{-2}$ in $\TMF_1(3) \otimes DL$ looks remarkably similar to the defining equation $a_3 z^4 - a_1 z^2 + 2 z$ in $\TMF_1(3)^{\mathcal{B}C_2}$.

  \item The initial calculation of the Hopf algebroid is made possible by the fact that the inversion map of a Weierstrass elliptic curve has a simple formula. For $C_3$, we can take the equalizer of the inversion map and the duplication map $[2]$, but the latter is unwieldy.
\end{itemize}

\appendix
\section{Connective \texorpdfstring{$C_2$}{C2}-equivariant \texorpdfstring{$\tmf$}{tmf}}\label{section:connective}
At the prime $2$, one prized property of $\tmf$ is
\[
  H_* \tmf = \A \square_{A(2)} \F_2,
\]
which lets us identify the $E_2$-page of its Adams spectral sequence as $\Ext_{\A(2)}(\F_2, \F_2)$.

It is natural to expect $C_2$-equivariant $\tmf$ to have a similar properties, and there have been attempts to construct $C_2$-equivariant $\tmf$ along these lines (e.g.\ \cite{ricka-mmf}). The goal of this section is not to construct $C_2$-equivariant $\tmf$, but to deduce properties of any such construction based on its homology.

\begin{theorem}
  Let $\tmf_{C_2} \in \Sp_{C_2}$ be a spectrum such that
  \[
    (\HF_2)_\star \tmf_{C_2} = \aesq{2} \Mt
  \]
  and the $C_2$ action on the underlying spectrum $\iota \tmf_{C_2}$ is trivial. Then
  \[
    H_* (\tmf_{C_2})^{C_2} \cong H_*(\tmf \oplus \tmf \oplus \tmf \otimes L)
  \]
  as a Steenrod comodule. If $(\tmf_{C_2})^{C_2}$ has a $\tmf$-module structure, then we have an isomorphism of 2-completed $\tmf$-modules.
\end{theorem}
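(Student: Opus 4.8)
The plan is to read off $H_*\!\big((\tmf_{C_2})^{C_2};\F_2\big)$ from the hypothesis by pushing it through the isotropy separation sequence, and then, when a module structure is present, to promote the resulting comodule splitting to a $\tmf$-module splitting by obstruction theory. The first step is to unpack the hypothesis along the two basic operations on $C_2$-spectra. Restriction along $\Sp_{C_2}\to\Sp$: since the Hu--Kriz structure of $\A^{C_2}$ is compatible with restriction, under which $\A^{C_2}\to\A$ and $\A^{C_2}(2)\to A(2)$, this gives $H_*(\iota\tmf_{C_2};\F_2)\cong\A\square_{A(2)}\F_2 = H_*\tmf$ as an $\A$-comodule; in particular, since the underlying action is trivial, $(\tmf_{C_2})_{hC_2}=\iota\tmf_{C_2}\otimes BC_{2+}$ has $\A$-comodule homology $H_*\tmf\otimes H_*(BC_{2+};\F_2)$. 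Inversion of the Euler class $a_\sigma$ computes geometric fixed points: $\pi_\star\Phi^{C_2}(\HF_2\otimes\tmf_{C_2}) = \big((\HF_2)_\star\tmf_{C_2}\big)[a_\sigma^{-1}]$, and because $\Phi^{C_2}$ is symmetric monoidal while $\Phi^{C_2}\HF_2$ is an $H\F_2$-algebra (so splits as a wedge of suspensions of $H\F_2$), one may divide out its polynomial generator. Using that $a_\sigma$-inversion carries $\A^{C_2}$ and $\A^{C_2}(2)$ to $\pi_*\Phi^{C_2}\HF_2\otimes\A$ and $\pi_*\Phi^{C_2}\HF_2\otimes A(2)$ respectively and commutes with cotensor, I obtain $H_*(\Phi^{C_2}\tmf_{C_2};\F_2)\cong\A\square_{A(2)}\F_2 = H_*\tmf$ as an $\A$-comodule as well.

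Next I would feed these into the isotropy separation cofiber sequence $(\tmf_{C_2})_{hC_2}\to(\tmf_{C_2})^{C_2}\to\Phi^{C_2}\tmf_{C_2}$, whose long exact sequence in $\F_2$-homology reads
\[
  \cdots\to H_*\tmf\otimes H_*BC_{2+}\to H_*(\tmf_{C_2})^{C_2}\to H_*\tmf\xrightarrow{\ \partial\ }H_{*-1}\tmf\otimes H_*BC_{2+}\to\cdots,
\]
so the computation reduces to the connecting map $\partial$. By construction $\partial$ is the homology effect of the composite $\Phi^{C_2}\tmf_{C_2}\to(\tmf_{C_2})^{tC_2}\to\Sigma(\tmf_{C_2})_{hC_2}$, which is exactly the map studied in \Cref{section:stunted}, whose cofiber is $\iota\tmf_{C_2}$ smashed with a stunted projective spectrum. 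The $\A^{C_2}(2)$-hypothesis is what makes this cofiber effectively finite: past a point the attaching maps in the projective tower become decomposable, so the relevant $\tmf$-module cancels trivial summands against the infinite projective tails on both sides. What survives is $H_*(\tmf_{C_2})^{C_2}\cong H_*\tmf\oplus H_*\tmf\oplus\big(H_*\tmf\otimes H_*L\big)$, where the first $H_*\tmf$ is the image of $1$, the second the image of the transfer, and $L=S^1\cup_2 S^2\cup_\eta S^4\cup_\nu S^8$; being the homology effect of genuine maps of spectra, this is an isomorphism of Steenrod comodules.

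Finally, if $(\tmf_{C_2})^{C_2}$ carries a $\tmf$-module structure, then its homotopy groups are pinned down by the $\Ext_{A(2)}$-computation (the same one governing $\tmf$ and $\tmf\otimes L$) and vanish in the low degrees relevant to the cells of $L$; so a $\tmf$-module map $\tmf\oplus\tmf\oplus\tmf\otimes L\to(\tmf_{C_2})^{C_2}$ realizing the comodule isomorphism can be built cell by cell, the obstructions being the brackets $\langle 2,-\rangle$, $\langle\eta,2,-\rangle$, $\langle\nu,\eta,2,-\rangle$, which vanish with the needed indeterminacy for degree reasons exactly as in \Cref{section:ident}. That map is a mod $2$ homology isomorphism, hence an equivalence of $2$-complete, bounded-below $\tmf$-modules. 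The main obstacle, I expect, is precisely the identification of $\partial$: carrying the $\A^{C_2}(2)$-hypothesis through geometric fixed points and the Tate construction carefully enough that the finite complex $L$, with its attaching maps $2,\eta,\nu$, actually falls out. Everything else is either formal or a routine adaptation of arguments already in the paper.
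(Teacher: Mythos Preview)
Your overall framework --- isotropy separation, compute the homology at the two ends, analyze the connecting map --- is the paper's, and your computation of $H_* \iota\tmf_{C_2}$ and the final obstruction-theoretic step are essentially correct. The gap is your computation of the geometric fixed points. You assert that $a_\sigma$-inversion carries $\A^{C_2}(2)$ to $\pi_*\Phi^{C_2}\HF_2 \otimes \A(2)$, giving $H_*(\Phi^{C_2}\tmf_{C_2}) \cong \A\square_{\A(2)}\F_2 = H_*\tmf$. This is false: after setting $\rho = 1$, the relation $\tau_i^2 = \rho\tau_{i+1} + \bar{\tau}\xi_{i+1}$ lets one eliminate higher $\tau_i$'s in terms of $\tau_0$, but the class $\bar{\tau}_3$ in $\aesq{2}\Mt$ survives as a primitive of degree $8$, and the $\bxi_i$ appear to \emph{lower} powers than in $H_*\tmf$. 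The correct answer is
\[
  H_* (\tmf_{C_2})^{\Phi C_2} = \F_2[\bar{\tau}_3,\, \bxi_1^4,\, \bxi_2^2,\, \bxi_3,\, \bxi_4,\, \ldots],
\]
which has classes in degrees $4, 6, 7$ where $H_*\tmf$ vanishes. So $\partial$ is highly nontrivial on comodule generators, and your heuristic that ``past a point the attaching maps become decomposable'' does not suffice to produce $L$.

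The paper handles this by comparing to the auxiliary sequence $(\Y_2)_{hC_2} \to \iota\Y_2 \to \widetilde{\Y_2}$ with $\widetilde{\Y_2} \simeq \iota\Y_2 \otimes \Sigma\RP^\infty_{-1}$, and proving that the map $H_*\Y_2^{\Phi C_2} \to H_*\widetilde{\Y_2} = \A\square_{\A(2)} H_*\Sigma\RP^\infty_{-1}$ is injective with image exactly $\A\square_{\A(2)} G_2$, where $G_2$ sits in Lin's short exact sequence $0 \to G_2 \to H_*\Sigma\RP^\infty_{-1} \to L_2 \to 0$ of $\A(2)$-modules. This identification uses a vanishing argument for $\A(2)$-comodule maps $H_*\Y_2^{\Phi C_2} \to L_2$ (for degree reasons) together with a dimension count against Lin's computation. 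One then obtains a short exact sequence $0 \to \A\square_{\A(2)} L_2[-1] \to H_*\Y_2^{C_2} \to \A\square_{\A(2)}\F_2 \to 0$, computes $L_2[-1] = \F_2 \oplus H_*L$ by hand, and finally shows by an explicit $\A(2)^*$-module obstruction argument that this extension splits --- it does not split for free.
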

Note that it is $L$ that appears here, not $DL$, so $\Delta^{-1} (\tmf_{C_2})^{C_2}$ and $\TMF^{C_2}$ are duals as $\TMF$-modules (after completion).

\begin{remark}
  We would expect that the underlying spectrum of $\tmf_{C_2}$ is $\tmf$ with the trivial action, and that there is a natural ring map $\tmf \to (\tmf_{C_2})^{C_2}$, giving the target a $\tmf$-module structure. The proof uses weaker (albeit less natural) assumptions, which is what we state in the theorem.
\end{remark}

Since all we know about this hypothetical $\tmf_{C_2}$ is its homology, it will be cleaner to work with a general $C_2$-spectrum whose homology is $\aesq{n} \Mt$ for some $n$, specializing to $n = 2$ only at the very end. Nevertheless, we shall shortly see that such a spectrum can only exist for $n \leq 2$, as in the non-equivariant case.

\subsection*{Conventions}
\begin{itemize}
  \item For consistency with existing literature, we use $(-)^{C_2}$ to denote the genuine fixed points of a genuine $C_2$-spectrum, instead of the previous $(-)^{\mathcal{B}C_2}$.

  \item We work with $\RO(C_2)$-graded homotopy groups throughout. We have an explicit identification $\RO(C_2) = \Z[\sigma]/(\sigma^2 - 1)$, and we shall write $p + q\sigma = (p + q, q)$ --- the first degree is the total degree of the representation and the second degree is the number of $\sigma$'s, also called the \emph{weight}. We use $\star$ to denote an $\RO(C_2)$-grading and $*$ for an integer grading.

  \item We let $\rho \colon S^0 \to S^\sigma$ be the natural inclusion of the representation spheres (more commonly known as $a$). Then $\rho \in \pi_{-1, -1}(S^0)$.

  \item We write $\iota\colon  \Sp_{C_2} \to \Sp$ for the underlying spectrum functor.
\end{itemize}

\subsection{\texorpdfstring{$C_2$}{C2}-equivariant homotopy theory}
Let $X$ be a $C_2$-spectrum. The main difficulty in analyzing $H_* X^{C_2}$ is that taking categorical fixed points is not symmetric monoidal, so there is no \emph{a priori} relation between $H\F_2 \otimes X^{C_2}$ and $(\HF_2 \otimes X)^{C_2}$. To understand $X^{C_2}$, we consider two other functors $\Sp_{C_2} \to \Sp$ that \emph{are} symmetric monoidal, namely the geometric fixed points $X^{\Phi C_2}$ and the underlying spectrum $\iota X$. The goal of this section is to understand the effects of these constructions on $\RO(C_2)$-graded \emph{homotopy} groups.

The homotopy groups of the categorical fixed points are easy to compute. Essentially by definition, we have
\[
  \pi_* X^{C_2} = \pi_{*, 0} X.
\]

The underlying spectrum and geometric fixed points are obtained by performing certain constructions in $\Sp_{C_2}$ and then applying categorical fixed points.

In the $C_2$-equivariant world, the geometric fixed points admit a particularly simple description, since $\rho^{-1}S^0$ is a model of $\tilde{E}C_2$:
\begin{lemma}\label[lemma]{lemma:geometric-homotopy}
  The geometric fixed points is given by
  \[
    X^{\Phi C_2} = (\rho^{-1} X)^{C_2},
  \]
  and we can write its homotopy groups as
  \[
    \pi_* X^{\Phi C_2} = \pi_{*, 0} (\rho^{-1} X) = \pi_\star X / (\rho - 1),
  \]
  where in the latter formulation, an element of bidegree $(s, w)$ in $\pi_\star X$ gives an element of degree $s - w$ in $\pi_* X^{\Phi C_2}$.
\end{lemma}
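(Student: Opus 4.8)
The plan is to assemble the statement from two standard facts about $C_2$-equivariant stable homotopy theory: that $\widetilde{E}C_2$ is modelled by the mapping telescope $\rho^{-1}S^0$, and that geometric fixed points are computed by $X^{\Phi C_2} = (\widetilde{E}C_2 \otimes X)^{C_2}$. Combined with the identity $\pi_* Y^{C_2} = \pi_{*,0} Y$ recorded just above, the rest is grading bookkeeping.

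First I would recall the telescope model. One has $EC_2 = S(\infty\sigma)$ and hence $\widetilde{E}C_2 = S^{\infty\sigma} = \colim_n S^{n\sigma}$, where the transition map is
\[
  S^{n\sigma} = S^{n\sigma}\otimes S^0 \xrightarrow{\ \mathrm{id}\otimes\rho\ } S^{n\sigma}\otimes S^\sigma = S^{(n+1)\sigma};
\]
this telescope is precisely what $\rho^{-1}S^0$ denotes. Since $-\otimes X$ is a left adjoint it preserves colimits, so $\widetilde{E}C_2 \otimes X = \colim_n \Sigma^{n\sigma} X = \rho^{-1}X$, and therefore $X^{\Phi C_2} = (\widetilde{E}C_2 \otimes X)^{C_2} = (\rho^{-1}X)^{C_2}$, which is the first displayed formula.

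For the homotopy groups I would apply $\pi_* Y^{C_2} = \pi_{*,0}Y$ to $Y = \rho^{-1}X$, giving $\pi_* X^{\Phi C_2} = \pi_{*,0}(\rho^{-1}X)$. Because $\RO(C_2)$-graded homotopy commutes with the filtered colimit, $\pi_\star(\rho^{-1}X) = \pi_\star X[\rho^{-1}]$, and here $\rho$, of bidegree $(-1,-1)$, is a unit. Multiplication by $\rho^{w}$ is then an isomorphism $\pi_{s,w}(\rho^{-1}X) \xrightarrow{\sim} \pi_{s-w,0}(\rho^{-1}X)$, so the weight-$0$ groups are exactly the quotient of $\pi_\star X$ by the relation $\rho = 1$, a class of bidegree $(s,w)$ contributing in degree $s-w$ (matching the convention $p + q\sigma = (p+q,q)$, i.e.\ $\sigma \mapsto 0$ on the collapsed grading). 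This is $\pi_\star X/(\rho-1)$ as asserted.

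There is no genuine obstacle here; the proof is a matter of correctly invoking the two standard inputs and keeping the two gradings straight. The only place where a careless argument could go wrong is the last identification — one should check explicitly that collapsing the $\RO(C_2)$-grading along $\rho\mapsto 1$ sends bidegree $(s,w)$ to total degree $s-w$, rather than to $s$ or to $w$.
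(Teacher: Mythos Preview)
Your proof is correct and follows exactly the approach the paper indicates: the paper does not give a detailed proof of this lemma, but simply records the key input ``$\rho^{-1}S^0$ is a model of $\widetilde{E}C_2$'' in the sentence preceding the statement, and you have correctly unpacked this together with the standard identification $X^{\Phi C_2} = (\widetilde{E}C_2 \otimes X)^{C_2}$ and the grading bookkeeping.
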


The underlying spectrum is slightly more involved. It can be expressed as 
\[
  \iota X = ((C_2)_+ \otimes X)^{C_2} = F((C_2)_+, X)^{C_2},
\]
since $(C_2)_+$ is self-dual. To compute its homotopy groups, we use the cofiber sequence
\[
  (C_2)_+ \to S^0 \overset\rho\to S^\sigma.
\]
We can either apply $F(-, X)$ or $(-) \otimes X$ to this, and get
\begin{lemma}\label[lemma]{lemma:underlying-les}
  There are long exact sequences
  \[
    \cdots \longrightarrow \pi_{s + 1, w + 1} X \overset\rho\longrightarrow \pi_{s, w} X \overset{\mathrm{res}}\longrightarrow \pi_{s, w} (C_2)_+ \otimes X \longrightarrow \pi_{s, w + 1} X \overset\rho\longrightarrow \cdots.
  \]
  and
  \[
    \cdots \overset{\rho}\longrightarrow \pi_{s, w - 1} X \longrightarrow \pi_{s, w} (C_2)_+ \otimes X \overset{\mathrm{tr}}\longrightarrow \pi_{s, w} X \overset\rho\longrightarrow \pi_{s - 1, w - 1} X \longrightarrow \cdots .
  \]
  Moreover, $\mathrm{res} \circ \mathrm{tr}$ is $1 + \gamma$ on $\pi_* \iota X = \pi_{*, 0} (C_2)_+ \otimes X$, where $\gamma$ is the generator of $C_2$.
\end{lemma}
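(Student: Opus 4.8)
The plan is to obtain both long exact sequences by applying the two symmetric monoidal functors $(-)\otimes X$ and $F(-, X)$ to the cofiber sequence of $C_2$-spectra
\[
  (C_2)_+ \xrightarrow{\ c\ } S^0 \xrightarrow{\ \rho\ } S^\sigma ,
\]
where $c$ is the collapse map $(C_2)_+ \to S^0$ and $\rho$ is the inclusion from \Cref{subsection:unit-transfer}. Two standard identifications are needed: the self-duality $F((C_2)_+, X) \simeq (C_2)_+ \otimes X$, and the suspension isomorphism for the representation sphere, which in the paper's convention $p + q\sigma = (p + q, q)$ places $S^\sigma$ in bidegree $(1,1)$, so that $\pi_{s,w}(S^\sigma \otimes X) = \pi_{s-1, w-1} X$ and $\pi_{s,w} F(S^\sigma, X) = \pi_{s+1, w+1} X$.

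First I would smash the cofiber sequence with $X$, getting $(C_2)_+ \otimes X \xrightarrow{\mathrm{tr}} X \xrightarrow{\rho} S^\sigma \otimes X$, where the first map is $c \otimes X$, which is $\mathrm{tr}$ by definition. Taking $\pi_{s,w}$ of the associated Puppe sequence and substituting $\pi_{s,w}(S^\sigma \otimes X) = \pi_{s-1,w-1}X$ yields, after re-indexing the connecting map $\pi_{s-1,w-1}X \to \pi_{s-1,w}((C_2)_+ \otimes X)$, exactly the second long exact sequence in the statement. Dually, applying $F(-, X)$ turns the cofiber sequence into the fiber sequence $F(S^\sigma, X) \xrightarrow{\rho^*} X \xrightarrow{\mathrm{res}} F((C_2)_+, X) \simeq (C_2)_+ \otimes X$, where $\mathrm{res} = c^*$; taking $\pi_{s,w}$ and using $\pi_{s,w} F(S^\sigma, X) = \pi_{s+1,w+1}X$ together with $\pi_{s,w}\, \Sigma F(S^\sigma, X) = \pi_{s,w+1}X$ for the connecting map produces the first long exact sequence.

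For the final clause, under the self-duality identification the map $\mathrm{res} = c^*$ corresponds to $Dc \otimes X$, where $Dc\colon S^0 = D(S^0) \to D((C_2)_+) = (C_2)_+$ is the Spanier--Whitehead dual of $c$. Hence $\mathrm{res} \circ \mathrm{tr} = (Dc \circ c) \otimes X$, and the composite $Dc \circ c\colon (C_2)_+ \to S^0 \to (C_2)_+$ is precisely the map recalled in \Cref{subsection:unit-transfer}, whose value is $\sum_{g \in C_2} g = 1 + \gamma$ as an equivariant self-map of $(C_2)_+$. Taking $C_2$-fixed points and restricting to the integer grading --- where $((C_2)_+ \otimes X)^{C_2} = \iota X$ and $\mathrm{tr}$, $\mathrm{res}$ become the usual transfer and restriction --- gives $\mathrm{res} \circ \mathrm{tr} = 1 + \gamma$ on $\pi_* \iota X = \pi_{*,0}((C_2)_+ \otimes X)$.

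There is essentially no serious obstacle; the only points demanding care are (a) keeping the $S^\sigma$-suspension shifts consistent with the $(p+q, q)$ grading so that the boundary maps land in the advertised bidegrees, and (b) checking that the self-duality equivalence $F((C_2)_+, X) \simeq (C_2)_+ \otimes X$ is the one under which $c^*$ is identified with $Dc \otimes X$, so that the computation of $\mathrm{res} \circ \mathrm{tr}$ is the honest one and not off by the swap $\gamma$.
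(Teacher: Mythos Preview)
Your proposal is correct and follows exactly the approach the paper indicates: the paper simply states the cofiber sequence $(C_2)_+ \to S^0 \overset{\rho}{\to} S^\sigma$ just before the lemma and remarks that one applies $F(-,X)$ or $(-)\otimes X$ to obtain the two sequences, leaving the details (including the $\mathrm{res}\circ\mathrm{tr}=1+\gamma$ identification via self-duality and the composite recalled in \Cref{subsection:unit-transfer}) implicit. You have filled in precisely those details.
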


\begin{corollary}\label[corollary]{cor:underlying-inj}
  Suppose $\rho$ is injective on $\pi_{*, 0} X$ and $\pi_{*, 1} X$. Then
  \[
    \pi_* \iota X = (\pi_\star X / \rho)_{*, 0}
  \]
  with $\gamma = -1$.
\end{corollary}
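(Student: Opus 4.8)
The plan is to extract both assertions directly from the two long exact sequences of \Cref{lemma:underlying-les}, evaluated at weight $w = 0$, using that $\pi_* \iota X = \pi_{*,0}\big((C_2)_+ \otimes X\big)$.

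First I would handle the additive identification. Specializing the first long exact sequence to $w = 0$ yields
\[
  \pi_{s+1,1} X \overset{\rho}\longrightarrow \pi_{s,0} X \overset{\mathrm{res}}\longrightarrow \pi_{s,0}\big((C_2)_+ \otimes X\big) \longrightarrow \pi_{s,1} X \overset{\rho}\longrightarrow \pi_{s-1,0} X .
\]
By hypothesis $\rho$ is injective on $\pi_{*,1} X$, so the rightmost arrow is injective; by exactness the preceding map $\pi_{s,0}\big((C_2)_+ \otimes X\big) \to \pi_{s,1}X$ is zero, so $\mathrm{res}$ is surjective. Thus $\pi_{s,0}\iota X$ is the cokernel of $\rho\colon \pi_{s+1,1}X \to \pi_{s,0}X$, and since $\rho$ has bidegree $(-1,-1)$ this cokernel is precisely the degree-$(s,0)$ component of $\pi_\star X/\rho$, which is the asserted identification of graded groups.

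For the residual action of $\gamma$, I would use the second long exact sequence at $w = 0$:
\[
  \pi_{s,-1} X \longrightarrow \pi_{s,0}\big((C_2)_+ \otimes X\big) \overset{\mathrm{tr}}\longrightarrow \pi_{s,0} X \overset{\rho}\longrightarrow \pi_{s-1,-1} X .
\]
Here $\rho \circ \mathrm{tr} = 0$ as consecutive maps in an exact sequence, and $\rho$ is injective on $\pi_{*,0}X$ by hypothesis, so $\mathrm{tr}$ vanishes on all of $\pi_*\iota X$. But \Cref{lemma:underlying-les} also records $\mathrm{res} \circ \mathrm{tr} = 1 + \gamma$ on $\pi_* \iota X$; hence $1 + \gamma = 0$, i.e.\ $\gamma$ acts by $-1$.

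The argument is bookkeeping with the two exact sequences, and I do not anticipate a genuine obstacle. The only points requiring care are tracking the bidegree of $\rho$ so that the cokernel is indexed correctly, and invoking each injectivity hypothesis in the correct weight — injectivity on $\pi_{*,1}$ for surjectivity of $\mathrm{res}$, and injectivity on $\pi_{*,0}$ for the vanishing of $\mathrm{tr}$.
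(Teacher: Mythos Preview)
Your proposal is correct and matches the paper's proof essentially line for line: apply the first exact sequence at $w=0$ and use injectivity of $\rho$ on $\pi_{*,1}X$ to see $\mathrm{res}$ is surjective with kernel the $\rho$-multiples, then apply the second sequence at $w=0$ and use injectivity of $\rho$ on $\pi_{*,0}X$ to force $\mathrm{tr}=0$, whence $1+\gamma=\mathrm{res}\circ\mathrm{tr}=0$.
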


\begin{proof}
  We first apply the first sequence with $w = 0$. By assumption, the last $\rho$ is injective, hence has trivial kernel. So $\mathrm{res}$ is surjective with kernel given by $\rho$-multiples.

  Now consider the second sequence with $w = 0$. Since the second $\rho$ is injective, it has trivial kernel, so $\mathrm{tr}$ has zero image. Hence $1 + \gamma = \mathrm{res} \circ \mathrm{tr} = 0$.
\end{proof}

\subsection{The \texorpdfstring{$C_2$}{C2}-equivariant Steenrod algebra}
We recall the computation of the $C_2$-equivariant Steenrod algebra.

\begin{lemma}[{\cite[Proposition 6.2]{hu-kriz}}]
  \[
    \Mt \equiv \pi_\star \HF_2 = \F_2[\tau, \rho] \oplus \left\{\frac{\gamma}{\rho^k \tau^\ell}\mid k, \ell \geq 1\right\}
  \]
  with the obvious ring structure. The degrees of the classes are
  \[
    |\tau| = (0, -1),\quad |\rho| = (-1, -1),\quad |\gamma| = (1, 0).
  \]
\end{lemma}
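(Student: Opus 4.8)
The plan is to compute $\Mt = \pi_\star\HF_2$ as a ring by isotropy separation. Smash the cofiber sequence $EC_{2+}\to S^0\to\widetilde{E}C_2$ in $\Sp_{C_2}$ with $\HF_2$ and run the resulting long exact sequence in $\RO(C_2)$-graded homotopy; this reduces the computation to the two outer terms and a single connecting map. Since $\widetilde{E}C_2 = \rho^{-1}S^0$ (as recalled before \Cref{lemma:geometric-homotopy}), the ``geometric'' term is $\widetilde{E}C_2\wedge\HF_2 = \rho^{-1}\HF_2$, with $\pi_\star = \Mt[\rho^{-1}]$; by the Adams isomorphism the ``Borel'' term $EC_{2+}\wedge\HF_2$ has $(EC_{2+}\wedge\HF_2)^{C_2} = (\iota\HF_2)_{hC_2} = H\F_2\wedge\RP^\infty_+$, and its full $\RO(C_2)$-graded homotopy is reached through the cellular filtration $EC_2 = \operatorname{colim} S(n\sigma)$, i.e. iterated cofiber sequences built from $C_{2+}\wedge\HF_2$ whose homotopy is just the underlying $\pi_*\HF_2$.

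First I would pin down the three elementary inputs. The underlying spectrum is $\iota\HF_2 = H\F_2$, so $\pi_{s}(C_{2+}\wedge\HF_2) = \F_2\cdot\delta_{s,0}$ (this is essentially \Cref{lemma:underlying-les}). The genuine fixed points are $(\HF_2)^{C_2} = H\F_2$, so $\pi_{*,0}\Mt = \F_2$ concentrated in degree $0$. And the geometric fixed points $\HF_2^{\Phi C_2}$ have homotopy $\F_2[b]$ with $|b| = 1$ (a standard computation from the cellular filtration of $\widetilde{E}C_2$). Since inverting $\rho$ turns the $\gamma$-divided classes into torsion, these inputs together with the weight bookkeeping of \Cref{lemma:geometric-homotopy} give $\pi_\star(\widetilde{E}C_2\wedge\HF_2) = \Mt[\rho^{-1}] = \F_2[\tau,\rho^{\pm1}]$, where $\tau$ is the weight class with $|\tau| = (0,-1)$. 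Symmetrically, the Borel term contributes a $\F_2$ in each stem $\geq 0$, spread out in weight.

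Then I would assemble via the long exact sequence. The localization map $\Mt\to\Mt[\rho^{-1}] = \F_2[\tau,\rho^{\pm1}]$ is surjective onto the non-negative $\rho$-powers $\F_2[\tau,\rho]$, which therefore sits inside $\Mt$ as the ``positive cone''; its kernel is the image of the Borel term, and identifying that image produces exactly the ``negative cone'' $\{\gamma/(\rho^k\tau^\ell) : k,\ell\geq 1\}$ — canonically the local cohomology module $H^2_{(\rho,\tau)}(\F_2[\rho,\tau])$ shifted by $|\gamma| = (1,0)$, with its tautological $\F_2[\rho,\tau]$-action. All of the ring relations (polynomiality of the positive cone; the negative cone being $\rho$- and $\tau$-power torsion yet infinitely $\rho$- and $\tau$-divisible; the pairing between the two cones) then follow from this description, and the degrees of $\tau$, $\rho$, $\gamma$ are read off directly.

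The main obstacle is the assembly step: a priori the long exact sequence pins down $\Mt$ only up to extensions and up to the connecting homomorphism. What forces the stated answer is that the possible bidegrees are severely constrained — from the $C_2$-CW structure of $S^{p+q\sigma}$ one reads off that $\pi_{p+q\sigma}\HF_2$ vanishes outside an explicit cone depending on the sign of $q$ — combined with the exact knowledge of the weight-$0$ line coming from $(\HF_2)^{C_2} = H\F_2$. These two facts rule out exotic extensions and determine the connecting map, after which the local-cohomology identification of the negative cone (a form of the Tate/Anderson self-duality of $\HF_2$) is formal but bookkeeping-heavy.
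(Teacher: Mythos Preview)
The paper does not give its own proof of this lemma: it is stated with a citation to Hu--Kriz and used as a black box. So there is no ``paper's proof'' to compare against.

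Your outline via isotropy separation is the standard route and is essentially how Hu--Kriz do it. The ingredients you list are right: $\widetilde{E}C_2\wedge\HF_2$ has $\rho$-inverted homotopy $\F_2[\tau,\rho^{\pm1}]$, the Borel piece is governed by the cellular filtration of $EC_2$ with associated graded built from copies of $\pi_*H\F_2$, and gluing along the Tate long exact sequence produces the positive cone $\F_2[\tau,\rho]$ and the negative cone as the local cohomology module. Your acknowledged ``main obstacle'' is genuine but not deep: the vanishing regions you mention, together with the known weight-$0$ line $\pi_{*,0}\HF_2=\F_2$, leave no room for nontrivial extensions, and the multiplicative structure on the negative cone is forced by its $\F_2[\tau,\rho]$-module description. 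One small correction: you write ``inverting $\rho$ turns the $\gamma$-divided classes into torsion,'' but you cannot use that as an input since you have not yet established those classes exist---rather, the computation of $\pi_\star(\widetilde{E}C_2\wedge\HF_2)$ is done directly from $\HF_2^{\Phi C_2}=H\F_2[b]$ and the bigrading bookkeeping.
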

The terms involving $\gamma$ are known as the ``negative cone''. They will not play a role in the story (though we will have to verify that they indeed do not play a role).

\begin{lemma}[{\cite[Theorem 6.41]{hu-kriz}}]
  \[
    \A^{C_2} \equiv \pi_\star (\HF_2 \otimes \HF_2) = \Mt[\tau_0, \tau_1, \ldots, \xi_1, \xi_2 \ldots]/(\tau_i^2 = \rho \tau_{i + 1} + \bar\tau \xi_{i + 1}),
  \]
  where $\bar\tau = \tau + \rho \tau_0$ and
  \[
    |\tau_n| = (2^{n + 1} - 1, 2^n - 1),\quad |\xi_n| = (2^{n + 1} - 2, 2^n - 1).
  \]
  We have
  \[
    \eta_R(\rho) = \rho,\quad \eta_R(\tau) = \bar{\tau} = \tau + \rho \tau_0
  \]
  and
  \[
    \Delta \xi_k = \sum \xi_{k - i}^{2^i} \otimes \xi_i,\quad \Delta \tau_k = \sum_{i = 0}^k \xi_{k - i}^{2^i} \otimes \tau_i + \tau_k \otimes 1.
  \]
\end{lemma}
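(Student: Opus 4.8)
The plan is to compute $\A^{C_2} = \pi_\star(\HF_2 \otimes \HF_2)$ by isotropy separation, bootstrapping from its underlying homotopy and its geometric fixed points, both of which are pinned down by non-equivariant homotopy theory together with the description of $\Mt$. Since $\HF_2$ restricts to the $C_2$-spectrum $\HF_2$ with trivial action and $\iota$ is symmetric monoidal, $\iota(\HF_2 \otimes \HF_2) = \HF_2 \wedge \HF_2$ with trivial residual Weyl action, so $\pi_* \iota(\HF_2 \otimes \HF_2)$ is Milnor's dual Steenrod algebra $\mathcal{A}_*^{\mathrm{cl}} = \F_2[\zeta_1, \zeta_2, \ldots]$ with $|\zeta_k| = 2^k - 1$. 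Since geometric fixed points is also symmetric monoidal, $(\HF_2 \otimes \HF_2)^{\Phi C_2} = \HF_2^{\Phi C_2} \otimes \HF_2^{\Phi C_2}$, and by \Cref{lemma:geometric-homotopy} we have $\pi_* \HF_2^{\Phi C_2} = (\rho^{-1}\Mt)/(\rho - 1) = \F_2[u]$ with $|u| = 1$, the negative cone of $\Mt$ being $\rho$-power-torsion and hence killed; thus $\pi_*(\HF_2 \otimes \HF_2)^{\Phi C_2} = \mathcal{A}_*^{\mathrm{cl}}[u, u']$. An alternative entry point, which I would keep in reserve, is to import Voevodsky's computation of the $\mathbb{R}$-motivic dual Steenrod algebra and apply real Betti realization, but that merely substitutes one black box for another, so I prefer the self-contained route.

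Next I would construct the generators inside $\pi_\star(\HF_2 \otimes \HF_2)$ and verify the presentation. The class $\tau_0$ is \emph{defined} by the requirement $\eta_R(\tau) = \tau + \rho\tau_0$: the difference $\eta_R(\tau) - \tau$ restricts to $0$ on the underlying spectrum (both sides are the unit there), so in the $\rho$-complete part it is uniquely $\rho$-divisible, and a degree count forces $|\tau_0| = (1,0)$. Inductively, $\tau_n^2 - \bar\tau\xi_{n+1}$ restricts to $0$ underlying — after checking that the Hu--Kriz class $\xi_{n+1}$, which lives in regular-representation degree $(2^{n+1}-1)(1+\sigma)$, restricts to $\tau_n^2$ there — hence is $\rho$-divisible, and one \emph{defines} $\rho\tau_{n+1} = \tau_n^2 + \bar\tau\xi_{n+1}$; a degree count gives $|\tau_{n+1}| = (2^{n+2}-1, 2^{n+1}-1)$. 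The $\xi_n$ are the images of the Milnor-type generators coming from $\pi_\star^{C_2}$ of the Real projective space $\P(\mathcal{U})$ (equivalently $B_{C_2}\mu_2$), whose $C_2$-homology is a polynomial/divided-power algebra on the Euler and orientation classes; this is the equivariant refinement of Milnor's identification of $\mathcal{A}_*^{\mathrm{cl}}$ via $\RP^\infty$, and the group structure on $B_{C_2}\mu_2$ simultaneously yields $\Delta\xi_k = \sum \xi_{k-i}^{2^i}\otimes\xi_i$. The coproduct on $\tau_k$ is then forced: apply $\Delta$ to $\rho\tau_k = \tau_{k-1}^2 + \bar\tau\xi_k$, use $\Delta(\bar\tau) = \bar\tau\otimes 1 + (\text{lower terms})$, and solve — the extra summand $\tau_k \otimes 1$ appears precisely because $\tau_k$ is primitive on the right factor.

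Finally I would show that the evident map from the presented algebra $B := \Mt[\tau_i, \xi_i]/(\tau_i^2 - \rho\tau_{i+1} - \bar\tau\xi_{i+1})$ to $\pi_\star(\HF_2 \otimes \HF_2)$ is an isomorphism. The key structural point is that $B$ is a \emph{free} $\Mt$-module: the relations are monic of degree $2$ in each $\tau_i$, so $B = \bigoplus_m \Sigma^{|m|}\Mt$ over the admissible monomials $m = \prod_{i\geq 1}\xi_i^{a_i}\prod_{i\geq 0}\tau_i^{\epsilon_i}$ with $\epsilon_i \in \{0,1\}$. Because a genuine bounded-below $C_2$-spectrum, or an $\Mt$-module of this shape, is recovered from its Borel completion (the $\rho$-completion) together with its geometric fixed points (the $\rho$-inversion) via the Tate pullback square, it suffices to check the map is an equivalence on those two localizations: on $\rho$-completion the homotopy fixed point spectral sequence run over the trivial-action $\mathcal{A}_*^{\mathrm{cl}}$ collapses and recovers the positive cone $\F_2[\tau,\rho]\langle m\rangle$, while after inverting $\rho$ we recover $\mathcal{A}_*^{\mathrm{cl}}[u,u']$ from the first step, matching $B[\rho^{-1}]$ after eliminating the $\tau_{n+1}$'s. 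The main obstacle is exactly this final comparison: handling the \emph{negative cone} of $\Mt$ carefully — it sits "between" the Borel and geometric pieces — and making rigorous that no exotic classes enter $\pi_\star$ beyond those forced by the underlying homotopy and the geometric fixed points, equivalently that the $\rho$-Bockstein spectral sequence relating these degenerates onto the claimed answer.
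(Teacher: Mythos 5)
The paper cites this statement from Hu--Kriz without proof, so there is no in-text argument to compare against; the review below assesses your sketch on its own merits.

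Your outline is in the right spirit — bootstrap $\pi_\star(\HF_2\otimes\HF_2)$ from its underlying homotopy $\A_*^{\mathrm{cl}}$, its geometric fixed points $\A_*^{\mathrm{cl}}[u,u']$, and a $\rho$-Bockstein comparison — and the definitions of the generators you give are consistent with the stated degrees (e.g.\ $\rho\tau_{n+1} := \tau_n^2 + \bar\tau\xi_{n+1}$ forces $|\tau_{n+1}| = (2^{n+2}-1, 2^{n+1}-1)$, and $\xi_{n+1}$ indeed sits in $(2^{n+1}-1)(1+\sigma)$). But there are two genuine gaps. First, the claim that each $\tau_n^2 - \bar\tau\xi_{n+1}$ (or, initially, $\eta_R(\tau) - \tau$) is ``uniquely $\rho$-divisible because it restricts to zero underlying'' is not automatic in the $\RO(C_2)$-graded setting: the kernel of restriction is not literally the image of $\rho$, because the negative cone of $\Mt$ contributes $\rho$-torsion in precisely the weights where these classes live, so uniqueness of the division has to be argued by a degree analysis that your sketch does not carry out. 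Second, and more importantly, the final comparison is where essentially all the content of Hu--Kriz's Theorem 6.41 resides, and your proposal explicitly flags it as ``the main obstacle'' without resolving it. The Tate fracture square recovers the \emph{genuine spectrum} from its Borel completion and geometric fixed points, but extracting the full $\RO(C_2)$-graded $\pi_\star$ from those two localizations requires controlling the $\rho$-Bockstein (equivalently, analyzing the cofiber sequence $(C_2)_+ \to S^0 \to S^\sigma$ across all twists), and the degeneration of that spectral sequence onto the presented answer — in particular, that no classes beyond those forced by the two localizations appear, and that the negative cone contributes exactly the predicted $\Mt$-module summands — is a substantive computation, not a formal consequence of the pullback square. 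As written, the argument shows the map $B \to \pi_\star(\HF_2\otimes\HF_2)$ is an isomorphism after inverting $\rho$ and after $\rho$-completion, but not on the nose; closing that gap is the theorem.
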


One observes that \Cref{cor:underlying-inj} applies to $\Mt$ and $\A^{C_2}$, so we can read off the homotopy groups of the underlying spectra of $\HF_2$ and $\HF_2 \otimes \HF_2$. This tells us $\iota \HF_2 = H\F_2$ and $\pi_* \iota (\HF_2 \otimes \HF_2) \cong \A$, as expected. The identification of the latter with $\A$ is canonical for the following unimaginative reason:

\begin{lemma}\label[lemma]{lemma:steenrod-auto}
  Let $\zeta_1, \zeta_2, \ldots \in \A$ be such that $|\zeta_i| = 2^i - 1$ and
  \[
    \Delta \zeta_k = \sum \zeta_{k - i}^{2^i} \otimes \zeta_i.
  \]
  If $\zeta_1 \not= 0$, then $\zeta_i = \xi_i$.
\end{lemma}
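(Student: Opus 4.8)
The plan is a straightforward induction on $k$: the coproduct hypothesis determines the \emph{reduced} coproduct of $\zeta_k$ in terms of the $\zeta_i$ with $i < k$, so once those are known to be the $\xi_i$, the difference $\zeta_k - \xi_k$ is forced to be primitive, and one finishes by invoking the classical description of the primitives of the dual Steenrod algebra.

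First I would dispose of the base case $k = 1$. Here the coproduct formula has only the $i = 0$ and $i = 1$ terms (with $\zeta_0 = 1$), so $\zeta_1$ is primitive of degree $1$; since $\A$ is one-dimensional in degree $1$, spanned by $\xi_1$, the hypothesis $\zeta_1 \ne 0$ gives $\zeta_1 = \xi_1$. For the inductive step, assume $k \ge 2$ and $\zeta_i = \xi_i$ for $0 < i < k$. Splitting off the two extreme terms of the sum, the hypothesis becomes
\[
  \Delta \zeta_k - \zeta_k \otimes 1 - 1 \otimes \zeta_k = \sum_{0 < i < k} \zeta_{k-i}^{2^i} \otimes \zeta_i = \sum_{0 < i < k} \xi_{k-i}^{2^i} \otimes \xi_i,
\]
and by the coproduct formula for $\A$ the right-hand side is precisely the reduced coproduct of $\xi_k$. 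Hence $\zeta_k - \xi_k$ is a primitive element of $\A$ of degree $2^k - 1$.

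The key input --- and the only step with real content --- is the classical fact (Milnor) that the primitives of the dual Steenrod algebra are spanned by the iterated squares $\xi_1^{2^j}$, which sit in degrees $2^j$; equivalently, this is dual to the statement that the Steenrod algebra is generated as an algebra by the $\Sq^{2^j}$. For $k \ge 2$ the integer $2^k - 1$ is odd and $> 1$, hence not a power of $2$, so there are no nonzero primitives in that degree; therefore $\zeta_k = \xi_k$ and the induction closes.

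I do not expect any genuine obstacle here. The one thing worth flagging is that the nonvanishing hypothesis $\zeta_1 \ne 0$ is used only at the very first stage --- it is precisely the degree $2^1 - 1 = 1 = 2^0$ in which a nonzero primitive exists --- while for all $k \ge 2$ the vanishing of primitives in degree $2^k - 1$ makes any such hypothesis superfluous.
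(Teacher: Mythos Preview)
Your argument is correct and follows exactly the paper's approach: induction on $k$, with the base case handled by $\dim \A_1 = 1$ and the inductive step reducing $\zeta_k - \xi_k$ to a primitive in degree $2^k - 1$, which must vanish since the primitives of $\A$ are the $\xi_1^{2^j}$.
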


\begin{proof}
  We prove this by induction on $i$. It is clear for $i = 1$, since there is a unique element of degree $1$. Then if it is true for $k < i$, then $\zeta_i - \xi_i$ is primitive. But the only primitive elements of $\A$ are of the form $\xi_1^{2^k}$, which is not in the bidegree. So this is zero, and $\zeta_i = \xi_i$.
\end{proof}

\subsection{The homology of geometric fixed points}
Since $(-)^{\Phi C_2}$ is symmetric monoidal, in order to deduce $H_* X^{\Phi C_2}$ from $(\HF_2)_\star X$, it suffices to understand $\HF_2^{\Phi C_2}$.

\begin{lemma}
  We have $\HF_2^{\Phi C_2} = H\F_2 [\tau]$ as an $\mathbb{E}_1$-ring, where $|\tau| = 1$.
\end{lemma}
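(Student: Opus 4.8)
The plan is to equip $\HF_2^{\Phi C_2}$ with an $H\F_2$-algebra structure, compute its homotopy with the tools already in hand, and then match it with the free $\mathbb{E}_1$-$H\F_2$-algebra on a degree-one class.

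First, since $(-)^{\Phi C_2}\colon \Sp_{C_2} \to \Sp$ is symmetric monoidal, $\HF_2^{\Phi C_2}$ is an $\mathbb{E}_\infty$-ring, and in particular an $\mathbb{E}_1$-ring. To see it is an $H\F_2$-algebra I would use the localization map $S^0 \to \rho^{-1} S^0 = \widetilde{E}C_2$, which is a map of $\mathbb{E}_\infty$-rings in $\Sp_{C_2}$; smashing with $\HF_2$ and applying the lax symmetric monoidal genuine fixed-point functor produces a map of $\mathbb{E}_\infty$-rings
\[
  (\HF_2)^{C_2} \longrightarrow (\rho^{-1}\HF_2)^{C_2} = \HF_2^{\Phi C_2},
\]
the last identity being \Cref{lemma:geometric-homotopy}. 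Since the weight-$0$ part of $\Mt = \pi_\star \HF_2$ is $\F_2$ in degree $0$ and vanishes in all other degrees, $(\HF_2)^{C_2} = H\F_2$, so $\HF_2^{\Phi C_2}$ is canonically an $\mathbb{E}_1$-$H\F_2$-algebra.

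Next, \Cref{lemma:geometric-homotopy} identifies $\pi_* \HF_2^{\Phi C_2}$ with $\Mt/(\rho - 1)$, where a class of bidegree $(s,w)$ lands in degree $s - w$. By the Hu--Kriz description, $\Mt$ is the sum of the positive cone $\F_2[\tau,\rho]$ and the negative cone, spanned by the classes $\gamma/(\rho^k\tau^\ell)$ with $k,\ell \geq 1$. Each negative cone class is $\rho$-power torsion, since $\rho^k \cdot \gamma/(\rho^k \tau^\ell) = 0$, and hence is annihilated modulo $(\rho - 1)$; the positive cone contributes $\F_2[\tau,\rho]/(\rho - 1) = \F_2[\tau]$, and $\tau$, of bidegree $(0,-1)$, lands in degree $1$. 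Thus $\pi_* \HF_2^{\Phi C_2} = \F_2[\tau]$ with $|\tau| = 1$.

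Finally, as an $H\F_2$-module $\HF_2^{\Phi C_2}$ has free homotopy over $\pi_0 = \F_2$, hence is a wedge of shifts of $H\F_2$. Write $H\F_2[\tau] := T_{H\F_2}(\Sigma H\F_2)$ for the free $\mathbb{E}_1$-$H\F_2$-algebra on a generator in degree $1$; its underlying module is $\bigvee_{n\geq 0}\Sigma^n H\F_2$ and its homotopy is the polynomial ring $\F_2[\tau]$. The class $\tau \in \pi_1 \HF_2^{\Phi C_2}$ then induces, via the universal property, a map of $\mathbb{E}_1$-$H\F_2$-algebras $H\F_2[\tau] \to \HF_2^{\Phi C_2}$, which on homotopy is a ring homomorphism out of a polynomial ring sending the generator to $\tau$, hence an isomorphism in every degree; so the map is an equivalence. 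The only genuinely non-formal ingredient here is the construction of the unit map $H\F_2 \to \HF_2^{\Phi C_2}$ pinning down the $H\F_2$-algebra structure; once that is available, the rest is the homotopy-group computation, whose sole subtlety is verifying that the negative cone of $\Mt$ dies, which is exactly its $\rho$-power-torsion-ness.
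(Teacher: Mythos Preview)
Your proof is correct and follows essentially the same route as the paper: obtain the $H\F_2$-algebra structure from the canonical map $(\HF_2)^{C_2} \to \HF_2^{\Phi C_2}$ (what the paper calls ``the inclusion of the categorical fixed points''), compute $\pi_*$ via \Cref{lemma:geometric-homotopy}, and invoke the universal property of the free $\mathbb{E}_1$-$H\F_2$-algebra. You simply spell out more of the details, in particular the verification that the negative cone of $\Mt$ dies modulo $\rho - 1$, which the paper leaves implicit.
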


Truncation gives us an $\mathbb{E}_\infty$ map $\HF_2^{\Phi C_2} \to H\F_2$, which we can understand as quotienting out $\tau$.

\begin{proof}
  The inclusion of the categorical fixed points gives $\HF_2^{\Phi C_2}$ the structure of an $\mathbb{E}_\infty$-$H\F_2$-algebra. Since $H\F_2[\tau]$ is the free $\mathbb{E}_1$-ring on one generator over $H\F_2$, it suffices to observe that $\pi_* \HF_2^{\Phi C_2} = \F_2[\tau]$ by \Cref{lemma:geometric-homotopy}.
\end{proof}

\begin{corollary}
  If $X \in \Sp_{C_2}$, then
  \[
    H_* X^{\Phi C_2} \cong ((\HF_2)_\star X)/(\rho - 1, \tau).
  \]
  If $X$ is a homotopy ring, then this is an isomorphism of rings.
\end{corollary}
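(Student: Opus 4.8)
The plan is to reduce the statement to the two facts already in hand: that $(-)^{\Phi C_2}$ is symmetric monoidal, and that $\HF_2^{\Phi C_2} = H\F_2[\tau]$ with $H\F_2$ obtained from it by coning off $\tau$. First I would invoke symmetric monoidality to rewrite $(\HF_2 \otimes X)^{\Phi C_2} \simeq \HF_2^{\Phi C_2} \otimes X^{\Phi C_2}$, where the smash product on the right is now in plain spectra. Since $\HF_2^{\Phi C_2} = H\F_2[\tau]$ is a module over $H\F_2$, its underlying spectrum splits as $\bigvee_{n \ge 0}\Sigma^n H\F_2$; I would then smash the cofibre sequence $\Sigma \HF_2^{\Phi C_2} \to \HF_2^{\Phi C_2} \to H\F_2$ given by multiplication by $\tau$ with $X^{\Phi C_2}$. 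This identifies $H\F_2 \otimes X^{\Phi C_2}$ with the cofibre of $\tau$-multiplication on $(\HF_2 \otimes X)^{\Phi C_2}$, and at the same time exhibits $(\HF_2 \otimes X)^{\Phi C_2} \simeq \bigvee_{n \ge 0}\Sigma^n(H\F_2 \otimes X^{\Phi C_2})$ as a mere spectrum, so that multiplication by $\tau$ is injective on its homotopy groups.

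Given that injectivity, passing to homotopy groups turns the cofibre sequence into a short exact sequence, and I would read off
\[
  H_* X^{\Phi C_2} = \pi_*(H\F_2 \otimes X^{\Phi C_2}) = \pi_*\bigl((\HF_2 \otimes X)^{\Phi C_2}\bigr)\big/(\tau).
\]
Then \Cref{lemma:geometric-homotopy}, applied to the $C_2$-spectrum $\HF_2 \otimes X$, identifies $\pi_*\bigl((\HF_2 \otimes X)^{\Phi C_2}\bigr)$ with $((\HF_2)_\star X)/(\rho - 1)$, where the $\RO(C_2)$-grading collapses with an element of bidegree $(s,w)$ contributing in degree $s - w$; under this identification the class $\tau \in \pi_1 \HF_2^{\Phi C_2}$ acts as the image of $\tau \in \Mt$, since this is already the case on $\pi_* \HF_2^{\Phi C_2} = \Mt/(\rho - 1) = \F_2[\tau]$ (and consistently, $\tau$ lands in degree $0 - (-1) = 1$). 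Quotienting further by $\tau$ gives $((\HF_2)_\star X)/(\rho - 1, \tau)$, as claimed.

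For the ring statement I would observe that if $X$ is a homotopy ring then so is $\HF_2 \otimes X$, hence so is $(\HF_2 \otimes X)^{\Phi C_2} \simeq \HF_2^{\Phi C_2} \otimes X^{\Phi C_2}$ because $(-)^{\Phi C_2}$ is monoidal; the truncation $\HF_2^{\Phi C_2} \to H\F_2$ and all the quotient maps are maps of homotopy rings, so the resulting isomorphism is one of rings. The only genuinely non-formal input is the injectivity of $\tau$-multiplication used to make the quotient by $\tau$ exact on homotopy groups — this is the one step I expect to be worth spelling out, and it is immediate from the splitting of $\HF_2^{\Phi C_2}$ into suspensions of $H\F_2$; everything else is bookkeeping with the monoidal structure and the grading conventions.
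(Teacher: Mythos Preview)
Your proposal is correct and is exactly the argument the paper has in mind: the paper states this as an immediate corollary without proof, having set up beforehand that $(-)^{\Phi C_2}$ is symmetric monoidal and that $\HF_2^{\Phi C_2} = H\F_2[\tau]$ with $H\F_2$ obtained by coning off $\tau$. You have simply spelled out the details of how those two facts combine, including the $\tau$-injectivity step that makes the quotient exact on homotopy groups.
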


We are, of course, not only interested in the homology groups as groups, but as Steenrod modules. This involves understanding the comparison $\HF_2^{\Phi C_2} \otimes \HF_2^{\Phi C_2} \to H\F_2 \otimes H\F_2$ given by squaring the projection $\HF_2^{\Phi C_2} \to H\F_2$.

\begin{corollary}
  The map $\pi_* (\HF_2^{\Phi C_2} \otimes\HF_2^{\Phi C_2}) \to \pi_* (H\F_2 \otimes H\F_2)$ sends $\xi_i \mapsto \xi_i$ and kills $\tau_0, \tau$.
\end{corollary}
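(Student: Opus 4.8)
The plan is to identify the source explicitly, and then pin the map down using its multiplicativity and its compatibility with the coproduct.

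First I would compute the source. Since $(-)^{\Phi C_2}$ is symmetric monoidal, $\HF_2^{\Phi C_2}\otimes\HF_2^{\Phi C_2} = (\HF_2\otimes\HF_2)^{\Phi C_2}$, so \Cref{lemma:geometric-homotopy} gives $\pi_*(\HF_2^{\Phi C_2}\otimes\HF_2^{\Phi C_2}) = \A^{C_2}/(\rho-1)$. Setting $\rho = 1$ annihilates the negative cone of $\Mt$ (each class there is killed by a power of $\rho$, hence lies in the image of $\rho-1$), leaving $\F_2[\tau]$; and the relations presenting $\A^{C_2}$, namely $\tau_i^2 = \rho\tau_{i+1} + \bar\tau\xi_{i+1}$, become $\tau_{i+1} = \tau_i^2 + (\tau + \tau_0)\xi_{i+1}$, which successively eliminate $\tau_1, \tau_2, \dots$. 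Hence the source is the polynomial ring $\F_2[\tau, \tau_0, \xi_1, \xi_2, \dots]$, the $\xi_i$ retaining the coproduct $\Delta\xi_k = \sum \xi_{k-i}^{2^i}\otimes\xi_i$.

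The comparison map is $q\otimes q$, where $q\colon \HF_2^{\Phi C_2}\to H\F_2$ is the truncation. Since $\pi_1 H\F_2 = 0$, $q$ kills $\tau$; therefore $q\otimes q$ kills $\tau$ (coming from the left factor) and $\bar\tau$ (coming from the right factor), hence also $\tau_0 = \tau + \bar\tau$. For the $\xi_i$, note that $q\otimes q$ is compatible with the coproduct, so the classes $q(\xi_i)\in\A$ have degree $2^i-1$ and satisfy Milnor's coproduct formula; by \Cref{lemma:steenrod-auto} it suffices to show $q(\xi_1)\neq 0$. For this I would use that $q\otimes q$ is surjective: as an $H\F_2$-module $\HF_2^{\Phi C_2} = H\F_2[\tau]$ splits as $\bigoplus_{n\geq 0}\Sigma^n H\F_2$ with $q$ the projection onto $n=0$, so $q\otimes q$ is the projection of $\bigoplus_{n,m\geq 0}\Sigma^{n+m}(H\F_2\otimes H\F_2)$ onto the $(0,0)$-summand. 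In degree $1$ the source is spanned by $\tau, \tau_0, \xi_1$ while the target is $\A_1 = \F_2\{\xi_1\}$; since $\tau$ and $\tau_0$ map to $0$, surjectivity forces $q(\xi_1) = \xi_1$, and then \Cref{lemma:steenrod-auto} gives $q(\xi_i) = \xi_i$ for all $i$.

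The only genuinely substantive point is the non-vanishing of $q(\xi_1)$, i.e.\ that the equivariant Milnor generators are not destroyed on passing to geometric fixed points; everything else is bookkeeping with the explicit form of $\A^{C_2}$ and \Cref{lemma:geometric-homotopy,lemma:steenrod-auto}.
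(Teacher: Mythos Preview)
Your proof is correct and follows essentially the same route as the paper: identify $\A^{C_2}/(\rho-1)$ as $\F_2[\tau,\tau_0,\xi_1,\xi_2,\dots]$ by eliminating the higher $\tau_i$ via the relations, kill $\tau$ and $\tau_0$ using the left and right units of $\tau$, use the splitting of $\HF_2^{\Phi C_2}$ as a wedge of $H\F_2$'s to see $q\otimes q$ is surjective and hence $\xi_1\mapsto\xi_1$, and then invoke \Cref{lemma:steenrod-auto}. Your write-up is a bit more explicit (e.g.\ you spell out why the negative cone dies under $\rho\mapsto 1$), but the argument is the same.
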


\begin{proof}
  After setting $\rho = 1$, we can express $\tau_{i + 1}$ in terms of $\tau_i^2$ and $\xi_{i + 1}$, so $\A^{C_2}/(\rho - 1)$ is generated polynomially over $\F_2[\tau]$ by $\tau_0, \xi_1, \xi_2, \ldots$ with no relations. We know that $\eta_L (\tau)$ and $\eta_R (\tau)$ get mapped to zero, and these are $\tau$ and $\tau + \tau_0$ respectively. So $\tau$ and $\tau_0$ get killed.

  As a spectrum, $\HF_2^{\Phi C_2}$ splits as a sum of $H\F_2$'s, we know that the map is surjective, and thus $\xi_1$ must hit $\xi_1$ since it is the only element left in the degree. Finally, the images of the $\xi_i$ satisfy the conditions of \Cref{lemma:steenrod-auto}, so must be equal to $\xi_i$.
\end{proof}

\begin{corollary}
  If $X \in \Sp_{C_2}$, then the coaction on $H_* X^{\Phi C_2}$ is given by taking the $C_2$-equivariant coaction
  \[
    (\HF_2)_\star X \to \A^{C_2} \otimes_{\Mt} (\HF_2)_\star X
  \]
  and quotienting out by $(\rho - 1, \tau)$ on the left and $(\rho - 1, \tau \otimes 1, \tau_0 \otimes 1)$ on the right.
\end{corollary}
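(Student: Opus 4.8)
The plan is to realise the Steenrod coaction on $H_* X^{\Phi C_2}$ as $\pi_*$ of a map of ordinary spectra obtained from the $C_2$-equivariant coaction by applying the (symmetric monoidal) geometric fixed point functor and then truncating twice. Let $c\colon \HF_2 \otimes X \to \HF_2 \otimes \HF_2 \otimes X$ be the map of $C_2$-spectra inserting the unit $S \to \HF_2$ into the middle factor, so that $\pi_\star(c)$ is the equivariant coaction (here I use, as in the construction of the $C_2$-equivariant Steenrod algebra, flatness of $\A^{C_2}$ over $\Mt$ to identify $\pi_\star(\HF_2 \otimes \HF_2 \otimes X) = \A^{C_2} \otimes_\Mt (\HF_2)_\star X$). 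Since the unit $S \to H\F_2$ is the composite of the geometric fixed point unit $S = S^{\Phi C_2} \to \HF_2^{\Phi C_2}$ with the truncation $q\colon \HF_2^{\Phi C_2} \to H\F_2$ killing $\tau$, the map
\[
  H\F_2 \otimes X^{\Phi C_2} \longrightarrow H\F_2 \otimes H\F_2 \otimes X^{\Phi C_2}
\]
defining the Steenrod comodule $H_* X^{\Phi C_2}$ factors as $(1 \otimes q \otimes 1) \circ \bigl(H\F_2 \otimes_{\HF_2^{\Phi C_2}} c^{\Phi C_2}\bigr)$, base changing $c^{\Phi C_2}$ along $q$ on the first ($=\HF_2^{\Phi C_2}$) factor.

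I would then identify the two halves of this composite using the two preceding corollaries. By \Cref{lemma:geometric-homotopy} (naturally in $X$), $c^{\Phi C_2}$ on $\pi_*$ is $\pi_\star(c)$ reduced modulo $\rho - 1$; base changing further along $q$ and applying the corollary $H_* W^{\Phi C_2} \cong (\HF_2)_\star W/(\rho - 1, \tau)$ --- to $W = X$ for the source and $W = \HF_2 \otimes X$ for the target --- shows this first map is $\psi_{C_2}$ followed by reduction modulo $\rho - 1$ and modulo the class $\tau$ of the \emph{coefficient} copy of $\HF_2$, i.e.\ modulo $\eta_L(\tau) = \tau \otimes 1$ on the $\A^{C_2}$-side. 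Applying $\pi_*(1 \otimes q \otimes 1)$ then additionally kills the class $\tau$ of the \emph{middle} copy of $\HF_2$; as that copy is precisely the one shared with $(\HF_2)_\star X$ in the tensor $\A^{C_2} \otimes_\Mt (\HF_2)_\star X$, this quotients the module factor by $\tau$ and the Hopf algebroid factor by $\eta_R(\tau) = \bar{\tau} = \tau + \rho\tau_0$. Modulo $(\rho - 1, \tau \otimes 1)$ one has $\bar{\tau} \equiv \tau_0 \otimes 1$, so the target becomes $\bigl(\A^{C_2}/(\rho - 1, \tau, \tau_0)\bigr) \otimes_{\F_2} \bigl((\HF_2)_\star X/(\rho - 1, \tau)\bigr)$, which is $\A \otimes_{\F_2} H_* X^{\Phi C_2}$ by the preceding corollary identifying $\A^{C_2}/(\rho - 1, \tau, \tau_0) \cong \A$ (together with \Cref{lemma:geometric-homotopy} applied to $\HF_2 \otimes \HF_2$). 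This yields exactly the asserted quotient description, with $(\rho - 1, \tau)$ on the source and $(\rho - 1, \tau \otimes 1, \tau_0 \otimes 1)$ on the target.

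The main obstacle is the bookkeeping: one must track the \emph{two} copies of $\HF_2$ being truncated (one providing $\eta_L$, the other $\eta_R$) and use the relation $\bar{\tau} = \tau + \rho\tau_0$ to see that it is the second truncation that introduces $\tau_0$ into the target ideal. Secondary points to verify are the Künneth/flatness statements over $\Mt$ and over $\F_2[\tau] = \pi_* \HF_2^{\Phi C_2}$ that make the base changes compute as honest quotients, and that the negative cone of $\Mt$ does not obstruct the final tensor-product splitting --- this last point reduces to $\Mt/(\rho - 1, \tau) = \F_2$, since each $\gamma/(\rho^k\tau^\ell)$ is $\tau$-divisible and hence vanishes modulo $\tau$. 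All of these were in effect already handled by the two corollaries being combined; the content here is assembling them naturally in $X$.
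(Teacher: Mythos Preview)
Your proposal is correct and is essentially the argument the paper has in mind; the paper simply states this corollary without proof, regarding it as immediate from the two preceding corollaries together with the symmetric monoidality of $(-)^{\Phi C_2}$. Your careful bookkeeping of the two truncations --- identifying the first with $\eta_L(\tau)=\tau\otimes 1$ and the second with $\eta_R(\tau)=\bar\tau\equiv\tau_0\otimes 1$ modulo $(\rho-1,\tau\otimes 1)$ --- is exactly the content being invoked.
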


\subsection{Spectra whose homology is \texorpdfstring{$\aesq{n} \Mt$}{}}

We now introduce the standard quotient coalgebras of $\A^{C_2}$:
\begin{definition}
  We define
  \[
    \A^{C_2}(n) = \A^{C_2}/(\xi_i^{2^{n - i + 1}}, \tau_{n + 1}, \tau_{n + 2}, \ldots).
  \]
  If $n < 0$, we set $\A^{C_2}(n) = \Mt$. Then we have
  \[
    \aesq{n} \Mt = \Mt[\bxi_1^{2^n}, \ldots, \bxi_n^2, \bxi_{n + 1}, \bxi_{n + 2}, \ldots, \bar{\tau}_{n + 1}, \bar{\tau}_{n + 2}, \ldots].
  \]
\end{definition}
As usual, the bar denotes the Hopf algebroid antipode.

\begin{definition}
  We let $\Y_n \in \Sp_{C_2}$ be any spectrum such that
  \[
    (\HF_2)_\star \Y_n = \A^{C_2} \square_{\A^{C_2}(n)} \Mt
  \]
  and the $C_2$ action on the underlying spectrum is trivial. If $n \geq m$, a map $f\colon \Y_n \to \Y_m$ is admissible if it induces the natural injection in homology.
\end{definition}

It is helpful to note that
\begin{lemma}
  There is a unique admissible map $\Y_n \to \HF_2$.
\end{lemma}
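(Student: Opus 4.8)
The plan is to identify maps $\Y_n \to \HF_2$ with comodule maps on mod-$2$ homology. Since $\A^{C_2}(-1) = \Mt$, we have $\HF_2 = \Y_{-1}$ and $(\HF_2)_\star\HF_2 = \A^{C_2}\square_{\Mt}\Mt = \A^{C_2}$, so ``admissible'' means inducing the natural inclusion $\iota\colon (\HF_2)_\star\Y_n = \aesq{n}\Mt \hookrightarrow \A^{C_2}$ on $(\HF_2)_\star$; thus the lemma is equivalent to the statement that there is exactly one $f\colon \Y_n \to \HF_2$ with $(\HF_2\otimes f)_\star = \iota$.

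The main input is a universal coefficient computation. By the explicit presentation recorded above, $(\HF_2)_\star\Y_n = \Mt[\bxi_1^{2^n},\ldots,\bxi_n^2,\bxi_{n+1},\ldots,\bar\tau_{n+1},\ldots]$ is a \emph{free} $\Mt$-module on its monomial basis. Hence the $\HF_2$-module $\HF_2\otimes\Y_n$ has homotopy free over $\pi_\star\HF_2 = \Mt$, so it splits as $\bigvee_\alpha \Sigma^{V_\alpha}\HF_2$ (an $\HF_2$-module map inducing an isomorphism on $\pi_\star$ is an equivalence, so a choice of $\Mt$-basis produces such a splitting). Mapping into $\HF_2$ and taking $\pi_{0,0}$ then yields an isomorphism $[\Y_n,\HF_2] \cong \Hom_{\Mt}((\HF_2)_\star\Y_n, \Mt)_{(0,0)}$ sending $f$ to $\epsilon \circ (\HF_2\otimes f)_\star$, where $\epsilon\colon \A^{C_2} \to \Mt$ is the Hopf algebroid counit (the map on $\pi_\star$ induced by the multiplication $\HF_2 \otimes \HF_2 \to \HF_2$). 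A quick degree count finishes the computation: each polynomial generator has degree $(d,w)$ with $d \geq 2w$ and $w \geq 1$, so each non-unit monomial $m$ has $|m| = (d,w)$ with $d > w \geq 1$; since $\Mt = \F_2[\tau,\rho] \oplus \{\gamma\rho^{-k}\tau^{-\ell} : k,\ell \geq 1\}$ is supported in degrees $(p,q)$ with $p \leq 0, q \leq p$ or $p \geq 2, q \geq p$, we get $\Mt_{|m|} = 0$ for all non-unit $m$, so $\Hom_{\Mt}((\HF_2)_\star\Y_n,\Mt)_{(0,0)} = \Mt_{(0,0)} = \F_2$. Thus $[\Y_n,\HF_2] \cong \F_2$; call the nonzero map $f_0$.

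It remains to see that $f_0$ is the admissible map. Both $(\HF_2\otimes f_0)_\star$ and $\iota$ are $\A^{C_2}$-comodule maps $(\HF_2)_\star\Y_n \to \A^{C_2}$. Composing with $\epsilon$: the first gives the nonzero element of $\Hom_\Mt((\HF_2)_\star\Y_n,\Mt)_{(0,0)}$ (its image under the isomorphism above, nonzero since $f_0 \neq 0$), i.e. the augmentation $1 \mapsto 1$, non-unit monomials $\mapsto 0$; the second gives the same, since $\iota$ is a ring homomorphism carrying each polynomial generator $\bxi_i^{2^j}, \bar\tau_i$ into the augmentation ideal of $\A^{C_2}$ (because $\epsilon(\bxi_i) = \epsilon(\xi_i) = 0$ and $\epsilon(\bar\tau_i) = \epsilon(\tau_i) = 0$). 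Since $\A^{C_2}$ is flat over $\Mt$ (Hu--Kriz), it is cofree as a comodule over itself, so a comodule map into $\A^{C_2}$ is determined by its composite with $\epsilon$; therefore $(\HF_2\otimes f_0)_\star = \iota$, i.e. $f_0$ is admissible. Conversely any admissible $f$ has $\epsilon \circ (\HF_2 \otimes f)_\star = \epsilon \circ \iota \neq 0$, so $f = f_0$, giving uniqueness.

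The step I expect to be the main obstacle is the universal coefficient reduction in the second paragraph: verifying that $(\HF_2)_\star\Y_n$ is genuinely free over $\Mt$ (rather than merely flat), that this forces the wedge splitting of $\HF_2 \otimes \Y_n$ with no boundedness or convergence hypotheses, and that the resulting identification of $[\Y_n,\HF_2]$ with $\Hom_\Mt(-,\Mt)$ is compatible, via $\epsilon$, with the cofree-comodule identification used in the last paragraph. Once these bookkeeping points are settled, the argument is formal.
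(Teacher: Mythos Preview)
Your proof is correct and uses the same two ingredients as the paper: freeness of $(\HF_2)_\star\Y_n$ over $\Mt$ (for the universal coefficient identification) and cofreeness of $\A^{C_2}$ as a comodule over itself (for the bijection $\Hom_{\Mt}(-,\Mt)\cong\Hom_{\A^{C_2}\text{-comod}}(-,\A^{C_2})$ via $\epsilon$). The only difference is organizational: the paper composes these two bijections directly, observes that the composite $[\Y_n,\HF_2]\to\Hom_{\A^{C_2}\text{-comod}}(H_\star\Y_n,\A^{C_2})$ is exactly $f\mapsto(\HF_2\otimes f)_\star$, and is done, since ``admissible'' means this image equals the natural inclusion. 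You instead insert an intermediate degree count to compute $[\Y_n,\HF_2]=\F_2$, and then separately argue that the nonzero element is admissible; this is correct but unnecessary, and the paper's route bypasses it. Your stated ``main obstacle'' (freeness, the wedge splitting, compatibility with $\epsilon$) is not a genuine obstacle here: freeness is immediate from the polynomial presentation, and the rest is the standard universal coefficient/cofree adjunction package that the paper also invokes without further comment.
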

Since we understand the situation of $\HF_2$ completely, we get to learn about $\Y_n$ through comparison.

\begin{proof}
  Since $H_\star \Y_n$ is free over $\Mt$, the universal coefficients theorem tells us such maps are classified by $\Mt$-module maps $H_\star \Y_n \to \Mt$, which in turn is in bijection with $\A^{C_2}$-comodule maps $H_\star \Y_n \to \A^{C_2}$.
\end{proof}

\begin{lemma}
  $H_* \iota \Y_n = \A \square_{\A(n)} \F_2$ with trivial $C_2$ action as a Steenrod comodule, and admissible maps induce the natural inclusion.
\end{lemma}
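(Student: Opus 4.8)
The plan is to mimic exactly the argument the excerpt uses for $\HF_2$ and $\HF_2 \otimes \HF_2$: apply \Cref{cor:underlying-inj} to the $C_2$-spectrum $X = \HF_2 \otimes \Y_n$. Since the underlying-spectrum functor $\iota$ is symmetric monoidal with $\iota \HF_2 = H\F_2$, we have $\iota(\HF_2 \otimes \Y_n) = H\F_2 \otimes \iota \Y_n$, so $H_* \iota \Y_n = \pi_* \iota(\HF_2 \otimes \Y_n)$, and \Cref{cor:underlying-inj} will identify this with $(\aesq{n} \Mt / \rho)_{*,0}$, with $\gamma$ acting as $-1$ and hence trivially mod $2$. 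So the work reduces to (i) checking the hypothesis of \Cref{cor:underlying-inj}, (ii) computing the graded $\F_2$-algebra $(\aesq{n} \Mt / \rho)_{*,0}$, and (iii) matching its comodule structure with that of $\A \square_{\A(n)} \F_2$.

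For (i), I would observe that $\aesq{n} \Mt = \Mt[\bxi_1^{2^n}, \ldots, \bxi_n^2, \bxi_{n+1}, \ldots, \bar\tau_{n+1}, \ldots]$ is a free $\Mt$-module on monomials all of non-negative weight, so it suffices that $\rho$ be injective on $\Mt$ in weights $\le 1$; this holds because the kernel of multiplication by $\rho$ on $\Mt$ sits inside the negative cone, which is concentrated in weights $\ge 2$. For (ii), I would use that the negative cone is $\rho$-divisible, so $\Mt/\rho = \F_2[\tau]$ and $\aesq{n} \Mt / \rho = \F_2[\tau][\bxi_1^{2^n}, \ldots, \bar\tau_{n+1}, \ldots]$ on the same generators; restricting to weight $0$ clears $\tau$, leaving the polynomial algebra on the classes $\tau^{w(g)} g$ (one for each generator $g$, of the same total degree as $g$). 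A total-degree count then identifies these generators with $\xi_i^{2^{n+2-i}}$ ($1 \le i \le n+1$) and $\xi_j$ ($j \ge n+2$), i.e.\ with the standard generators of $\A \square_{\A(n)} \F_2$.

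For (iii), I would argue by naturality: the comodule structure on $H_* \iota \Y_n$ is the reduction $(-/\rho)_{*,0}$ of the $C_2$-equivariant coaction, using $(\A^{C_2}/\rho)_{*,0} = \A$; the unique admissible map $\Y_n \to \HF_2$ induces the natural inclusion $\aesq{n} \Mt \hookrightarrow \A^{C_2}$, which after reduction exhibits $H_* \iota \Y_n$ as a sub-comodule-algebra of $\A$, and the images of the $\bxi_i$ satisfy the hypotheses of \Cref{lemma:steenrod-auto}, pinning down the identification with the standard sub-Hopf-algebra $\A \square_{\A(n)} \F_2$. The statement for a general admissible map $\Y_n \to \Y_m$ follows the same way: it induces $\aesq{n} \Mt \hookrightarrow \aesq{m} \Mt$, and reduction carries this to the natural inclusion $\A \square_{\A(n)} \F_2 \hookrightarrow \A \square_{\A(m)} \F_2$.

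I expect the main obstacle to be bookkeeping rather than anything structural: step (i) is immediate, and step (ii) is a finite degree computation, but in step (iii) one has to be careful that $(-/\rho)_{*,0}$ really does commute with the relevant cotensor and coaction diagrams well enough for the reduced coaction to be the standard one --- this is the place where invoking \Cref{lemma:steenrod-auto}, rather than trying to track the coaction by hand, pays off.
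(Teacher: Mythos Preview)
Your approach is essentially the paper's: apply \Cref{cor:underlying-inj} to $\HF_2\otimes\Y_n$, check the $\rho$-injectivity via the weight of the negative cone, and identify $(\aesq{n}\Mt/\rho)_{*,0}$ with $\A\square_{\A(n)}\F_2$. Your treatment of the comodule structure via the admissible map to $\HF_2$ and \Cref{lemma:steenrod-auto} is in fact more explicit than the paper's ``follows from a similar calculation''.

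There is one small gap in step~(ii). You assert that $(\aesq{n}\Mt/\rho)_{*,0}$ is polynomial on $\tau^{w(g)}g$, one for each listed generator $g$. But $\aesq{n}\Mt$ is \emph{not} a polynomial $\Mt$-algebra on the $\bxi$'s and $\bar\tau$'s: the relation $\bar\tau_i^2=\rho\bar\tau_{i+1}+\tau\bar\xi_{i+1}$ (obtained by applying the antipode to $\tau_i^2=\rho\tau_{i+1}+\bar\tau\xi_{i+1}$) reduces mod $\rho$ to $\bar\tau_i^2=\tau\bar\xi_{i+1}$. In weight $0$ this says $(\tau^{2^i-1}\bar\tau_i)^2=\tau^{2^{i+1}-1}\bar\xi_{i+1}$, so the classes coming from $\bar\xi_j$ for $j\ge n+2$ are redundant. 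Once you use this, the genuine polynomial generators are $\tau^{2^i-1}\bxi_i^{\,2^{n+1-i}}$ for $1\le i\le n+1$ together with $\tau^{2^j-1}\bar\tau_j$ for $j\ge n+1$, which is exactly the right list to match $\bxi_i^{\,2^{n+2-i}}$ and $\bxi_j$ on the nonequivariant side. (The paper's explicit map $\bxi_i\mapsto\tau^{2^{i-1}-1}\bar\tau_{i-1}$ encodes precisely this.) With that correction your degree count goes through and the rest of your argument is fine.
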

This implies that $\Y_n$ cannot exist for $n > 2$.

\begin{proof}
  Since $\iota$ is symmetric monoidal, we have $H_* \iota \Y_n = \pi_* \iota (\HF_2 \otimes \Y_n)$.

  Observe that in $(\HF_2)_\star \Y_n$, the $\rho$-torsion term of smallest weight is $\frac{\gamma}{\rho \tau}$ with weight $2$. So $\rho$ is injective on $(\HF_2)_{*, 0} \Y_n$ and $(\HF_2)_{*, 1} \Y_n$, and \Cref{cor:underlying-inj} applies. This tells us $H_* \iota \Y_n = ((\HF_2)_\star \Y_n / \rho)_{*, 0}$. It is then easy to check that the ring map $\A \square_{\A(n)} \F_2 \to H_* \iota \Y_n$ sending $\bxi_i^k$ to $(\tau^{2^{i - 1} - 1}\bar{\tau}_{i - 1})^k$ is an isomorphism. Since the ring is $2$-torsion, we have $\gamma = -1 = 1$.

  The comodule structure follows from a similar calculation.
\end{proof}

We can similarly compute
\begin{lemma}
  \[
    H_* \Y_n^{\Phi C_2} = \F_2[\bar{\tau}_{n + 1}, \bxi_1^{2^n}, \ldots, \bxi_n^2, \bxi_{n + 1}, \ldots]
  \]
  with the coactions of the $\bxi_i$ as usual and
  \[
    \psi (\bar{\tau}_{n + 1}) = 1 \otimes \bar{\tau}_{n + 1}.
  \]
  An admissible map $Y_n \to Y_m$ acts as the ``identity'' on the $\bxi_i^{2^n}$'s and sends $\bar{\tau}_{n + 1}$ to $\bar{\tau}_{m + 1}^{2^{n - m}}$.
\end{lemma}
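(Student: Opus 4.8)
\emph{Proof proposal.} The plan is to feed $X=\Y_n$ into the two corollaries already established: that $H_*X^{\Phi C_2}\cong((\HF_2)_\star X)/(\rho-1,\tau)$ as a ring, and that its $\A$-comodule structure is obtained from the $C_2$-equivariant coaction $(\HF_2)_\star X\to\A^{C_2}\otimes_{\Mt}(\HF_2)_\star X$ by the reduction described there — set $\rho=1$, kill $\tau$ on the $(\HF_2)_\star X$ tensor factor, and kill $\tau$ and $\tau_0$ on the $\A^{C_2}$ tensor factor. By definition $(\HF_2)_\star\Y_n=\aesq{n}\Mt$, a polynomial algebra over $\Mt$ on the (suitably truncated) $\bxi_i$ together with $\bar\tau_{n+1},\bar\tau_{n+2},\dots$, and its coaction is the restriction to this cotensor subalgebra of the comultiplication of $\A^{C_2}$.

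For the ring: reducing mod $\rho-1$ collapses $\Mt$ onto $\F_2[\tau]$, since the negative cone is $\rho$-power-torsion and hence dies once $\rho$ is set to $1$ (just as for $\HF_2$ itself). Conjugating the relation $\tau_i^2=\rho\tau_{i+1}+\bar\tau\xi_{i+1}$ of $\A^{C_2}$ gives $\bar\tau_i^2=\rho\bar\tau_{i+1}+\tau\bxi_{i+1}$, which modulo $\rho-1$ reads $\bar\tau_{i+1}=\bar\tau_i^2+\tau\bxi_{i+1}$; iterating this expresses $\bar\tau_{n+2},\bar\tau_{n+3},\dots$ in terms of $\bar\tau_{n+1}$, $\tau$ and the $\bxi_i$, so $((\HF_2)_\star\Y_n)/(\rho-1)=\F_2[\tau][\bar\tau_{n+1},\bxi_1^{2^n},\dots,\bxi_n^2,\bxi_{n+1},\dots]$, and a further quotient by $\tau$ gives the claimed polynomial ring. (One needs $\bar\tau_0=\tau_0$ here and below; this follows from $c\eta_R=\eta_L$ applied to $\bar\tau=\tau+\rho\tau_0$, or from a one-line degree count.)

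For the coaction: I will use the conjugated comultiplication formulas $\Delta\bxi_k=\sum_{i=0}^k\bxi_i\otimes\bxi_{k-i}^{2^i}$ and $\Delta\bar\tau_{n+1}=1\otimes\bar\tau_{n+1}+\sum_{i=0}^{n+1}\bar\tau_i\otimes\bxi_{n+1-i}^{2^i}$, obtained from the formulas in the excerpt via $\Delta c=(c\otimes c)\,T\,\Delta$. After reduction, the map on the $\A^{C_2}$ tensor factor sends $\xi_i\mapsto\xi_i$, hence $\bxi_i\mapsto\bxi_i\in\A$, so $\psi(\bxi_k)$ retains its usual shape (and lands in $\A\otimes H_*\Y_n^{\Phi C_2}$ precisely because the relevant monomials $\bxi_{k-i}^{2^i}$ lie in the cotensor). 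Every term of $\Delta\bar\tau_{n+1}$ other than $1\otimes\bar\tau_{n+1}$ has some $\bar\tau_i$ with $i\le n+1$ in the $\A^{C_2}$ slot; but modulo $(\rho-1,\tau,\tau_0)$ we have $\bar\tau_0=\tau_0\equiv0$ and inductively $\bar\tau_{i+1}\equiv\bar\tau_i^2\equiv0$, so all these terms vanish and $\psi(\bar\tau_{n+1})=1\otimes\bar\tau_{n+1}$.

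Finally, for $n\ge m$ an admissible map $\Y_n\to\Y_m$ induces the cotensor inclusion $\aesq{n}\Mt\hookrightarrow\aesq{m}\Mt$ on $(\HF_2)_\star$; reducing mod $(\rho-1,\tau)$ it fixes each $\bxi$-generator and sends $\bar\tau_{n+1}$ to $\bar\tau_{n+1}$, which in $H_*\Y_m^{\Phi C_2}$ equals $\bar\tau_{m+1}^{2^{n-m}}$ by iterating the reduced relation $\bar\tau_i^2=\bar\tau_{i+1}$. The main obstacle I anticipate is the Hopf-algebroid bookkeeping: pinning down the conjugate comultiplication of $\bar\tau_{n+1}$ correctly (the excerpt records $\Delta\tau_k$ only, and since $\A^{C_2}$ is a Hopf algebroid one must be careful with the $\eta_L/\eta_R$ module structures in $\A^{C_2}\otimes_{\Mt}\A^{C_2}$), keeping straight which of $\rho-1,\tau,\tau_0$ is killed on which tensor factor, and — for the ring statement — reconciling the generator conventions for the $\bar\tau_j$ in the cotensor with the collapse forced by the relations.
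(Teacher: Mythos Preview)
Your proposal is correct and follows exactly the approach the paper intends: the paper offers no proof beyond ``We can similarly compute'', and you have supplied precisely that computation --- apply the earlier corollaries $H_*X^{\Phi C_2}\cong((\HF_2)_\star X)/(\rho-1,\tau)$ and its coaction analogue to $X=\Y_n$, using the conjugated relation $\bar\tau_i^2=\rho\bar\tau_{i+1}+\tau\bxi_{i+1}$ to eliminate the higher $\bar\tau_j$'s and to see that all $\bar\tau_i$ vanish on the $\A^{C_2}$ tensor factor after reduction. The Hopf-algebroid bookkeeping you flag as a potential obstacle is handled correctly in your sketch.
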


Our goal is to understand the homology of the categorical fixed points $\Y_n^{C_2}$. To do so, we make use of the commutative diagram
\[
  \begin{tikzcd}
    (\Y_n)_{h C_2} \ar[d, equals] \ar[r] & \Y_n^{C_2} \ar[r] \ar[d] & \Y_n^{\Phi C_2} \ar[d] \ar[r] & \Sigma (\Y_n)_{h C_2} \ar[d, equals]\\
    (\Y_n)_{h C_2} \ar[r] & \iota \Y_n \ar[r] & \widetilde{\Y_n} \ar[r] & \Sigma (\Y_n)_{h C_2},
  \end{tikzcd}
\]
where $\widetilde{Y_n}$ is defined to be the cofiber of $(Y_n)_{h C_2} \to \iota Y_n$.

To compute $H_* \Y_n^{C_2}$, we have to understand the effect of $\Y_n^{\Phi C_2} \to \Sigma (\Y_n)_{h C_2}$ on homology. This factors through $\widetilde{\Y_n}$, and the strategy is to understand each factor separately. By \Cref{thm:stunted}, we can identify the homology of the sequence
\[
  \iota \Y_n \to \widetilde{\Y_n} \to \Sigma (\Y_n)_{h C_2}
\]
with the natural maps
\[
  \A \square_{\A(n)} \F_2 \to \A \square_{\A(n)} H_* \Sigma \RP^\infty_{-1} \to \A \square_{\A(n)} H_* \Sigma \RP^\infty,
\]
which is a short exact sequence.

As for the map $\Y_n^{\Phi C_2} \to \widetilde{\Y_n}$, we know it is an equivalence in the case $n = -1$. In particular, it is injective in homology. Since $H_* \Y_n^{\Phi C_2}$ and $H_* \widetilde{\Y_n}$ inject into the $n = -1$ counterparts, we know that $H_* \Y_n^{\Phi C_2} \to H_* \widetilde{\Y_n}$ is injective as well. It remains to compute the image.

At this moment it is slightly more convenient to consider cohomology. We follow some constructions from \cite{lin-ext}.

For any $N$, we have a map $\Sigma \RP^\infty_{-N} \to \Sigma \RP^\infty_{-1}$ which presents $H^* \Sigma \RP^\infty_{-1}$ as a submodule of $H^* \Sigma \RP^\infty_{-N}$. Let $M_{n, N}^* \subseteq H^* \Sigma \RP^\infty_{-N}$ be the $\A(n)^*$-submodule generated under $\A(n)^*$ by elements of negative degree, and let $L_n^* = M_{n, N}^* \cap H^* \Sigma \RP^\infty_{-1}$ for sufficiently large $N$ (where it no longer depends on $N$). Let $G_n^* = H^* \Sigma \RP^\infty_{-1} / L_n^*$. We then have a short exact sequence of $\A(n)$-comodules.
\[
  0 \to G_n \to H_* \Sigma \RP^\infty_{-1} \to L_n \to 0.
\]

\begin{lemma}
  $L_n^*$ is trivial in degree $0$.
\end{lemma}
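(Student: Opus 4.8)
The plan is to read off $L_n^*$ in degree $0$ directly from the $\A$-module structure of $H^*\Sigma\RP^\infty_{-N}$. Recall that the Thom isomorphism presents $\tilde H^*(\RP^\infty_{-N};\F_2)$ as the free $H^*(\RP^\infty)$-module on the Thom class; concretely it has one generator $x^j$ in each degree $j\ge -N$, with $\Sq^i(x^j)=\binom{j}{i}x^{i+j}$, where for $j<0$ the symbol $\binom{j}{i}$ denotes the coefficient of $t^i$ in $(1+t)^j$, i.e.\ the mod-$2$ reduction of the $2$-adic binomial coefficient. Suspending only shifts degrees and does not change the Steenrod action, so for bookkeeping I will suppress the suspension and work inside $H^*\RP^\infty_{-N}$, remembering that the degree of $x^j$ in $\Sigma\RP^\infty_{-N}$ is one more than in $\RP^\infty_{-N}$. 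In particular the degree-$0$ part of $H^*\Sigma\RP^\infty_{-N}$ is one-dimensional, spanned by $x^{-1}$, and this class is also the bottom of the submodule $H^*\Sigma\RP^\infty_{-1}$.

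First I would reduce the lemma to one binomial computation. By definition the degree-$0$ part of $M_{n,N}^*$ is spanned by the elements $\theta(x^{j})$ with $\theta\in\A(n)$ homogeneous, $j<-1$ (negative degree in $\Sigma\RP^\infty_{-N}$), and $\deg\theta=-(j+1)$; writing $j=-d-1$, these are $\theta(x^{-d-1})$ with $d=\deg\theta\ge1$. Since that degree-$0$ part is spanned by $x^{-1}$, it suffices to show that the coefficient of $x^{-1}$ in $\theta(x^{-d-1})$ is $0$ for every such $\theta$. I will in fact prove this for every $\theta\in\A$ of degree $d\ge1$, which yields the lemma for all $n$ at once and is harmless, since $M_{n,N}^*$ lies inside the $\A$-submodule generated by the negative-degree classes.

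The computation is the crux. Expanding $\theta$ as a sum of (not necessarily admissible) monomials $\Sq^{i_1}\cdots\Sq^{i_k}$ with $\sum_\ell i_\ell=d$, and applying such a monomial to $x^{-d-1}$ from the right while tracking only the coefficient of $x^{-1}$, one obtains a product of binomial coefficients. After stripping off any leading factors $\Sq^0=\mathrm{id}$, the last operation applied sends $x^{-i_1-1}$ to $x^{-1}$ and so contributes the factor $\binom{-i_1-1}{i_1}=(-1)^{i_1}\binom{2i_1}{i_1}$. By Kummer's theorem the $2$-adic valuation of $\binom{2m}{m}$ equals the number of $1$'s in the binary expansion of $m$, so $\binom{2m}{m}$ is even for every $m\ge1$; hence this factor is $0$ mod $2$ unless $i_1=0$, in which case we discard it and repeat with the shorter monomial. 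As $d\ge1$ forces some $i_\ell\ge1$, every monomial in the expansion contributes $0$, so $\theta(x^{-d-1})$ has vanishing $x^{-1}$-coefficient.

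Hence the degree-$0$ part of $M_{n,N}^*$ vanishes, and therefore so does $L_n^*=M_{n,N}^*\cap H^*\Sigma\RP^\infty_{-1}$ in degree $0$. I expect no serious obstacle here; the only point needing care is the bookkeeping in the monomial expansion — that ``strip the leading $\Sq^0$'s and recurse'' is legitimate — which is immediate since $\Sq^0$ is the identity and the source degree for the recursion is pinned down by the surviving exponents.
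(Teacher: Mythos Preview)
Your proof is correct and follows essentially the same approach as the paper. The paper's argument simply asserts that it suffices to check $x^{-1}$ is not in the image of any single $\Sq^k$ and then records the computation $\Sq^k x^{-k-1}=\binom{2k}{k}x^{-1}=0$; you spell out the reduction to this case (by expanding in monomials and observing the leftmost nontrivial $\Sq^{i_1}$ contributes the factor $\binom{2i_1}{i_1}\equiv 0$) and justify the vanishing via Kummer's theorem, which is a welcome elaboration but not a different idea.
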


\begin{proof}
  In the notation of \cite[Page 1]{lin-ext}, we need to know that $x^{-1}$ is not in the image of the $\Sq^k$'s. We have
  \[
    \Sq^k x^{-k - 1} = \frac{(-k - 1)(-k - 2) \cdots (-2k)}{1 \cdot 2 \cdot\cdots \cdot k} x^{-1} = (-1)^{?} \binom{2k}{k} x^{-1} = 0.
  \]
\end{proof}

\begin{lemma}
  There are no non-zero $\A$-comodule maps $H_* \Y_n^{\Phi C_2} \to \A \square_{\A(n)} L_n$.
\end{lemma}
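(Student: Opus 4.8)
The plan is to transport the statement into the category of $\A(n)$-comodules by change of rings, and then to reduce it to a single question about the $\A(n-1)^*$-module generators of $L_n^*$.

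First I would apply the adjunction: $\A \square_{\A(n)}(-)$ is right adjoint to restriction along $\A \twoheadrightarrow \A(n)$, so
\[
  \Hom_{\A}\bigl(H_*\Y_n^{\Phi C_2},\, \A\square_{\A(n)}L_n\bigr) \;\cong\; \Hom_{\A(n)}\bigl(H_*\Y_n^{\Phi C_2}\big|_{\A(n)},\, L_n\bigr),
\]
and it suffices to see that the right-hand group vanishes (throughout, $\Hom$ denotes comodule maps).

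Next I would pin down the $\A(n)$-comodule $H_*\Y_n^{\Phi C_2}$. From the generators computed earlier, $H_*\Y_n^{\Phi C_2} \cong (\A\square_{\A(n-1)}\F_2)\otimes\F_2[\bar{\tau}_{n+1}]$ as $\A$-comodules, the second factor trivial because $\psi(\bar{\tau}_{n+1}) = 1\otimes\bar{\tau}_{n+1}$. Restricting $\A\square_{\A(n-1)}\F_2$ to $\A(n)$ gives the standard factorization $(\A\square_{\A(n-1)}\F_2)\big|_{\A(n)} \cong (\A(n)\square_{\A(n-1)}\F_2)\otimes(\A\square_{\A(n)}\F_2)$ with the second factor trivial: using the conjugate generators, the classes $\bxi_1^{2^n},\dots,\bxi_n^2,\bxi_{n+1}$ carry the $\A(n)\square_{\A(n-1)}\F_2$ part, while their squares together with $\bxi_{n+2},\bxi_{n+3},\dots$ span the $\A(n)$-trivial part. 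Writing $W := (\A\square_{\A(n)}\F_2)\otimes\F_2[\bar{\tau}_{n+1}]$ — a trivial $\A(n)$-comodule concentrated in degree $0$ and in degrees $\ge 2^{n+1}$ — the Hom group splits as $\bigoplus_b \Hom_{\A(n)}\bigl(\Sigma^{|b|}(\A(n)\square_{\A(n-1)}\F_2),\, L_n\bigr)$, the sum over a homogeneous basis $\{b\}$ of $W$.

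Now I would compute each summand. Since $\A(n-1)^*\subseteq\A(n)^*$ is a Frobenius extension, the module $(\A(n)\square_{\A(n-1)}\F_2)^* = \A(n)^*\otimes_{\A(n-1)^*}\F_2$ is coinduced, isomorphic to $\Hom_{\A(n-1)^*}(\A(n)^*,\Sigma^{c}\F_2)$ with $c := d_n - d_{n-1}$ the difference of the top degrees of $\A(n)$ and $\A(n-1)$; dualizing and using the coinduction adjunction identifies $\Hom_{\A(n)}\bigl(\Sigma^{|b|}(\A(n)\square_{\A(n-1)}\F_2),\, L_n\bigr)$ with the graded dual of $L_n^*/\A(n-1)^*_+L_n^*$ in degree $c + |b|$. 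Thus the whole Hom group vanishes precisely when $L_n^*$ has no $\A(n-1)^*$-module generator in any degree of the form $c+|b|$, $b$ a basis element of $W$. A support reduction then isolates the main obstacle: $L_n^*$, being the $\A(n)^*$-span of the Lin classes $x^{-j}$ ($j\ge 2$) cut down to $H^*\Sigma\RP^\infty_{-1}$, is concentrated in degrees $1,\dots,d_n-1$ — it vanishes in degree $0$ by the preceding lemma, and $\A(n)^*$ has top degree $d_n$ — so since $W$ sits in degree $0$ and in degrees $\ge 2^{n+1}$, and $c + 2^{n+1} > d_n - 1$ exactly when $d_{n-1}\le 2^{n+1}$ (true in the only cases where $\Y_n$ exists, $n\le 2$, with $d_0 = 1$, $d_1 = 6$, $d_2 = 23$), only the single degree $c = d_n - d_{n-1}$ survives: degree $5$ for $n = 1$, degree $17$ for $n = 2$. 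It remains to check that $L_n^*$ has no $\A(n-1)^*$-module generator there; I would do this from the explicit Lin-style description of $L_n^*$ set up following \cite{lin-ext}. As $H^*\Sigma\RP^\infty_{-1}$ is one-dimensional in each degree, so is $L_n^*$, and it is enough to produce a single generator $\Sq^{2^i}$ of $\A(n-1)^*_+$ and a class of $L_n^*$ in degree $c - 2^i$ on which $\Sq^{2^i}$ acts non-trivially — a short explicit computation, which for $n\le 2$ can also be read off the small finite spectrum underlying $L_n$.
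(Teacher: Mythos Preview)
Your argument is correct (at least for $n\le 2$, which is all that is needed), but it is far more elaborate than the paper's proof, and the final ``short explicit computation'' you defer is where all the actual work in your approach would hide.

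The paper's proof is a three-line degree count. After the same adjunction you use, it dualizes once more to $\A(n)^*$-module maps $L_n^*\to H^*\Y_n^{\Phi C_2}$ and then observes two facts: (i) every element of $L_n^*$ is obtained from a class of negative degree by applying a word in the algebra generators $\Sq^1,\dots,\Sq^{2^n}$, so $L_n^*$ is generated as an $\A(n)^*$-module by classes of degree at most $2^n-1$; and (ii) $H^*\Y_n^{\Phi C_2}$ has nothing between degrees $1$ and $2^n-1$, since its lowest positive-degree class is $\bxi_1^{2^n}$. Any module map must therefore kill the generators and hence vanish. This works uniformly in $n$ and requires no knowledge of $L_n^*$ beyond where its $\A(n)^*$-generators sit.

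By contrast, your route---factoring $H_*\Y_n^{\Phi C_2}\big|_{\A(n)}$ as $(\A(n)\square_{\A(n-1)}\F_2)\otimes W$ and invoking Frobenius reciprocity to reduce to $\A(n-1)^*$-indecomposables of $L_n^*$ in degree $c+|b|$---is valid but buys nothing here: the weaker support bound $L_n^*\subset[1,d_n-1]$ forces the restriction to $n\le 2$, and finishing then still needs you to compute $L_n^*$ finely enough to see it vanishes in degree $c$ (which it does: $L_1^*$ lives only in degree $1$, $L_2^*$ only in degrees $1,2,3,5,9$, so degrees $5$ and $17$ are empty and your ``produce a class on which $\Sq^{2^i}$ acts nontrivially'' step is vacuous). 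The paper's sharper generator bound $<2^n$ (rather than your support bound $<d_n$) is exactly the observation that short-circuits all of this.
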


\begin{proof}
  By adjunction, this is equivalent to showing that there are no non-zero $\A(n)$-comodule maps $H_* \Y_n^{\Phi C_2} \to L_n$, or equivalently, no $\A(n)^*$-module maps $L_n^* \to H^* \Y_n^{\Phi C_2}$.

  Now note that in $L_n^*$ all elements have positive degree. Moreover, since $\A(n)^*$ is generated by elements of degree at most $2^n$, the module $L_n^*$ is generated as an $\A(n)^*$-module by elements of degree less than $2^n - 1$. On the other hand, the elements of lowest degree in $H_* \Y_n^{\Phi C_2}$ are in degree $0$ and $2^n$ (given by $1$ and $\bxi_1^{2^n}$). Thus, any $\A(n)^*$-module map $L_n \to H^* \Y_n^{\Phi C_2}$ must kill the generators, hence must be zero.
\end{proof}

\begin{corollary}
  The map $H_* \Y_n^{\Phi C_2} \to H_* \widetilde{\Y_n}$ factors through $\A \square_{\A(n)} G_n$.
\end{corollary}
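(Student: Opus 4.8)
The plan is to deduce this formally from the short exact sequence $0 \to G_n \to H_* \Sigma \RP^\infty_{-1} \to L_n \to 0$ of $\A(n)$-comodules together with the vanishing established in the previous lemma. First I would apply $\A \square_{\A(n)} (-)$ to this sequence and observe that it stays exact: $\A$ is free as an $\A(n)$-comodule, so cotensoring over $\A(n)$ is an exact functor. Combining this with the identification $H_* \widetilde{\Y_n} = \A \square_{\A(n)} H_* \Sigma \RP^\infty_{-1}$ obtained above yields a short exact sequence of $\A$-comodules
\[
  0 \to \A \square_{\A(n)} G_n \to H_* \widetilde{\Y_n} \to \A \square_{\A(n)} L_n \to 0.
\]

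Next I would use that the comparison map $H_* \Y_n^{\Phi C_2} \to H_* \widetilde{\Y_n}$ is induced by the map of spectra $\Y_n^{\Phi C_2} \to \widetilde{\Y_n}$ appearing in the commutative square above, hence is a morphism of $\A$-comodules. Post-composing it with the surjection $H_* \widetilde{\Y_n} \to \A \square_{\A(n)} L_n$ produces an $\A$-comodule map $H_* \Y_n^{\Phi C_2} \to \A \square_{\A(n)} L_n$, which is zero by the previous lemma. Therefore the image of $H_* \Y_n^{\Phi C_2}$ in $H_* \widetilde{\Y_n}$ is contained in the kernel of that surjection, which by the displayed exact sequence is exactly $\A \square_{\A(n)} G_n$. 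This gives the asserted factorization.

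There is no real difficulty here beyond bookkeeping; the only two points that need to be checked carefully are that $\A \square_{\A(n)} (-)$ is exact, so that $\A \square_{\A(n)} G_n$ genuinely is the kernel of the map onto $\A \square_{\A(n)} L_n$, and that the map $H_* \Y_n^{\Phi C_2} \to H_* \widetilde{\Y_n}$ is $\A$-linear, which is immediate since it comes from a map of spectra. Both are standard, so the corollary follows directly from the lemma.
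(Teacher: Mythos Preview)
Your argument is correct and matches the paper's intended reasoning: the paper states the corollary without proof, as it follows immediately from the preceding lemma by composing with the projection $H_* \widetilde{\Y_n} \to \A \square_{\A(n)} L_n$ and using exactness of $\A \square_{\A(n)} (-)$. Your added remarks about exactness and $\A$-linearity are the natural details one would fill in.
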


\begin{corollary}
  $H_* \Y_n^{\Phi C_2} \to \A \square_{\A(n)} G_n$ is an isomorphism.
\end{corollary}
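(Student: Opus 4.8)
The plan is to prove the map is injective and surjective separately. Injectivity will be essentially formal, using only the exactness of $\A\square_{\A(n)}(-)$ and the factorization established in the previous corollary; surjectivity will come down to a Poincaré-series count, whose only nontrivial input is the $\A(n)^*$-module structure of stunted real projective spaces in low degrees.

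\textbf{Injectivity.} By the Milnor--Moore theorem $\A^*$ is free as a module over the sub-Hopf-algebra $\A(n)^*$, so dually $\A$ is cofree over $\A(n)$; hence $\A$ is an injective $\A(n)$-comodule and $\A\square_{\A(n)}(-)$ is exact. Applying it to the defining short exact sequence $0\to G_n\to H_*\Sigma\RP^\infty_{-1}\to L_n\to 0$ yields a short exact sequence $0\to\A\square_{\A(n)}G_n\to\A\square_{\A(n)}H_*\Sigma\RP^\infty_{-1}\to\A\square_{\A(n)}L_n\to 0$ whose middle term is $H_*\widetilde{\Y_n}$. We already know $H_*\Y_n^{\Phi C_2}\to H_*\widetilde{\Y_n}$ is injective (by comparison with the case $n=-1$), and by the previous corollary it factors through the subcomodule $\A\square_{\A(n)}G_n$; therefore the corestricted map $H_*\Y_n^{\Phi C_2}\to\A\square_{\A(n)}G_n$ is injective.

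\textbf{Surjectivity.} Both sides are of finite type, so it suffices to compare Poincaré series. Since $\A$ is cofree over $\A(n)$ on a graded basis with Poincaré series equal to that of $\A\square_{\A(n)}\F_2$, the functor $\A\square_{\A(n)}(-)$ multiplies Poincaré series by $P_{\A\square_{\A(n)}\F_2}(t)$; in particular $P_{\A\square_{\A(n)}G_n}=P_{\A\square_{\A(n)}\F_2}\cdot P_{G_n}$. On the other hand, the explicit polynomial description $H_*\Y_n^{\Phi C_2}=\F_2[\bar{\tau}_{n+1},\bxi_1^{2^n},\dots,\bxi_n^2,\bxi_{n+1},\dots]$ (with $|\bxi_i|=2^i-1$ and $|\bar{\tau}_{n+1}|=2^{n+1}$ in the geometric-fixed-point grading) gives $P_{H_*\Y_n^{\Phi C_2}}(t)$ directly, and a routine manipulation of geometric series shows
\[
  \frac{P_{H_*\Y_n^{\Phi C_2}}(t)}{P_{\A\square_{\A(n)}\F_2}(t)}
  =\frac{1+t^{2^{n+1}-1}}{1-t^{2^{n+1}}}\prod_{i=1}^{n}\bigl(1+t^{\,2^{n+1}-2^{n+1-i}}\bigr).
\]
So the corollary is equivalent to the statement that the right-hand side equals $P_{G_n}(t)$, equivalently that $L_n$ occupies the complementary classes inside $H_*\Sigma\RP^\infty_{-1}$: for $n=0$ this says $L_0=0$, for $n=1$ that $L_1=\F_2$ concentrated in degree $1$, and so on. This I would deduce from the $\A(n)^*$-module structure of the spectra $\Sigma\RP^\infty_{-N}$ near the bottom cell in the style of \cite{lin-ext}, by determining exactly which classes of $H^*\Sigma\RP^\infty_{-1}$ lie in the $\A(n)^*$-span of the negative-degree classes of $H^*\Sigma\RP^\infty_{-N}$.

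\textbf{Main obstacle.} Everything except that last point is formal. The real work is the dimension count --- pinning down $L_n$, i.e.\ the low-degree $\A(n)^*$-module structure of stunted real projective space --- and this is where I expect the argument to require genuine computation (or an appeal to known results in the tradition of Lin).
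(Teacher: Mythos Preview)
Your approach is essentially the same as the paper's: injectivity from the factorization through $\A\square_{\A(n)}G_n$ combined with the already-established injectivity into $H_*\widetilde{\Y_n}$, and surjectivity from a dimension count in each degree. The paper dispatches the dimension count in one line by citing \cite[Lemma~1.3]{lin-ext}, which states precisely that $H_*\Y_n^{\Phi C_2}$ and $\A\square_{\A(n)}G_n$ are abstractly isomorphic as graded vector spaces; so the ``genuine computation'' you flag as the main obstacle is already in Lin's paper and need not be redone.
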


\begin{proof}
  Since $H_* \Y_n^{\Phi C_2} \to H_* \widetilde{\Y_n}$ is injective, so must this map. By \cite[Lemma 1.3]{lin-ext} both sides are abstractly isomorphic as graded groups. Since they are finite dimensional in each degree, it must be an isomorphism.
\end{proof}

Since $H_* \widetilde{\Y_n} \to H_* \Sigma (\Y_n)_{h C_2}$ is surjective in homology with kernel $\A \square_{\A(n)} \F_2$, we learn that:
\begin{corollary}
  There is a short exact sequence of comodules
  \[
    0 \to \A \square_{\A(n)} L_n[-1] \to H_* \Y_n^{C_2} \to \A \square_{\A(n)} \F_2 \to 0.
  \]
\end{corollary}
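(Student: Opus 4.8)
The plan is to compare the isotropy separation cofiber sequence $(\Y_n)_{hC_2} \to \Y_n^{C_2} \to \Y_n^{\Phi C_2}$ with the defining cofiber sequence $(\Y_n)_{hC_2} \to \iota\Y_n \to \widetilde{\Y_n}$ through the commutative ladder displayed above, whose leftmost vertical arrow is the identity. Since the two cofiber sequences agree on their initial terms, the standard $3\times 3$ (or octahedral) argument in a stable category identifies the cofiber $C$ of the middle vertical map $\Y_n^{C_2} \to \iota\Y_n$ with the cofiber of the rightmost vertical map $\Y_n^{\Phi C_2} \to \widetilde{\Y_n}$. So I would first compute $H_* C$ and then feed it into the homology long exact sequence of $\Y_n^{C_2} \to \iota\Y_n \to C$.

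For $H_* C$: we have already identified $H_*\Y_n^{\Phi C_2} = \A\square_{\A(n)} G_n$ and $H_*\widetilde{\Y_n} = \A\square_{\A(n)} H_*\Sigma\RP^\infty_{-1}$, with the map between them obtained by applying the exact functor $\A\square_{\A(n)}(-)$ (exact since $\A$ is free over $\A(n)$) to the inclusion $G_n \hookrightarrow H_*\Sigma\RP^\infty_{-1}$. As this is injective in every degree, the long exact sequence for $\Y_n^{\Phi C_2} \to \widetilde{\Y_n} \to C$ collapses, giving $H_* C \cong \A\square_{\A(n)} L_n$ as $\A$-comodules, via the short exact sequence $0 \to G_n \to H_*\Sigma\RP^\infty_{-1} \to L_n \to 0$.

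Now run the homology long exact sequence of $\Y_n^{C_2} \to \iota\Y_n \to C$. The crux is that the map $H_*\iota\Y_n \to H_* C$ induced by $\iota\Y_n \to C$ is zero: this map factors through $\widetilde{\Y_n}$, and under the identifications above the composite $\iota\Y_n \to \widetilde{\Y_n} \to C$ realizes on homology the map $\A\square_{\A(n)}(-)$ applied to the composite $\F_2 \hookrightarrow H_*\Sigma\RP^\infty_{-1} \twoheadrightarrow L_n$ (inclusion of the bottom class followed by the quotient defining $L_n$), which vanishes because $\F_2$ sits entirely in degree $0$ while $L_n$ is trivial in degree $0$ (equivalently $L_n^\ast = 0$ there, by the preceding lemma). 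Hence the long exact sequence breaks into short exact sequences $0 \to H_{\ast+1} C \to H_\ast\Y_n^{C_2} \to H_\ast\iota\Y_n \to 0$, which after substituting $H_* C = \A\square_{\A(n)} L_n$ and $H_*\iota\Y_n = \A\square_{\A(n)}\F_2$ is exactly the claimed sequence $0 \to \A\square_{\A(n)} L_n[-1] \to H_*\Y_n^{C_2} \to \A\square_{\A(n)}\F_2 \to 0$ of comodules (the desuspension $[-1]$ accounting for the degree shift of the connecting map). The part requiring care is the bookkeeping of the first two paragraphs — correctly identifying the common cofiber $C$ and checking that the topological maps $\iota\Y_n \to \widetilde{\Y_n} \to C$ become the expected algebraic maps; once that is in place, the degree-$0$ triviality of $L_n$ does all of the remaining work, the rest being routine diagram chasing together with the finite-type bookkeeping already used in this section.
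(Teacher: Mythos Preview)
Your argument is correct and uses essentially the same ingredients as the paper: the commutative ladder relating the isotropy separation sequence to the sequence $\iota\Y_n \to \widetilde{\Y_n} \to \Sigma(\Y_n)_{hC_2}$, the identification $H_*\Y_n^{\Phi C_2} \cong \A\square_{\A(n)}G_n$, the short exact sequence defining $L_n$, and the vanishing of $L_n$ in degree $0$. The only organizational difference is that the paper analyzes the boundary map $H_*\Y_n^{\Phi C_2} \to H_*\Sigma(\Y_n)_{hC_2}$ directly (computing its kernel to be $\A\square_{\A(n)}\F_2$ and its cokernel to be $\A\square_{\A(n)}L_n$, then reading off the claimed extension from the long exact sequence of the top row), whereas you pass to the common cofiber $C$ of the vertical maps and run the long exact sequence for $\Y_n^{C_2} \to \iota\Y_n \to C$ instead; these are two ways of reading the same $3\times 3$ diagram.
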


These $L_n$ can be explicitly computed.
\begin{example}
  $L_0[-1] = 0$. $L_1[-1] = \F_2$. $L_2[-1] = \F_2 \oplus H_* L$.
\end{example}

Specializing to the $n = 2$ case,
\begin{lemma}
  When $n = 2$, the short exact sequence
  \[
    0 \to \A \square_{\A(2)} L_2[-1] \to H_* \Y_2^{C_2} \to \A \square_{\A(2)} \F_2 \to 0.
  \]
  splits non-canonically.
\end{lemma}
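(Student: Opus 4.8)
The plan is to read the lemma as an obstruction-theoretic statement: splitting the sequence is equivalent to vanishing of its extension class
\[
  e \in \Ext^1_{\A}\bigl(\A \square_{\A(2)} \F_2,\ \A \square_{\A(2)} L_2[-1]\bigr),
\]
and the word ``non-canonically'' merely records that $\Hom_\A$ of these two comodules is nonzero, so there is no preferred splitting. By the Example, $L_2[-1] = \F_2 \oplus H_* L$, so $e$ is a single class, living in
\[
  \Ext^1_\A(\A \square_{\A(2)} \F_2, \A \square_{\A(2)} \F_2) \oplus \Ext^1_\A(\A \square_{\A(2)} \F_2, \A \square_{\A(2)} H_*L).
\]

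To compute this group I would invoke change of rings. Since $\A$ is coflat over $\A(2)$ (Milnor--Moore), the functor $\A \square_{\A(2)}(-)$ is exact, giving $\Ext^s_\A(N, \A \square_{\A(2)} M) \cong \Ext^s_{\A(2)}(N|_{\A(2)}, M)$ for any $N$. Taking $N = \A \square_{\A(2)} \F_2 = H_*\tmf$ and dualizing (both $\F_2$ and $H_* L$ are finite), the obstruction becomes the degree-preserving part of
\[
  \Ext^1_{\A(2)}\bigl(\F_2 \oplus H^*L,\ H^*\tmf\bigr),
\]
where $H^*\tmf = \A/\!/\A(2)$ carries its restricted $\A(2)$-module structure; concretely, $e$ is the class of the short exact sequence of $\A(2)$-modules dual to the one in the statement.

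The key structural input is that $H^*\tmf = \F_2[\bxi_1^8, \bxi_2^4, \bxi_3^2, \bxi_4, \dots]$ is concentrated in degree $0$ and in degrees $\ge 8$ — in particular it vanishes in degrees $1$ through $7$ — whereas $\F_2$ and $H^*L$ live in degrees $\le 8$ and $\A(2)$ is generated by $\Sq^1, \Sq^2, \Sq^4$. In any degree-preserving extension $0 \to H^*\tmf \to E \to \F_2 \oplus H^*L \to 0$ this pins down the relevant lifts: the lift of the bottom class of the $\F_2$-summand has $\Sq^1, \Sq^2, \Sq^4$ taking it into $E_1 = E_2 = E_4 = 0$, so it is annihilated by the whole augmentation ideal of $\A(2)$ and splits off; and the lift $\tilde x_1 \in E_1$ of the generator of $H^*L$ is unique, and one checks it satisfies every relation of $H^*L$ — the relations in degrees $\le 7$ lift uniquely because $H^*\tmf$ vanishes there, and the relations in degrees $\ge 8$ (where $H^*\tmf$ reappears) hold by a short cascade computation starting from $\Sq^2 \tilde x_1 = \Sq^4 \tilde x_1 = 0$. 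Hence $E$ splits, $e = 0$, and the lemma follows. The one piece of genuine work is this last Adem-relation bookkeeping in degrees $8$ through $12$; I expect it to be the (fairly mild) main obstacle, everything else being formal.
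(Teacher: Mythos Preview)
Your approach is essentially the same as the paper's. Both reduce via change of rings to showing that any degree-preserving $\A(2)^*$-module extension $0 \to \A^*/\!/\A(2)^* \to Z \to L_2[-1]^* \to 0$ splits, and both exploit the vanishing of $\A^*/\!/\A(2)^*$ in degrees $1$ through $7$ to pin down the lifts of the low-degree generators. The paper keeps $L_2[-1]^* = \F_2 \oplus H^*L$ together rather than splitting it, but this is cosmetic.

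The one place where the paper is sharper is the ``short cascade computation'' you flag at the end. The paper identifies the single genuine obstruction: with $z_1$ the unique lift of $x_1$, the forced image of $x_8$ is $\Sq^4\Sq^2\Sq^1 z_1$, and one must check $\Sq^4(\Sq^4\Sq^2\Sq^1 z_1) = 0$ in degree $12$ (degrees $9,10,11$ being automatic since $\A^*/\!/\A(2)^*$ vanishes there). The paper dispatches this with the single Adem relation $\Sq^4\Sq^4\Sq^2\Sq^1 = \Sq^7\Sq^3\Sq^1$, noting that $\Sq^3\Sq^1 z_1$ lands in $Z_5 = 0$. Your phrasing ``starting from $\Sq^2\tilde x_1 = \Sq^4\tilde x_1 = 0$'' is a bit optimistic---these two relations alone do not generate the full annihilator ideal of $x_1$---but the actual computation goes through exactly as the paper shows, using the vanishing of $Z$ in the intermediate degree $5$ rather than purely algebraic consequences of those two relations.
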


\begin{proof}
  It is classified by an element in 
  \[
    \Ext_{\A}^{1, 0} (\A \square_{\A(2)} \F_2, \A \square_{\A(2)} L_2[-1]) = \Ext_{\A(2)} (\A \square_{\A(2)} \F_2, L_2[-1]).
  \]
  Thus, we have to equivalently show that any short exact sequence of $\A(2)^*$-modules
  \[
    0 \to \A^*/\!/\A(2)^* \to Z \to L_2[-1]^* \to 0
  \]
  splits. Let $x_i \in L_2[-1]^*$ be the unique element in degree $i$, if exists.

  Let $z_0 \in Z$ be any lift of $x_0 \in L_2[-1]^*$ and $z_1$ the unique lift of $x_1$. Then we can attempt to produce a splitting by mapping $x_0$ to $z_0$ and $x_1$ to $z_1$. Since the lowest non-zero degrees of $\A^*/\!/\A(2)^*$ are $0, 8, 12$, the only obstruction is if we cannot map $x_8$ to $\Sq^4 \Sq^2 \Sq^1 z_1$. This occurs if and only if $\Sq^4 \Sq^4 \Sq^2 \Sq^1 z_1 \not= 0$. But $\Sq^4 \Sq^4 \Sq^2 \Sq^1 = \Sq^7 \Sq^3 \Sq^1$, and $\Sq^3 \Sq^1 z_1$ has degree $5$, where $Z$ is trivial. So this cannot happen.
\end{proof}

\begin{corollary}
  We have
  \[
    H_* \Y_2^{C_2} = \A \square_{\A(2)} (\F_2 \oplus \F_2 \oplus H_* L).
  \]
\end{corollary}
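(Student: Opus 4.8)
The plan is simply to assemble the two results immediately preceding the statement. First I would invoke the short exact sequence of $\A$-comodules
\[
  0 \to \A \square_{\A(2)} L_2[-1] \to H_* \Y_2^{C_2} \to \A \square_{\A(2)} \F_2 \to 0
\]
together with the lemma asserting that it splits non-canonically. This produces an isomorphism of $\A$-comodules
\[
  H_* \Y_2^{C_2} \;\cong\; \left(\A \square_{\A(2)} \F_2\right) \oplus \left(\A \square_{\A(2)} L_2[-1]\right).
\]

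Next I would feed in the computation $L_2[-1] = \F_2 \oplus H_* L$ recorded in the example, and use that the cotensor $\A \square_{\A(2)} (-)$ is an exact, additive functor, so it carries the direct sum decomposition of $L_2[-1]$ to a direct sum decomposition
\[
  \A \square_{\A(2)} L_2[-1] \;=\; \left(\A \square_{\A(2)} \F_2\right) \oplus \left(\A \square_{\A(2)} H_* L\right).
\]
Substituting back yields exactly $H_* \Y_2^{C_2} = \A \square_{\A(2)} (\F_2 \oplus \F_2 \oplus H_* L)$, as claimed.

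There is no real obstacle here: every nontrivial input has already been carried out, namely the short exact sequence (obtained via \Cref{thm:stunted} together with the analysis of $L_n$ and $G_n$) and the $\Sq$-operation computation establishing the splitting. The only points worth recording explicitly are that the splitting is one of $\A$-comodules rather than merely of graded $\F_2$-vector spaces (which is part of the statement of the splitting lemma), and that additivity of $\A \square_{\A(2)} (-)$ is being applied to genuine $\A(2)$-sub-comodules of $L_2[-1]$, which is clear since the decomposition $L_2[-1] = \F_2 \oplus H_* L$ is itself a decomposition of $\A(2)$-comodules.
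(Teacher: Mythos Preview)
Your proposal is correct and matches the paper's approach exactly: the paper states this corollary immediately after the splitting lemma and the example computing $L_2[-1]$, with no proof given, precisely because it is the straightforward assembly you describe.
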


\begin{corollary}
  If $\Y_2^{C_2}$ admits the structure of a $\tmf$-module, then upon 2-completion, we have an isomorphism
  \[
    \Y_2^{C_2} = \tmf \oplus \tmf \oplus \tmf \otimes L.
  \]
\end{corollary}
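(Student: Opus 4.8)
The plan is to realise the homology isomorphism of the previous corollary by an explicit $\tmf$-module map and then upgrade it to an equivalence using connectivity.

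First I would rewrite the target homology. Since $H_*\tmf = \A\square_{\A(2)}\F_2$ is extended, the projection formula identifies $H_*(\tmf\otimes L) = H_*\tmf\otimes_{\F_2}H_*L$ with $\A\square_{\A(2)}H_*L$, so the previous corollary becomes an isomorphism of $\A$-comodules $H_*\Y_2^{C_2}\cong H_*(\tmf\oplus\tmf\oplus\tmf\otimes L)$. Both sides are connective (the comodule $\F_2\oplus\F_2\oplus H_*L$ is concentrated in degrees $\ge 0$) and of finite type. Consequently, after $2$-completion, any $\tmf$-module map between them inducing an isomorphism on $H\F_2$-homology is an equivalence: its cofiber would be a connective, finite type, $2$-complete spectrum with trivial $H\F_2$-homology, hence zero, and a $\tmf$-module map that is an equivalence of underlying spectra is an equivalence of $\tmf$-modules. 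So it suffices to produce such a map $\phi$.

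Next I would build $\phi$ by obstruction theory. A $\tmf$-module map out of $\tmf$ is a class in $\pi_0\Y_2^{C_2}$, and one out of $\tmf\otimes L$ is a map $L\to\Y_2^{C_2}$. Since $\Y_2^{C_2}$ is connective, $\pi_0\Y_2^{C_2} = H_0(\Y_2^{C_2};\Z)$ surjects onto $H_0(\Y_2^{C_2};\F_2) = \F_2^{\oplus2}$, so I choose $z_1,z_2\in\pi_0\Y_2^{C_2}$ with Hurewicz images forming a basis. For the last factor, $L$ has cells in dimensions $1,2,4,8$ with attaching maps $2,\eta,\nu$; I pick $x\in\pi_1\Y_2^{C_2}$ whose Hurewicz image generates $H_1(\Y_2^{C_2};\F_2)$ (such $x$ exists because $H_0$ is torsion free, so this class lifts to $H_1(\Y_2^{C_2};\Z)$, which is reached by the Hurewicz map of the connective spectrum $\Y_2^{C_2}$), and extend over the remaining cells; the successive obstructions are $2x\in\pi_1\Y_2^{C_2}$, then $\langle\eta,2,x\rangle\subseteq\pi_3\Y_2^{C_2}$, then $\langle\nu,\eta,2,x\rangle\subseteq\pi_7\Y_2^{C_2}$. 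These would be shown to vanish, or to be absorbed into their indeterminacy, from the now-explicit homotopy groups of $\Y_2^{C_2}$ in these degrees (which come from its Adams $E_2$-page $\Ext_{\A(2)}(\F_2, \F_2^{\oplus2}\oplus H_*L)$ via change of rings), by arguments of the same flavour as those in \Cref{section:differentials}.

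Finally I would check that the resulting $\phi\colon\tmf\oplus\tmf\oplus\tmf\otimes L\to\Y_2^{C_2}$ induces an isomorphism on $H\F_2$-homology. On each summand, $\phi_*$ is the natural inclusion of the corresponding summand of $H_*\Y_2^{C_2}$: for the two $\tmf$-summands because a comodule map out of $H_*\tmf$ is pinned down by the image of the bottom class (the degree-$n$ coaction determines a class in terms of lower degrees — e.g.\ the degree-$8$ generator is the unique primitive there), and likewise for $\tmf\otimes L$ using that the $k$-cell hits the relevant $\Sq$-image of $x$. Then $\phi_*$ is an isomorphism in degrees $1$ through $7$ and in degree $0$ by construction (using that $H_*\tmf$ vanishes in degrees $1$ through $7$), and an induction extends this upward: a minimal-degree class in $\ker\phi_*$ would have to be $\A$-coaction-trivial, but $H_*\tmf$ and $\A\square_{\A(2)}H_*L$ have no coaction-trivial classes in degrees above $0$ and $1$ respectively. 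By the second paragraph, $\phi$ is then the desired equivalence of $2$-complete $\tmf$-modules. The hard --- indeed the only non-formal --- part will be the verification that the Toda brackets $\langle\eta,2,x\rangle$ and $\langle\nu,\eta,2,x\rangle$ vanish; this should be much more routine than the analogous computations in the body of the paper, since the translation-invariance difficulties there are absent here because $\Y_2^{C_2}$ is connective.
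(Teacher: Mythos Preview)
Your approach is essentially the paper's: map in the two copies of $\tmf$ using the degree-$0$ classes, map in $\tmf\otimes L$ by obstruction theory over the cells of $L$, and conclude via the homology isomorphism. The paper streamlines this slightly by first quotienting out the two $\tmf$ summands (detected by the two $h_0$-towers at stem $0$ in the Adams $E_2$-page), so that the obstruction theory for $L$ takes place in a module whose Adams $E_2$-page is just $\Ext_{\A(2)}(\F_2, H_*L)$; in that quotient $\pi_1$ and $\pi_3$ vanish outright and the only class in $\pi_7$ is a $\nu$-multiple, hence in the indeterminacy of $\langle\nu,\eta,2,w_1\rangle$. Doing it your way, without quotienting first, the obstruction groups pick up contributions from the $\tmf$ summands (e.g.\ $\nu\in\pi_3\tmf$), so you have slightly more to check, though nothing serious. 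The paper also does not bother with your careful primitive-element argument for injectivity of $\phi_*$: once the map is built hitting the comodule generators, the homology isomorphism is immediate.
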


\begin{proof}
  The Adams spectral sequence has two $h_0$-towers at $0$, and this lets us map in two copies of $\tmf$. After quotienting them out we are left with $\Ext_{\A(2)}(\F_2, H_* L)$, which is shown in \Cref{figure:ext-l}.

  \begin{figure}[h]
    \centering
    \DeclareSseqGroup \tower {} {\class(0,0)\savestack\DoUntilOutOfBounds {\class(\lastx,\lasty+1)\structline}\restorestack}
    \DeclareSseqCommand\hone{ O{} d() }{\IfNoValueF{#2}{\pushstack(#2) }\class[#1] (\lastx+1, \lasty+1)\structline}
    \DeclareSseqCommand\htwo{ O{} d() }{\IfNoValueF{#2}{\pushstack(#2) }\class[#1] (\lastx + 3, \lasty+1)\structline}
    \DeclareSseqCommand\hook{ d() }{\tower(#1)\hone\hone}

    \begin{sseqpage}[classes = {fill, minimum size=0.07cm}, scale = {\ifspringer 0.406 \else 0.42\fi}, y tick step = 2, x tick step = 2, grid = go, x range={0}{25}, yrange={0}{10}, class pattern=linear]
      \class(1, 0)
      \htwo \htwo
      \hone(1, 0)
      \tower(4, 2)
      \hook(8, 3)
      \tower(12, 6)
      \hook(16, 3)
      \class(16, 3) \htwo \htwo
      \class(21, 4) \structline(22, 5)
      \hook(16, 7)
      \tower(20, 6) \tower(20, 10)
      \hook(24, 7) \hook(24, 11)
    \end{sseqpage}
    \caption{$\Ext_{\A(2)}(\F_2, H_* L)$}\label{figure:ext-l}
  \end{figure}

  Let $w_1$ be the class in degree $1$. To map in $L$, we need to show that
  \[
    2w_1 = 0, \quad \langle \eta, 2, w_1\rangle = 0,\quad \langle \nu, \eta, 2, w_1\rangle = 0.
  \]
  These obstructions live in $\pi_1, \pi_3$ and $\pi_7$ respectively. The only non-trivial case is the last case, but the only possible class lives in the indeterminacy. So we get a map $L \to Y_2^{C_2}$, which lifts to a map $\tmf \otimes L \to Y_2^{C_2}$ via the $\tmf$-module structure. This map induces an isomorphism on homology groups, hence an equivalence on $2$-completion.
\end{proof}

\section{Stunted projective spaces}\label{section:stunted}
The goal of this appendix is to prove the following folklore theorem we used in \Cref{thm:x11-permanent}:

\begin{theorem}\label{thm:stunted}
  Let $X$ be a genuine $C_2$-spectrum whose $C_2$ action on the underlying spectrum $\iota X$ is trivial. Then the cofiber of the composition
  \[
    \iota X \otimes \RP^n_+ \to \iota X \otimes \RP^\infty_+ = X_{h C_2} \overset{\mathrm{Nm}}\to X^{h C_2} = (\iota X)^{\RP^\infty_+} \to (\iota X)^{\RP^m_+}
  \]
  is $\iota X \otimes \Sigma P_{-m - 1}^n$, where $P_{-m}^n$ is the stunted projective space.
\end{theorem}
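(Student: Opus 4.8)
The plan is to unwind the norm map inside genuine $C_2$-equivariant homotopy theory, so that the whole composite becomes the $C_2$-fixed points of a map of free $C_2$-spectra, and then to reduce the cofiber to a single Euler-class computation. Write $(-)^{C_2}$ for genuine fixed points; this functor is exact. The starting point is the standard fact (the Greenlees--May Tate square) that for a genuine $C_2$-spectrum $X$ the norm map $X_{hC_2}\to X^{hC_2}$ factors as $X_{hC_2}\to X^{C_2}\to X^{hC_2}$: writing $\epsilon\colon EC_{2+}\to S^0$ for the collapse, it is the $C_2$-fixed points of
\[
  EC_{2+}\otimes X \xrightarrow{\epsilon\otimes 1} X \xrightarrow{\eta} F(EC_{2+},X),
\]
where $\eta$ is induced by $\epsilon$, together with the Adams isomorphism $(EC_{2+}\otimes X)^{C_2}=X_{hC_2}$ and the identification $F(EC_{2+},X)^{C_2}=X^{hC_2}$.

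Next I would filter $EC_{2+}=S(\infty\sigma)_+$ by the free finite $C_2$-CW complexes $S((k+1)\sigma)_+$ (unit spheres of multiples of the sign representation), with $S((k+1)\sigma)/C_2=\RP^k$. Using the projection formula together with the hypothesis that the $C_2$-action on $\iota X$ is trivial, one gets natural identifications $(S((n+1)\sigma)_+\otimes X)^{C_2}\simeq \iota X\otimes\RP^n_+$ and $F(S((m+1)\sigma)_+,X)^{C_2}\simeq(\iota X)^{\RP^m_+}$, under which the inclusion $S((n+1)\sigma)\hookrightarrow EC_{2+}$ induces the skeletal inclusion $\iota X\otimes\RP^n_+\to\iota X\otimes\RP^\infty_+=X_{hC_2}$, and restriction along $S((m+1)\sigma)\hookrightarrow EC_{2+}$ induces $X^{hC_2}\to(\iota X)^{\RP^m_+}$. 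Chasing the displayed description of the norm map through these identifications — the key point being that $\epsilon$ restricted to $S((n+1)\sigma)_+$ is precisely the collapse $c_n\colon S((n+1)\sigma)_+\to S^0$, and dually for $c_m$ — shows that the composite in the statement is the $C_2$-fixed points of
\[
  \phi\colon S((n+1)\sigma)_+\otimes X \xrightarrow{c_n\otimes 1} X \xrightarrow{F(c_m,1)} F(S((m+1)\sigma)_+,X).
\]

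With this in hand the cofiber computation is formal. For $\phi=g\circ f$ with $f=c_n\otimes 1$ and $g=F(c_m,1)$: the cofiber sequence $S((n+1)\sigma)_+\xrightarrow{c_n}S^0\xrightarrow{a}S^{(n+1)\sigma}$ (with $a=a_{(n+1)\sigma}$ the Euler class, i.e.\ the inclusion of fixed points) gives $\mathrm{fib}$/$\mathrm{cofib}$ data $\mathrm{cofib}(f)=S^{(n+1)\sigma}\otimes X$ with quotient map $a_{(n+1)\sigma}\otimes 1\colon X\to S^{(n+1)\sigma}\otimes X$; applying $F(-,X)$ to the analogous sequence for $m$ gives $\mathrm{fib}(g)=F(S^{(m+1)\sigma},X)=S^{-(m+1)\sigma}\otimes X$ with structure map $a_{(m+1)\sigma}\otimes 1\colon S^{-(m+1)\sigma}\otimes X\to X$. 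The cofiber sequence $\mathrm{cofib}(f)\to\mathrm{cofib}(\phi)\to\mathrm{cofib}(g)$ then identifies $\mathrm{cofib}(\phi)$ with the cofiber of the composite $S^{-(m+1)\sigma}\otimes X\to X\to S^{(n+1)\sigma}\otimes X$, which by multiplicativity of Euler classes, $a_{(n+1)\sigma}\cdot a_{(m+1)\sigma}=a_{(n+m+2)\sigma}$, is multiplication by $a_{(n+m+2)\sigma}$. Tensoring the cofiber sequence $S((n+m+2)\sigma)_+\to S^0\xrightarrow{a_{(n+m+2)\sigma}}S^{(n+m+2)\sigma}$ with $S^{-(m+1)\sigma}\otimes X$ and shifting shows
\[
  \mathrm{cofib}(\phi)\simeq \Sigma\, S((n+m+2)\sigma)_+\otimes S^{-(m+1)\sigma}\otimes X.
\]

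Finally I would take $C_2$-fixed points. Since $S((n+m+2)\sigma)$ is a free finite $C_2$-CW complex with quotient $\RP^{n+m+1}$, the spectrum $\bigl(S((n+m+2)\sigma)_+\otimes S^{-(m+1)\sigma}\otimes X\bigr)^{C_2}$ is — again using the trivial $C_2$-action on $\iota X$, so that no twisting appears — built out of shifted copies of $\iota X$ in the cell pattern of the Thom spectrum of $-(m+1)$ times the tautological line bundle over $\RP^{n+m+1}$; that is, it is $\iota X\otimes P_{-m-1}^n$. Hence, by exactness of $(-)^{C_2}$, the cofiber of the original composite is $\bigl(\mathrm{cofib}(\phi)\bigr)^{C_2}=\iota X\otimes\Sigma P_{-m-1}^n$, as claimed. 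The main obstacle is the equivariant bookkeeping in the first two paragraphs: correctly factoring the norm map through genuine fixed points, setting up the fixed-point identifications for the free $C_2$-spectra $S(k\sigma)_+\otimes X$ and $F(S(k\sigma)_+,X)$ compatibly with the skeletal filtrations, and verifying that the maps appearing in the statement are the expected skeletal ones. Once that is in place, the last two steps are a routine application of the octahedral cofiber sequence, multiplicativity of Euler classes, and the Thom isomorphism.
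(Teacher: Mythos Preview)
Your argument is correct, and it takes a genuinely different route from the paper's. The paper first computes the cofiber of the partial composite $\iota X\otimes\RP^n_+\to X^{hC_2}$ by identifying $(S^{(n+1)\sigma}\otimes X)^{hC_2}$ with an inverse limit $\lim_N(\Sigma P^n_{-N}\otimes\iota X)$ (which requires writing $X$ as a filtered colimit of finite spectra to commute the limit past the tensor), and then reaches the finite complex $\Sigma P_{-m-1}^n$ via a $3\times 3$ diagram whose columns come from restricting along $\RP^m_+\to\RP^\infty_+$. By contrast, you stay inside $\Sp_{C_2}$ the whole time: you identify the full composite as the fixed points of $\phi=F(c_m,1)\circ(c_n\otimes 1)$, use the octahedral cofiber sequence and multiplicativity of Euler classes to get $\mathrm{cofib}(\phi)\simeq\Sigma\,S((n+m+2)\sigma)_+\otimes S^{-(m+1)\sigma}\otimes X$, and only then take fixed points, landing directly on $P_{-m-1}^n$ via the Thom-space identification $(S(k\sigma)_+\otimes S^{j\sigma})_{hC_2}\simeq P_j^{j+k-1}$. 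Your approach is cleaner in that it never passes through the infinite case or any limit, at the cost of invoking slightly more equivariant input (Euler classes and the Thom identification); the paper's approach trades that for the filtered-colimit manoeuvre in its Corollary~\ref{cor:sns}. Both ultimately rest on the same Adams isomorphism and the same description of stunted projective spaces as Thom spectra over $\RP^k$.
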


We should think of $\iota X \otimes \RP^n_+$ as the colimit of the constant functor on $\iota X$ under $\RP^n$. In general, we can understand this colimit as follows:

\begin{lemma}\label[lemma]{lemma:adams}
  Let $X$ be any genuine $C_2$-spectrum, and restrict it to a diagram on $\RP^n$ under the inclusion $\RP^n \hookrightarrow BC_2 = \RP^\infty$. Then
  \[
    \colim_{\RP^n} X \simeq (S((n + 1)\sigma)_+ \otimes X)_{h C_2} \simeq (S((n + 1)\sigma)_+ \otimes X)^{C_2},
  \]
  where if $V \in \RO(C_2)$, then $S(V)$ is the unit sphere of $V$.
\end{lemma}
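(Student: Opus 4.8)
The plan is to compute the colimit in two stages, exploiting the fact that $\RP^n$ is itself a homotopy orbit. Write $S((n+1)\sigma) = S^n$ for the unit sphere of $n+1$ copies of the sign representation, i.e.\ $S^n$ with the antipodal $C_2$-action. This action is free, so the quotient map exhibits $\RP^n \simeq S((n+1)\sigma)_{hC_2}$, the induced map $\RP^n \to BC_2$ is precisely the inclusion $\RP^n \hookrightarrow \RP^\infty$, and the (homotopy) fiber of $\RP^n \to BC_2$ is $S((n+1)\sigma)$ with its free action, sitting in a fiber sequence $S((n+1)\sigma) \to \RP^n \to BC_2$.

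First I would invoke the ``Fubini'' principle for colimits over a homotopy orbit: for a $C_2$-space $Z$ and a functor $F\colon Z_{hC_2} \to \Sp$ one has $\colim_{Z_{hC_2}} F \simeq (\colim_Z F|_Z)_{hC_2}$, where $F|_Z$ denotes the restriction along $Z \to Z_{hC_2}$ with its residual $C_2$-action. This follows by writing $\colim_{Z_{hC_2}} F$ as the left Kan extension of $F$ along $Z_{hC_2} \to *$ and factoring through $Z_{hC_2} \to BC_2 \to *$: the first Kan extension has value at the basepoint the colimit over the fiber $Z$ of $F|_Z$, and the second is the homotopy-orbit functor. Applying this with $Z = S((n+1)\sigma)$ and $F$ the restriction of (the underlying spectrum with its $C_2$-action of) $X$ along $\RP^n \hookrightarrow BC_2$, I would observe that the composite $S((n+1)\sigma) \to \RP^n \to BC_2$ is nullhomotopic, since it lifts through $EC_2 \simeq *$; hence $F|_{S((n+1)\sigma)}$ is constant at the underlying spectrum of $X$, so $\colim_{S((n+1)\sigma)} F|_{S((n+1)\sigma)} \simeq S((n+1)\sigma)_+ \otimes X$, carrying the diagonal of the antipodal action on the sphere and the given action on $X$. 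Taking homotopy orbits gives the first equivalence.

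For the second equivalence I would use that $S((n+1)\sigma)_+ \otimes X$ is a \emph{free} $C_2$-spectrum: its geometric fixed points vanish, since $\Phi^{C_2}(S((n+1)\sigma)_+) = \Sigma^\infty_+(S((n+1)\sigma)^{C_2}) = 0$ as $S^n$ has no fixed points. Equivalently, $S((n+1)\sigma)$ is a finite free $C_2$-CW complex, so $S((n+1)\sigma)_+ \otimes X$ is built from the induced cells $(C_2)_+ \otimes \Sigma^k X$. For any such free $C_2$-spectrum the Adams isomorphism identifies genuine fixed points with homotopy orbits — concretely, the isotropy separation cofiber sequence $Y_{hC_2} \to Y^{C_2} \to \Phi^{C_2}Y$ collapses when $\Phi^{C_2}Y = 0$; or, elementarily, $((C_2)_+ \otimes W)^{C_2} \simeq W \simeq ((C_2)_+ \otimes W)_{hC_2}$ compatibly, and one inducts over the finitely many cells. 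This yields $(S((n+1)\sigma)_+ \otimes X)_{hC_2} \simeq (S((n+1)\sigma)_+ \otimes X)^{C_2}$.

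The step I expect to be the main obstacle is pinning down, in the Fubini argument, that the residual $C_2$-action on $\colim_{S((n+1)\sigma)} F|_{S((n+1)\sigma)} \simeq S((n+1)\sigma)_+ \otimes X$ is the diagonal one and not merely the action coming from the sphere. I would resolve this by tracking the $C_2 = \mathrm{Aut}(\mathrm{pt})$-monodromy of the left Kan extension along $\RP^n \to BC_2$: a loop in $BC_2$ acts on the fiber $S((n+1)\sigma)$ by the deck transformation and simultaneously re-trivializes the null-homotopy of $S((n+1)\sigma) \to BC_2$ by the generator of the $C_2$-action on $X$, and these combine to the diagonal action. Alternatively, one can sidestep the point entirely by fixing an explicit free $C_2$-CW model of $S((n+1)\sigma)$ and computing both sides cellularly from the start.
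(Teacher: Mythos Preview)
Your proposal is correct and follows essentially the same approach as the paper: the paper also computes the colimit by first pulling back along the universal cover $S((n+1)\sigma) \to \RP^n$ (citing \cite[Corollary 4.2.3.10]{htt} for the Fubini step), observes the restriction to the sphere is constant via the factorization through $S(\infty\sigma) \simeq *$, and then invokes the Adams isomorphism for the second equivalence. Your extended discussion of why the residual $C_2$-action is the diagonal one is a point the paper leaves implicit in its citation of HTT.
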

Note that since $C_2$ acts freely on $S((n + 1) \sigma)$, this homotopy quotient agrees with the literal quotient under point set models.

\begin{proof}
  The idea of the first isomorphism is that we have a $2$-sheeted universal cover $S((n + 1)\sigma) \to \RP^n$ with deck transformation group $C_2$. To take the colimit over $\RP^n$, we can take the colimit over $S((n + 1)\sigma)$, and then take the colimit over the residual $C_2$ action. The restriction of $X$ to $S((n + 1) \sigma)$ is trivial since the following diagram commutes
  \[
    \begin{tikzcd}
      S((n + 1)\sigma) \ar[r] \ar[d] & S(\infty \sigma) = * \ar[d]\\
      \RP^n \ar[r] & \RP^\infty,
    \end{tikzcd}
  \]
  so the colimit over $S((n + 1) \sigma)$ is simply given by tensoring with $S((n + 1) \sigma)_+$.

  To prove this formally, we apply \cite[Corollary 4.2.3.10]{htt}, where we choose $\mathcal{J} = BC_2$, $K = \RP^n$ and $F$ to be the universal cover $S((n + 1)\sigma) \to \RP^n$ with $\mathcal{J}$ acting via deck transformations.

  The second equivalence is the Adams isomorphism (see e.g.\ \cite[Theorem 8.4]{equivariant-lectures}).
\end{proof}

We recall some basic properties of stunted projective spaces. Let $\phi(n)$ be the number of integers $j$ congruent to $0, 1, 2$ or $4$ mod $8$ such that $0 < j \leq n$.

\begin{theorem}[{\cite[Theorem V.2.14]{h-infinity}}]
  There is a (unique) collection of spectra $\{P_m^n\}_{n \geq m}$ with the property that
  \begin{enumerate}
    \item If $m > 0$, then $P^n_m = \Sigma^\infty \RP^n / \RP^{m - 1}$ using the standard inclusion.
    \item If $r \equiv n \pmod {2^{\phi(k)}}$, then $P_n^{n + k} \cong \Sigma^{n - r} P^{r + k}_r$.
  \end{enumerate}
  Further, we have
  \begin{enumerate}
    \setcounter{enumi}{2}
    \item If we restrict the $BC_2$ action on $S^{\sigma}$ to $\RP^m \subseteq \RP^\infty = BC_2$, then
      \[
        \colim_{\RP^m} S^{n\sigma} \simeq P_n^{n + m}
      \]
    \item $D P_m^n \simeq \Sigma P_{-n-1}^{-m-1}$.
  \end{enumerate}
\end{theorem}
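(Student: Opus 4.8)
The plan is to realize the collection $\{P_m^n\}$ as Thom spectra of (virtual) multiples of the tautological line bundle over real projective space; this makes property~(3) a definition, and the remaining properties follow from standard facts about Thom spectra together with one deep classical input. Concretely, for $m \ge 0$ and $n \in \Z$ set $P_n^{n+m} := \colim_{\RP^m} S^{n\sigma}$, where $S^{n\sigma}$ is the $n$-fold (de)suspension of the sphere by the sign representation $\sigma$, regarded as an object of $\Fun(BC_2, \Sp)$ via the $C_2$-action and restricted along $\RP^m \hookrightarrow BC_2$. By \Cref{lemma:adams} this colimit equals $(S((m+1)\sigma)_+ \otimes S^{n\sigma})^{C_2}$, and since $C_2$ acts freely on $S((m+1)\sigma)$ with quotient $\RP^m$, it is the Thom spectrum $\mathrm{Th}(n\gamma_m)$ of $n$ copies of the tautological line bundle $\gamma_m \to \RP^m$, interpreted for $n < 0$ as the Thom spectrum of the virtual bundle $n\gamma_m$. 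Property~(3) then holds by construction. We also record the additivity $\mathrm{Th}((a+b)\gamma_m) \simeq \mathrm{Th}(a\gamma_m) \otimes_{\RP^m} \mathrm{Th}(b\gamma_m)$ (fiberwise smash over $\RP^m$), which follows from the symmetric monoidality of the Thom spectrum functor.

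For property~(1): when $a \ge 1$ is a genuine integer, the Thom space of $a\gamma_m \to \RP^m$ is the classical $\RP^{a+m}/\RP^{a-1}$ — the fiberwise one-point compactification of the total space of $a\gamma_m$ sits inside $\RP^{a+m}$ as the complement of $\RP^{a-1}$ — so in the bottom/top indexing $(a,b) = (a, a+m)$ we get $P_a^b = \Sigma^\infty \RP^b/\RP^{a-1}$. For uniqueness: any collection satisfying~(1) and~(2) is determined, since for arbitrary $m \le n$, putting $k = n - m$ and choosing $r > 0$ with $r \equiv m \pmod{2^{\phi(k)}}$, property~(2) forces $P_m^n \simeq \Sigma^{m-r} P_r^{r+k}$ and property~(1) identifies the right-hand side with an ordinary stunted projective space, independently of any construction. (Independence of the choice of $r$, hence consistency of the construction above, follows by composing the periodicity equivalences of the next paragraph.)

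Property~(2) is James periodicity, the one genuinely hard point. It suffices to exhibit an equivalence $\mathrm{Th}(2^{\phi(k)}\gamma_k) \simeq \Sigma^{2^{\phi(k)}} \mathrm{Th}(0\cdot\gamma_k) = \Sigma^{2^{\phi(k)}} \Sigma^\infty_+ \RP^k$; smashing it over $\RP^k$ with $\mathrm{Th}(r\gamma_k)$ and iterating — and running the same argument for negative multiples, using that the relevant $J$-group is a group — yields $\mathrm{Th}(n\gamma_k) \simeq \Sigma^{n-r}\mathrm{Th}(r\gamma_k)$ whenever $n \equiv r \pmod{2^{\phi(k)}}$, which is exactly~(2). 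The displayed equivalence says that $2^{\phi(k)}\gamma_k$ is stably fibre-homotopy trivial over $\RP^k$, i.e.\ that $\gamma_k - 1$ has order $2^{\phi(k)}$ in $\widetilde{J}(\RP^k)$; this is precisely Adams' determination of the image of $J$ on real projective spaces (the solution of the vector field problem), recalled in \cite{h-infinity}, which I would invoke rather than reprove.

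Finally, property~(4) is Atiyah duality for the closed $m$-manifold $\RP^m$: $D\,\mathrm{Th}(V \to \RP^m) \simeq \mathrm{Th}(-V - T\RP^m \to \RP^m)$. Using the standard relation $T\RP^m \oplus \underline{\R} \cong (m+1)\gamma_m$, for $V = n\gamma_m$ one has $-V - T\RP^m \cong -(n+m+1)\gamma_m + \underline{\R}$, whence $D P_n^{n+m} \simeq \Sigma\, \mathrm{Th}(-(n+m+1)\gamma_m) = \Sigma P_{-(n+m+1)}^{-n-1}$; in the $(a,b)$ indexing this reads $D P_a^b \simeq \Sigma P_{-b-1}^{-a-1}$. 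In summary, the main obstacle is property~(2): once the theory of Thom spectra of virtual bundles is in place, properties~(1), (3), (4) and uniqueness are formal bookkeeping, but the precise periodicity constant $2^{\phi(k)}$ with $\phi$ the stated counting function rests on Adams' computation of $\widetilde{J}(\RP^k)$, which is the deep classical input being cited.
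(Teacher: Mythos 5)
The paper does not prove this theorem; it is cited verbatim as \cite[Theorem V.2.14]{h-infinity} and used as a black box, so there is no ``paper's own proof'' to compare against. That said, your sketch is a correct and essentially standard account of how the result is established in the literature: define $P_n^{n+m}$ as the Thom spectrum $\mathrm{Th}(n\gamma_m)$ (equivalently $(S((m+1)\sigma)_+ \otimes S^{n\sigma})^{C_2}$ via \Cref{lemma:adams}), so that~(3) holds by fiat; recover~(1) from the classical computation of the Thom space of $a\gamma_m$ as $\RP^{a+m}/\RP^{a-1}$; deduce uniqueness by using~(2) to translate any $P_m^n$ into a positively-indexed one pinned down by~(1); obtain~(2) from the fact that $\gamma_k - 1$ has order $2^{\phi(k)}$ in $\widetilde J(\RP^k)$, which is Adams' solution of the vector fields problem and the one nonformal input; and get~(4) from Atiyah duality together with $T\RP^m \oplus \underline{\R} \cong (m+1)\gamma_m$. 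Two small caveats worth noting: for~(2) you only need that the order of $\gamma_k - 1$ \emph{divides} $2^{\phi(k)}$ (Adams proves the sharper equality, but divisibility suffices for the stated periodicity), and the phrase ``the fiberwise one-point compactification sits inside $\RP^{a+m}$'' is loose --- it is the total space of $a\gamma_m$ that embeds as the complement of $\RP^{a-1}$, and collapsing the complement of a tubular neighborhood of $\RP^m$ yields the Thom space $\RP^{a+m}/\RP^{a-1}$. Neither affects the correctness of the outline.
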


\begin{corollary}
  There is a natural isomorphism
  \[
    \lim_{\RP^m} S^{n \sigma} \simeq \Sigma P_{n - m + 1}^{n - 1}.
  \]
\end{corollary}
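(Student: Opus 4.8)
The plan is to obtain this as the Spanier--Whitehead dual of part (3) of the preceding theorem, with part (4) converting the indexing. The one formal ingredient I would set up first is that limits and colimits over a space are interchanged by duality: for any space $K$ and any local system $F\colon K \to \Sp$ valued in dualizable spectra, there is a natural equivalence $\lim_K F \simeq D\bigl(\colim_K (D\circ F)\bigr)$, where $D = F(-,\mathbb{S})$ is Spanier--Whitehead duality and $D\circ F$ is the termwise-dual local system (here one uses that $K$ is canonically equivalent to $K^{\mathrm{op}}$, so that $D\circ F$ is again a local system on $K$). This holds simply because $F(-,\mathbb{S})$ carries colimits to limits, combined with $D\circ D\simeq \mathrm{id}$ on dualizable objects; no finiteness of $K$ is needed, only that $F$ takes dualizable values.

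I would then apply this with $K = \RP^m$, restricted along $\RP^m \hookrightarrow \RP^\infty = BC_2$ as in \Cref{lemma:adams}, and $F$ the (Borel) local system $S^{n\sigma}$, which is invertible hence dualizable with termwise dual $S^{-n\sigma}$. Part (3) of the quoted theorem gives $\colim_{\RP^m} S^{-n\sigma} \simeq P_{-n}^{m-n}$, so the duality interchange yields $\lim_{\RP^m} S^{n\sigma} \simeq D\bigl(P_{-n}^{m-n}\bigr)$. Feeding $a = -n$, $b = m-n$ into part (4), $D P_a^b \simeq \Sigma P_{-b-1}^{-a-1}$, then computes the right-hand side as the required stunted projective space, and naturality of the identification is inherited from the naturality in part (3) under the inclusions $\RP^m \hookrightarrow \RP^{m+1}$.

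I do not expect a genuine homotopy-theoretic obstacle here: once the limit--colimit duality is in hand, the proof is bookkeeping. The steps that actually need care, and where one could slip, are (i) checking that the local system written $S^{n\sigma}$ is exactly the one (restricted from $BC_2$) appearing in part (3); (ii) using the equivalence $K^{\mathrm{op}}\simeq K$ consistently when re-interpreting $D\circ F$ as a local system on $\RP^m$; and (iii) transcribing the exponent range correctly after composing parts (3) and (4). The last of these is the place to be most vigilant, since it is easy to be off by a shift in the stunted-projective-space indices.
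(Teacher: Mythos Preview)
Your approach is exactly the paper's: its entire proof is the single displayed chain
\[
  \lim_{\RP^m} S^{n\sigma} \simeq D\colim_{\RP^m} S^{-n\sigma} \simeq DP_{-n}^{m-n} \simeq \Sigma P_{n-m-1}^{n-1},
\]
with no further commentary. Your vigilance about the indices is well placed: carrying out the substitution you describe gives lower index $n-m-1$ (as in the paper's proof and as used downstream in \Cref{cor:sns}), so the $n-m+1$ appearing in the corollary's statement is a typo.
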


\begin{proof}
  \[
    \lim_{\RP^m} S^{n \sigma} \simeq D \colim_{\RP^m} S^{-n \sigma} \simeq D P_{-n}^{m - n} \simeq \Sigma P_{n - m - 1}^{n - 1}.
  \]
\end{proof}

\begin{corollary}\label[corollary]{cor:sns}
  Let $X$ be a spectrum with trivial $C_2$ action. Then
  \[
    (S^{n \sigma} \otimes X)^{h C_2} \simeq \lim_N (\Sigma P^{n - 1}_{-N} \otimes X).
  \]
\end{corollary}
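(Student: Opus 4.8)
The plan is to bootstrap from \Cref{lemma:adams} and part (3) of \cite[Theorem V.2.14]{h-infinity} by filtering $EC_2$. Since $EC_2 = S(\infty\sigma) = \colim_m S((m+1)\sigma)$ and homotopy fixed points are computed as $(-)^{hC_2} = F((EC_2)_+, -)^{C_2}$, I would first write
\[
  (S^{n\sigma}\otimes X)^{hC_2} \simeq \lim_m F(S((m+1)\sigma)_+,\, S^{n\sigma}\otimes X)^{C_2},
\]
with transition maps induced by the equatorial inclusions $S((m+1)\sigma)_+ \to S((m+2)\sigma)_+$; it then suffices to identify each term of this tower and its transition maps.

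For the individual terms, the idea is to dualize so that \Cref{lemma:adams} applies. The sphere $S((m+1)\sigma)$ is a finite free $C_2$-CW complex, so $S((m+1)\sigma)_+$ is a dualizable $C_2$-spectrum, and smashing the cofibre sequence $S((m+1)\sigma)_+ \to S^0 \to S^{(m+1)\sigma}$ with $S^{-(m+1)\sigma}$ identifies its dual as $D_{C_2}S((m+1)\sigma)_+ \simeq S^{1-(m+1)\sigma}\otimes S((m+1)\sigma)_+$. Combining this with the projection formula $(W\otimes X)^{C_2}\simeq W^{C_2}\otimes X$, valid since $X$ carries the trivial $C_2$-action, gives
\[
  F(S((m+1)\sigma)_+,\, S^{n\sigma}\otimes X)^{C_2} \simeq \Sigma\,\bigl(S((m+1)\sigma)_+\otimes S^{(n-m-1)\sigma}\bigr)^{C_2}\otimes X.
\]
By \Cref{lemma:adams} the fixed points on the right are $\colim_{\RP^m}S^{(n-m-1)\sigma}$, which by part (3) of \cite[Theorem V.2.14]{h-infinity} equals $P^{n-1}_{n-m-1}$; so the $m$-th term of the tower is $\Sigma P^{n-1}_{n-m-1}\otimes X$. (Having reached $\Sigma P^{n-1}_{n-m-1}\otimes X \simeq (\lim_{\RP^m}S^{n\sigma})\otimes X$ one may alternatively just invoke the preceding corollary.)

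It remains to match the transition maps and reindex: the map from the $(m+1)$-st term to the $m$-th term is induced by $S((m+1)\sigma)_+ \to S((m+2)\sigma)_+$, which under these identifications becomes the standard cellular quotient $\Sigma P^{n-1}_{n-m-2}\otimes X \to \Sigma P^{n-1}_{n-m-1}\otimes X$ collapsing the bottom cell. Setting $N = m+1-n$ then turns $\{\Sigma P^{n-1}_{n-m-1}\otimes X\}_m$ into the tower $\{\Sigma P^{n-1}_{-N}\otimes X\}_N$ with its usual structure maps, so $(S^{n\sigma}\otimes X)^{hC_2}\simeq \lim_N \Sigma P^{n-1}_{-N}\otimes X$, as claimed. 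I expect the only genuine obstacle to be organizational, namely pinning down the first displayed equivalence — that $(-)^{hC_2}$ is the inverse limit over the skeleta of $EC_2$ of the genuine fixed points $F(S((m+1)\sigma)_+, -)^{C_2}$, with restriction as transition maps — and then carefully tracking how the self-duality of $S((m+1)\sigma)_+$ interacts with the reindexing; nothing here is conceptually hard.
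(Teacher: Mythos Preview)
Your argument is correct, but it follows a different route from the paper's. The paper never works in genuine $\Sp_{C_2}$ at this step: it writes $(S^{n\sigma}\otimes X)^{hC_2}$ as $\lim_m \lim_{\RP^m}(S^{n\sigma}\otimes X)$, then writes $X=\colim_\alpha X^{(\alpha)}$ as a filtered colimit of \emph{finite} spectra, commutes the finite limit $\lim_{\RP^m}$ past the filtered colimit, and uses that tensoring with a finite spectrum commutes with finite limits to reach $\lim_m(\Sigma P^{n-1}_{n-m-1}\otimes X)$. Your route instead stays inside $\Sp_{C_2}$: you dualize the free $C_2$-cell complex $S((m+1)\sigma)_+$ and pull $X$ out via a projection formula. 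Both strategies solve the same problem---getting the non-finite $X$ to commute past a limit---but by different mechanisms.

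One point deserves more care than you give it: the projection formula $(W\otimes X)^{C_2}\simeq W^{C_2}\otimes X$ is not automatic merely because $X$ has trivial action. You need that $(-)^{C_2}$ preserves colimits (equivalently, that the equivariant sphere is compact in $\Sp_{C_2}$), so that both sides are colimit-preserving functors of $X$ agreeing on $S^0$. Alternatively, and more in the spirit of what you have already set up, observe that your $W$ is a twist of the \emph{free} spectrum $S((m+1)\sigma)_+$; then \Cref{lemma:adams} identifies $(-)^{C_2}$ on such spectra with homotopy orbits, and homotopy orbits visibly commute with smashing by a constant diagram. Either way the step goes through, but your justification ``valid since $X$ carries the trivial $C_2$-action'' undersells what is needed. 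The paper's finite-spectrum trick sidesteps this entirely, at the cost of an extra colimit manipulation; your approach is arguably cleaner once the projection formula is properly grounded.
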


\begin{proof}
  We write $X$ as a filtered colimit of finite spectra, $X = \colim_\alpha X^{(\alpha)}$. Then we have
  \[
    \begin{aligned}
      (S^{n \sigma} \otimes X)^{h C_2} &= \left(S^{n \sigma} \otimes \colim_\alpha X^{(\alpha)}\right)^{h C_2} \\
                                      &= \left(\colim_\alpha \left(S^{n \sigma} \otimes X^{(\alpha)}\right)\right)^{h C_2} \\
                                      &= \lim_{\RP^\infty} \colim_\alpha \left(S^{n \sigma} \otimes X^{(\alpha)}\right) \\
                                      &= \lim_m \lim_{\RP^m} \colim_\alpha \left(S^{n \sigma} \otimes X^{(\alpha)}\right) \\
                                      &= \lim_m \colim_\alpha \lim_{\RP^m} \left(S^{n \sigma} \otimes X^{(\alpha)}\right) \\
                                      &= \lim_m \colim_\alpha \left(\Sigma P_{n - m - 1}^{n - 1} \otimes X^{(\alpha)}\right) \\
                                      &= \lim_m \left(\Sigma P_{n - m - 1}^{n - 1} \otimes X\right),
    \end{aligned}
  \]
  where we use that finite limits commute with filtered colimits and tensoring with \emph{finite} spectra.
\end{proof}

We now recall the construction of the norm map. Let $X$ be a genuine $C_2$-spectrum. By \Cref{lemma:adams}, we have an isomorphism
\[
  \colim_{\RP^n} X = (S((n + 1)\sigma)_+ \otimes X)^{C_2}.
\]
Taking the colimit as $n \to \infty$ gives
\[
  X_{hC_2} = (S(\infty \sigma)_+ \otimes X)^{C_2},
\]
where $S(\infty \sigma) = \colim_n S(n \sigma)$. The norm map $X_{h C_2} \to X^{C_2}$ is then induced by the projection $S(\infty \sigma)_+ \to S^0$. By replacing $X$ with the cofree version $X^{(EC_2)_+}$, we obtain the $X^{h C_2}$-valued version of the norm map.

\begin{lemma}
  Let $X$ be a genuine $C_2$-spectrum whose $C_2$ action on the underlying spectrum $\iota X$ is trivial. Then the cofiber of the composition
  \[
    \iota X \otimes \RP^n_+ \to \iota X \otimes \RP^\infty_+ = X_{h C_2} \overset{\mathrm{Nm}}\to X^{h C_2} = (\iota X)^{\RP^\infty_+}
  \]
  is $\lim_N (\iota X \otimes \Sigma P_{-N}^n)$.
\end{lemma}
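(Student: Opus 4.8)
The plan is to replay the construction of the norm map recalled just above, keeping careful track of the truncation from $\RP^\infty$ to $\RP^n$. Write $Y = \iota X$, regarded as a spectrum with trivial $C_2$-action, and let $X^f = X^{(EC_2)_+}$ be the cofree replacement; since $(EC_2)_+$ is an underlying equivalence, $\iota X^f \simeq Y$, while $(X^f)^{C_2} = X^{hC_2}$ and $(X^f)_{hC_2} = X_{hC_2}$. First I would observe that, by the naturality of \Cref{lemma:adams} in the diagram-restriction along $\RP^n \hookrightarrow \RP^\infty = BC_2$ (compatibly with the description of the norm map recalled above), the composite in the statement is identified with the map
\[
  \big(S((n+1)\sigma)_+ \otimes X^f\big)^{C_2} \longrightarrow \big(S^0 \otimes X^f\big)^{C_2} = X^{hC_2}
\]
obtained by applying the exact functor $(-\otimes X^f)^{C_2}$ to the collapse $S((n+1)\sigma)_+ \to S^0$. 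Here one uses that the collapse maps $S((n+1)\sigma)_+ \to S^0$ are compatible with the inclusions $S((n+1)\sigma)_+ \hookrightarrow S(\infty\sigma)_+$, and that $\big(S((n+1)\sigma)_+ \otimes X^f\big)^{C_2} \simeq Y \otimes \RP^n_+$ since $C_2$ acts freely on $S((n+1)\sigma) = S^n$ (antipodal action) with quotient $\RP^n$.

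Next I would compute the cofiber of the $C_2$-equivariant collapse $S((n+1)\sigma)_+ \to S^0$. This is the standard cofiber sequence $S(V)_+ \to D(V)_+ = S^0 \to D(V)/S(V) = S^V$ attached to a representation $V$; with $V = (n+1)\sigma$ it produces the representation sphere $S^{(n+1)\sigma}$. Since $(-\otimes X^f)^{C_2}$ is exact, the cofiber in the statement is therefore $\big(S^{(n+1)\sigma} \otimes X^f\big)^{C_2}$. Then I would strip off the cofree replacement: $S^{(n+1)\sigma}$ is a dualizable $C_2$-spectrum, so tensoring with it commutes with $F((EC_2)_+, -)$, giving
\[
  \big(S^{(n+1)\sigma} \otimes X^f\big)^{C_2} = F\big((EC_2)_+,\, S^{(n+1)\sigma} \otimes X\big)^{C_2} = \big(S^{(n+1)\sigma} \otimes X\big)^{hC_2},
\]
where on the right everything is Borel: $S^{(n+1)\sigma} \otimes X$ has underlying spectrum $\Sigma^{n+1} Y$ with the $C_2$-action coming from $(n+1)\sigma$, and $Y$ itself carries the trivial action.

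Finally I would invoke \Cref{cor:sns} with $n$ there replaced by $n+1$ and the trivial-action spectrum taken to be $Y$, which yields
\[
  \big(S^{(n+1)\sigma} \otimes Y\big)^{hC_2} \simeq \lim_N \big(\Sigma P^{n}_{-N} \otimes Y\big) = \lim_N \big(\iota X \otimes \Sigma P^{n}_{-N}\big),
\]
as claimed. The only genuinely delicate step is the first one: verifying that under \Cref{lemma:adams} the truncated composite really is $(-\otimes X^f)^{C_2}$ applied to the collapse $S((n+1)\sigma)_+ \to S^0$ — that is, that the Adams-isomorphism identifications are natural in the restriction $\RP^n \hookrightarrow \RP^\infty$ and mesh with the construction of the norm map recalled above. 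Everything after that is formal manipulation of cofiber sequences together with dualizability of representation spheres.
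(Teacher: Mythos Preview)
Your proposal is correct and is essentially the paper's own proof, only with more detail spelled out. The paper identifies the composite as the fixed points of the collapse $S((n+1)\sigma)_+ \otimes X^{(EC_2)_+} \to X^{(EC_2)_+}$, takes the cofiber $S^{(n+1)\sigma} \otimes X^{(EC_2)_+} \cong (S^{(n+1)\sigma} \otimes X)^{(EC_2)_+}$, and then invokes \Cref{cor:sns}; your dualizability remark justifying that isomorphism and your care about naturality of the Adams isomorphism are useful elaborations but not departures.
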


\begin{proof}
  This composition is obtained by taking the fixed points of
  \[
    S((n + 1) \sigma)_+ \otimes X^{(EC_2)_{+}} \to X^{(EC_2)_+}.
  \]
  The cofiber of this map is $S^{(n+1) \sigma} \otimes X^{(EC_2)_+} \cong (S^{(n + 1)\sigma} \otimes X)^{(EC_2)_+}$. So this follows from \Cref{cor:sns}.
\end{proof}

We can now prove \Cref{thm:stunted}.
\begin{proof}[of \Cref{thm:stunted}]
  This follows from the commutative diagram
  \[
    \begin{tikzcd}
      & (\iota X)^{\RP^\infty_{m + 1}} \ar[r, equals] \ar[d] & \lim_N (\Sigma P^{-m - 2}_{-N} \otimes \iota X) \ar[d]\\
      \RP^n_+ \otimes \iota X \ar[r] \ar[d, equals] & (\iota X)^{\RP^\infty_+} \ar[r] \ar[d] & \lim_N (\Sigma P^{n}_{-N} \otimes \iota X) \ar[d] \\
      \RP^n_+ \otimes \iota X \ar[r] & (\iota X)^{\RP^m_+} \ar[r] & \Sigma P^n_{-m - 1} \otimes \iota X.
    \end{tikzcd}
  \]
\end{proof}
\section{Sage script}\label{section:sage}
This appendix contains the sage script used to perform the computer calculations. The actual computations are at the end of the script and the comments indicate the lemmas they prove.
\lstset{
  belowcaptionskip=1\baselineskip,
  language=Python,
  showstringspaces=false,
  basicstyle=\footnotesize\ttfamily,
  keywordstyle=\bfseries\color{green!50!black},
  commentstyle=\color{purple!40!black},
  identifierstyle=\color{blue},
  deletekeywords=[2]{reduce},
  escapeinside={@}{@},
}
\lstinputlisting{public.sage}

\ifspringer
  \bibliographystyle{spmpsci}
  \bibliography{tmfc2.bib}
\else
  \printbibliography
\fi
\end{document}